%% file: EmStEm.tex
\documentclass[a4paper]{article}
\usepackage[dvipsnames]{xcolor}
\usepackage{graphicx} %
\usepackage[textwidth=16cm]{geometry}

\usepackage{enumitem}
\usepackage{seqsplit} %

\newcommand{\msf}{\mathsf{f}}
\newcommand{\msB}{\mathsf{B}}
\newcommand{\Om}{\Omega}
\newcommand{\vp}{\varphi}
\newcommand{\weakstar}{\overset{*}{\rightharpoonup}}

\newcommand{\divv}{\operatorname{div}} 
\newcommand{\wt}{\widetilde}
\newcommand{\la}{\langle}
\newcommand{\ra}{\rangle}
\newcommand{\pa}{\partial}
\newcommand{\loc}{\mathrm{loc}}
\newcommand{\ve}{\varepsilon}

\newcommand{\ol}{\overline}

\newcommand{\wh}{\widehat}
\newcommand{\id}{\mathrm{id}}

\newcommand{\R}{\mathbb{R}}

\newcommand{\rel}{\mathrm{rel}}

\newcommand{\dft}{\mathrm{def}}
\newcommand{\dd}{\mathrm{d}}

\newcommand{\weakly}{\rightharpoonup}
\newcommand{\liftpi}{\wt\pi}
\newcommand{\GEO}{\operatorname{Geo}}
\newcommand{\err}{\operatorname{err}}

\newcommand{\TAN}{{\mathrm{tan}}}
\newcommand{\PARAM}{\mathsf{y}}
\newcommand{\embeds}{\hookrightarrow}
\newcommand{\cvx}{\mathsf{cvx}}
\newcommand{\ccv}{\mathsf{ccv}}
\newcommand{\sfF}{\mathsf{F}}
\def\Xint#1{\mathchoice
   {\XXint\displaystyle\textstyle{#1}}%
   {\XXint\textstyle\scriptstyle{#1}}%
   {\XXint\scriptstyle\scriptscriptstyle{#1}}%
   {\XXint\scriptscriptstyle\scriptscriptstyle{#1}}%
   \!\int}
\def\XXint#1#2#3{{\setbox0=\hbox{$#1{#2#3}{\int}$}
     \vcenter{\hbox{$#2#3$}}\kern-.5\wd0}}

\def\dashint{\Xint-}

\title{Dissipative solutions for fibred Wasserstein gradient flows of Cahn--Hilliard-type energies\thanks{Funded by the Deutsche Forschungsgemeinschaft (DFG, German Research Foundation) under Germany’s Excellence Strategy – The Berlin Mathematics Research Center MATH+ (EXC-2046/2, project ID: 390685689) (all authors). Funded by the Deutsche Forschungsgemeinschaft (DFG, German Research Foundation) – Project number 591879973 (K.H.).}}
\author{Georg Heinze, Katharina Hopf, Matthias Liero}

\date{}%

\input{header.tex}
\allowdisplaybreaks
\numberwithin{equation}{section}

\theoremstyle{remark}
\newtheorem*{rem*}{Remark}

    \usepackage[backend=biber, style=alphabetic, giveninits, maxnames=6,alldates=year, maxalphanames=4,url=false,isbn=false]{biblatex}
\begin{document}

\maketitle

\begin{abstract}	
We study fourth-order evolution equations with a fibred Wasserstein gradient-flow structure, describing spatial transport of finitely many or a continuum of types. The driving Cahn--Hilliard-like energy combines a diagonal gradient contribution with nonlinear interactions between types. We prove subsequential convergence of the fibred Jordan--Kinderlehrer--Otto scheme to solutions satisfying an energy-dissipation inequality. 
For general type distributions, the absence of compactness in the type variable is addressed through defect-based solution concepts. 
On $\mathbb R^d, d\le 3$, we construct dissipative solutions satisfying an additional entropy-dissipation inequality. 
On smooth bounded domains in any space dimension, a coercivity estimate for the metric slope allows us to construct energy-dissipating solutions without relying on flow interchange, thereby covering non-convex domains endowed with the intrinsic length distance.
In both cases, a variational comparison of the piecewise constant and De Giorgi interpolants yields a common limiting energy and strong convergence of the difference to zero without requiring strong compactness.
For finitely many types, the defects vanish and the construction yields weak solutions. 
We further establish weak--strong uniqueness of dissipative solutions in every space dimension and of energy-dissipating solutions in dimensions $d\le5$. Combined with the respective existence results, this yields strong convergence of the variational approximations whenever a sufficiently regular, strictly positive solution exists.
\end{abstract}

{\small
\noindent
\emph{Keywords: }Fibered Wasserstein distance, intrinsic cost, fourth-order PDEs, Cahn–Hilliard systems, minimizing movements, energy-dissipating solutions, weak--strong uniqueness, continuous type distributions.
\\[1em]
\emph{MSC[2020]:}
35K41, %
35K65, %
49Q22, %
35A15 %
}

\section{Introduction}

We study fourth-order evolution equations describing the spatial transport of interacting populations with a fixed distribution of types. The dynamics are driven by a common Cahn--Hilliard-like energy and formally have the structure of a gradient flow with respect to a fibred Wasserstein distance. This framework allows for mixtures of finitely many species, populations with continuously distributed traits, and, more generally, combinations of singular and continuous type distributions.
A motivation comes from continuum descriptions of cell adhesion, sorting and segregation. The type variable $y\in Y$ may encode cell type or adhesion-molecule expression \cite{ST1994ESCS}, a differentiation status or a gene expression profile \cite{Velten*2017HHSC, CHBIH2008TNCL}, or a collection of intracellular regulatory variables \cite{Bendall*2011SMCD}. We here treat these characteristics as fixed labels and assume that the mass of each type is conserved. Our model thus describes spatial rearrangement in regimes where changes in cellular state and population size can be neglected.

In~\cite{FaBaCa2024LCMC}, Falc\'o, Baker and Carrillo formally derive a local fourth-order system from nonlocal adhesive interactions in a short-range limit.
Their numerical simulations show that the local model reproduces a range of cell-sorting patterns associated with differential adhesion.
A related phenomenological fourth-order model of tissue growth was proposed earlier by Chertock, Degond, Hecht and Vincent~\cite{CDHV_2019}, where repulsive interactions induce segregation of initially mixed cell populations. The model is derived by a Wasserstein gradient-flow ansatz for the conservative part, with the growth terms being incorporated afterwards.
For a second-order tumour growth model with continuously distributed phenotypes $y\in[0,1]$, where mass transport is driven by the gradient of a common population pressure, we refer to David~\cite{David_2023}.

\paragraph{Model description and motivation}
In the present work, we consider $Y:=\mathbb{R}^{d'}$ and describe the types by a fixed distribution $\nu\in\calP(\mathbb{R}^{d'})$, the support of which can thus be interpreted as the set of all types.
A state is then characterised by a measure
$\mu$ in the space $\calP_{2,\nu}(X\times Y)$ of probability measures on $X\times Y$
with $Y$-marginal $\nu$ and finite spatial second moment, where we consider as the physical space $X=\overline\Omega\subset\bbR^d$ for a smooth bounded domain $\Omega$ or $X=\bbR^d$.
Disintegrating $\mu$ with respect to $\nu$ yields a Borel family $y\mapsto\mu^y$ describing the spatial distribution of type $y$ so that $\mu=\int_Y\mu^y\,\dd\nu(y)$.
The fibred Wasserstein distance is then given by
\[\mathcal{W}_{2,X,\nu}^2(\mu,\widehat\mu):=\int_Y \mathcal{W}_{2,X}^2(\mu^y,\widehat\mu^y)\,\dd\nu(y),\]
where $\calW_{2,X}$ denotes the quadratic Wasserstein distance
with the intrinsic length distance $d_X$.
For $X=\mathbb R^d$ or if $\Omega$ is convex, $d_X$ coincides with the Euclidean distance, and, in this case, following~\cite{peszek2023heterogeneous}, we simply write $\mathcal{W}_{2,\nu}$.
The fibred distance encodes the restriction that transport preserves the type variable and, for finitely many types,
reduces to a weighted product Wasserstein distance. 

In this manuscript we study the convergence of the fibred JKO scheme towards solutions of fourth-order evolution equations 
with a formal fibred Wasserstein gradient flow structure 
\begin{align*}%
	\partial_t\mu =-\operatorname{grad}_{\calW_{2,X,\nu}}\calE(\mu).
\end{align*}
The driving energy is of Cahn--Hilliard type and takes the form
$\calE=\calE_1+\calE_0$, where in the present study the gradient energy $\calE_1$ is diagonal in the type
variable.
Writing $\mu=u\scrL^d\otimes\nu$, we set
\begin{align*}
    \calE_1(\mu)
    &:=\frac12\int_X\int_Y
        \alpha(y)|\nabla_xu(x,y)|^2\,\dd\nu(y)\dd x,\\
    \calE_0(\mu)
    &:=\int_X \sff(U(x))\,\dd x
      +\frac12\int_X\int_{Y\times Y}
        \beta(y,y')u(x,y)u(x,y')\,
        \dd(\nu\otimes\nu)(y,y')\dd x,
\end{align*}
where
\[ U(x):=\int_Y\kappa(y)u(x,y)\,\dd\nu(y), \qquad
    \sff(s):=\frac{\vartheta}{4}s^2(1-s)^2,
    \qquad \vartheta\ge0.\]
Here, $\alpha,\kappa\in L^\infty(Y,\nu)$ satisfy
$\alpha\ge\alpha_*>0$ and $\kappa\ge\kappa_*>0$,
while $\beta$ is a $\nu\otimes\nu$-essentially bounded, symmetric, non-negative interaction kernel.
We set $\calE(\mu)=+\infty$ unless $\mu=u\scrL^d\otimes\nu$ with
$u\in L^2(Y,\nu;H^1(X))$ and the displayed integrals are finite.

The gradient contribution acts separately on each type, whereas
the lower-order energy couples the types through the weighted
total density $U$ and the pairwise repulsion.
For $\vartheta>0$, the (non-convex) double-well potential describes the preferred states $U=0$ and $U=1$ of the mixture with strength controlled by $\vartheta$.
The repulsion term penalises spatial overlap between types
with $\beta(y,y')\ge0$; note that despite the non-negativity of the kernel,
this quadratic contribution also need not be convex.

In terms of the density, the associated gradient-flow equation takes the PDE form
\begin{subequations}\label{eq:PDEsystem.intro}
\begin{align}\label{eq:CE.intro}
\partial_tu+\operatorname{div}_x j_u=0,\qquad j_u=-u\nabla_x w_u,
\end{align}
where the chemical potential is given by
\begin{align}\label{eq:chempot.intro}
w_u(x,y)=-\alpha(y)\Delta_xu(x,y)+\kappa(y)\sff'(U(x))+\int_Y\beta(y,y')u(x,y')\,\dd\nu(y').
\end{align}
\end{subequations}
On bounded domains, the equations are complemented by the boundary conditions $\partial_nu=0$ and $j_u\cdot n=0$.

As an example, consider the case where the measure $\nu$ has finite support so that it can be written in the form $\nu = \sum_{i=1}^nm_i\delta_{y_i}$, where $m_i>0$ are positive weights with $\sum_{i=1}^nm_i=1 $ and $y_i\in Y$ correspond to species labels. In this setting, we can identify a measure $\mu\in\calP_{\nu}(X{\times}Y)$ and its $\scrL^d{\otimes}\nu$-density $u$ with a vector of measures $(\mu_1,\ldots,\mu_n)\in \calM(X)^n$ and a vector of probability densities $(u_1,\ldots, u_n)\in L^1(X)^n$, respectively. More precisely, we have that
$\mu = \sum_{i=1}^n m_i u_i\scrL^d\otimes \delta_{y_i}$. Consequently, the driving energy takes the form
\begin{equation}
\mathcal E(\mu)
=
\sum_{i=1}^n
\int_X \frac{m_i\alpha_i}{2} |\nabla u_i(x)|^2\,\dd x
+
\int_X
\msf\left(
\sum_{i=1}^n m_i\kappa_i u_i(x)
\right)\,\dd x
+\sum_{i,j=1}^n
\frac{m_i m_j\beta_{ij}}{2}
\int_Xu_i(x) u_j(x)\,\dd x,
\end{equation}
where $\alpha_i= \alpha(y_i)$, $\kappa_i = \kappa(y_i)$, $\beta_{ij} = \beta(y_i,y_j)$.
Therefore, the evolution equation~\eqref{eq:PDEsystem.intro} takes the form
of the coupled system
\[
\partial_t u_i
=
\operatorname{div}_x
\Bigg(
u_i\nabla_x
\bigg(
{-}\alpha_i\Delta_x u_i
+
\kappa_i{\sf f}'(U)
+
\sum_{j=1}^n m_j\beta_{ij}u_j
\bigg)
\Bigg),
\qquad i=1,\dots,n.
\]

The choice of the lower-order energy $\mathcal E_0$ is motivated by effective incompressible multi-phase descriptions. 
Chertock et al.~\cite{CDHV_2019} formally obtain a two-species Hele–Shaw growth model without surface tension from their fourth-order model with active repulsion,
while Kroemer and Laux~\cite{KL_2022} derive, under an energy convergence assumption, a Hele--Shaw flow with surface tension from a Cahn--Hilliard equation with linear mobility and double-well potential.
The present system combines inter-species repulsion with a double-well potential for the total (weighted) density $U$, and, in a suitable scaling regime, we expect an effective finite-species sharp-interface description of multi-phase Hele--Shaw type with surface tension.
In the context of cell sorting, related sharp-interface descriptions have recently been studied using Keller--Segel-type approximations~\cite{JangMellet_2026}.

\paragraph{Analytical contributions}
The present manuscript is devoted to the analysis of systems with a diagonal structure of the gradient energy $\calE_1$.
The diagonal structure allows us to express the flux distributionally in terms of an
energy stress tensor, which only requires control of the densities and
their spatial gradients, but does not involve second- or higher-order derivatives. 
More precisely, the contribution of the Dirichlet energy to the constitutive flux admits the representation, cf.~\cite{Glasner_2003,KL_2022},
\begin{align}\label{eq:flux.tensorial.intro}
\alpha(y)u\nabla_x\Delta_xu=\operatorname{div}_x\mathbb T_1(u)
+\alpha(y)\nabla_x\Big(\operatorname{div}_x(u\nabla_xu)-|\nabla_xu|^2\Big),
\end{align}
where $\mathbb T_1$ denotes the energy stress tensor associated with the Dirichlet energy, given by
\[\mathbb T_1(u):=\alpha(y)\bigg(
\frac12|\nabla_xu|^2 \mathbb{I}_d-\nabla_xu\otimes\nabla_xu\bigg).\]
Our primary aim is to establish subsequential convergence of the fibred JKO scheme to solutions of the fourth-order equations, within a framework that captures the energy-dissipation mechanism and allows for a weak--strong stability estimate. 
The special structure~\eqref{eq:flux.tensorial.intro} allows us to carry out the variational construction using intrinsic transport on general smooth bounded domains without the usual convexity restriction on the spatial geometry, while ensuring weak--strong uniqueness for the same solution concept.
A specific difficulty due to the fibred setting is the absence of a regularising mechanism in the type variable,
so that higher-order spatial control does not generally yield the compactness needed to identify the nonlinear flux.
To cope with this circumstance, we introduce solution concepts
involving non-negative defect quantities,
where the limiting density-flux pair $(u,j)$ 
satisfies the continuity equation $\pa_tu+\divv j=0$ in $\mathcal{D}'$ and, for a.e.\ $t>0$, the energy-dissipation inequality
\begin{align}\label{eq:edi.intro}
	E(t)+\int_0^t\int_X\int_Y\frac{|j|^2}{u}\,\dd\nu\dd x\dd s\le \calE(\mu_0),\qquad E(t)\ge\calE(\mu(t)).
	\end{align}
Here $E$ is the limiting value of the energies along approximate solutions, and $E^{\mathrm{def}}(t):=E(t)-\mathcal E(\mu(t))\ge0$ denotes the energy defect, measuring the possible loss of energy under weak convergence. For bounded domains $\Om\subset\mathbb{R}^d$ and $X=\ol\Om$, the difference $j-j_u$ between limiting and constitutive flux can be controlled by the energy defect thanks to the Poincar\'e--Wirtinger inequality that allows to control the density $u$ in the term $\alpha\nabla_x\operatorname{div}_x(u\nabla_xu)$ on the right-hand side of~\eqref{eq:flux.tensorial.intro}. For $X=\mathbb{R}^d$ a further defect quantity providing stronger control at infinity is needed in general.
A natural choice is the entropy defect associated to the entropy functional $\mathcal{H}(\mu)=\int_X\int_Y u\log u\,\dd\nu\dd x$, $\mu=u\dd x\dd\nu$, for which a suitable dissipation inequality can be derived along the scheme via flow interchange.

We first study the variational scheme on the whole space $\mathbb R^d$
 where optimal transport maps are available and where the theory on fibred optimal transport was carefully developed~\cite{peszek2023heterogeneous}. 
 Because of difficulties at spatial infinity, 
 the compactness and convergence arguments, however, are different from our analysis on bounded domains and to cope with the quartic potential part, we restrict to $d\le3$ whenever $\supp\nu$ is not a singleton.
A key step in the construction on $\mathbb{R}^d$ is a variational comparison of the piecewise constant and De Giorgi interpolants $u_\tau$ and $\wt u_\tau$, that exploits the dominant convex part of the energy, where convexity is understood in the traditional affine sense, see Lemma~\ref{lem:energy-hierarchy}.
It implies that the energies along the two interpolants converge to the same limit and allows us to further conclude the strong convergence of the difference
\begin{align}\label{eq:strong-conv.difference}
u_\tau-\wt u_\tau\longrightarrow0\quad\text{strongly in }L^2((0,T)\times Y, \scrL^1\otimes\nu;H^1(X)),
\end{align}
without relying on compactness. 
Together with the Euler--Lagrange equations, this yields a common weak limit of the associated fluxes,
obeying the  energy-dissipation inequality~\eqref{eq:edi.intro}.
Flow interchange additionally yields an entropy dissipation inequality, and spatial compactness for the piecewise constant interpolant, that allows us to identify the flux term and recover a weak formulation whenever $\supp\nu$ is finite.

On smooth bounded domains, we do not rely on flow interchange for compactness, which allows us to admit non-convex geometries with intrinsic transport.
For the compactness and convergence analysis of the scheme, we exploit a slope-coercivity estimate for Cahn--Hilliard-type energies with respect to the Wasserstein metric,
adapted from the recent preprint by Gau and two of the authors~\cite{GHL_preprint}, which in particular yields uniform bounds in $L^2((0,T)\times Y,\scrL^1\otimes\nu;H^2(\Omega))$ for both interpolants.
In this way, we are able to construct energy-dissipating solutions without relying on the standard heat-flow interchange argument, which is not available for non-convex domains.
Let us mention that, in order to cover the fibred setting in this case, we need to establish basic measurable selection results for Kantorovich potentials and optimal dynamic plans for the intrinsic cost. 
We further derive a first-variation formula for the squared (fibred) intrinsic Wasserstein distance on non-convex bounded domains.

In both settings, when $\nu$ has finite support, the defects vanish and, after passage to a subsequence, the schemes converge to weak solutions satisfying the energy-dissipation inequality and, if $X=\mathbb{R}^d$, an entropy-dissipation inequality.

Furthermore, we establish weak--strong uniqueness results for both settings and general base measure $\nu\in \mathcal{P}(Y)$, which are consistent with the existence results obtained by the minimising movements approximation.

Finally, let us mention that non-diagonal gradient energies, relevant for modelling differential adhesion, are studied in a parallel work using a functional setting based on Young measures.

\paragraph{Related work} 
The geometric framework for fibred optimal transport on $X=\mathbb{R}^d$ and an associated gradient-flow theory in the spirit of~\cite{AGS} were developed by Peszek and Poyato~\cite{peszek2023heterogeneous} for a class of energies generating evolution variational inequality solutions. 
Their abstract well-posedness theory is based on $\lambda$-convexity along generalised geodesics of the admissible energy functionals, which we cannot rely on here
due to the lack of geodesic semi-convexity of the Dirichlet energy in the Wasserstein space.
A minimising movements scheme in the fibred Wasserstein space for a geodesically non-convex 
energy was studied by Canc\`es, Matthes, Medina and Schmitzer~\cite{CMMS2024CCWGa}, who established subsequential convergence to the associated second-order incompressible drift--diffusion equations.
In that model, the nonlinearity stems from a common pressure $\Pi=\Pi(t,x)$, independent of $y$, which acts as a Lagrange multiplier for the $X$-marginal constraint related to incompressibility. As a consequence, in this equation the limiting flux can be identified without strong convergence of the density with respect to $y$.

For scalar models, variational schemes for degenerate fourth-order equations in transport metrics were systematically studied by Matthes, McCann and Savar\'e~\cite{MMS} and Lisini, Matthes and Savar\'e~\cite{LMS_2012}, who established subsequential convergence to weak solutions.
In these works, the flow-interchange technique constitutes a key tool, among other things,
for obtaining improved control and compactness of the approximate solutions.
For related results on fourth-order systems with two components we refer to~\cite{LM_2014,CEFF_2024}; for adjusted JKO schemes incorporating a pointwise volume constraint, see~\cite{CancesMatthesNabet2019,CancesMatthes2023}.

A sharp energy-dissipation inequality in the spirit of~\eqref{eq:edi.intro} was not established in the gradient-flow approaches above.
 To the authors' knowledge, in the variational approximation of degenerate fourth-order equations, such an inequality first appeared in the paper~\cite{KL_2022} by Kroemer and Laux for the scalar Cahn--Hilliard-type equation with linear mobility. Their approach is based on the De Giorgi variational interpolant. However, we have not been able to verify the asserted $L^2_tH^2_x$ control from flow interchange used in the proof of~\cite[Proposition~3.12]{KL_2022} to get compactness for the De Giorgi interpolant.
 The recent work by Gau and two of the authors~\cite{GHL_preprint} considering variational schemes for dynamic transport metrics of a Cahn--Hilliard-type energy does include a derivation of the energy-dissipation inequality for convex bounded domains. 
In the present work, besides the fibred setting, a new challenge compared to~\cite{GHL_preprint} concerns the more involved spatial setting, covering both non-convex bounded smooth domains and the unbounded whole-space case $\mathbb{R}^d$. A natural question, which we leave for future research, concerns 
 the extension of our results to the case of non-convex smooth domains that are {\em unbounded}. 

\paragraph{Outline} The manuscript is organised as follows. Section~\ref{sec:Prelim} introduces the functional setting and solution concepts and states the main results. In Section~\ref{sec:diagE}, we construct dissipative solutions on $\R^d$ via the fibred JKO scheme, establishing the energy and entropy-dissipation inequalities and the flux-defect control. In Section~\ref{sec:non-convex}, we construct energy-dissipating solutions on smooth bounded, possibly non-convex domains with intrinsic optimal transport  by means of a slope-coercivity estimate. In Section~\ref{sec:uniq.wkstr} we establish weak--strong uniqueness results for the solution concepts considered. Finally, the appendix collects auxiliary results on fibred optimal transport, interpolation inequalities, and convergence in negative Sobolev spaces.

\section{Main results}\label{sec:Prelim}

We first introduce the fibred optimal transport framework and the energy functional, then define the solution concepts and state the main results. We distinguish between the whole space and smooth bounded domains,
where transport is measured using the intrinsic length distance.

\subsection{Notations and preliminaries}

We first collect the notation used throughout the paper. 
We consider closed sets $X\subseteq\R^d$ and $Y=\mathbb R^{d'}$ 
and denote by $\calP(X)$ and $\calP(Y)$ the
spaces of Borel probability measures on $X$ and $Y$, respectively.
The spaces of non-negative finite Borel measures on $X$ or $Y$ are denoted by $\calM_+(X)$
or $\calM_+(Y)$. The spaces of signed measures are called $\calM(X)$
or $\calM(Y)$, respectively. 
We consider  a fixed Borel probability measure $\nu\in\calP(Y)$ and assume that
 \begin{equation}
 \label{eq:assNuCompSupp}
 \supp \nu \subset Y\text{ compactly.}
 \end{equation}

In the analysis below, we will distinguish two cases, namely
\begin{equation}
\label{eq:domainCases}
X = \R^d\quad\text{or}\quad X= \ol \Om,~\text{where }\Omega\subset\R^d\text{ is a smooth bounded  domain}.
\end{equation}

In the remaining text, we will write $f\lesssim_{a,b} g$ for non-negative quantities $f,g$ if there exists a constant $C>0$ that is allowed to depend on $a$ and $b$ such that $f\leq C g$.

\paragraph{Fibred Wasserstein distance}
The first central ingredient is the so-called
\emph{fibred Wasserstein distance} introduced in \cite{peszek2023heterogeneous},
see also \cite{CHSW2025CWDA}, where the name \emph{conditional Wasserstein distance} is used. Let $P_Y:X\times Y\to Y$ be the projection onto the second factor, i.e., $P_Y(x,y) = y$. We call
a measure $\mu\in\calP(X{\times}Y)$ \emph{fibred over $\nu$} if $(P_Y)_\#\mu=\nu$.
The set of all such measures is denoted by
\[
\calP_\nu(X{\times}Y)
\coloneqq 
\{\mu\in\calP(X{\times}Y):(P_Y)_\#\mu=\nu\}.
\]
By the disintegration theorem, every
$\mu\in\calP_\nu(X{\times}Y)$ admits a disintegration with respect to
$\nu$, which is denoted by $\mu=\int_Y\mu^y\,\mathrm d\nu(y)$, where $\mu^y\in\calP(X)$ is defined for $\nu$-almost every $y\in Y$,
the map $y\mapsto\mu^y$ is Borel measurable, and the family
$\{\mu^y\}_{y\in Y}$ is unique up to $\nu$-null sets. More precisely, for
every non-negative Borel function
$\phi:X\times Y\to[0,\infty]$,
\[
\int_{X\times Y}\phi(x,y)\,\mathrm d\mu(x,y)
=
\int_Y\left(\int_X\phi(x,y)\,\mathrm d\mu^y(x)\right)\mathrm d\nu(y).
\]
We further denote by $\calP_{2,\nu}(X{\times}Y)$ the subset of fibred probability measures with integrable second moments with respect to $X$,
i.e.,
\[
\calP_{2,\nu}(X{\times}Y)
\coloneqq 
\left\{
\mu\in\calP_\nu(X{\times}Y):
\textstyle\int_{X\times Y}|x|^2\,\mathrm d\mu(x,y)<\infty
\right\}.
\]
Equivalently, if $\mu=\int_Y\mu^y\,\mathrm d\nu(y)$, then
$\int_Y\int_X|x|^2\,\mathrm d\mu^y(x)\,\mathrm d\nu(y)<\infty$.

Following \cite{peszek2023heterogeneous}, we define the
\emph{fibred $2$-Wasserstein distance} on
$\calP_{2,\nu}(X{\times}Y)$ by
\[
\mathcal W_{2,\nu}(\mu_1,\mu_2)
\coloneqq 
\left(
\int_Y
\mathcal W_{2,X}(\mu_1^y,\mu_2^y)^2\,\mathrm d\nu(y)
\right)^{1/2},
\]
where $\mathcal W_{2,X}$ denotes the usual $2$-Wasserstein distance on
$\calP_2(X)$. For an overview of the properties of this metric used below, we refer to
Appendix~\ref{ssec:Notation}.

Later on, see Subsection~\ref{ss:nonconvex}, we will discuss the case $X=\ol\Om$ with $\Om\subset\mathbb{R}^d$ a smooth domain, possibly non-convex, with intrinsic length distance $d_X$. In the case that $\Omega$ is convex, 
the results from~\cite{peszek2023heterogeneous} used below remain valid. Indeed, since every $\mu\in\mathcal P_2(X)$ can be identified with its push-forward under the inclusion $X\hookrightarrow\mathbb R^d$, we may regard $\mathcal P_2(X)$ as an isometrically embedded closed subspace of $\mathcal P_2(\mathbb R^d)$. For the non-convex case, necessary results are collected in Subsection~\ref{ss:DiffIntrinsicWasserstein}.

\paragraph{Function spaces} 
 Throughout the text, unless specified otherwise, all Lebesgue and Bochner spaces involving $Y$ are understood with respect to the measure $\nu$. Spaces of continuous functions on $Y$ are understood with respect to the topology of $Y=\mathbb{R}^{d'}$. 
  Furthermore, throughout these sections, $\nabla_x=\nablax$ denotes the Euclidean gradient with respect to the $x$-variable, where $X=\mathbb{R}^d$ or $X=\ol\Om$ for a domain $\Om\subset\mathbb{R}^d$.
For any $\phi\in \rmL^\infty([0,T]\times X\times Y)$ and $u\in\rmL^1([0,T]\times X\times Y)$, we define the function
\begin{equation*}
    \langle u,\phi\rangle_Y(t,x) \coloneqq  \int_Y\phi(t,x,y)u(t,x,y)\,\dd\nu(y).
\end{equation*}
Often $u$ will be the density of a measure $\mu$ with respect to $\scrL^{d+1}{\otimes}\nu$, in which case we may also use the notation $\langle \mu,\phi\rangle_Y \coloneqq \langle u,\phi\rangle_Y$ to emphasize the measure $\mu$.

In the case of a bounded domain, i.e., $X = \ol\Om$, we also use the notation
\begin{align*}
    C_{\mathrm{tan}}^k(\ol\Omega;\mathbb R^d):= \left\{
\xi\in C^k(\ol\Omega;\mathbb R^d):\xi\cdot n=0\ \text{on }\partial\Omega\right\},
\end{align*}
where $n$ denotes the outer unit normal vector on $\partial\Omega$.

\paragraph{Energy functional}
The evolution is driven by a Cahn--Hilliard-type energy
$\calE : \calP_{2,\nu}(X{\times} Y) \to [0,\infty]$, which we decompose as
$\calE = \calE_1 + \calE_0$. The gradient contribution $\calE_1$ acts separately
on each type, while the lower-order contribution $\calE_0$ couples
the types, see below. We define the gradient
contribution by
\begin{equation}
\label{eq:defGradEnergy}
\calE_1(\mu) = 
\begin{cases}
\displaystyle\frac{1}{2}\int_X\int_Y \alpha(y)|\nabla_x u(x,y)|^2\,\dd\nu\dd x&\text{if }\mu = u\scrL^d{\otimes}\nu ~\text{with}~\nabla u\in L^2(X{\times} Y;\R^d),\\
+\infty&\text{otherwise}.
\end{cases}
\end{equation}
We assume that the coefficient satisfies $\alpha\in L^\infty(Y)$ and is uniformly bounded from below such that
\begin{equation}\label{eq:assAlpha}
0< \alpha_* \leq \alpha(y) \leq \alpha^*:=\|\alpha\|_{L^\infty(Y)}\text{ for $\nu$-a.e. }y\in Y.
\end{equation}
The lower-order contribution $\calE_0: \calP_{2,\nu}(X{\times}Y)\to[0,\infty]$ couples the types through two terms:
a double-well potential of the weighted total density
$U(x)=\int_Y\kappa(y)u(x,y)\,d\nu(y)$ and a pairwise repulsion energy
that penalises spatial overlap between types. More precisely, we set 
\begin{subequations}
\label{eq:defLowerEnergy}
\begin{align}
\calE_0(\mu) &= \calF(\mu) + \calR(\mu),\quad\text{where}
\\\label{eq:defEnergyDoubleWell}
\calF(\mu) &= \begin{cases}\displaystyle
\int_X \sff(U(x))\,\dd x &\text{if }\mu = u\scrL^d{\otimes}\nu,\\
+\infty &\text{otherwise},
\end{cases}\quad\text{with } U(x):= \int_{Y}\kappa(y) u(x,y)\, \dd \nu,
\\\label{eq:defEnergyRepulsion}
\calR(\mu) &= 
\begin{cases}\displaystyle
\frac12\int_X\int_{Y\times Y}
        \beta(y,y')u(x,y)u(x,y')\,
        \dd(\nu{\otimes}\nu)(y,y')\dd x&\text{if }\mu = u\scrL^d{\otimes}\nu,\\
+\infty &\text{otherwise}.
\end{cases}
\end{align}
\end{subequations}
We impose the following assumptions on the coefficients $\kappa$ and $\beta$ 
and the potential $\sff$
\begin{subequations}
\begin{align}
\label{eq:assKappa}
&0< \kappa_* \leq \kappa(y) \leq \kappa^*:=\|\kappa\|_{L^\infty(Y)}\text{ for $\nu$-a.e. }y\in Y,
\\[0.25em]\label{eq:assBeta}
&\beta\in L^\infty(Y{\times} Y)\text{ with }\beta(y,y') = \beta(y',y)\geq 0~\forall\, y,y'\in Y,
\\\label{eq:DoubleWellSplit}
&\sff(s) = \frac{\vartheta}{4}s^2(1{-}s)^2~\text{for }s\geq 0~\text{with}~
\sff=\sff_\cvx+\sff_\ccv,~\sff_\cvx(s)=\sff(s){+}\frac\vartheta2s^2,\text{ and }\sff_\ccv(s)=-\frac\vartheta2s^2.
\end{align}
\end{subequations}
\begin{subequations}
Throughout, we allow $\vartheta=0$, in which case the double-well
contribution vanishes. Whenever an argument requires $\vartheta>0$,
in particular to exploit the coercivity of the potential, we state
this explicitly.

With the particular choice of $\sff$, it holds that $\sff_\cvx\ge0$, $\sff_\ccv\le0$, and
\begin{equation}\label{eq:potential-curvature}
	\sff_\cvx''(s)=\frac32\vartheta\big(s^2+(s-1)^2\big)\gtrsim\vartheta (1+s^2),\qquad \sff_\ccv''(s)=-\vartheta.
\end{equation}
For $U,V\in\mathbb R$ and $Z=U-V$, direct calculation gives
\begin{align}
	\sff(U)-\sff(V)-\sff'(V)Z&=\frac{3\vartheta}2Z^2\left(V-\frac12+\frac Z3\right)^2+\frac\vartheta{12}Z^4-\frac\vartheta8Z^2\ge\frac\vartheta{12}|Z|^4-\frac\vartheta8|Z|^2,\label{eq:pot-rel.coerc}\\
	\sff''(U)-\sff''(V)&=3\vartheta(U+V-1)Z,	\label{eq:pot.aux}\\
	\sff'(U)-\sff'(V)-\sff''(V)Z&=\vartheta\bra[\bigg]{3V-\frac32}Z^2+\vartheta Z^3.\label{eq:pot.exp}
\end{align}
Since $\sff_\ccv$ is quadratic, \eqref{eq:pot.aux}--\eqref{eq:pot.exp} also hold with $\sff$ replaced by $\sff_\cvx$. 
\end{subequations}
\begin{rem*}
We use the quartic double-well potential for definiteness. More general
potentials can be treated by the same strategy, provided they admit
a similar decomposition and satisfy suitable growth, coercivity, and
regularity assumptions.    
\end{rem*}

In the remaining text, we will also use the shorthand notation
\begin{equation}\label{eq:defOpB}
(\mathsf Bu)(x,y)\coloneqq\int_Y\beta(y,y')u(x,y')\,\dd\nu(y')\quad\text{such that}\quad
    \mathcal{R}(u) = \frac{1}{2}\int_{X}(\mathsf{B}u,u)_{L^2(Y)}\dd x.
\end{equation}

\paragraph{Entropy functional}
In parts of the analysis, we supplement the energy estimates with
estimates for the logarithmic entropy. Its formal fibred Wasserstein
gradient flow is the heat equation acting separately on each type,
which we use through flow interchange to obtain additional spatial
regularity and an entropy-dissipation inequality, see, e.g., Definition~\ref{def:diss_sol} below.
The entropy functional $\calH:\calP_{2,\nu}(X{\times} Y)\to \R_\infty\coloneqq \R\cup \{+\infty\}$, used in parts of the manuscript, is defined as
\begin{equation}
\label{eq:defEntropy}
\calH(\mu) \coloneqq 
\begin{cases}\displaystyle
\int_X\int_Y u(x,y)\log u(x,y) \,\dd\nu(y)\dd x&\text{if }\mu = u \scrL^d{\otimes}\nu,\\
+\infty&\text{otherwise.}
\end{cases}
\end{equation}
Note that the integrability of the second moments and the compact support of $\nu$ yields that $\calH(\mu)$ is bounded from below for any $\mu\in\calP_{2,\nu}(X\times Y)$.

\subsection{Main theorems}

We now introduce the solution concepts and state our main results. 

We specify the set of admissible initial data as
\begin{equation}
\label{eq:defSetAnu}
\calA_\nu\coloneqq \big\{\mu=u\scrL^d{\otimes}\nu \in \calP_{2,\nu}(X{\times}Y)\,
\big|\, u\in L^2(Y;H^1(X)),\, U= \langle\kappa, u\rangle_Y\in L^4(X)\big\}.
\end{equation}
Note that every $\mu\in \calA_\nu$ has finite entropy $\mathcal{H}(\mu)<\infty$. Indeed, the $L^2$ condition controls its positive part, while the second moment controls its negative part.

We consider the energy functional $\calE=\calE_1 + \calE_0$ with $\calE_1$ and $\calE_0$ as in \eqref{eq:defGradEnergy} and \eqref{eq:defLowerEnergy}. 
In particular, we have the decomposition of the flux
$j_u= j_u^{(1)}+j_u^{(0)}$ with $j_u^{(1)}\simeq -u \nabla\frac{\delta\calE_1}{\delta u}$ and $j_u^{(0)}\simeq -u \nabla\frac{\delta\calE_0}{\delta u}$.
The theory below exploits the fact that the flux contribution $j_u^{(1)}$ can be expressed, using the associated energy stress tensor (see~\cite{Glasner_2003,KL_2022}), as
\begin{align}\label{eq:flux-tensor}
j_u^{(1)}=\alpha(y)u\nabla_x\Delta_x u
	&=\alpha(y)\divv_x\Big(\big(u\Delta_x u+\frac12|\nabla_x u|^2\big)I-\nabla_x u\otimes\nabla_x u
		\Big)\nonumber
	\\&=\alpha(y)\divv_x\Big(\big(\divv_x(u\nabla_x u)-\frac12|\nabla_x u|^2\big)I-\nabla_x u\otimes\nabla_x u
	\Big).
\end{align}
More precisely, given $ \mu\in\mathcal{P}_{2,\nu}(X{\times} Y)$ with $\calE(\mu)<\infty$ and 
$\scrL^{d}\otimes \nu$-density $u$, we define the flux 
$j_u$ distributionally by its action on
$\xi\in L^\infty\big(Y;C_{c}^2(X;\mathbb R^d)\big)$ if $X=\mathbb{R}^d$, via
\begin{align}\label{eq:flux.distrib}
\begin{split}
	\langle j_u,\xi\rangle\coloneqq{}&\int_X\int_Y
	\alpha(y)\bigg[
		\frac12|\nabla_xu|^2\divv_x\xi
		+(\nabla_xu\otimes\nabla_xu):\nabla_x\xi
		+u\nabla_xu\cdot\nabla_x\divv_x\xi
	\bigg]\,\dd\nu\dd x
	\\&
     + \int_X\int_Y \big[(\msf'(U)\kappa(y)+\mathsf{B} u)
        \bra[\big]{\nablax u\cdot\xi + u\divv_x\xi}\big]\,\dd\nu\dd x.
        \end{split}
\end{align}
If $X=\ol\Om$ is the closure of a bounded smooth domain, $j_u$ is defined via~\eqref{eq:flux.distrib} 
for test vector fields $\xi\in L^\infty(Y;C_{\mathrm{tan}}^2(\ol\Omega;\mathbb R^d))$.

\begin{remark}[Scalar case]\label{rem:Singleton}
For a single species, i.e. $\nu= \delta_{y_0}$ for $y_0\in Y$, 
we can compute for sufficiently regular $U(x)= \kappa(y_0)u(x,y_0)$ and a compactly supported (resp.\ tangential) smooth vector field $\xi$
\begin{align}\label{eq:flux.pot.def.scalar}
\int_X \sff'(U)\kappa(y_0)\big(\nabla_xu(\cdot,y_0)\cdot \xi + u(\cdot,y_0)\divv\xi\big)\dd x=\int_X \big({-}\sff(U)\divv\xi + U\sff'(U)\divv\xi\big)\dd x.
\end{align}
See, e.g., \cite[Lemma 3.8]{KL_2022}. In particular, we can write this expression free of derivatives of $U$.  
In the fibred setting, this reduction is no longer valid since, in general, $\int_Y\kappa(y)\nabla u(x,y)\cdot \xi(x,y)\dd \nu(y) \neq \nabla U(x)\cdot\int_Y\xi(x,y)\,\dd\nu(y)$.
\end{remark}

Next, for convenience we introduce an energy-dissipation property common to both
solution concepts. To account for a possible loss of energy under weak
convergence, we introduce a non-increasing function $E(t)$ that bounds
the energy $\calE(\mu_t)$ from above; for related ideas, cf.\ e.g.\ the solution
framework in \cite[Section~2.1]{FLJ_2026} for the Euler system of gas dynamics.
The relation between the flux $j$ and the constitutive flux $j_u$ will be specified separately through suitable defect estimates, cf.\ Definition \ref{def:diss_sol} below.

\begin{definition}[Energy-dissipation property]\label{def:e-diss.property} Let $\calE=\calE_1+\calE_0$ be given by~\eqref{eq:defGradEnergy}--\eqref{eq:DoubleWellSplit} and consider $\mu_0\in \calA_\nu$.
We say that a triple $(u,j,E)$ is {\em energy-dissipating} 
with initial data $\mu_0=u_0\scrL^d{\otimes}\nu$
 if the following is satisfied, with $\mu_t\coloneqq u(t,\cdot)\,\scrL^d{\otimes}\nu:$
	\begin{enumerate}
\item  Energy monotonicity: the map $E:[0,\infty)\to\mathbb{R}$ is non-increasing and satisfies
\begin{align*}
	E(t)&\ge \calE(\mu_t)\;\text{ for a.e.\ } t,\qquad\lim_{t\downarrow0}E(t)=\calE(\mu_0).
\end{align*}
		\item Energy-dissipation inequality: 
		$j\in L^1([0,T]{\times} X{\times} Y;\scrL^{d+1}{\otimes}\nu)$ 
        for all $T<\infty$, and for a.e.\ $0<t<\infty$ 
			\begin{align}\label{eq:EDI.diagE}
			E(t)+\int_0^t\int_X\int_Y\frac{|j|^2}{u}\,\dd \nu\dd x\dd\tau\le \calE(\mu_0).
		\end{align}
		\item  
Continuity equation: for $\nu$-a.e. $y\in Y$, $\pa_tu+\divv j=0,$ $u|_{t=0}=u_0$ in $\mathcal{D}'_{t,x}$.
More precisely, for every
$\phi\in C^\infty_c([0,T)\times X)$,
\begin{align}\label{eq:CE.diagE}
	\int_0^T\int_X
	\big(u\pa_t\phi	+j\cdot\nabla_x\phi\big)\,\dd x\dd t
	+\int_Xu_0(\cdot,y)\phi_{|t=0}\,\dd x=0\qquad\text{for }\nu\text{-a.e.\ }y\in Y.
\end{align}
\end{enumerate}
\end{definition}

\paragraph{Dissipative solutions on $X=\mathbb{R}^d$}
On the whole space, we complement the energy-dissipation property
in Definition~\ref{def:e-diss.property} with an entropy-dissipation inequality, which encodes improved spatial
regularity and provides additional control of the densities at infinity.

\begin{definition}[Dissipative solution]\label{def:diss_sol} 
Let $X=\mathbb{R}^d$. 
We call a quadruple $(u,j,E,H)$ a {\em dissipative} solution to \eqref{eq:PDEsystem.intro} with initial data $\mu_0=u_0\scrL^d{\otimes}\nu\in \calA_\nu$
	if the triple $(u,j,E)$ is energy-dissipating in the sense of Definition~\ref{def:e-diss.property} and if the following additional properties are satisfied:
		\begin{enumerate}[resume]
		\item Entropy bound:  $H\in BV_{\loc}([0,\infty))$ with
			$H(t)\ge \mathcal{H}(u(t))$ for a.e.\ $t>0$ and  
			$\lim_{t\downarrow0}H(t)=\mathcal{H}(u_0)$.
 \item Defective flux control:  for every $T>0$ and every 
$\xi\in L^\infty\bigl((0,T)\times Y;C_b^2(X;\mathbb R^d)\bigr),$ it holds that
\begin{align}\label{eq:mv-flux.control}
	\bigg|\int_0^T\langle j-j_u,\xi\rangle\dd t\bigg|
	&\lesssim\norm{\xi}_{L^\infty((0,T)\times Y;C_b^2(X))}\int_0^T	\bigl(E_\dft(t)+H_\dft(t)\bigr)\,\dd t .
\end{align}
where $E_\dft(t)\coloneqq E(t)-\calE(u(t))$ and $H_\dft(t)\coloneqq H(t)-\mathcal{H}(u(t))$.
\item Entropy-dissipation inequality: $u\in L^2([0,T]\times Y;H^2(X))$ for all $T>0$,  and for a.e.\ $t>0$ 
\begin{align}\label{eq:entr.diss}
&H(t)+\int_0^t\mathcal{D}_\mathcal{H}(u)\,\dd s\le \calH(\mu_0) -\int_0^t\int_X \sff_\ccv''(U)|\nabla U|^2\dd x\dd s - \int_0^t\int_{X}\int_Y \msB\nabla_x u\cdot\nabla_x u\,\dd\nu\dd x\dd s,\\\nonumber
&\text{where}\quad
	\mathcal{D}_\mathcal{H}(u)=\int_X\int_Y\big(\alpha(y)|\Delta_x u|^2+{\sf f}_\cvx''(U)|\nabla_x U|^2\big)\,\dd\nu\dd x\ge0,\quad {\sf f}_\ccv''\equiv-\vartheta.
\end{align}
	\end{enumerate}
\end{definition}
Due to the fibred setting and associated lack of strong compactness with respect to $y$, we cannot generally identify the flux $j$ obtained in the construction with the constitutive flux $j_u$. Estimate~\eqref{eq:mv-flux.control} is a substitute that controls the flux defect by the energy and entropy defect.
The latter is needed here due to the unbounded space domain, where the energy alone does not generally control densities at infinity. An important motivation for the above concept is that the fibred JKO scheme with respect to the energy $\mathcal{E}$ generates dissipative solutions, which is the content of the next theorem. Furthermore, as we will see below in  Theorem~\ref{thm:wk-str.uniq}, this concept enjoys a weak--strong uniqueness principle, meaning that whenever a strong solution exists, every dissipative solution with the same initial data coincides with it.

\begin{theorem}[Dissipative solutions generated by fibred JKO scheme]\label{thm:existence} 
Let $X=\mathbb{R}^d$. If $\vartheta>0$ suppose that $d\leq 3$.
For every $\mu_0=u_0\scrL^d{\otimes}\nu\in \calA_\nu$ there exists a dissipative solution $(u,j,E,H)$ to \eqref{eq:PDEsystem.intro} with the following additional properties:
 letting $u_\tau, \wt u_\tau$ denote piecewise constant and De Giorgi variational interpolants obtained along the minimising movement scheme for $\calE$ in the fibred Wasserstein space with initial value $u_0$, there exists a subsequence $\tau\downarrow0$ such that
 for all $T<\infty$
 \begin{align}\label{eq:conv.interpolants.weak}
 	&u_\tau, \wt u_\tau\rightharpoonup u\quad\text{in }L^2([0,T]\times Y;H^1(X)).
 \end{align}
 Furthermore, we have for a.e. $t>0$
 \begin{align*}
 E(t)=\lim_{\tau\downarrow0}\calE(u_\tau(t))=\lim_{\tau\downarrow0}\calE(\wt u_\tau(t)), \qquad H(t)=\lim_{\tau\downarrow0}\mathcal{H}(u_\tau(t))=\lim_{\tau\downarrow0}\mathcal{H}(\wt u_\tau(t)),
 \end{align*}
and the weak convergence
\begin{align}\label{eq:jtau.weakL1.thm}
	j_\tau,\wt\jmath_\tau\rightharpoonup j 	\quad\text{in }L^1([0,T]\times X\times Y).
\end{align}
If $\nu$ has finite support, one obtains in addition $E(t)=\calE(\mu(t))$ and $H(t)=\calH(\mu(t))$ for a.e.\ $t>0$, $j=j_u$,
 and the convergence~\eqref{eq:conv.interpolants.weak} holds in the strong sense.   If $\supp\nu$ is a singleton, the same result holds in every space dimension $d\in \mathbb{N}_+$ also for $\vartheta>0$, where the potential term of the flux is determined distributionally by the right-hand side of~\eqref{eq:flux.pot.def.scalar}.
\end{theorem}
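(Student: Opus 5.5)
We follow the classical minimising-movement strategy, adapted to the fibred setting. Fix $\tau>0$ and define recursively $\mu_\tau^n\in\operatorname{argmin}\{\frac1{2\tau}\calW_{2,\nu}^2(\cdot,\mu_\tau^{n-1})+\calE\}$. Existence of minimisers comes from lower semicontinuity and coercivity of $\calE$ on sublevel sets: the Dirichlet part controls $u$ in $L^2(Y;H^1)$, and $\sff\ge0$, $\beta\ge0$. We then set $u_\tau$ to be the piecewise constant interpolant and $\wt u_\tau$ the De Giorgi interpolant. The standard De Giorgi estimate gives the discrete energy-dissipation inequality
\[
\calE(u_\tau(t))+\tfrac12\int_0^t|\mu_\tau'|^2+\tfrac12\int_0^t|\partial\calE|^2(\wt u_\tau)\le\calE(\mu_0).
\]
Using the fibred optimal maps from \cite{peszek2023heterogeneous}, we write the Euler--Lagrange equation for the minimisers by inner variations $x\mapsto x+\ve\xi(x,y)$. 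This produces discrete fluxes $j_\tau=u_\tau(T-\id)/\tau$ and $\wt\jmath_\tau$ that satisfy $j_\tau=j_{u_\tau}$ in the distributional sense~\eqref{eq:flux.distrib}, together with discrete continuity equations up to $O(\tau)$ errors. Next we apply flow interchange with the fibrewise heat flow, i.e.\ the gradient flow of $\calH$. This gives the discrete entropy inequality~\eqref{eq:entr.diss}, hence uniform $L^2_tL^2_YH^2_x$ bounds on $u_\tau$. The concave contributions $\sff_\ccv''$ and $\msB$ are controlled by the energy bound. For $\vartheta>0$, the term $\sff_\cvx''(U)|\nabla U|^2$ is handled via Gagliardo--Nirenberg, which is where $d\le3$ enters.

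Compactness then goes as follows. The $L^2_YH^1_x$ energy bound plus the $\frac12$-Hölder estimate in $\calW_{2,\nu}$ give weak convergence~\eqref{eq:conv.interpolants.weak} for both interpolants. The energy bound also gives $\int|j_\tau|^2/u_\tau\le C$, which yields equi-integrability and weak $L^1$ limits of the fluxes via the joint convexity of $(j,u)\mapsto|j|^2/u$. Helly's theorem applied to the monotone functions $t\mapsto\calE(u_\tau(t))$ and to the BV functions $\calH(u_\tau(t))$ produces $E$ and $H$. Lower semicontinuity then gives the inequalities $E\ge\calE(u)$, $H\ge\calH(u)$ and the energy-dissipation inequality~\eqref{eq:EDI.diagE}. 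The continuity equation~\eqref{eq:CE.diagE} for $\nu$-a.e.\ $y$ passes to the limit by testing with $\phi(t,x)\psi(y)$ and then varying $\psi$.

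The key step, and what we expect to be the main obstacle, is the variational comparison of $u_\tau$ and $\wt u_\tau$ in the spirit of Lemma~\ref{lem:energy-hierarchy}. Write $\calE$ as a uniformly convex part (the Dirichlet energy, $\sff_\cvx$, and a fraction of $L^2$) minus a part that is $L^2$-Lipschitz and quadratic. Compare the minimality of $\wt u_\tau(t)$ against $u_\tau(t)$ and against convex combinations, and use $\calW_{2,\nu}(u_\tau,\wt u_\tau)^2\lesssim\tau\,(\calE(\mu^{n-1})-\calE(\mu^n))$. The goal is
\[
\int_0^T\|u_\tau-\wt u_\tau\|_{L^2_YH^1}^2\to0
\qquad\text{and}\qquad
\calE(\wt u_\tau)-\calE(u_\tau)\to0
\]
for a.e.\ $t$, i.e.~\eqref{eq:strong-conv.difference}. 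A difficulty is that the $\calW_2$-closeness must be upgraded to $L^2$-closeness; here we would interpolate $\dot H^{-1}$-type control of the difference (Appendix on negative Sobolev spaces) against the $H^1$ bound, and also use the entropy bounds to control tails. The same argument applied to $\calH$ gives the common entropy limit. It follows that $j_\tau$ and $\wt\jmath_\tau$ share the limit $j$.

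For the defect control~\eqref{eq:mv-flux.control}, pass to the limit in the representation~\eqref{eq:flux.distrib} of $\langle j_\tau,\xi\rangle$. The terms quadratic in $\nabla_x u_\tau$, as well as $u_\tau\nabla u_\tau$ and the $\sff'$ and $\msB$ terms, converge to their counterparts for $u$ up to defect measures. Every such defect is bounded by the energy defect $\lim\calE(u_\tau)-\calE(u)$, using the strict convexity of the quadratic parts. At infinity, the entropy defect, together with the spatial compactness of $u_\tau$ from the $H^2$ bound and the Aubin--Lions lemma in $x$, is needed to control the lower-order terms $u\nabla u$ and $U^3$. If $\nu$ has finite support, this $x$-compactness is compactness of the full density, so the convergence is strong, both defects vanish and $j=j_u$. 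In the singleton case, the flux potential term can be rewritten via~\eqref{eq:flux.pot.def.scalar} without derivatives of $U$, so the Gagliardo--Nirenberg restriction $d\le3$ is no longer needed.
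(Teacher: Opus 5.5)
Your overall architecture (JKO scheme, Euler--Lagrange equation by inner variations, flow interchange for the $H^2$ bound, Helly, comparison of the two interpolants) matches the paper. However, the flux-defect step, which is the heart of the notion of dissipative solution, has a genuine gap.

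\textbf{The flux-defect step.} In the fibred setting $z_\tau:=u_\tau-u$ does \emph{not} tend to zero in $L^2((0,T)\times X\times Y)$, because there is no compactness in $y$. Neither the $L^2_{t,y}H^2_x$ bound nor an Aubin--Lions argument ``in $x$'' changes this: unless $\supp\nu$ is finite, $L^2(Y;H^2)$ is not compactly embedded in $L^2(Y;H^1)$, even locally in $x$. Spatial compactness only yields strong convergence of the averages $\langle u_\tau,\phi\rangle_Y$, which you already get from narrow convergence; if it gave strong convergence of $u_\tau$ itself, all defects would vanish for every $\nu$.

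Consequently, the term $\int\alpha\, z_\tau\nabla z_\tau\cdot\nabla\divv\xi$ coming from $u\nabla u$ leaves a defect of size $\|z_\tau\|_{L^2}\|\nabla z_\tau\|_{L^2}$. On $\mathbb{R}^d$ there is no Poincar\'e inequality, so this is \emph{not} bounded by the energy defect, contrary to your claim that every defect is. On bounded domains the fibrewise Poincar\'e--Wirtinger inequality is exactly what makes $E_\dft$ alone suffice.

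The missing idea is how $H_\dft$ actually enters; it does not supply spatial compactness. By Gagliardo--Nirenberg,
$\int_0^T\|z_\tau\|_{L^2}^2\lesssim\big(\int_0^T\|\nabla z_\tau\|_{L^2}^2\big)^{\frac{d}{d+2}}\big(\int_0^T\int_Y\|z_\tau(\cdot,y)\|_{L^1}^2\,\dd\nu\big)^{\frac{2}{d+2}}$.
The fibrewise $L^1$ distance is controlled, via fibrewise Csisz\'ar--Kullback--Pinsker, by the Jensen--Shannon divergence $\calH_{\mathrm{mid}}(u_\tau,u)$. Its $\limsup$ is at most $\frac12\int H_\dft$, because $\frac12(u_\tau+u)\rightharpoonup u$ and $\calH$ is lower semicontinuous. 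Combine this with $\limsup\int\calE_\rel(u_\tau|u)\le\int E_\dft$ and the coercivity $\calE_\rel(u_\tau|u)+\lambda\|z_\tau\|_{L^2}^2\gtrsim\|z_\tau\|_{L^2(Y;H^1)}^2+\vartheta\|Z_\tau\|_{L^4}^4$. This bounds the quadratic part and, via $H^1\hookrightarrow L^6$, the potential part. That step, together with needing $\sff'(U)\in L^2$ in the Euler--Lagrange equation, is where $d\le3$ enters. It does not enter in the flow interchange, where $\sff_{\cvx}''(U)|\nabla U|^2\ge0$ simply sits on the dissipative side.

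\textbf{The comparison step.} As you describe it, this step is mis-specified. Testing the minimality of $\wt u_\tau(t)$ (step $h=t-(k-1)\tau$) against a convex combination with $u_\tau$ leaves the error $\frac1{4h}\big(\calW_{2,\nu}^2(\mu_\tau^k,\mu_\tau^{k-1})-\calW_{2,\nu}^2(\wt\mu_\tau(t),\mu_\tau^{k-1})\big)$. This behaves like $\calW_{2,\nu}^2(\mu_\tau^k,\mu_\tau^{k-1})/(4h)$ and is not integrable in $h$ near $0$.

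One must instead test the minimality of the full step $\mu_\tau^k$ against the midpoint $\frac12(\mu_\tau^k+\wt\mu_\tau(t))$. This uses affine convexity of $\calW_{2,\nu}^2(\cdot,\mu_\tau^{k-1})$ and the distance ordering $\calW_{2,\nu}(\wt\mu_\tau(t),\mu_\tau^{k-1})\le\calW_{2,\nu}(\mu_\tau^k,\mu_\tau^{k-1})$, obtained by adding the two mutual optimality inequalities. The resulting energy ordering $\calE(\mu_\tau^k)\le\calE(\wt\mu_\tau(t))\le\calE(\mu_\tau^{k-1})$ telescopes to $0\le\int_0^t(\calE(\wt\mu_\tau)-\calE(\mu_\tau))\le\tau\calE(\mu_0)$. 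That telescoped bound is what turns the uniform convexity of $\calE_1$ into $\nabla(u_\tau-\wt u_\tau)\to0$.

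\textbf{The entropy limit.} ``The same argument applied to $\calH$'' is not available: the scheme does not minimise $\calH$, so the variational comparison says nothing about $\calH(\mu_\tau)-\calH(\wt\mu_\tau)$. The common entropy limit follows instead from $u_\tau-\wt u_\tau\to0$, together with $|h(s)-h(r)|\lesssim_\theta|s-r|^\theta+|s-r|(s+r)$ and the second-moment bounds.

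Your upgrade of $\calW_{2,\nu}$-closeness to $L^2$-closeness is workable if you use $\calW_1\le\calW_2$ and a dual-Lipschitz or $H^{-m}$ norm rather than $\dot H^{-1}$, which $\calW_2$ controls only under $L^\infty$ bounds. The paper avoids this by using that the $Y$-averages of both interpolants converge strongly to the same limit.
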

 A key point in the proof of Theorem~\ref{thm:existence}, given in Section~\ref{sec:diagE}, is the use of the coercivity properties of the energy and the convexity (in the affine sense) of the gradient part $\mathcal{E}_1$. Exploiting these properties in the variational scheme, we obtain a common limiting energy for the different interpolants $u_\tau$ and $\wt u_\tau$---without requiring strong compactness.
In the case of a single species, Theorem~\ref{thm:existence} contains the main assertions of~\cite[Section~3]{KL_2022}.

We next formulate the weak--strong uniqueness principle mentioned above.
\begin{theorem}[Weak--strong uniqueness of dissipative solutions]\label{thm:wk-str.uniq}
	Let $d\ge1$, $X=\mathbb R^d$, and suppose that $v\in L^\infty((0,T)\times Y;H^3(X))$ 
	is a positive weak solution to~\eqref{eq:PDEsystem.intro} with initial data $v(0)=u_0$, satisfying  
	\begin{align}\label{eq:strongsol.reg.diss}
		\nabla\log v,\ \nabla w_v	\in L^\infty\big((0,T)\times Y;	C_b^2(X;\mathbb R^d)\big),
	\end{align}
	where $w_v:=-\alpha\Delta v+\kappa\sff'(V)+\mathsf Bv$ and  $V:=\int_Y\kappa v\,\dd\nu$.
	
	Then any dissipative solution $(u,j,E,H)$ with initial data $\mu_0=u_0\scrL^d\otimes\nu\in\calA_\nu$ coincides with $v$, i.e.
	\[u=v,\qquad j=j_v\quad\text{a.e. on }(0,T)\times X\times Y,\]
	and \[E(t)=\calE(v(t)),\qquad H(t)=\mathcal H(v(t))	\quad\text{for a.e. }t\in(0,T).	\]
\end{theorem}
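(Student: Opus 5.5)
We argue by a relative energy–entropy method, adapted to the defect formulation. For the strong solution $v$ we introduce the relative functional
\[
\mathcal{R}(t):=\big(E(t)-\calE(v(t))-\langle D\calE(v(t)),u(t)-v(t)\rangle\big)+\delta\big(H(t)-\mathcal{H}(v(t))-\langle D\mathcal{H}(v(t)),u(t)-v(t)\rangle\big),
\]
with a small parameter $\delta>0$ to be fixed. Here $D\calE(v)=w_v$ and $D\mathcal H(v)=\log v+1$. We split $\mathcal R=\mathcal R_{\rm rel}+E_\dft+\delta H_\dft$, where $\mathcal R_{\rm rel}$ is the same expression with $E,H$ replaced by $\calE(u),\mathcal H(u)$. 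The first step is coercivity. The quadratic parts give $\frac12\int\alpha|\nabla(u-v)|^2$, and \eqref{eq:pot-rel.coerc} yields $\frac\vartheta{12}\|U-V\|_{L^4}^4-\frac\vartheta8\|U-V\|_{L^2}^2$. The repulsion part is bounded by $\|\beta\|_\infty\|u-v\|_{L^2}^2$, and the relative entropy controls $\int (\sqrt u-\sqrt v)^2$. The negative $L^2$ contributions are absorbed by combining the relative entropy with the gradient term: Gagliardo–Nirenberg interpolation on $u-v$ against the entropy and $\nabla$-control gives $\|u-v\|_{L^2}^2\lesssim \epsilon\|\nabla(u-v)\|^2_{L^2}+C_\epsilon(\text{rel.\ entropy}+\ldots)$, with constants depending on $\|v\|$. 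The outcome is $\mathcal R(t)\gtrsim \|u-v\|^2_{L^2(Y;H^1)}+E_\dft+\delta H_\dft-C\cdot(\text{something controlled by }\mathcal R)$. Positivity of $v$ and the bounds on $\nabla\log v$ keep $\log v$ usable as a test function.

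Next we derive the evolution inequality. From \eqref{eq:EDI.diagE} and \eqref{eq:entr.diss} we get upper bounds for $E(t)$ and $H(t)$. The strong solution satisfies the energy and entropy equalities. The cross terms $\langle w_v,u-v\rangle$ and $\langle\log v,u-v\rangle$ are evaluated with the continuity equation \eqref{eq:CE.diagE}, tested with $w_v$ and $\log v$; this is legitimate by \eqref{eq:strongsol.reg.diss} after a cutoff at infinity. The resulting flux terms $\int j\cdot\nabla w_v$ are rewritten as $\int j_u\cdot\nabla w_v$ plus an error. By the defective flux control \eqref{eq:mv-flux.control} with $\xi=\nabla w_v$ (respectively $\nabla\log v$), this error is bounded by $C\int_0^t(E_\dft+H_\dft)$. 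We then complete squares: $-\int |j|^2/u+2\int j\cdot\nabla w_v-\int u|\nabla w_v|^2=-\int |j-u\nabla w_v|^2/u\le0$ up to sign conventions. What remains is $\int (j_u-j_v)\cdot\nabla w_v$-type terms in which $u-v$ appears. Using the stress-tensor form \eqref{eq:flux.distrib} of $j_u$ and the identity for $j_v$, these become integrals of $\nabla^2 w_v$ against the relative quantities: $\frac12|\nabla(u-v)|^2$, $\nabla(u-v)\otimes\nabla(u-v)$, $(u-v)\nabla(u-v)$, and the potential remainders from \eqref{eq:pot.exp}. Each is bounded by $\|\nabla w_v\|_{C_b^2}\mathcal R$. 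The cubic and quadratic potential remainders $(U-V)^2,(U-V)^3$ are handled via the $L^4$ term of $\mathcal R$ together with the boundedness of $V$.

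The main obstacle is the term $\int(u-v)\nabla(u-v)\cdot\nabla\divv\xi$ and the $L^2$-level terms. The energy gives only gradients, so undifferentiated $u-v$ on $\mathbb R^d$ must be controlled by the relative entropy. This is why the entropy part with weight $\delta$ is included, and the analogous $\mathcal{H}$-dissipation terms, $-\vartheta|\nabla U|^2$ and $\mathsf B\nabla u\cdot\nabla u$ from \eqref{eq:entr.diss}, have to be estimated relatively. We would first try the bound $\int|u-v|^2\lesssim$ (relative entropy)$\cdot\|v\|_\infty+\int|u-v|^2\mathbf 1_{u>2v}$, treating the last piece by interpolation with the $H^1$ control. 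This step is expected to be the most delicate, and $\delta$ must be chosen so that the resulting bound closes.

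Collecting terms gives $\mathcal R(t)\le \mathcal R(0^+)+C\int_0^t\mathcal R$, and $\mathcal R(0^+)=0$ because $E(0^+)=\calE(\mu_0)$, $H(0^+)=\mathcal H(\mu_0)$ and $v(0)=u_0$. Gronwall then gives $\mathcal R\equiv0$ a.e. Coercivity yields $u=v$, $E=\calE(v)$ and $H=\mathcal H(v)$. Vanishing of the completed square $\int|j-u\nabla w_v|^2/u$ gives $j=j_v$.
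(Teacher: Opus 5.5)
Your architecture is the paper's: relative energy plus a multiple of the relative entropy, the continuity equation tested with $w_v$ and $\log v$ after a cut-off, $\langle j-j_u,\xi\rangle$ for $\xi\in\{\nabla w_v,\nabla\log v\}$ controlled by \eqref{eq:mv-flux.control}, the square completed, and Gronwall. Two steps, however, fail as you state them.

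\textbf{The entropy weight must be large, not small.} The relative energy contains $-(\frac{\vartheta}{8}+\frac{\beta^*}{2})\|z\|^2_{L^2(X\times Y)}$, where $z=u-v$. On $\mathbb R^d$ this can only be absorbed, fibrewise via Gagliardo--Nirenberg and Csisz\'ar--Kullback--Pinsker, through
\[
\|z\|_{L^2(X\times Y)}^2\le\varepsilon\|\nabla z\|_{L^2(X\times Y)}^2+C_\varepsilon\int_Y\|z(\cdot,y)\|_{L^1(X)}^2\,\dd\nu(y)\le\varepsilon\|\nabla z\|_{L^2(X\times Y)}^2+C_\varepsilon H_\rel .
\]
Here $\varepsilon$ must be small compared with $\alpha_*$, and then the weight on $H_\rel$ must exceed $C_\varepsilon(\frac\vartheta8+\frac{\beta^*}2)$. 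With $\delta$ small, your functional need not be coercive or even non-negative, so Gronwall gives nothing. Nothing else in the argument needs smallness.

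\textbf{The potential remainders are not all bounded by $C\mathcal R$.} Apart from the harmless term $\int_X\partial_tV\,(\sff'(U)-\sff'(V)-\sff''(V)Z)\,\dd x$, which you do handle, the double-well part leaves (in one form or another) a term of the type
\[
-\int_X\Big(\int_Y z\nabla w_v\,\dd\nu\Big)\cdot\big(\sff''(U)\nabla U-\sff''(V)\nabla V\big)\,\dd x .
\]
Its leading part is of size $\int_X U^2q|\nabla Z|\,\dd x$ with $q=\int_Y|z|\,\dd\nu$, and the same term appears on the entropy side with $\nabla\log v$ in place of $\nabla w_v$.

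Because $\nabla w_v$ depends on $y$, this is not a total derivative in $Z$. Your functional only controls $\|z\|^2_{L^2(Y;H^1(X))}$ and $\vartheta\|Z\|_{L^4}^4$, so the term cannot be bounded by $C\mathcal R$ in general dimension. Sobolev embedding plus the uniform energy bound rescues $d\le3$, but the theorem is stated for every $d\ge1$.

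The missing idea is to keep the relative entropy dissipation on the left-hand side and use its \emph{convex} potential part,
\[
\lambda\int_X\sff''_\cvx(U)|\nabla Z|^2\,\dd x\ge\tfrac32\lambda\vartheta\,\mathcal K,\qquad \mathcal K:=\int_XU^2|\nabla Z|^2\,\dd x .
\]
After Young's inequality, $\mathcal K$ is absorbed by this term. One combines this with the pointwise bound $(1+U^2)q^2\lesssim_v\int_Y|z|^2\,\dd\nu+|Z|^4$, obtained by splitting into $\{U\le 2\|V\|_\infty+1\}$ and its complement, where $Z>U/2$.

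Your sketch only treats the non-convex parts of \eqref{eq:entr.diss} as something to be estimated relatively and never exploits the convex part. So the closing inequality $\mathcal R(t)\le\mathcal R(0)+C\int_0^t\mathcal R$ does not follow from what you wrote.

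Once the dissipation is retained, deducing $j=j_v$ from the vanishing of $\int u|j/u+\nabla w_v|^2$ is a valid alternative to the paper's route, which inserts $E_\dft=H_\dft=0$ into \eqref{eq:mv-flux.control}.
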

The proof combines energy- and entropy-dissipation inequalities by means of a stability estimate similar to~\cite{HJ_2025,HJT_2024}. It is postponed to Section~\ref{ssec:wsu.diag.diss}.

For the specific models considered here, due to the diagonal structure of the gradient energy, we will see in Theorem~\ref{thm:wk-str.uniq.e-diss} below that, on  smooth bounded domains, weak--strong uniqueness can even be achieved for a notion of energy-dissipating solution. 
Let us mention that, in principle, reliance of a weak--strong stability estimate on an entropy-dissipation inequality can also be avoided on the whole space, but comes at the price of a less natural flux-defect control, involving the defect of some negative Sobolev norm.

\paragraph{Energy-dissipating solutions on non-convex domains}
\label{ss:nonconvex}
On bounded domains, a slope coercivity property adapted from~\cite{GHL_preprint}, cf.~Proposition~\ref{prop:coercivity.slope}, allows us to avoid reliance on the entropy dissipation inequality for improved spatial regularity. As a consequence, we obtain convergence of the fibred JKO scheme even in non-convex domains. We are not aware of such a result for degenerate fourth-order systems in the literature.

Let $X=\ol\Om$, where $\Om\subset\mathbb{R}^d$ is a bounded smooth domain, possibly non-convex, with intrinsic length distance $d_X$. The resulting metric space
$(X,d_X)$ is geodesic, and the associated quadratic Wasserstein
distance is defined by
\begin{align}\label{eq:defW2.intrinsic}
\mathcal{W}_{2,X}^2(\mu_0,\mu_1)=\inf_{\pi\in\Gamma(\mu_0,\mu_1)}\int_{X\times X}d_X^2(x,x')\,\dd\pi(x,x').
\end{align}

\begin{definition}[Energy-dissipating solution]\label{def:e-diss.sol}
An energy-dissipating triple $(u,j,E)$ in the sense of Definition~\ref{def:e-diss.property} is called an	\emph{energy-dissipating solution} to~\eqref{eq:PDEsystem.intro} if
\begin{align}\label{eq:def.e-diss.flux}
	\big|\langle j(t,\cdot)-j_{u(t)},\xi\rangle\big|&\lesssim\norm{\xi}_{L^\infty(Y;C^2(\ol\Om))}E_\dft(t)
\end{align}
for a.e.\ $t>0$ and all $\xi\in L^\infty\big(Y;C_{\mathrm{tan}}^2(\ol\Omega;\mathbb R^d)\big)$.
	Here, $\langle j_{u(t)},\xi\rangle$ is defined according to~\eqref{eq:flux.distrib}, while $\langle j,\xi\rangle=\int_Y\int_Xj\cdot\xi\,\dd x\dd\nu(y)$. 
\end{definition}

Our main constructive existence result for bounded domains reads as follows.
\begin{theorem}[Energy-dissipating solutions generated by fibred OT with intrinsic cost]\label{thm:ex.Ediss.nonconvex}
For every $\mu_0\in \mathcal{A}_\nu$, the fibred JKO scheme with intrinsic cost generates 
an energy-dissipating solution $(u,j,E)$ in the sense of Definition~\ref{def:e-diss.sol}, which additionally enjoys for all $T<\infty$ the regularity
\begin{subequations}\label{eq:ediss-sol.reg}
\begin{align}\label{eq:wsu.e.regularity}
	u\in  L^2((0,T){\times} Y;H^2(\Omega)),\qquad \vartheta U^2\in L^2(0,T;H^1(\Omega)),
\end{align}
and satisfies the homogeneous Neumann boundary condition
\begin{align}\label{eq:homNeumann.bc}
	\partial_nu=0\quad\text{for a.e. }t>0,x\in\pa\Om\text{ and $\nu$-a.e.\ }y\in Y.
\end{align}
\end{subequations}
If $\supp\nu\subset Y$ is finite, the scheme subsequentially converges to an energy-dissipating weak solution consisting of an energy-dissipating triple $(u,j_u,E)$ with vanishing energy defect, i.e., $E(t)=\mathcal{E}(\mu_t)$ for a.e.\ $t>0$ and the identified flux $j_u$.
\end{theorem}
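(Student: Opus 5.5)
The construction follows the minimising movement route of Theorem~\ref{thm:existence}, with the flow-interchange step replaced by the slope-coercivity estimate, Proposition~\ref{prop:coercivity.slope}.

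\emph{Scheme and a priori bounds.} Fix $\tau>0$ and let $\mu^0_\tau=\mu_0$. Define iteratively $\mu^{n}_\tau\in\operatorname{argmin}\{\frac1{2\tau}\mathcal W_{2,X,\nu}^2(\cdot,\mu^{n-1}_\tau)+\calE\}$. Existence of minimisers follows from weak lower semicontinuity of $\calE$ on sublevel sets together with lower semicontinuity of the fibred intrinsic distance; here $X$ is compact, so tightness is automatic. Let $u_\tau,\wt u_\tau$ be the piecewise constant and De Giorgi interpolants. The standard De Giorgi estimate gives the discrete energy-dissipation inequality
\[\calE(\wt u_\tau(t))+\tfrac12\int_0^t|\mu'|^2+\tfrac12\int_0^t|\partial\calE|^2(\wt u_\tau)\le\calE(\mu_0).\]
The metric slope is then bounded from below via Proposition~\ref{prop:coercivity.slope}. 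This yields $L^2_tL^2_YH^2$ bounds for $\wt u_\tau$, and also for $u_\tau$ since $|\partial\calE|(\mu^n_\tau)\le \mathcal W/\tau$. It also yields $\vartheta U^2\in L^2_tH^1$. To use the slope in this way, I first identify the slope on the non-convex domain. I use the first-variation formula for the squared fibred intrinsic distance (Subsection~\ref{ss:DiffIntrinsicWasserstein}) together with measurable selections of Kantorovich potentials and optimal dynamic plans. This gives a discrete Euler--Lagrange equation: for tangential $\xi\in L^\infty(Y;C^2_{\TAN})$,
\[\langle j_{u^n_\tau},\xi\rangle=\frac1\tau\int_Y\int\langle \text{initial velocity of the optimal geodesic},\xi\rangle\,\dd\pi^y\,\dd\nu.\]
The geodesic velocities define a discrete flux $j_\tau$ that satisfies the continuity equation up to an $O(\tau)$ error, because geodesics may travel along $\partial\Om$. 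I also expect the boundary condition $\partial_n u=0$ to follow from the Euler--Lagrange equation tested with tangential fields. The admissible class for such fields is $H^2$ functions with vanishing trace of $\partial_nu$ on $\partial\Omega$.

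\emph{Comparison of interpolants and passage to the limit.} As in Lemma~\ref{lem:energy-hierarchy}, I compare $u_\tau$ and $\wt u_\tau$ variationally. I use the minimality of $\wt u_\tau(t)$ tested against $u_\tau(t)$, and conversely. Combined with affine convexity of $\calE_1$ and of $\sff_\cvx$, and with the concave and repulsion parts being quadratic (hence controlled), this gives $\calE(u_\tau)-\calE(\wt u_\tau)\to0$ for a.e.\ $t$ and $u_\tau-\wt u_\tau\to0$ strongly in $L^2_tL^2_YH^1$. Here the Poincar\'e--Wirtinger inequality on bounded $\Om$ replaces the entropy used on $\R^d$. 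Monotonicity of $t\mapsto \calE(u_\tau(t))$ and Helly's theorem give a common limit $E(t)$. Weak compactness in $L^2_tL^2_YH^2$, together with $\int|j_\tau|^2/u_\tau$ bounds and Aubin--Lions in $x$ alone, give $u_\tau,\wt u_\tau\rightharpoonup u$ and $j_\tau,\wt\jmath_\tau\rightharpoonup j$ in $L^1$. Lower semicontinuity of the Benamou--Brenier functional then yields \eqref{eq:EDI.diagE}, and the continuity equation \eqref{eq:CE.diagE} passes to the limit.

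\emph{Flux defect, the main obstacle.} I pass to the limit in the tensorial Euler--Lagrange formula \eqref{eq:flux.distrib}. The quadratic terms $\alpha|\nabla_xu|^2$ and $\nabla_xu\otimes\nabla_xu$ converge only as measures up to a defect. Likewise $u\nabla_xu\cdot\nabla\divv\xi=\frac12\nabla(u^2)\cdot\nabla\divv\xi$ and the potential terms lose strong compactness in $y$. Each defect is bounded by $\|\xi\|_{C^2}$ times the weak limit of $\int\alpha|\nabla(u_\tau-u)|^2+\vartheta|U_\tau-U|^4+|u_\tau-u|^2$. For the last term I use Poincar\'e--Wirtinger on each fibre, since fibre masses are conserved. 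This quantity is in turn controlled by $E_\dft$, via the relative-energy coercivity \eqref{eq:pot-rel.coerc} and boundedness of $\msB$. The delicate point is that \eqref{eq:def.e-diss.flux} is required pointwise in $t$, whereas the scheme gives only time-integrated control. I would test with $\xi\,\psi(t)$ for arbitrary $\psi\ge0$ and localise via Lebesgue points; this is where the time-integrated bounds must be upgraded. A further difficulty is the negative term $-\frac\vartheta8|Z|^2$ in \eqref{eq:pot-rel.coerc}, which I absorb by $L^2$-compactness in $x$ of $U_\tau$. That compactness is available because $U_\tau$ is bounded in $L^2_tH^1$ and $\partial_tU_\tau$ is controlled in negative Sobolev norms, so $U_\tau\to U$ strongly by Aubin--Lions.

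\emph{Finite support.} If $\supp\nu$ is finite, the $y$-integral is a finite sum. Aubin--Lions then applies componentwise, giving strong $L^2_tH^1$ convergence, so $E_\dft=0$ and $j=j_u$.
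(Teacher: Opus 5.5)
Your overall architecture matches the paper: slope coercivity for $H^2$ bounds, variational comparison of the interpolants, Dirichlet defect controlled by $E_\dft$ via fibrewise Poincar\'e, and localisation in time. However, your treatment of the double-well part of the flux, $\int_\Omega\int_Y\kappa\,\sff'(U_\tau)\divv(u_\tau\xi)\,\dd\nu\dd x$, has a genuine gap.

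You propose to bound its defect, like the Dirichlet one, by $\|\xi\|_{C^2}$ times the limit of $\int(\alpha|\nabla z_\tau|^2+\vartheta|Z_\tau|^4+|z_\tau|^2)$. That amounts to repeating the $\calA_\tau/\mathrm{err}_\tau$ splitting of Section~\ref{sec:diagE}, which works for $d\le3$ but not in general. Two things go wrong. First, the quadratic part needs $L^6$-type integrability of $U$ and $U_\tau$; this is exactly where $d\le3$ enters Theorem~\ref{thm:existence}. Second, the terms linear in $z_\tau$, e.g.\ $\int\kappa\,\sff'(U)\divv(z_\tau\xi)$, cannot be estimated by a quadratic quantity at all. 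They must be shown to vanish by weak convergence, and that requires integrability of $\sff'(U)$ which $U\in L^\infty_tL^4_x$, $U^2\in L^2_tH^1_x$, $u\in L^2_tL^2_YH^2_x$ do not provide in high dimension. Theorem~\ref{thm:ex.Ediss.nonconvex}, however, has no dimensional restriction. The same problem affects your unargued claim that $j_\tau$ and $\wt\jmath_\tau$ have a common weak limit, since their Euler--Lagrange formulas differ precisely by such lower-order terms.

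The missing idea (Lemma~\ref{l:flux-passage.lower}) is that this contribution carries no defect at all. Since $\xi$ is tangential and $\int_Y\kappa\divv(u_\tau\xi)\,\dd\nu=\divv M_\tau$ with the $Y$-average $M_\tau=\int_Y\kappa u_\tau\xi\,\dd\nu$, you can integrate by parts to $-\int M_\tau\cdot\nabla\sff'(U_\tau)$ and write $\nabla\sff'(U)=\frac{3\vartheta}{2}(U-1)\nabla(U^2)+\frac\vartheta2\nabla U$. Slope coercivity bounds $U_\tau^2$ in $L^\infty_tL^2_x\cap L^2_tH^1_x$, hence $U_\tau$ in $L^{4+8/d}_{t,x}$ by space--time Gagliardo--Nirenberg. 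Therefore $(U_\tau-1)M_\tau$ converges strongly in $L^2_{t,x}$ while $\nabla(U_\tau^2)$ converges weakly, in every dimension. With the nonlocal part treated via finite-rank kernels, only the Dirichlet defect remains, and that part you control correctly.

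Two further steps need repair.

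\emph{Neumann condition.} The Euler--Lagrange equation with tangential fields does not yield $\partial_nu=0$. Concentrating $\xi$ in a boundary layer only isolates the term $\int_{\partial\Omega}\alpha\,u\,\partial_nu\,g$, so at best you get $u\,\partial_nu=0$, which says nothing where $u$ vanishes on $\partial\Omega$. The Neumann condition is instead part of the conclusion of Proposition~\ref{prop:coercivity.slope}: finite slope forces $\partial_nu_\tau=\partial_n\wt u_\tau=0$. It then passes to the limit by weak continuity of the normal trace on $L^2_{t,y}H^1$.

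\emph{Continuity equation up to the boundary.} To get it directly from the initial-velocity flux with test functions not vanishing on $\partial\Omega$, you need $|\phi(\gamma_1)-\phi(\gamma_0)-\nabla\phi(\gamma_0)\cdot\dot\gamma(0+)|\lesssim d_X(\gamma_0,\gamma_1)^2$. This must hold also for short geodesics bending along $\partial\Omega$, which requires curvature bounds for shortest paths that you do not supply. The paper avoids this by using the geodesic interpolation $(\rho_\tau,m_\tau)$, which satisfies the continuity equation exactly, including the no-flux condition. It then compares $m_\tau$ with $j_\tau$ only against fields supported in some $K\Subset\Omega$, where short geodesics are segments (Lemma~\ref{l:err.EL.dyn}).
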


The following weak--strong uniqueness result ensures that every energy-dissipating solution with the 
additional regularity~\eqref{eq:ediss-sol.reg} coincides with any sufficiently regular, strictly positive solution with same initial data. Note that this additional regularity is obtained in the construction via the fibred JKO scheme.

\begin{theorem}[Weak--strong uniqueness of energy-dissipating solutions]\label{thm:wk-str.uniq.e-diss}
Suppose there exists a jointly measurable, strictly positive solution $v\in L^2((0,T)\times Y;H^2(\Om))$ of \eqref{eq:PDEsystem.intro}, with $v(0,\cdot)=u_0$ %
satisfying %
\begin{align*}
	&v(\cdot,\cdot,y)\in C^1([0,T];C^2(\overline\Omega))\cap C([0,T];C^5(\overline\Omega))\qquad\text{for $\nu$-a.e.\ $y$},\\
	&v,\partial_tv,w_v,\partial_tw_v\in L^\infty((0,T)\times\Omega\times Y),\\
	&\nabla w_v\in L^\infty((0,T)\times Y;W^{2,\infty}(\Omega;\mathbb R^d)),\qquad \partial_nv=\partial_nw_v=0\quad\text{for } x\in\partial\Omega.
\end{align*}
 If ${\sf f}\not\equiv0$, further suppose that $d\le5$.
Then, any energy-dissipating solution $(u,j,E)$ with initial data $\mu_0=u_0\scrL^d\otimes\nu\in \calA_\nu$ satisfying~\eqref{eq:ediss-sol.reg} 
must coincide with $v$ on $[0,T]$, i.e.,
 $u=v$ holds $\scrL^{1+d}\otimes\nu$-a.e.\ in $[0,T]\times \Om\times Y$, $j=j_v$ and $E(t)=\calE(v(t))$ for a.e.\ $t\in[0,T]$.
\end{theorem}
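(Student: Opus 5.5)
We prove Theorem~\ref{thm:wk-str.uniq.e-diss} by a relative energy (modulated energy) argument. The relative energy is
\[
\calE(u|v):=E(t)-\calE(v)-\int_\Om\int_Y w_v\,(u-v)\,\dd\nu\dd x,
\]
where the energy $\calE(u)$ is replaced by the upper bound $E(t)$, so that $\calE(u|v)=E_\dft(t)+\calE_{\rm rel}(u|v)$. Here $\calE_{\rm rel}(u|v)$ is the usual Bregman-type quantity. It consists of $\frac12\int\alpha|\nabla(u-v)|^2$, the repulsion part $\frac12\int(\msB(u-v),u-v)_{L^2(Y)}\ge0$ up to the sign of $\beta$ (which we only bound), and the potential part $\int \sff(U)-\sff(V)-\sff'(V)(U-V)$. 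By \eqref{eq:pot-rel.coerc}, the potential part is bounded below by $\frac\vartheta{12}|U-V|^4-\frac\vartheta8|U-V|^2$. Mass conservation in each fibre and Poincar\'e--Wirtinger (mean of $u-v$ vanishes for $\nu$-a.e.\ $y$, by \eqref{eq:CE.diagE}) give $\|u-v\|_{L^2}^2\lesssim\|\nabla(u-v)\|_{L^2}^2$. Hence $\calE(u|v)$ controls $E_\dft+\|u-v\|^2_{L^2(Y;H^1)}+\vartheta\|U-V\|_{L^4}^4$, modulo the bounded negative $L^2$ terms.

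The key computation is the time evolution. First, the energy-dissipation inequality \eqref{eq:EDI.diagE} bounds $E(t)$. Second, for $v$ we use the exact energy identity. Third, the linear term $\int w_v(u-v)$ is tested using \eqref{eq:CE.diagE} with test function $w_v$, which is admissible by the regularity of $v$ after a time-regularisation argument, together with $\pa_t w_v\in L^\infty$. We also add the relative dissipation
\[
\int\!\!\int\frac{|j-u\nabla w_v|^2}{u}\ge0
\]
and expand it. After cancellations one obtains
\[
\calE(u|v)(t)+\int_0^t\!\!\int\frac{|j-u\nabla w_v|^2}{u}\le \int_0^t\Big(\langle j-j_u,\nabla w_v\rangle+\langle j_u,\nabla w_v\rangle-\int u\,\nabla w_v\cdot\nabla w_v+\dots\Big).
\]
Here $\xi=\nabla w_v$ is an admissible tangential field, since $\pa_n w_v=0$ and $\nabla w_v\in L^\infty(Y;W^{2,\infty})\subset L^\infty(Y;C^2)$ after approximation. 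The flux defect term is bounded via \eqref{eq:def.e-diss.flux} by $C\,E_\dft\le C\calE(u|v)$. The remaining terms are rewritten, using the tensorial form \eqref{eq:flux.distrib} of $\langle j_u,\xi\rangle$ and the strong equation for $v$ (which expresses $\pa_t v$ through $j_v=-v\nabla w_v$), as a relative stress $\int\alpha\big[\frac12|\nabla(u-v)|^2\divv\xi+(\nabla(u-v)\otimes\nabla(u-v)):\nabla\xi\big]$. This is quadratic in $u-v$ and is bounded by $\|\xi\|_{C^1}\calE_{\rm rel}$. To this we add the term $\int\alpha (u-v)\nabla(u-v)\cdot\nabla\divv\xi$, controlled by $\|\xi\|_{C^2}\|u-v\|_{H^1}^2$, and lower-order remainders.

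The main obstacle is the nonlinear potential contribution in the fibred setting. Since Remark~\ref{rem:Singleton} fails, the term $\int\kappa\sff'(U)(\nabla u\cdot\xi+u\divv\xi)$ cannot be reduced to a derivative-free form. Its relative version produces cubic expressions of the type $\int (U-V)^2|\nabla(u-v)|$ and $(U-V)^3$, via \eqref{eq:pot.aux}--\eqref{eq:pot.exp}. To control them by $\calE(u|v)$ plus an absorbable fraction of the relative dissipation, we use the regularity \eqref{eq:ediss-sol.reg} ($u\in L^2_tH^2$, $\vartheta U^2\in L^2_tH^1$), which allows integration by parts. We then apply Gagliardo--Nirenberg interpolation between $\|U-V\|_{L^4}^4$, $\|u-v\|_{H^1}$ and the $H^2$ bound, with a time-integrable prefactor from \eqref{eq:wsu.e.regularity}. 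The Sobolev exponents close precisely when $d\le5$ (the $H^2\subset L^{2d/(d-4)}$ embedding must reach $L^{p}$ with $p$ large enough to pair with $L^4$), which is why this restriction appears for $\sff\not\equiv0$. The resulting Gronwall inequality
\[
\calE(u|v)(t)\le\int_0^t g(s)\,\calE(u|v)(s)\,\dd s, \qquad g\in L^1(0,T),
\]
together with $\calE(u|v)(0^+)=0$ (from $\lim_{t\downarrow0}E(t)=\calE(\mu_0)$ and $v(0)=u_0$), gives $\calE(u|v)\equiv0$. Hence $u=v$, $E=\calE(v)$, and the vanishing relative dissipation gives $j=u\nabla w_v\cdot(-1)=j_v$.
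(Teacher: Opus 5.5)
\textbf{There is a genuine gap in your coercivity step, and everything downstream depends on it.}

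Because $\calE_0$ is non-convex ($\sff''<0$ on the spinodal interval, and $\beta\ge0$ does not make $\mathsf B$ positive semidefinite), the Bregman term only satisfies
\[
\calE_{\rm rel}(u|v)\ge\frac{\alpha_*}{2}\|\nabla z\|_{L^2}^2+\frac{\vartheta}{12}\|Z\|_{L^4}^4-\Big(\frac{\vartheta}{8}+\frac{\beta^*}{2}\Big)\|z\|_{L^2}^2,\qquad z=u-v,\ Z=U-V.
\]
Poincar\'e--Wirtinger bounds $\|z\|_{L^2}^2$ \emph{from above} by $C_P^2\|\nabla z\|_{L^2}^2$. It therefore absorbs the negative term only under the smallness condition $(\frac{\vartheta}{4}+\beta^*)C_P^2<\alpha_*$, which is not assumed. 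The condition fails, for example, when $V$ lies in the spinodal region on a domain with small first Neumann eigenvalue; there $\calE_{\rm rel}(u|v)<0$ for suitable $z$.

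Writing ``modulo the bounded negative $L^2$ terms'' leaves exactly this term unaccounted for. Without coercivity, none of your bounds of the form $|\cdot|\lesssim\calE(u|v)$ is justified: not the flux defect via $E_{\dft}$, not the relative stress, not the potential remainders. Your final Gronwall inequality would then only give $\calE(u|v)\le0$, not $u=v$.

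\textbf{The missing idea is to augment the relative energy by a weak norm of $z$ whose evolution is itself controllable.} The paper uses $S_{\rel}=E_{\dft}+\calE_{\rm rel}(u|v)+\frac\lambda2\|z\|^2_{L^2(Y;\mathcal V^*)}$ with $\mathcal V=H^N(\Omega)$, $N>1+d/2$. Ehrling's inequality $\|z\|_{L^2(\Omega)}\le\varepsilon\|\nabla z\|_{L^2(\Omega)}+C_\varepsilon\|z\|_{\mathcal V^*}$, applied fibrewise, gives coercivity of $S_{\rel}$ for $\lambda$ large. With $R$ the Riesz map of $\mathcal V$, the two continuity equations give
\[
\frac12\frac{\dd}{\dd t}\|z\|^2_{L^2(Y;\mathcal V^*)}=\int_Y\int_\Omega(j+u\nabla w_v)\cdot\nabla(R^{-1}z)\,\dd x\dd\nu-\int_Y\int_\Omega z\nabla w_v\cdot\nabla(R^{-1}z)\,\dd x\dd\nu .
\]
The first term is bounded by $\delta\int u|j/u+\nabla w_v|^2+C_\delta\int_Y\|\nabla R^{-1}z\|_{L^\infty}^2\,\dd\nu$, using $\int_\Omega u\,\dd x=1$ and $\mathcal V\hookrightarrow W^{1,\infty}$. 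This is where the relative dissipation is genuinely absorbed. It also explains why a strongly negative norm, rather than $H^{-1}$, is needed: no $L^\infty$ bound on $u$ is available. After this, the remaining error terms are bounded by $(1+\|u\|^2_{L^2(Y;H^2)})S_{\rel}$, roughly as you sketch.

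\textbf{Two smaller points.}
\begin{itemize}
\item The relative dissipation must be $\int|j+u\nabla w_v|^2/u$, since $j_v=-v\nabla w_v$, not $\int|j-u\nabla w_v|^2/u$.
\item The restriction $d\le5$ enters concretely through $\|\sff'(U)-\sff'(V)\|_{L^2}\lesssim(1+\|U\|_{L^{10}}^2)\|Z\|_{L^{10/3}}$, via $H^2\hookrightarrow L^{10}$ and $H^1\hookrightarrow L^{10/3}$. This is used after integrating the potential remainder by parts so it is paired with $\divv(z\nabla w_v)\in L^2$. It does not come from pairing with $L^4$.
\end{itemize}
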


The following corollary is a consequence of the constructive existence theorems combined with the specific form of the weak--strong uniqueness results, which ensure vanishing energy and entropy defects on the existence interval of any strong solution with the same initial data.

\begin{corollary}[Strong convergence] Suppose there is a strong solution $u$ to \eqref{eq:PDEsystem.intro}  on the time interval $[0,T]$ as in Theorem~\ref{thm:wk-str.uniq} or Theorem~\ref{thm:wk-str.uniq.e-diss}. Then, under the assumptions of the respective existence result (cf.\ Theorem~\ref{thm:existence} or~\ref{thm:ex.Ediss.nonconvex})  the minimising movement scheme emanating from $u_0$ converges strongly to the limit $u$ in the sense that for both piecewise constant and De Giorgi variational interpolants $u_\tau, \wt u_\tau$ one has
	$u_\tau, \wt u_\tau\to u$ in $L^2([0,T]\times Y;H^1(X))$.
\end{corollary}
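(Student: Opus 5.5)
The corollary follows by combining the existence theorems with the weak--strong uniqueness theorems and upgrading weak to strong convergence through the vanishing of the defects. Fix $T$ and a strong solution $v$ as in Theorem~\ref{thm:wk-str.uniq} (case $X=\mathbb R^d$) or Theorem~\ref{thm:wk-str.uniq.e-diss} (case $X=\ol\Om$). Take an arbitrary sequence $\tau\downarrow0$. By Theorem~\ref{thm:existence}, respectively Theorem~\ref{thm:ex.Ediss.nonconvex}, along a further subsequence the interpolants $u_\tau,\wt u_\tau$ converge weakly in $L^2([0,T]\times Y;H^1(X))$ to a dissipative, respectively energy-dissipating, solution $(u,j,E,(H))$. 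This solution has initial datum $u_0$, and on bounded domains it has the regularity~\eqref{eq:ediss-sol.reg}. Weak--strong uniqueness then gives $u=v$ and $E(t)=\calE(v(t))$ for a.e.\ $t\in(0,T)$, and in the whole-space case also $H(t)=\calH(v(t))$. Since the limit $v$ does not depend on the subsequence, the whole sequence converges weakly. It therefore suffices to prove strong convergence along subsequences.

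The key step is to convert convergence of energies into strong $H^1$ convergence. The existence theorems give $\calE(u_\tau(t))\to E(t)=\calE(v(t))$ for a.e.\ $t$, and the same for $\wt u_\tau$. I would split $\calE=\calE_1+\calE_0$ and use weak lower semicontinuity of each piece, namely Fatou for $\calF$ and convexity for $\calR$ (with $\beta\ge0$ the quadratic form $\int(\msB u,u)$ is at least weakly lsc if the kernel operator is positive semidefinite). If it is not, I would instead use the convex--concave decomposition in the energy hierarchy (Lemma~\ref{lem:energy-hierarchy}): there the lower-order concave part $-\frac\vartheta2\int U^2$ and the possibly non-convex part of $\calR$ are controlled by strong $L^2$ convergence. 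That strong $L^2$ convergence I would obtain from the entropy bound and flow interchange on $\mathbb R^d$, and from the $L^2_tH^2_x$ slope bound together with the Aubin--Lions lemma in $x$ on $\ol\Om$.

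Strong convergence in $y$ is the real issue. It should come from the energy identity, not from compactness. Once the energy converges, $\limsup\calE_1(u_\tau(t))\le\calE_1(v(t))$ holds for a.e.\ $t$, since every other piece is lsc. Because $\calE_1$ is a weighted Hilbert norm squared, with weight $\alpha\ge\alpha_*$, weak convergence together with convergence of norms gives strong convergence of $\nabla_x u_\tau(t)$ in $L^2(X\times Y)$. The difficulty is that weak convergence holds only in space-time. So I would argue in integrated form: $\int_0^T\calE_1(u_\tau)\,\dd t\to\int_0^T\calE_1(v)\,\dd t$ by dominated convergence, using the uniform energy bound $\calE(u_\tau)\le\calE(\mu_0)+C$. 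Combined with the lsc of the remaining integrated terms, this yields norm convergence in $L^2(0,T;L^2(Y;\dot H^1))$. For the $L^2$ part of the norm, I expect the main obstacle to be the lack of compactness in $y$. I would handle it with the same mechanism applied to the quadratic terms: the fibrewise $L^2$ norm is controlled through the vanishing entropy defect on $\mathbb R^d$ (strict convexity of $s\log s$ turns convergence of entropies plus weak convergence into strong $L^1$ convergence, and interpolation with the uniform $H^1$ bound upgrades this to $L^2$), and through Poincar\'e--Wirtinger with mass conservation per fibre on $\ol\Om$.

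Finally, $\wt u_\tau$ inherits the result from $u_\tau$ via~\eqref{eq:strong-conv.difference}, or its bounded-domain analogue obtained by the same variational comparison. Alternatively, the argument above can be repeated for $\wt u_\tau$, since its energies converge to the same $E$.
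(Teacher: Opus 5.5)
Your proposal is correct and is essentially the paper's argument: weak--strong uniqueness identifies the limit and forces $E_{\dft}=H_{\dft}=0$, and your ``weak convergence plus convergence of norms'' step is exactly the paper's defect mechanism. On $\mathbb R^d$ this is $\limsup_\tau\int_0^T\mathcal E_{\rel}(u_\tau|u)\,\dd t\le\int_0^TE_{\dft}\,\dd t$ and $\limsup_\tau\int_0^T\mathcal H_{\mathrm{mid}}(u_\tau,u)\,\dd t\le\frac12\int_0^TH_{\dft}\,\dd t$, combined with \eqref{eq:relEnergyCoerc} and Csisz\'ar--Kullback--Pinsker; on $\ol\Om$ it is \eqref{eq:gradient.defect.nc} with fibrewise Poincar\'e--Wirtinger; and $\wt u_\tau$ is handled by the difference estimate.

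One justification needs correcting. The lower semicontinuity, indeed continuity, of $\calR$ along the scheme does not follow from $\beta\ge0$, nor from strong convergence obtained by $x$-compactness alone. It follows from the compactness of the Hilbert--Schmidt operator $\msB$, via the finite-rank approximation of Lemma~\ref{l:ApproxRepul}, together with the strong convergence of type averages (Lemma~\ref{l:GMM_convergence_strong}). The paper already establishes this, so you can simply cite the convergence of $\calE_0$.
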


\section{Dissipative solutions by minimising movements in 
\texorpdfstring{$\boldsymbol{\mathcal{W}_{2,\nu,\mathbb{R}^d}}$}{fibred Wasserstein metric}}
\label{sec:diagE}
Here, we present in detail the construction of dissipative solutions (Theorem~\ref{thm:existence}) in the case $X=\R^d$ via the minimising movement scheme (MMS) in
the fibred setting for the case of the functional $\calE =\calE_1+\calE_0$ on $\calP_{2,\nu}(X{\times} Y)$ as defined in Section~\ref{sec:Prelim} and the fibred Wasserstein $\calW_{2,\nu}$.

\subsection{Properties of the fibred Moreau--Yosida approximation}
We start with the usual definitions for the time-incremental JKO scheme.
\begin{definition}\label{def:MoreauYosida}
    Given $\mu \in\calP_{2,\nu}(X{\times} Y)$ and $h>0$, 
    we define the Moreau--Yosida approximation
    \begin{align}
        \calE_h(\mu) &\coloneqq \inf_{\wh\mu\in\calP_{2,\nu}(X{\times}Y)} \bra[\bigg]{\frac{\calW_{2,\nu}(\wh\mu,\mu)^2}{2h} + \calE(\wh\mu)},\label{eq:MoreauYosida}
    \intertext{and the set of its minimisers}
        \sfJ_h(\mu) &\coloneqq \argmin_{\wh\mu \in \calP_{2,\nu}(X{\times}Y)}\bra[\bigg]{\frac{\calW_{2,\nu}(\wh\mu,\mu)^2}{2h} + \calE(\wh\mu)}\subset \calP_{2,\nu}(X{\times} Y).
    \end{align}
\end{definition}

The following result gives direct a priori estimates for minimisers in $\sfJ_h(\mu)$.
\begin{lemma}\label{lem:bounds}
    Let $\sfF \subset \calP_{2,\nu}(X{\times} Y)$ be a bounded subset of probability measures 
     with uniformly bounded energy $\calE$, i.e., there exists $\mu \in\calP_{2,\nu}(X{\times}Y)\cap\mathrm{dom}(\calE)$ such that
    \begin{align*}
        \sup_{\wh\mu \in\sfF} \calW_{2,\nu}(\wh\mu,\mu)<\infty
        \qquad\text{and}\qquad
        \sup_{\wh\mu \in\sfF}\calE(\wh\mu) <\infty.
    \end{align*} 
    Then, for all $\wh\mu\in\sfF$ we have $\wh\mu \ll \scrL^d{\otimes}\nu$ and denoting by $\wh u$ the density, it holds
    \begin{equation}
    \begin{gathered}
    \label{eq:incrementalApriori}
        \sup_{\wh\mu\in\sfF} m_{2,X}(\wh\mu)<\infty,\quad \text{where}\quad m_{2,X}(\wh\mu)\coloneqq \int_X\int_Y \abs{x}^2 \wh u(x,y)\,\dd \nu(y)\dd x 
        \quad\text{and}\\
        \sup_{\wh\mu \in\sfF}\Bigg\{\int_X\int_Y \abs{\nablax\wh u}^2\,\dd\nu\dd x +\vartheta\int_X |\wh U|^4\dd x\Bigg\}<\infty,\quad \text{where}\quad\wh U(x) = \int_Y \kappa(y)\wh u(x,y)\dd \nu.
        \end{gathered}
    \end{equation}
    Here, $\vartheta\geq0$ is the coefficient in the definition of the double-well potential $\msf$ (cf.~\eqref{eq:DoubleWellSplit}).
    In particular, for any $\phi_1\in \rmL^\infty(Y;C^1(X))$, $\phi_2\in \rmL^\infty(Y;C(X))$ we have
    \begin{align*}
        \sup_{\wh\mu\in\sfF}\int_X \abs{\nablax\skp{\wh\mu,\phi_1}_Y}^2\,\dd x <\infty
        \qquad\text{and}\qquad 
        \sup_{\wh\mu\in\sfF}\int_X \abs{\skp{\wh\mu,\phi_2}_Y}^p\,\dd x  <\infty,
    \end{align*}
    for all $p\in[1,2^\ast)$ and for $p = 2^\ast$ if $d\ne 2$.
\end{lemma}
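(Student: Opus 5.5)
Absolute continuity is immediate: if $\calE(\wh\mu)<\infty$, then by the definition of $\calE$ (set to $+\infty$ otherwise) we have $\wh\mu=\wh u\,\scrL^d\otimes\nu$. For the second moment, fix $\bar x=0$ and use the triangle inequality for $\calW_{2,\nu}$ together with $m_{2,X}(\wh\mu)=\calW_{2,\nu}(\wh\mu,\delta_0\otimes\nu)^2$. Here $\delta_0\otimes\nu$ denotes the fibred measure with all fibres equal to $\delta_0$, so $\calW_{2,X}(\wh\mu^y,\delta_0)^2=\int|x|^2\dd\wh\mu^y$. This gives
\[
m_{2,X}(\wh\mu)^{1/2}\le \calW_{2,\nu}(\wh\mu,\mu)+m_{2,X}(\mu)^{1/2}.
\]
Both terms on the right are bounded uniformly over $\sfF$ because $\mu\in\calP_{2,\nu}$.

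For the gradient and quartic bounds, I use that each part of $\calE$ is controlled. Since $\beta\ge0$ and $u\ge0$, we have $\calR\ge0$. Also $\sff\ge0$, so $\calF\ge0$. Hence $\calE_1(\wh\mu)\le\calE(\wh\mu)$, and with $\alpha\ge\alpha_*$ this yields the bound on $\int\int|\nabla_x\wh u|^2$. For the quartic term, I use $\sff(s)=\frac\vartheta4 s^2(1-s)^2\ge \frac\vartheta8 s^4-C\vartheta s^2$ for $s\ge0$, so
\[
\vartheta\int \wh U^4\le 8\calF(\wh\mu)+C\vartheta\int\wh U^2 .
\]
It remains to absorb $\int \wh U^2$. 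By Jensen, $\wh U^2\le\kappa^{*2}\int_Y\wh u^2\dd\nu$, so this reduces to an $L^2$ bound on $\wh u$. One option is the interpolation $\int \wh U^2\le \|\wh U\|_{L^1}^{2/3}\|\wh U\|_{L^4}^{4/3}$ with $\|\wh U\|_{L^1}\le\kappa^*$. Young's inequality then absorbs this into $\frac\vartheta{16}\int\wh U^4$. Alternatively, a Nash-type inequality gives $\|\wh u(\cdot,y)\|_{L^2}\lesssim\|\nabla\wh u(\cdot,y)\|_{L^2}^{\theta}$ using $\|\wh u(\cdot,y)\|_{L^1}=1$. The first option avoids dimension issues, so I would use it. This is the only slightly delicate step, and the interpolation settles it.

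For the final claims, I argue fibrewise and then in $x$. For $\phi_1\in L^\infty(Y;C^1(X))$, write
\[
\nabla_x\langle\wh u,\phi_1\rangle_Y=\int_Y(\phi_1\nabla_x\wh u+\wh u\nabla_x\phi_1)\dd\nu .
\]
If $C^1$ means bounded $C^1$ norm, Jensen in $y$ bounds the $L^2_x$ norm by $\|\phi_1\|\big(\|\nabla_x\wh u\|_{L^2}+\|\wh u\|_{L^2}\big)$. The $L^2$ norm of $\wh u$ is bounded via Nash's inequality applied fibrewise, using unit mass and the gradient bound, followed by integration in $\nu$ with Hölder; this works since the Nash exponent is at most $1$. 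Hence $\langle\wh u,\phi_1\rangle_Y$ is bounded in $H^1(X)$.

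Taking $\phi_1=\phi_2$ would require $C^1$ regularity, so instead I handle $\phi_2$ directly. We have $|\langle\wh u,\phi_2\rangle_Y|\le\|\phi_2\|_\infty\langle\wh u,1\rangle_Y$, and $\langle\wh u,1\rangle_Y$ is bounded in $H^1$ by the previous step. It is also bounded in $L^1$ with norm $1$. Sobolev embedding ($H^1\embeds L^{2^*}$ for $d\ge3$, all finite $p$ for $d=2$, $L^\infty$ for $d=1$), together with interpolation against $L^1$, gives the bound for all $p\in[1,2^*)$. It also covers $p=2^*$ when $d\neq2$, with $2^*=\infty$ for $d=1$, $2$.
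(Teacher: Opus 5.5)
Your proof is correct. The absolute continuity, the moment bound (triangle inequality against $\delta_0\otimes\nu$) and the gradient bound are argued exactly as in the paper. Beyond that, you supply details the paper leaves implicit:
\begin{itemize}
\item the quartic bound, via $\sff(s)\ge\frac\vartheta8 s^4-C\vartheta s^2$, the interpolation $\|\wh U\|_{L^2}^2\le\|\wh U\|_{L^1}^{2/3}\|\wh U\|_{L^4}^{4/3}$, and absorption. The absorption is legitimate because $\calF(\wh\mu)<\infty$ already gives $\wh U\in L^4$, since $\sff(s)\ge\frac{\vartheta}{16}s^4$ for $s\ge2$.
\item the lower-order term $\wh u\,\nabla_x\phi_1$, which the paper's appeal to Jensen passes over. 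You control it by the fibrewise Nash inequality with unit mass, which is correct.
\end{itemize}

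The one genuinely different step is the bound for $\phi_2\in L^\infty(Y;C(X))$.
\begin{itemize}
\item The paper first proves it for $x$-differentiable test functions via Gagliardo--Nirenberg. It then approximates $\phi_2$ in $L^2(Y;C(X))$, using $\|\langle \wh u,\psi\rangle_Y\|_{L^p(X)}\lesssim\|\wh u\|_{L^2(Y;L^p(X))}\|\psi\|_{L^2(Y;L^\infty(X))}$.
\item You instead use positivity, $|\langle\wh u,\phi_2\rangle_Y|\le\|\phi_2\|_{L^\infty}\langle\wh u,1\rangle_Y$, to reduce to the case $\phi_1\equiv1$.
\end{itemize}
Your route is shorter and sidesteps the density question entirely. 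That question needs some care on $X=\R^d$, because functions with bounded $C^1$ norm are not sup-norm dense in bounded continuous functions. In exchange, the paper's route gives a continuity estimate in a test-function norm that is only $L^2$ in $y$. That is the form reused in Lemma~\ref{l:GMM_convergence_strong} to pass from differentiable to merely continuous test functions.

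Incidentally, both of your options for bounding $\int\wh U^2$ work in every dimension. Nash's inequality holds for all $d$, as you use yourself in the $\phi_1$ step.
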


\begin{proof}
    The absolute continuity of $\wh \mu$ with respect to $\scrL^d{\otimes}\nu$ follows immediately from the boundedness of the energy.
    The moment control in \eqref{eq:incrementalApriori} is a direct consequence of the triangle inequality for $\calW_{2,\nu}$. 
    The derivative control in \eqref{eq:incrementalApriori} 
    follows from the uniform bound on $\calE_1$ 
    and the assumption \eqref{eq:assAlpha}. 
    Using Jensen's inequality, this also implies the regularity of $\skp{\wh\mu,\phi}_Y$ when $\phi$ is differentiable in $x$. 
    For such $\phi$ we then obtain higher integrability via the Gagliardo--Nirenberg inequality. 
    The higher integrability for $\phi\in \rmL^\infty(Y;C(X))$ is then obtained via an approximation by $x$-differentiable $\phi_n$.
    Indeed, this follows since $L^2(Y;C^1(X))\subset L^2(Y;C(X))$ densely and for $\psi_n=\phi_n-\phi\to 0$ in $L^2(Y;C(X))$ it holds
    \begin{align*}
        \norm{\skp{u,\psi_n}_Y}_{\rmL^p(X)} 
        &\lesssim \norm{u}_{\rmL^p(X;\rmL^2(Y))}\norm{\psi_n}_{\rmL^\infty(X;\rmL^2(Y))}
        \le \norm{u}_{\rmL^2(Y;\rmL^p(X))}\norm{\psi_n}_{\rmL^2(Y;\rmL^\infty(X))} 
        \overset{n\to 
        \infty}{\longrightarrow}0.
    \end{align*}
    This concludes the proof.
\end{proof}

Dealing with the nonlocal (and in general non-convex) repulsion contribution $\calR$ (cf.~\eqref{eq:defRelEnergy}) 
in the fibred setting is not straightforward. However, 
the following lemma shows that we can approximate the interaction kernel $\beta$ by tensor products
in a suitable way. Since $\nu$ is a finite measure, we have $\beta\in L^2(Y{\times} Y)$ (consequently, the integral operator $\sfB$ in \eqref{eq:defOpB} is Hilbert--Schmidt and hence compact).
Since finite rank kernels of the form $\sum_{l=1}^{N}a_l(y)b_l(y')$ for $a_l,b_l\in L^\infty(Y)$
are dense in $L^2(Y\times Y)$, we find
\begin{equation}
\label{eq:finiteRankApprox}
\beta_n(y,y')
=
\frac{1}{2}\sum_{l=1}^{N_n}\big(a_{l}^n(y)b_{l}^n(y')+ a_{l}^n(y')b_{l}^n(y)\big),
\qquad a_{n,l},b_{n,l}\in L^\infty(Y),
\end{equation}
such that $\beta_n\to \beta$ in $L^2(Y\times Y)$.
Moreover, we can choose the approximation such that non-negativity is preserved and
\begin{equation} \label{eq:boundBetaApprox}
	\|\beta_n\|_{L^\infty(Y\times Y)}\leq \beta^*.
\end{equation}
For example, we may choose an increasing sequence of finite measurable partitions 
$\{A_{n,1},\ldots,A_{n,N_n}\} $ of $Y$ and set
\[
c_{n,ij}
=
\dashint_{A_{n,i}\times A_{n,j}}
\beta(y,y')\,\dd\nu(y)\,\dd\nu(y'),\quad \text{and}\quad
\beta_n(y,y')
=
\sum_{i,j=1}^{N_n}
c_{n,ij}\,
\mathbf1_{A_{n,i}}(y)\mathbf1_{A_{n,j}}(y').
\]

We define the approximating operator and functional
\begin{align}
	\label{eq:defRepulsionApprox}
    \begin{split}
    &(\sfB^nu) (x,y)\coloneqq \int_Y\beta_n(y,y')u(x,y')\dd\nu(y'),\quad
	\calR^n(\mu)\coloneqq\frac12\int_X (\sfB^n u,u)_{L^2(Y)}\,\dd x,\\&
    \text{for }\mu = u\scrL^d\otimes \nu \text{ with }u\in L^2(X\times Y).
    \end{split}
\end{align}
\begin{lemma}
\label{l:ApproxRepul}
	Let the repulsion energy $\calR$ and its approximation $\calR^n$ be defined as in \eqref{eq:defEnergyRepulsion} and \eqref{eq:defRepulsionApprox}, respectively. Then, 
	there exists a constant $C>0$ independent of $n$ such that for all $\mu\in \calP_{2,\nu}(X{\times} Y)$ with density $u\in L^2(X\times Y)$ it holds
	\begin{align}
		\label{eq:estimateRepulsionDerivApprox}
		\|\mathsf{B}^n- \mathsf{B}\|_{\calL(L^2(Y))}&\leq \|\beta_n{-}\beta\|_{L^2(Y\times Y)},\\
		\label{eq:estimateRepulsionApprox}
		|\calR^n(\mu)-\calR(\mu)|
		&\leq
		\frac{1}{2}\|\beta_n{-}\beta\|_{L^2(Y\times Y)}
		\|u\|_{L^2(X\times Y)}^2.
    \end{align}
\end{lemma}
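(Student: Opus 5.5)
The plan is to treat both estimates as elementary consequences of the Hilbert--Schmidt bound for integral operators on $L^2(Y)=L^2(Y,\nu)$, applied pointwise in $x$.

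\emph{Operator bound.} For $g\in L^2(Y)$ and $\nu$-a.e.\ $y$, the Cauchy--Schwarz inequality in $y'$ gives
\[
|((\mathsf B^n-\mathsf B)g)(y)|^2=\Big|\int_Y(\beta_n-\beta)(y,y')g(y')\,\dd\nu(y')\Big|^2\le\int_Y|\beta_n-\beta|^2(y,y')\,\dd\nu(y')\,\|g\|_{L^2(Y)}^2 .
\]
Integrating in $y$ with respect to $\nu$ (Fubini applies since $\beta_n-\beta\in L^2(Y\times Y)$) yields $\|(\mathsf B^n-\mathsf B)g\|_{L^2(Y)}\le\|\beta_n-\beta\|_{L^2(Y\times Y)}\|g\|_{L^2(Y)}$. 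This is \eqref{eq:estimateRepulsionDerivApprox}, the operator norm being bounded by the Hilbert--Schmidt norm.

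\emph{Energy bound.} For $u\in L^2(X\times Y)=L^2(X;L^2(Y))$, Fubini gives $u(x,\cdot)\in L^2(Y)$ for a.e.\ $x$. Both $\mathcal R^n(\mu)$ and $\mathcal R(\mu)$ are finite, because $\beta,\beta_n$ are bounded and $\nu$ is a probability measure. Hence we may write
\[
\mathcal R^n(\mu)-\mathcal R(\mu)=\frac12\int_X\big((\mathsf B^n-\mathsf B)u(x,\cdot),u(x,\cdot)\big)_{L^2(Y)}\dd x .
\]
Cauchy--Schwarz in $L^2(Y)$, followed by the operator bound just proved, controls the integrand by $\|\beta_n-\beta\|_{L^2(Y\times Y)}\|u(x,\cdot)\|_{L^2(Y)}^2$. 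Integrating in $x$ gives \eqref{eq:estimateRepulsionApprox} with constant $\tfrac12$, which is independent of $n$.

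\emph{Main obstacle.} The only point needing care is measurability. One must check that $x\mapsto(\mathsf B^n-\mathsf B)u(x,\cdot)$ is measurable and that the double integrals may be rearranged. Both follow from Tonelli/Fubini, since $|\beta_n-\beta|\,|u(x,y)|\,|u(x,y')|$ is integrable on $X\times Y\times Y$; this integrability holds by the $L^\infty$ bounds on $\beta,\beta_n$ together with $u\in L^2(X\times Y)$ and $\nu(Y)=1$.
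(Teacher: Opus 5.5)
Your proposal is correct and follows the paper's proof: the paper also uses a pointwise Cauchy--Schwarz inequality in $y'$, integrated against $\nu$, to get the Hilbert--Schmidt bound on $\mathsf{B}^n-\mathsf{B}$, and then obtains the energy estimate by applying Cauchy--Schwarz in $L^2(Y)$ for a.e.\ $x$ and integrating, a step the paper only calls a simple consequence. Your Fubini/Tonelli justification, based on $\int_X\bigl(\int_Y|u|\,\dd\nu\bigr)^2\dd x\le\|u\|_{L^2(X\times Y)}^2$, is a correct and harmless addition.
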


\begin{proof}Let us set $K_n(y,y'):=\beta(y,y')-\beta_n(y,y')$ such that
$K_n \to 0$ in $L^2(Y\times Y)$. By the Cauchy--Schwarz inequality, we get for every $v\in L^2(Y)$
and a.e.\ $y\in Y$
\[
|((\sfB{-}\sfB_n)v)(y)|
\leq 
\|K_n(y,\cdot)\|_{L^2(Y)}\|v\|_{L^2(Y)}.
\]
Hence, after integrating the square of this estimate with respect to $\nu$,
we obtain $\|(\sfB^n-\sfB)v\|_{L^2(Y)}
\leq
\|K_n\|_{L^2(Y{\times} Y)}
\|v\|_{L^2(\nu)}$, from which \eqref{eq:estimateRepulsionDerivApprox} directly follows.
Now, \eqref{eq:estimateRepulsionApprox} is a simple consequence of \eqref{eq:estimateRepulsionDerivApprox}.
\end{proof}

The preceding bounds and kernel approximation allow us to apply
the direct method and establish the existence of minimisers for
the incremental problem.

\begin{proposition}\label{prop:MY_well-posedness}
    Let $h>0$ and $\mu \in\calP_{2,\nu}(X{\times}Y)\cap\mathrm{dom}(\calE)$. Then, $\sfJ_h(\mu)\ne \emptyset$, i.e., the incremental problem has minimisers.
\end{proposition}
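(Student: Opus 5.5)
We apply the direct method in $\calP_{2,\nu}(X{\times}Y)$ to $\Phi(\wh\mu):=\frac{1}{2h}\calW_{2,\nu}(\wh\mu,\mu)^2+\calE(\wh\mu)$. Since $\mu\in\mathrm{dom}(\calE)$ and $\calE\ge0$, the infimum $\calE_h(\mu)$ is finite. Choose a minimising sequence $(\wh\mu_k)$ with $\Phi(\wh\mu_k)\le\Phi(\mu)=\calE(\mu)$. Then $\calW_{2,\nu}(\wh\mu_k,\mu)^2\le 2h\calE(\mu)$ and $\calE(\wh\mu_k)\le\calE(\mu)$, so the family $\sfF=\{\wh\mu_k\}$ satisfies the hypotheses of Lemma~\ref{lem:bounds}. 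This gives densities $\wh u_k$ with uniformly bounded second moments, $\nabla_x\wh u_k$ bounded in $L^2(X{\times}Y)$, and $\vartheta\|\wh U_k\|_{L^4}^4$ bounded. By Gagliardo--Nirenberg on each fibre combined with mass one, $\wh u_k$ is bounded in $L^2(Y;H^1(X))$. Passing to a subsequence, we get $\wh u_k\rightharpoonup \wh u$ weakly in $L^2(Y;H^1(X))$. Uniform moment bounds give tightness in $x$, and the $Y$-marginal is fixed, so $\wh\mu_k\rightharpoonup\wh\mu:=\wh u\,\scrL^d{\otimes}\nu$ narrowly. The limit lies in $\calP_{2,\nu}$: mass is preserved by tightness and the moment bound passes to the limit by lower semicontinuity.

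Next we treat lower semicontinuity term by term. The transport term $\wh\mu\mapsto\calW_{2,\nu}(\wh\mu,\mu)^2$ is narrowly lower semicontinuous on $\calP_{2,\nu}$, as recalled in Appendix~\ref{ssec:Notation}, following \cite{peszek2023heterogeneous}. The gradient energy $\calE_1$ is convex and strongly continuous in $\nabla_x u$, hence weakly lower semicontinuous. For the double well, $\wh U_k=\langle\kappa,\wh u_k\rangle_Y$ is bounded in $H^1(X)$ by Lemma~\ref{lem:bounds}. Rellich on balls then gives $\wh U_k\to\wh U$ in $L^2_{\loc}$ and a.e. along a subsequence, and Fatou yields $\int\sff(\wh U)\le\liminf\int\sff(\wh U_k)$ because $\sff\ge0$. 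Identifying the limit $\wh U=\langle\kappa,\wh u\rangle_Y$ follows from the weak convergence of $\wh u_k$.

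The main obstacle is the repulsion term $\calR$. It is quadratic and not convex, and there is no compactness in $y$, so weak convergence of $\wh u_k$ alone does not suffice. We use Lemma~\ref{l:ApproxRepul} for this. For the finite-rank kernels $\beta_n$ of \eqref{eq:finiteRankApprox}, the functional $\calR^n(\wh\mu_k)$ reduces to finitely many integrals $\int_X\langle a_l^n,\wh u_k\rangle_Y\langle b_l^n,\wh u_k\rangle_Y\,\dd x$. Each factor $\langle a,\wh u_k\rangle_Y$ with $a\in L^\infty(Y)$ is bounded in $H^1(X)$ by Lemma~\ref{lem:bounds}, so it converges strongly in $L^2_{\loc}(X)$. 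To pass to strong $L^2(X)$ convergence, we control tails using the moment bound interpolated with the $L^{2^*}$ bound, which gives $\int_{|x|>R}|\langle a,\wh u_k\rangle_Y|^2\to0$ uniformly in $k$ as $R\to\infty$. Hence $\calR^n(\wh\mu_k)\to\calR^n(\wh\mu)$ as $k\to\infty$ for each fixed $n$. By \eqref{eq:estimateRepulsionApprox} and the uniform bound on $\|\wh u_k\|_{L^2(X\times Y)}$, we have $|\calR(\wh\mu_k)-\calR^n(\wh\mu_k)|\le\frac12\|\beta_n-\beta\|_{L^2}C$ uniformly in $k$, and the same bound holds at the limit. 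Letting $k\to\infty$ and then $n\to\infty$ gives $\calR(\wh\mu_k)\to\calR(\wh\mu)$, which is full continuity of this term.

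Summing the three contributions yields $\Phi(\wh\mu)\le\liminf_k\Phi(\wh\mu_k)=\calE_h(\mu)$, so $\wh\mu\in\sfJ_h(\mu)$. The only delicate points are the tightness and tail estimate on $X=\R^d$ and the repulsion argument; on bounded domains the tail estimate is unnecessary.
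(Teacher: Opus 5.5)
Your proposal is correct and follows essentially the same route as the paper's proof. Both use the direct method with narrow lower semicontinuity of $\calW_{2,\nu}$, weak lower semicontinuity of $\calE_1$, strong $L^2(X)$ convergence of the $Y$-averages $\langle \wh u_k,\phi\rangle_Y$, and continuity of $\calR$ via the finite-rank kernels of Lemma~\ref{l:ApproxRepul}; your treatment differs only in minor points. You spell out the tail control from the moment and $L^{2^*}$ bounds more carefully than the paper, and you obtain lower semicontinuity of $\calF$ from Fatou's lemma with $\sff\ge0$ and a.e.\ convergence, where the paper uses that $\sff$ is convex up to a quadratic term.
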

\begin{proof}
    We apply the direct method of the calculus of variations. 

    Since $\calE \ge 0$, we find a sequence $(\mu_k)_k\subset\calP_{2,\nu}(X{\times}Y)$, satisfying $\frac{1}{2h}\calW_{2,\nu}(\wh\mu_k,\mu)^2 + \calE(\wh\mu_k)\le C$ and approximating the infimum arbitrarily closely. 
    Since all $\wh\mu_k$ have mass 1, there exists $\wh\mu\in\calP_{2,\nu}(X{\times}Y)$ such that (along a subsequence) $\wh\mu^k\rightharpoonup^\ast\wh\mu$ weakly-$^\ast$ in $\calM_+(X{\times}Y)$. 
    The energy contribution $\calE_1$ and $\calW^2_{2,\nu}(\mu,\cdot)$ are lower semicontinuous along this sequence.

    Moreover, we can assume that the density $\wh u_k$ of $\wh \mu_k$ with respect to $\scrL^d\otimes \nu$
    also converges weakly in $L^2(Y;H^1(X))$. In particular, since $\wh \mu_k$  has uniformly bounded second moments, we can also assume that $\langle \wh u_k,\phi\rangle_Y $ converges strongly in $L^2(X)$ for any $\phi\in L^\infty(Y)$, in particular, $\wh U_k = \langle \wh u_k,\kappa \rangle_Y\to \wh U=\langle \wh u,\kappa \rangle_Y$ strongly in $L^2(X)$. 
    Since $\msf$ is convex up to a quadratic function, we obtain the lower semicontinuity of $\calF$.
    To prove that $\calR$ is lower semicontinuous, we approximate $\beta$ by tensor products $\beta_n$ as in \eqref{eq:finiteRankApprox}. For fixed $n\in\bbN$, the functional $\calR^n$ is continuous along the sequence $\wh \mu_k$. Indeed, we have that $\wh U^{l,n}_k:=\langle \wh u_k, a_l^n\rangle_Y\to \langle \wh u, a^n_l\rangle_Y=:\wh U^{l,n}$ strongly 
    in $L^2(X)$ (resp.\ $\bar U^{l,n}_k:=\langle \wh u_k, b^n_l\rangle_Y\to \langle \wh u, b^n_l\rangle_Y=:\bar U^{l,n}$ strongly 
    in $L^2(X)$) and hence
    \[
    \lim_{k\to \infty}\calR^n(\wh \mu_k) = \lim_{k\to\infty}\frac12 \sum_{l=1}^{N_n}\int_{X} \wh U^{l,n}_k(x)\bar U^{l,n}_k(x)\,\dd x = 
    \frac12\sum_{l=1}^{N_n}\int_{X} \wh U^{l,n}(x)\bar U^{l,n}(x)\,\dd x = \calR^n(\wh \mu).
    \]
    Finally, we choose $R>0$ such that $\|\wh u_k\|_{L^2(X\times Y)},\|\wh u\|_{L^2(X\times Y)}\leq R$, which exists due to the uniform energy bound and the Gagliardo--Nirenberg inequality. For $\eps>0$, we fix $n\in\bbN$  and $k_0\in\bbN$ such that $\|\beta_n{-}\beta\|_{L^2(Y\times Y)}R^2\leq 2\eps/3$ (cf. Lemma~\ref{l:ApproxRepul}) and $|\calR^n(\wh\mu_k)-\calR^n(\wh \mu)|\leq \eps/3$ for all $k\geq k_0$. 
    Then, it holds that
    \[
    |\calR(\wh \mu_k)-\calR(\wh \mu)| \leq |\calR(\wh\mu_k)-\calR^n(\wh\mu_k)| + |\calR^n(\wh\mu_k)-\calR^n(\wh\mu)| + |\calR^n(\wh\mu)-\calR(\wh\mu)|\leq \eps.
    \]
    This shows the continuity of the repulsion contribution and in turn the lower semicontinuity of the lower-order energy $\calE_0$.
\end{proof}

 We follow \cite[Lemma 3.7]{KL_2022} to establish the Euler--Lagrange equation by solely relying on $H^1$-regularity with respect to  the $x$-variable.
\begin{lemma}[Discrete flux]\label{lem:discrete_flux}
    Let $h>0$ and $\mu \in\calP_{2,\nu}(X{\times}Y)\cap\mathrm{dom}(\calE)$ and $\wh\mu \in \sfJ_h(\mu)$ with density $\wh u$. 
There exists a jointly measurable map $T:X\times Y\to X$ such that, for $\nu$-a.e.\ $y\in Y$,  $T(\cdot,y)$
   is the 2-Wasserstein optimal transport map from $\wh\mu^y$ to $\mu^y$.  Moreover, the flux defined by
	\begin{align*}
        \hat\jmath(x,y) \coloneqq \frac{1}{h}(x-T(x,y))\wh u(x,y)
    \end{align*}
    satisfies
    \begin{align}\label{eq:flux-bound_L1}
        \norm{\hat\jmath}_{{L}^1(X{\times}Y)} 
        \le \bra[\bigg]{\int_X\int_Y \frac{\abs{\hat\jmath(x,y)}^2}{\wh u(x,y)}\,\dd\nu(y)\dd x}^{1/2}
        = \frac{1}{h} \calW_{2,\nu}(\mu,\wh\mu).
    \end{align}
\end{lemma}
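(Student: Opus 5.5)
The plan is to build the map fibrewise and then fix measurability with a measurable selection argument; the flux bound is then a direct computation.

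\emph{Absolute continuity and existence of fibrewise maps.} Since $\wh\mu\in\sfJ_h(\mu)$ and $\mu\in\mathrm{dom}(\calE)$, comparing with $\wh\mu=\mu$ gives $\calE(\wh\mu)\le\calE(\mu)<\infty$. By Lemma~\ref{lem:bounds}, $\wh\mu=\wh u\,\scrL^d\otimes\nu$ with $\wh u\in L^2(Y;H^1(X))$. Hence for $\nu$-a.e.\ $y$ the fibre $\wh\mu^y=\wh u(\cdot,y)\scrL^d$ is absolutely continuous with finite second moment, and so is $\mu^y$. By Brenier's theorem there is, for such $y$, a unique (up to $\wh\mu^y$-null sets) optimal map $T^y=\nabla\varphi^y$ from $\wh\mu^y$ to $\mu^y$, with $\varphi^y$ convex.

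\emph{Joint measurability.} This is the main obstacle. I would invoke the fibred optimal transport theory of \cite{peszek2023heterogeneous}, collected in Appendix~\ref{ssec:Notation}. The optimal plans $\gamma^y$ depend measurably on $y$, since the optimal-plan correspondence has closed graph and is single-valued here, and a Borel measurable selection exists. The plan $\gamma:=\int_Y\gamma^y\,\dd\nu(y)$, viewed as a measure on $X\times X\times Y$, is then optimal for the fibred problem. I would disintegrate $\gamma$ with respect to $\wh\mu$ on $X\times Y$ and define the barycentric map
\[
T(x,y):=\int_X x'\,\dd\gamma_{x,y}(x').
\]
It is jointly Borel by the disintegration theorem. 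Because $\gamma^y=(\id,T^y)_\#\wh\mu^y$ for $\nu$-a.e.\ $y$, the kernels are Dirac masses, so $T(\cdot,y)=T^y$ holds $\wh\mu^y$-a.e.\ for $\nu$-a.e.\ $y$. This gives the required map. An alternative route would be a measurable selection of the Kantorovich potentials $\varphi^y$ followed by differentiation.

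\emph{Flux bound.} With $\hat\jmath=\frac1h(x-T)\wh u$ we have $|\hat\jmath|^2/\wh u=h^{-2}|x-T|^2\wh u$ on $\{\wh u>0\}$, and $\hat\jmath=0$ elsewhere. Integrating against $\dd\nu\,\dd x$ and using optimality of $T(\cdot,y)$ fibrewise gives
\[
\int_X\int_Y\frac{|\hat\jmath|^2}{\wh u}\,\dd\nu\,\dd x=h^{-2}\int_Y\calW_{2,X}^2(\wh\mu^y,\mu^y)\,\dd\nu(y)=h^{-2}\calW_{2,\nu}^2(\mu,\wh\mu),
\]
which is the equality in \eqref{eq:flux-bound_L1}. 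The $L^1$ inequality follows from Cauchy--Schwarz, writing $|\hat\jmath|=\frac{|\hat\jmath|}{\sqrt{\wh u}}\sqrt{\wh u}$ and using $\int_X\int_Y\wh u\,\dd\nu\,\dd x=1$.
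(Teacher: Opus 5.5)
Your proof is correct. The flux bound matches the paper's argument: fibrewise optimality of $T(\cdot,y)$ gives the identity, and Cauchy--Schwarz together with $\int_X\int_Y\wh u\,\dd\nu\dd x=1$ gives the $L^1$ inequality.

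The difference is in how you get joint measurability. The paper takes the fibrewise Brenier maps $T^y$ and invokes a ready-made theorem on the measurable dependence of optimal maps on a parameter \cite[Theorem~1.1]{fontbona2010measurability}, using only that $y\mapsto(\wh\mu^y,\mu^y)$ is Borel. You instead construct $T$ by hand. You take a Borel family of optimal plans (item (iii) of the theorem in Appendix~\ref{ssec:Notation}), disintegrate $\gamma=\int_Y\gamma^y\otimes\delta_y\,\dd\nu$ over $\wh\mu$, and take the barycentric projection. This is more self-contained and uses only tools already in the paper. It would even give a jointly measurable barycentric flux without Brenier; the identity in \eqref{eq:flux-bound_L1} would then weaken to an inequality by Jensen.

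The price is one step you state too quickly. You cannot read off ``the kernels are Dirac masses'' from $\gamma^y=(\id,T^y)_\#\wh\mu^y$ by uniqueness of disintegration on $X\times Y$, because joint measurability of $(x,y)\mapsto T^y(x)$ is exactly what you are proving. Argue in two stages instead:
\begin{enumerate}
\item Uniqueness of disintegration with respect to $\nu$ gives $\gamma^y=\int_X\delta_x\otimes\gamma_{x,y}\,\dd\wh\mu^y(x)$ for $\nu$-a.e.\ $y$.
\item For each such fixed $y$, where $T^y$ is Borel in $x$, uniqueness of disintegration over $\wh\mu^y$ gives $\gamma_{x,y}=\delta_{T^y(x)}$ for $\wh\mu^y$-a.e.\ $x$.
\end{enumerate}
You should also define $T$ explicitly on the $\wh\mu$-null Borel set where the barycentre is undefined.

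Your alternative route, a measurable selection of Kantorovich potentials followed by differentiation, is what the paper itself does later for the intrinsic cost (Lemma~\ref{l:Kant-pot.meas}).
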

\begin{proof}
Since $\wh\mu^y\ll\scrL^d$ for $\nu$-a.e.\ $y\in Y$, Brenier's theorem \cite[Theorem~2.12(ii)]{villani2003topics} guarantees the existence of a $\wh\mu^y$-a.e.\ unique optimal transport map $T^y:X\to X$ from $\wh\mu^y$ to $\mu^y$. 
 Since $y\mapsto(\wh\mu^y,\mu^y)$ is Borel, we may now simply invoke~\cite[Theorem~1.1]{fontbona2010measurability}, see also~\cite[Theorem~3.0.3]{Toneian_2019}, to infer the existence of a jointly measurable map $T:X\times Y\to X$ satisfying $T(\cdot,y)=T^y$ $\wh\mu^y$-a.e.\ for $\nu$-a.e.\ $y$.
  Finally, the inequality in \eqref{eq:flux-bound_L1} is a direct application of the Cauchy-Schwarz inequality. The identity in \eqref{eq:flux-bound_L1} follows from the optimality of $T$.
\end{proof}

The next lemma establishes monotonicity of the energies and transport distances with
respect to the time-step parameter, together with a quantitative estimate comparing
the minimisers. These estimates will yield strong convergence of the difference between
the two interpolants.
\begin{lemma}
\label{lem:energy-hierarchy}
    Let $h>0$, $\mu^0 \in\calP_{2,\nu}(X{\times}Y)\cap\mathrm{dom}(\calE)$. 
    For $\theta\in(0,1]$ let $\mu^\theta \in \sfJ_{\theta h}(\mu^0)$. 
    Then, the following inequalities hold for all $\theta\in(0,1)$ 
    \begin{gather}        
        \label{eq:energy.squeeze}
        \calE(\mu^1) \le \calE(\mu^\theta)\le \calE(\mu^0),\\
        \label{eq:distance.squeeze}
        \calW_{2,\nu}(\mu^\theta,\mu^0)\le \calW_{2,\nu}(\mu^1,\mu^0).
    \end{gather}
  Furthermore,
    there exists a fixed constant $c>0$ such that for all $\theta\in(0,1)$
 \begin{subequations}\label{eq:strongH1.Econtrol}
 	    \begin{align}
 		c\Big(\|\nabla u^\theta-\nabla u^1\|_{L^2(X{\times}Y)}^2 + \vartheta\|U^\theta{-}U^1\|^4_{L^4(X)}\Big)&\le \mathcal{E}(\mu^\theta)-\mathcal{E}(\mu^1) + \calB_0(\mu^\theta-\mu^1), 
 		\label{eq:strongH1.Econtrol.th1}
 		\\	c\big(\|\nabla u^0-\nabla u^\theta\|_{L^2(X{\times}Y)}^2+ \vartheta\|U^0{-}U^\theta\|^4_{L^4(X)}\Big)&\le \mathcal{E}(\mu^0)-\mathcal{E}(\mu^\theta)+ \calB_0(\mu^\theta-\mu^0),
 		\label{eq:strongH1.Econtrol.0th}
 	\end{align}
 \end{subequations}
 where $u^\theta, u^i$, $i=0,1$, denote the densities of the measures $\mu^\theta,\mu^i$ 
 with respect to $\scrL^d\otimes\nu$, $\nabla\coloneqq \nabla_x$ 
 is the weak derivative in the space $X$, and 
 \begin{equation}
 \label{eq:errorB0Energy}
 \calB_0(\omega) \coloneqq \tilde c\vartheta
 \|\Sigma\|_{L^2(X)}^2 -\frac{1}{2}\calR(\omega),\quad\omega = \sigma\scrL^d\otimes\nu\in\calM(X\times Y)
  \end{equation}
  for some constant $\tilde c> 0$ and $\Sigma(x) = \int_Y \kappa(y) \sigma(x,y)\dd\nu(y)$, respectively.
\end{lemma}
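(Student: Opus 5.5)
The monotonicity statements \eqref{eq:energy.squeeze}--\eqref{eq:distance.squeeze} follow from the standard comparison argument for Moreau--Yosida minimisers. Write $W_\theta:=\calW_{2,\nu}(\mu^\theta,\mu^0)$ and $E_\theta:=\calE(\mu^\theta)$. Testing the minimality of $\mu^\theta$ against $\mu^1$, and of $\mu^1$ against $\mu^\theta$, gives
\[
\frac{W_\theta^2}{2\theta h}+E_\theta\le \frac{W_1^2}{2\theta h}+E_1,\qquad \frac{W_1^2}{2h}+E_1\le \frac{W_\theta^2}{2h}+E_\theta .
\]
Adding the two inequalities yields $(W_\theta^2-W_1^2)\big(\frac1{2\theta h}-\frac1{2h}\big)\le0$. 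Since $\theta<1$, this is \eqref{eq:distance.squeeze}. Substituting back into the second inequality gives $E_1\le E_\theta$. Finally, $E_\theta\le \calE(\mu^0)$ follows by testing the minimality of $\mu^\theta$ with $\mu^0$.

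For \eqref{eq:strongH1.Econtrol} we exploit the affine convexity of the energy. The scheme is to compare $\mu^\theta$ with the \emph{affine} midpoint, or more generally the linear interpolant $\mu_s:=(1-s)\mu^\theta+s\mu^1$. This interpolant is again fibred over $\nu$, has finite second moment, and has density $u_s=(1-s)u^\theta+su^1$. We use two inputs.

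First, $\mu\mapsto\calW_{2,\nu}^2(\mu,\mu^0)$ is convex along affine interpolation. This holds because on each fibre $\calW_{2,X}^2(\cdot,\mu^{0,y})$ is linearly convex: mix the optimal plans. Integrating over $\nu$ preserves convexity.

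Second, $\calE$ is uniformly convex along affine interpolation up to the error $\calB_0$. More precisely,
\[
\calE(\mu_s)\le(1-s)\calE(\mu^\theta)+s\calE(\mu^1)-s(1-s)\Big[\,c\|\nabla(u^\theta-u^1)\|_{L^2}^2+c\vartheta\|U^\theta-U^1\|_{L^4}^4-\calB_0(\mu^\theta-\mu^1)\Big].
\]
For $\calE_1$ this is the exact parallelogram identity with constant $\alpha_*/2$. For $\calR$, the quadratic form gives exactly $-s(1-s)\calR(\omega)$ with $\omega=\mu^\theta-\mu^1$. This term has a sign that hurts, since $\beta\ge0$, and it is the origin of the $-\frac12\calR(\omega)$ contribution to $\calB_0$, after adjusting constants. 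For $\calF$ we split $\sff=\sff_\cvx+\sff_\ccv$. The concave part is quadratic and produces $\frac\vartheta2 s(1-s)\|\Sigma\|_{L^2}^2$, which is absorbed into $\tilde c\vartheta\|\Sigma\|_{L^2}^2$. For the convex part we need a uniform convexity estimate of quartic type,
\[
(1-s)\sff_\cvx(a)+s\sff_\cvx(b)-\sff_\cvx((1-s)a+sb)\gtrsim s(1-s)\vartheta|a-b|^4 .
\]
This follows from \eqref{eq:potential-curvature}, $\sff_\cvx''\gtrsim\vartheta(1+r^2)$, by integrating the second derivative along the segment: $\int r^2$ over a segment of length $|a-b|$ is $\gtrsim|a-b|^3$ times that length. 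Alternatively, one can use the expansion \eqref{eq:pot-rel.coerc}.

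We now combine the two inputs with minimality of $\mu^1$ for the $h$-problem. Comparing $\mu^1$ with $\mu_s$,
\[
\frac{W_1^2}{2h}+E_1\le \frac{(1-s)W_\theta^2+sW_1^2}{2h}+(1-s)E_\theta+sE_1-s(1-s)[\dots].
\]
Rearranging, dividing by $1-s$, and using $W_\theta\le W_1$ from \eqref{eq:distance.squeeze} to drop the distance terms, we obtain $s[\dots]\le E_\theta-E_1$. Letting $s\to1$ gives \eqref{eq:strongH1.Econtrol.th1}, with the understanding that $\calB_0$ collects the non-convex defects.

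For \eqref{eq:strongH1.Econtrol.0th} we argue the same way with $\mu_s=(1-s)\mu^\theta+s\mu^0$ and the $\theta h$-problem. The distance term satisfies $\calW_{2,\nu}^2(\mu_s,\mu^0)\le(1-s)W_\theta^2$, so minimality of $\mu^\theta$ gives
\[
s\,\frac{W_\theta^2}{2\theta h}+sE_\theta\le sE_0-s(1-s)[\dots].
\]
Dropping the nonnegative distance term, dividing by $s$, and letting $s\to0$ yields \eqref{eq:strongH1.Econtrol.0th}.

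The main obstacle is the quartic uniform-convexity estimate for $\sff_\cvx$ with a constant independent of the base point, together with the bookkeeping of the non-convex contributions into the precise form of $\calB_0$. We also need that the affine interpolant remains admissible, that is, lies in $\mathrm{dom}(\calE)$. This holds by convexity of $\calE_1$ and $\calF_\cvx$ and by the $L^2$ bounds from Lemma~\ref{lem:bounds}.
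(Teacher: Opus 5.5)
Your proposal is correct and follows the paper's argument. It uses the same pairwise comparison of minimisers for \eqref{eq:energy.squeeze}--\eqref{eq:distance.squeeze}. For \eqref{eq:strongH1.Econtrol} it uses an affine interpolant as competitor, together with affine convexity of $\calW_{2,\nu}^2(\cdot,\mu^0)$ and uniform convexity of $\calE$ up to $\calB_0$. The paper simply fixes the midpoint $s=\tfrac12$, which yields the coefficient $-\tfrac12\calR(\omega)$ in $\calB_0$ directly. Your limits $s\to1$ (resp.\ $s\to0$) give $-\calR(\omega)$ instead; you then need to halve and use $\calE(\mu^\theta)-\calE(\mu^1)\ge0$, or just take $s=\tfrac12$. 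One small correction: pointwise $\beta\ge0$ does not fix the sign of $\calR(\omega)$ for signed $\omega$, and a positive sign would help rather than hurt. This is harmless, since you keep that term in $\calB_0$ anyway.
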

\begin{proof}
	The second inequality in \eqref{eq:energy.squeeze} is immediate by choosing $\mu^0$ as a competitor in the optimality condition for $\mu^\theta$.
	For the first inequality, we choose $\mu^\theta$ as a competitor in the optimality condition for $\mu^1$ and vice versa. This gives the two inequalities
	\begin{align}
    \label{eq:u^1-optimality}
		\calE(\mu^1)+\frac{1}{2h}\calW_{2,\nu}^2(\mu^1,\mu^0)&\le \calE(\mu^\theta)+\frac{1}{2h}\calW_{2,\nu}^2(\mu^\theta,\mu^0),\\
        \calE(\mu^\theta)+\frac{1}{2h\theta}\calW_{2,\nu}^2(\mu^\theta,\mu^0)&\le \calE(\mu^1)+\frac{1}{2h\theta}\calW_{2,\nu}^2(\mu^1,\mu^0).
        \notag
	\end{align}
	Upon adding these two inequalities, we infer
	\begin{align*}
		0\le \Big(\frac{1}{\theta}-1\Big)\big(\calW_{2,\nu}(\mu^1,\mu^0)^2-\calW_{2,\nu}(\mu^\theta,\mu^0)^2\big),
	\end{align*}
    i.e., \eqref{eq:distance.squeeze} holds since $\theta\in (0,1)$.
    Inserting this inequality back into \eqref{eq:u^1-optimality} and reordering the terms 
    then yields the first inequality in \eqref{eq:energy.squeeze}.
    
  It remains to show~\eqref{eq:strongH1.Econtrol}. We only prove~\eqref{eq:strongH1.Econtrol.th1}, because the argument for~\eqref{eq:strongH1.Econtrol.0th} is similar. 
    
    We use $\mu^{\theta,1}\coloneqq \frac12(\mu^\theta+\mu^1)\in\calP_{2,\nu}(X{\times}Y)$ 
    as a competitor in the optimality condition for $\mu^1$ to infer
    \begin{align*}
    \calE(\mu^1)+\frac{1}{2h}\calW_{2,\nu}^2(\mu^1,\mu^0)\le \calE(\mu^{\theta,1})+\frac{1}{2h}\calW_{2,\nu}^2(\mu^{\theta,1},\mu^0).
    \end{align*}
    To proceed, observe that by virtue of the affine convexity of $\mu\mapsto\calW_{2,\nu}^2(\mu,\wh\mu)$ for fixed $\wh\mu$
    \[
    \calW_{2,\nu}^2(\mu^{\theta,1},\mu^0)\le \frac12\calW_{2,\nu}(\mu_{\theta},\mu^0)^2+\frac12\calW_{2,\nu}^2(\mu_{1},\mu^0)
    \le\calW_{2,\nu}^2(\mu_{1},\mu^0),
    \]
    where the second step follows from~\eqref{eq:distance.squeeze}. From the last two lines we infer
    $\calE(\mu^1)\le\calE(\mu^{\theta,1})$. At the same time, the uniform convexity of $\calE_1$ in the homogeneous Sobolev space $L^2(Y;\dot H^1(X))$ implies that, for some fixed $c>0$,
    \begin{align*}
    \calE_1(\mu^{\theta,1})\le\frac12\calE_1(\mu^\theta)+\frac12\calE_1(\mu^1)
    -\frac{c}{2}\|\nabla u_\theta-\nabla u_1\|_{L^2(X{\times}Y)}^2.
    \end{align*}
    Furthermore, decomposing in the case $\vartheta>0$ 
    the double-well potential $\msf = \msf_\text{cvx}+\msf_\text{conc}$ as in \eqref{eq:DoubleWellSplit}, we obtain the inequality
    \[
    \msf\big((1{-}\theta)s_0+\theta s_1\big)\leq (1{-}\theta)\msf(s_0) + \theta\msf(s_1) -{\theta(1{-}\theta)}\Big(\frac{\vartheta}{16}|s_0{-}s_1|^4-\frac{\vartheta}{8}|s_0{-}s_1|^2\Big),\quad\theta\in(0,1),~ s_0,s_1\geq0.    
    \]
    Therefore, we arrive at the estimate
    \begin{align*}
        \calF(\mu^{\theta,1}) + \frac{\vartheta}{64}\|U_1{-}U_\theta\|^4_{L^4(X)} \le\frac12\calF(\mu^\theta)+\frac12\calF(\mu^1)
        +\frac{\vartheta}{32}\|U_\theta -U_1\|_{L^2(X)}^2
    \end{align*}
    with the usual definitions of $U_1$ and $U_\theta$ (cf.~\eqref{eq:defLowerEnergy}).  
    Finally, since $\calR$ is quadratic, we get
    \[
    \calR(\mu^{\theta,1}) = \frac12\calR(\mu^\theta) +\frac12\calR(\mu^1) - \frac{1}{4}\calR(\mu^\theta-\mu^1).    
    \]
    In combination, we deduce~\eqref{eq:strongH1.Econtrol.th1}.
\end{proof}

We next relate the variation of the Moreau--Yosida approximation with
respect to the time-step parameter to the corresponding transport
cost, obtaining an identity that will yield the discrete
energy-dissipation inequality.

\begin{lemma}\label{lem:Moreau-Yosida-Properties}
    Given $\mu \in \calP_{2,\nu}(X{\times}Y)\cap \mathrm{dom}(\calE)$, $h>0$ consider the scalar quantities
    \begin{align*}
        \calW_{2,\nu,h}^-(\mu) \coloneqq \inf_{\wh\mu \in \sfJ_h(\mu)}\calW_{2,\nu}(\mu,\wh\mu) 
        \qquad\text{and}\qquad 
        \calW_{2,\nu,h}^+(\mu) \coloneqq \sup_{\wh\mu \in \sfJ_h(\mu)}\calW_{2,\nu}(\mu,\wh\mu).
    \end{align*}
    There exists an at most countable set $\calN(\mu)\subset (0,\infty)$ such that $\calW_{2,\nu,h}^-(\mu) = \calW_{2,\nu,h}^+(\mu)$ for all $h \in (0,\infty)\setminus\calN(\mu)$.

    Furthermore, for all $\mu\in\calP_{2,\nu}(X{\times}Y)$ the map $h\mapsto\calE_h(\mu)$ is locally Lipschitz on $(0,\infty)$ and
    \begin{align*}
        \frac{\dd}{\dd h}\calE_h(\mu) = -\frac{\bra[\big]{\calW^\pm_{2,\nu,h}(\mu)}^2}{2h^2}
    \end{align*}
    for all $h \in (0,\infty)\setminus\calN(\mu)$

    In particular, for all $\mu\in\calP_{2,\nu}(X{\times}Y)\cap \mathrm{dom}(\calE)$, $h\in(0,\infty)$, and $\wh\mu\in\sfJ_h(\mu)$ we have
    \begin{align*}
        \frac{\calW^2_{2,\nu}(\mu,\wh\mu)}{2h} + \int_0^h \frac{\bra[\big]{\calW^\pm_{2,\nu,t}(\mu)}^2}{2t^2} \,\dd t = \calE(\mu) -\calE(\wh\mu).
    \end{align*}
\end{lemma}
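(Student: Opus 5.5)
This is the standard argument from \cite[Lemma~3.1.2, Theorem~3.1.4]{AGS}, transferred to the fibred metric $\calW_{2,\nu}$. It uses only the existence of minimisers (Proposition~\ref{prop:MY_well-posedness}) and the comparison inequalities from the proof of Lemma~\ref{lem:energy-hierarchy}. Fix $\mu\in\mathrm{dom}(\calE)$ and write $\Phi(h,\wh\mu):=\frac{1}{2h}\calW_{2,\nu}^2(\wh\mu,\mu)+\calE(\wh\mu)$.

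\emph{Monotonicity and the countable set.} For $0<h_1<h_2$, $\mu_i\in\sfJ_{h_i}(\mu)$, we test the optimality of $\mu_1$ with $\mu_2$ and vice versa. Adding the two inequalities gives
\[
\Big(\tfrac1{h_1}-\tfrac1{h_2}\Big)\big(\calW_{2,\nu}^2(\mu_2,\mu)-\calW_{2,\nu}^2(\mu_1,\mu)\big)\ge0,
\]
exactly as for \eqref{eq:distance.squeeze}. Hence $\calW^+_{2,\nu,h_1}(\mu)\le\calW^-_{2,\nu,h_2}(\mu)$ whenever $h_1<h_2$. Both $h\mapsto\calW^\pm_{2,\nu,h}(\mu)$ are therefore non-decreasing. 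They are finite, since $\calW_{2,\nu}^2(\wh\mu,\mu)\le 2h\calE(\mu)$. On any open interval on which $\calW^-_{2,\nu,h}(\mu)<\calW^+_{2,\nu,h}(\mu)$ fails to hold, the two functions coincide. At each $h$ with strict inequality, the function $\calW^-$ has a jump, because $\lim_{k\downarrow h}\calW^-_{2,\nu,k}\ge\calW^+_{2,\nu,h}>\calW^-_{2,\nu,h}$. A monotone function has at most countably many jumps, so the exceptional set $\calN(\mu)$ is countable.

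\emph{Lipschitz continuity and derivative.} For any $h,k>0$ and $\mu_h\in\sfJ_h(\mu)$, using $\mu_h$ as a competitor for $\calE_k$ gives
\[
\calE_k(\mu)-\calE_h(\mu)\le\Big(\tfrac1{2k}-\tfrac1{2h}\Big)\calW_{2,\nu}^2(\mu_h,\mu).
\]
Swapping the roles yields the reverse bound with $\mu_k$. For $k>h$ these combine to
\[
\frac{h-k}{2hk}\,(\calW^-_{2,\nu,k})^2\;\ge\;\calE_k-\calE_h\;\ge\;\frac{h-k}{2hk}\,(\calW^+_{2,\nu,h})^2,
\]
choosing $\mu_h,\mu_k$ so as to approach the inf or sup as appropriate. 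Since $\calW^\pm\le(2h\calE(\mu))^{1/2}$ is locally bounded, $h\mapsto\calE_h(\mu)$ is locally Lipschitz. Dividing by $k-h$ and letting $k\to h^\pm$, we use monotonicity to get $\lim_{k\downarrow h}\calW^-_{2,\nu,k}\ge\calW^+_{2,\nu,h}$ and $\lim_{k\uparrow h}\calW^+_{2,\nu,k}\le\calW^-_{2,\nu,h}$. This shows that the right derivative equals $-(\calW^+_{2,\nu,h})^2/(2h^2)$ and the left derivative equals $-(\calW^-_{2,\nu,h})^2/(2h^2)$. The two agree for $h\notin\calN(\mu)$, which gives the claimed formula.

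\emph{Integrated identity.} Since $\calE_h(\mu)$ is locally Lipschitz, it is absolutely continuous on $[\varepsilon,h]$. Integrating the derivative over this interval and letting $\varepsilon\downarrow0$ requires $\calE_\varepsilon(\mu)\to\calE(\mu)$. This is the one step needing care, and I expect it to be the main (mild) obstacle. We have $\calE_\varepsilon(\mu)\le\calE(\mu)$ trivially. For the reverse, $\mu_\varepsilon\in\sfJ_\varepsilon(\mu)$ satisfies $\calW_{2,\nu}(\mu_\varepsilon,\mu)\to0$, so $\mu_\varepsilon\to\mu$ narrowly with bounded energy. The lower semicontinuity of $\calE$ along such sequences is established in the proof of Proposition~\ref{prop:MY_well-posedness}, via the weak $H^1$ bounds, strong $L^2$ convergence of the $\kappa$-moments, and continuity of $\calR$ through the kernel approximation of Lemma~\ref{l:ApproxRepul}. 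It gives $\liminf\calE_\varepsilon(\mu)\ge\liminf\calE(\mu_\varepsilon)\ge\calE(\mu)$. Finally, for any $\wh\mu\in\sfJ_h(\mu)$ we have $\calE_h(\mu)=\frac{1}{2h}\calW_{2,\nu}^2(\mu,\wh\mu)+\calE(\wh\mu)$. Rearranging yields the asserted identity, which is independent of the choice of $\pm$ because $\calN(\mu)$ is a null set.
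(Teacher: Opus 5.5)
Your route is the paper's own: the paper simply invokes \cite[Lemma~3.1.2, Theorem~3.1.4]{AGS} on the Polish space $\calP_{2,\nu}(X{\times}Y)$. Your monotonicity and countability argument is correct, and so is the integrated identity, including the use of lower semicontinuity to get $\calE_\varepsilon(\mu)\to\calE(\mu)$.

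The derivative step, however, fails as written. Use $\mu_h\in\sfJ_h(\mu)$ as a competitor for $\calE_k(\mu)$ and $\mu_k\in\sfJ_k(\mu)$ as a competitor for $\calE_h(\mu)$. For $k>h$ this gives
\[
\frac{h-k}{2hk}\big(\calW^-_{2,\nu,k}(\mu)\big)^2\;\le\;\calE_k(\mu)-\calE_h(\mu)\;\le\;\frac{h-k}{2hk}\big(\calW^+_{2,\nu,h}(\mu)\big)^2 .
\]
Both inequalities in your display are therefore reversed. Your version would force $\calW^-_{2,\nu,k}\le\calW^+_{2,\nu,h}$, contradicting the monotonicity you had just proved.

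With the correct sandwich, the right difference quotient lies between $-(\calW^-_{2,\nu,k})^2/(2hk)$ and $-(\calW^+_{2,\nu,h})^2/(2hk)$. Closing it requires $\lim_{k\downarrow h}\calW^-_{2,\nu,k}\le\calW^+_{2,\nu,h}$, but monotonicity only gives $\ge$. Symmetrically, for the left derivative you would need $\lim_{k\uparrow h}\calW^+_{2,\nu,k}\ge\calW^-_{2,\nu,h}$. So your claim that the one-sided derivatives exist at every $h$ and equal $-(\calW^\pm_{2,\nu,h})^2/(2h^2)$ is not supported by your argument.

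There are two ways to repair this.
\begin{itemize}
\item Follow \cite{AGS}: let $\calN(\mu)$ be the set of discontinuity points of the non-decreasing map $h\mapsto\calW^-_{2,\nu,h}(\mu)$. This set is countable, and by your jump argument it contains $\{\calW^-<\calW^+\}$. At every $h\notin\calN(\mu)$, the interlacing $\calW^-_{2,\nu,s}\le\calW^+_{2,\nu,s}\le\calW^-_{2,\nu,s'}$ for $s<s'$ forces all one-sided limits of $\calW^\pm_{2,\nu,\cdot}$ at $h$ to equal $\calW^\pm_{2,\nu,h}$. The sandwich then closes, and this is all the lemma asserts.
\item If you want one-sided derivatives everywhere, prove the missing one-sided continuity by compactness. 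Minimisers $\mu_k\in\sfJ_k(\mu)$ with $k\to h$ have bounded second moments and energy, so a subsequence converges narrowly to some $\bar\mu$. Lower semicontinuity of $\calE$ (as in the proof of Proposition~\ref{prop:MY_well-posedness}) and of $\calW_{2,\nu}$, together with the continuity of $h\mapsto\calE_h(\mu)$, give $\bar\mu\in\sfJ_h(\mu)$. The same argument gives $\calW_{2,\nu}(\mu_k,\mu)\to\calW_{2,\nu}(\bar\mu,\mu)\in[\calW^-_{2,\nu,h},\calW^+_{2,\nu,h}]$, which is the continuity you need.
\end{itemize}

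A minor point: the Lipschitz claim is stated for all $\mu\in\calP_{2,\nu}(X{\times}Y)$, where your bound $\calW^\pm_{2,\nu,h}\le(2h\calE(\mu))^{1/2}$ is unavailable. In that case use $\calW_{2,\nu}^2(\mu_h,\mu)\le 2h\,\calE_h(\mu)\le\calW_{2,\nu}^2(\sigma,\mu)+2h\,\calE(\sigma)$ for a fixed $\sigma\in\mathrm{dom}(\calE)$.
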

\begin{proof}

    Recalling from \cite[Proposition~3.4]{peszek2023heterogeneous} that $\calP_{2,\nu}(X{\times}Y)$ is a Polish space the first claim follows from \cite[Lemma~3.1.2]{AGS}.
    Then, the second claim is a consequence of \cite[Theorem~3.1.4]{AGS}.
\end{proof}

We now derive the Euler--Lagrange equation for one  step of the
incremental
minimisation problem by taking spatial inner variations separately
in each fibre. This identifies the discrete flux 
with the constitutive flux in distributional form, using only
$L^2(Y;H^1(X))$ regularity of the minimising density.
Following Remark~\ref{rem:Singleton}, we highlight that the 
in the case where $\supp\nu$ is a singleton, the equation for the lower order contribution to the constitutive flux can be written free of any derivatives of $u$.
In particular, in this setting the restriction to $d\leq 3$ can be removed as
all terms are well-defined due to the $L^4$ bound on $U_\tau$ in \eqref{eq:incrementalApriori}.

\begin{proposition}[Euler--Lagrange equation]\label{prop:E-L_1}
    Let $h>0$ and $\mu \in\calP_{2,\nu}(X{\times}Y)\cap\mathrm{dom}(\calE)$ and $\wh\mu \in \sfJ_h(\mu)$ with density $\wh u$. Let $\hat\jmath$ be the flux introduced in Lemma~\ref{lem:discrete_flux}.
    Then, for all $\xi\in L^\infty(Y; C^2_c(X;\R^d))$  the Euler--Lagrange equation holds, namely
    \begin{equation}
    \label{eq:E-L_full}
    \begin{aligned}
        \int_X\int_Y \hat\jmath\cdot\xi  \,\dd\nu \dd x
        &= \int_X\int_Y \alpha(y) \big[\bra[\big]{\tfrac12\divv_x \xi I + \nablax\xi}\nablax \wh u + \wh u \nablax\divv_x\xi\big]
        \cdot \nablax \wh u \,\dd\nu(y)\dd x\\
        &\quad
        + \int_X\int_Y \big[(\msf'(\wh U)\kappa(y)+\mathsf{B}\wh u)
        \bra[\big]{\nablax \wh u\cdot\xi + \wh u\divv_x\xi}\big]\,\dd\nu(y)\dd x,
    \end{aligned}
    \end{equation}
    where $\wh U(x) = \int_Y \kappa(y)\wh u(x,y)\dd \nu(y)$ as before.
\end{proposition}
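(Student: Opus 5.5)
We take fibrewise inner variations of the minimiser and differentiate the incremental functional at $\varepsilon=0$. Fix $\xi\in L^\infty(Y;C^2_c(X;\R^d))$. For $|\varepsilon|$ small, the map $\Phi_\varepsilon(x,y)\coloneqq x+\varepsilon\xi(x,y)$ is, for $\nu$-a.e.\ $y$, a $C^2$-diffeomorphism of $X=\R^d$. The bound on $\varepsilon$ is uniform in $y$ because $\norm{\nabla_x\xi}_{L^\infty}<\infty$. We set $\wh\mu_\varepsilon\coloneqq\int_Y(\Phi_\varepsilon(\cdot,y))_\#\wh\mu^y\,\dd\nu(y)\in\calP_{2,\nu}(X{\times}Y)$, whose density is
\[
\wh u_\varepsilon(\Phi_\varepsilon(x,y),y)=\wh u(x,y)/\det(I+\varepsilon\nabla_x\xi(x,y)).
\]
Minimality of $\wh\mu$ gives $\frac{\dd}{\dd\varepsilon}\big|_{\varepsilon=0}\big[\frac1{2h}\calW_{2,\nu}^2(\wh\mu_\varepsilon,\mu)+\calE(\wh\mu_\varepsilon)\big]=0$, provided both derivatives exist. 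If existence cannot be shown for the transport term, the one-sided difference quotients as $\varepsilon\downarrow0$ and $\varepsilon\uparrow0$ still yield two opposite inequalities, and the identity follows.

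\emph{Transport term.} Let $T$ be the jointly measurable optimal map from Lemma~\ref{lem:discrete_flux}. The plan $(\Phi_\varepsilon,T)_\#\wh\mu^y$ is admissible in each fibre, so
\[
\calW_{2,\nu}^2(\wh\mu_\varepsilon,\mu)\le\int_Y\int_X|x+\varepsilon\xi-T|^2\wh u\,\dd x\,\dd\nu,
\]
with equality at $\varepsilon=0$. Expanding the square gives an upper bound whose derivative at $0$ is $2\int\!\int(x-T)\cdot\xi\,\wh u=2h\int\!\int\hat\jmath\cdot\xi$. Because this is only an upper bound, it gives one inequality for $\varepsilon>0$ and the reverse one for $\varepsilon<0$. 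Combined with differentiability of the energy term, this yields equality in \eqref{eq:E-L_full}.

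\emph{Energy terms.} We change variables $x\mapsto\Phi_\varepsilon(x,y)$ in each fibre. For $\calE_1$, the chain rule gives $\nabla_x\wh u_\varepsilon\circ\Phi_\varepsilon=(I+\varepsilon\nabla\xi)^{-T}\nabla_x(\wh u/J_\varepsilon)$, where $J_\varepsilon=\det(I+\varepsilon\nabla\xi)=1+\varepsilon\divv\xi+O(\varepsilon^2)$. Hence
\[
\calE_1(\wh\mu_\varepsilon)=\frac12\int\!\!\int\alpha\,\big|(I+\varepsilon\nabla\xi)^{-T}\nabla(\wh u/J_\varepsilon)\big|^2J_\varepsilon\,\dd x\,\dd\nu.
\]
This involves only $\nabla\wh u$, $\wh u$ and up to second derivatives of $\xi$, the latter through $\nabla J_\varepsilon$. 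Differentiating under the integral is justified by dominated convergence, since $\wh u\in L^2(Y;H^1)$ and $\xi\in L^\infty(Y;C^2_c)$. This produces the $\alpha$-terms, in particular the term $\wh u\nabla\divv\xi\cdot\nabla\wh u$ coming from $\nabla(1/J_\varepsilon)$, as in \cite[Lemma 3.7]{KL_2022}. For the lower-order terms no change of variables is possible, because $\wh U_\varepsilon(x)=\int_Y\kappa\,\wh u_\varepsilon(x,y)\,\dd\nu$ mixes fibres that are transported by different maps. Instead we use the first-order expansion
\[
\wh u_\varepsilon=\wh u-\varepsilon\,\divv_x(\wh u\,\xi)+o(\varepsilon)
\]
in a suitable $L^p$ sense, which needs the $H^1$ regularity of $\wh u$. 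Then $\frac{\dd}{\dd\varepsilon}\calF(\wh\mu_\varepsilon)|_0=-\int\!\int\msf'(\wh U)\kappa\,\divv(\wh u\xi)$ and $\frac{\dd}{\dd\varepsilon}\calR(\wh\mu_\varepsilon)|_0=-\int\!\int\msB\wh u\,\divv(\wh u\xi)$, the latter by symmetry of $\beta$. These are the stated terms, with $\divv(\wh u\xi)=\nabla\wh u\cdot\xi+\wh u\divv\xi$. Finally, the sign conventions combine to give \eqref{eq:E-L_full}.

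\emph{Main obstacle.} The difficult step is justifying the derivative of $\calF$. Since $\msf'(\wh U)\sim\wh U^3$, we need $\wh U^3\,\wh u\,\nabla\wh u$ to be integrable and the remainder to be controlled uniformly. The available bounds are $\nabla\wh u\in L^2$, $\wh u\in L^2(Y;L^{2^*})$ and $\wh U\in L^4\cap L^{2^*}$ from Lemma~\ref{lem:bounds}. For $d\le3$ we have $2^*\ge6$, and Hölder's inequality gives
\[
\norm{\wh U^3}_{L^{3}}\,\norm{\wh u}_{L^6}\,\norm{\nabla\wh u}_{L^2}<\infty,
\]
using $\wh U\in L^9$? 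That is not quite enough, so one interpolates with an $L^\infty_Y$ estimate if needed. This is presumably where $d\le3$ enters. To control the difference quotient we would write
\[
\frac{\msf(\wh U_\varepsilon)-\msf(\wh U)}{\varepsilon}=\int_0^1\msf'\big(\wh U+s(\wh U_\varepsilon-\wh U)\big)\,\frac{\wh U_\varepsilon-\wh U}{\varepsilon}\,\dd s.
\]
We would then establish $\frac{\wh U_\varepsilon-\wh U}{\varepsilon}\to-\langle\divv(\wh u\xi),\kappa\rangle_Y$ in $L^{q}$ and $\wh U_\varepsilon\to\wh U$ in $L^{p}$ for compatible exponents. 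For the first limit we would express the difference quotient as $-\frac1\varepsilon\int_0^\varepsilon\divv(\wh u_s\xi_s)\,\dd s$ with uniform $H^1$-type bounds on $\wh u_s$.
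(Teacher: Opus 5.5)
Your overall strategy matches the paper's proof. You take fibrewise inner variations, use the change-of-variables computation of \cite[Lemma~3.7]{KL_2022} for $\calE_1$, and use the expansion $\partial_\varepsilon\wh u_\varepsilon|_{\varepsilon=0}=-\divv_x(\wh u\xi)$ for $\calE_0$. Using $x+\varepsilon\xi$ instead of the flow of $\xi$ changes nothing. The one real difference is the transport term. The paper differentiates $\frac1{2h}\calW_{2,\nu}^2(\wh\mu_t,\mu)$ at $t=0$ via Proposition~\ref{prop:time-derivative_metric}. You instead majorise it by the cost of the competitor coupling $(\Phi_\varepsilon,T)_\#\wh\mu^y$ and use that this majorant plus $\calE(\wh\mu_\varepsilon)$ is still minimal at $\varepsilon=0$. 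Your route is more elementary and mirrors the squeeze later used in Proposition~\ref{prop:var.intrinsicW2}. It also avoids relying at $t=0$ on a result that is stated only for a.e.\ $t$.

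The step that does not hold up as written is your ``main obstacle'' paragraph.

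\begin{itemize}
\item \emph{The integrand is misidentified.} The lower-order integrand is $\msf'(\wh U)\kappa(y)\big(\nabla\wh u\cdot\xi+\wh u\,\divv\xi\big)$, which is linear in $\wh u$. No product $\wh U^3\wh u\nabla\wh u$ occurs, and the $L^\infty_Y$ bound you propose to interpolate with is neither available nor needed. As written, you leave this justification open, with a question mark.
\item \emph{The $\wh u\,\divv\xi$ piece.} Since $\wh u,\kappa\ge0$, we have $\big|\int_Y\kappa\wh u\,\divv\xi\,\dd\nu\big|\le\|\divv\xi\|_{L^\infty}\wh U$. So this piece only needs $\msf'(\wh U)\wh U\in L^1(X)$, which follows from $\wh U\in L^2\cap L^4$.
\item \emph{The gradient piece.} Here $\int_Y|\nabla\wh u|\,\dd\nu\in L^2(X)$, so one needs $\msf'(\wh U)\in L^2(X)$, i.e.\ $\wh U\in L^2\cap L^6$. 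This follows from $\nabla\wh U=\int_Y\kappa\nabla\wh u\,\dd\nu\in L^2(X)$ and the Sobolev embedding for $d\le3$. This is precisely where the dimensional restriction enters.
\item \emph{Passing to the limit.} In your mean-value formula it then suffices to have $\wh U_\varepsilon\to\wh U$ strongly in $L^6(X)$ and $(\wh U_\varepsilon-\wh U)/\varepsilon\to-\langle\divv(\wh u\xi),\kappa\rangle_Y$ in $L^2(X)$. These follow from $\wh u_\varepsilon\to\wh u$ in $L^2(Y;H^1(X))$ and $(\wh u_\varepsilon-\wh u)/\varepsilon\to-\divv(\wh u\xi)$ in $L^2(X\times Y)$. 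Both can be read off fibrewise from your explicit formula for $\wh u_\varepsilon$, with constants uniform in $y$ because $\xi\in L^\infty(Y;C^2_c)$.
\end{itemize}

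With this correction the argument is complete.
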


\begin{proof}
Consider $\xi\in L^\infty(Y; C^2_c(X;\R^d))$ and define for some $t_0>0$ the flow map
$\Phi:[0,t_0]\times X\times Y \to\R^d$ by $\Phi_0(x,y) = x$ 
and $\partial_t\Phi_t = \xi(\Phi_t,y)$, which is well-defined and measurable. 
Next, set $\wh\mu_t \coloneqq {(\Phi_t,\mathrm{id}_Y)}_\#\wh\mu$, 
which hence satisfies (in the sense of distributions) the continuity equation $\partial_t\wh\mu_t + \divv_x(\wh\mu_t \xi_t) = 0$. 
Then, Proposition~\ref{prop:time-derivative_metric} yields
\begin{align*}
    \frac{\mathrm{d}}{\mathrm{d}t}\bigg|_{t=0}\frac{1}{2h}\calW_{2,\nu}^2(\wh\mu_t,\mu)
    = \frac{1}{h}\int_X\int_Y (x- T(x,y))\cdot \xi(x,y)\,\dd\wh\mu(x,y)
    = \int_X\int_Y \xi\cdot\hat\jmath\,\dd\nu\dd x
\end{align*}
With this, the Euler--Lagrange equation for the minimization problem in \eqref{eq:MoreauYosida}
reads
\begin{align}\label{eq:E-L_flux-form}
    \int_X\int_Y \xi\cdot\hat\jmath\,\dd\nu\dd x = -\frac{\dd}{\dd t}\bigg|_{t=0} \calE(\wh\mu_t).
\end{align}
Regarding the right-hand side, we keep in mind the ${L}^2(Y;H^1(X))$-control over $\wh u$ due to the finiteness of $\calE_1$ and argue as in the proof of \cite[Lemma 3.7]{KL_2022} to obtain
\begin{align*}
   \frac{\dd}{\dd t}\bigg|_{t=0}  \calE_1(\wh\mu_t)
    &= -\int_X\int_Y \alpha(y) \big[\bra[\big]{\tfrac12\divv_x \xi I + \nablax\xi}\nablax \wh u + \nablax\divv_x\xi\wh u\big]
     \cdot \nablax\wh u \,\dd\nu(y)\dd x
\end{align*}
for the cohesion part, while for the lower-order part a standard calculation yields
\begin{align*}
   \frac{\dd}{\dd t}\bigg|_{t=0}  \calE_0(\wh\mu_t)_{t=0}
    &= \int_{X\times Y} \big(\msf'(\wh U)\kappa(y) + \msB\wh u\big)\partial_t \wh u|_{t=0} \dd\nu \dd x\\
    &= -\int_{X\times Y} \big(\msf'(\wh U)\kappa(y) + \msB\wh u\big)(\nabla \wh u\cdot \xi +\wh u\divv \xi) \dd\nu \dd x.
\end{align*}
Inserting these into \eqref{eq:E-L_flux-form}, we obtain \eqref{eq:E-L_full} as claimed.
\end{proof}

\subsection{Fibred JKO scheme and discrete inequalities}\label{ssec:fibredJKO}

We now iterate the incremental minimisation problem in Definition~\ref{def:MoreauYosida} 
with time step $h=\tau\in(0,1]$ to construct the
fibred JKO scheme and introduce the piecewise constant and De Giorgi
interpolants, together with their associated fluxes.

From now on and throughout the rest of Section~\ref{sec:diagE}, 
we consider an initial measure $\mu_0\in \calA_\nu$, cf.\eqref{eq:defSetAnu}.
By an elementary interpolation argument, this implies that also $\calH(\mu_0)<\infty$.

\begin{definition}\label{def:JKP_interpolations}
    We define the JKO scheme via
    \begin{align*}
        \mu^0_\tau \coloneqq \mu_0,\quad
        \mu^k_\tau \in \sfJ_\tau(\mu^{k-1}_\tau) \text{ for } k\in\bbN.
    \end{align*}
    We define the piecewise constant interpolation $\mu_\tau:[0,\infty)\to\calP(X{\times}Y)$ by
    $\mu_\tau(0)=\mu_\tau^0$  and
    \begin{align*}
        \mu_\tau(t)\coloneqq \mu^k_\tau
        \quad \text{for}\quad t\in(\tau (k{-}1),\tau k],
    \end{align*}
	and introduce its piecewise constant flux $j_\tau$ given by
	\begin{align*}
        j_\tau(t,x,y) &\coloneqq j^k_\tau(x,y)
        \quad \text{for}\quad t\in(\tau (k{-}1),\tau k],
    \end{align*}
    where $j^k_\tau$ is as in Lemma~\ref{lem:discrete_flux} with $h=\tau, \mu = \mu^{k-1}_\tau$, $\wh \mu =\mu^k_\tau$.
    
    Furthermore, using classical measurable selection results (for details, see e.g.~\cite[Section~2.2]{GHL_preprint}) we may choose a measurable map
 $\wt\mu_\tau:[0,\infty)\to\calP_{2,\nu}(X{\times}Y)$,  the {\em De Giorgi interpolant},  satisfying $\wt\mu_\tau(0)=\mu_\tau^0$  and
    \begin{align*}
        \wt\mu_\tau(t)\in \sfJ_{t-(k{-}1)\tau}(\mu^{k-1}_\tau),
        \;\;\; t\in(\tau (k{-}1),\tau k),\qquad \text{and }\;\;\wt\mu_\tau(\tau k)=\mu_\tau^k
    \end{align*}
    with associated De Giorgi flux $\wt\jmath_\tau$ defined for $k\ge 1$ and a.e.\ $t\in((k{-}1)\tau,k\tau]$ as in Lemma~\ref{lem:discrete_flux} with $h=t-(k{-}1)\tau, \mu = \mu^{k-1}_\tau$, $\wh\mu =\wt\mu_\tau(t)$.
\end{definition}

The one-step estimates of Lemma~\ref{lem:bounds}, 
together with the energy decrease along the scheme, yield the following uniform bounds for both interpolants.

\begin{lemma}\label{lem:GMM_bounds}
    There exist constants $C_1,C_2 >0$ depending only on
    $X,Y,\mu_0$, such that for all $T\in(0,\infty)$ it holds
    \begin{subequations}
    \label{eq:bounds_k-uniform}
    \begin{align}
    \label{eq:moment-bound_k-uniform}
    \begin{split}
        &\sup_{\tau\in(0,T]}\esssup_{t\in[0,T]} 
        m_{2,X}(\mu_\tau) \le C_1 (1+T),
        \quad\text{and}\\
        &\sup_{\tau>0}\esssup_{t\in[0,\infty)} \big(\|\nablax u_\tau(t)\|_{L^2(X\times Y)}^2 + \vartheta\|U_\tau(t)\|_{L^4(X)}^4\big)
        \le C_2.
        \end{split}
    \end{align}
    In particular, there exists $C_3=C_3(X,Y,\mu_0)>0$ such that for all $\phi\in\rmL^\infty([0,\infty){\times} Y;C^1(X))$ we have
    \begin{align}
    \label{eq:nabla-bound_k-uniform}
        &\sup_{\tau>0} \esssup_{t\in[0,\infty)} \norm{\nabla\skp{u_\tau(t),\phi(t)}_Y}_{{L}^2(X;\bbR^d)} \le 
        C_3\| \phi \|_{\rmL^\infty([0,\infty){\times} Y;C^1(X))},
    \end{align}
    and for all $p\in[1,2^\ast)$ or $p = 2^\ast$ if $d\ne 2$, there exists $C_4=C_4(X,Y,\mu_0,p)>0$ such that for all $\phi\in \rmL^\infty([0,\infty){\times} Y;C(X))$
    \begin{align}
    \label{eq:p-bound_k-uniform}
        &\sup_{\tau>0} \esssup_{t\in[0,\infty)} \norm{\skp{u_\tau(t),\phi(t)}_Y}_{{L}^p(X)} \le C_4\|\phi\|_{\rmL^\infty([0,\infty)\times Y;C(X))}.
    \end{align}
    \end{subequations}
    Furthermore, for all $0\le s\le t<\infty$, $\tau>0$ we have
    \begin{align}
    \label{eq:W-bound_embedded}
        \calW_{2,\nu}(\mu_\tau(s),\mu_\tau(t)) \le \sqrt{2\calE(\mu_0)}\bra[\big]{\sqrt{t-s}+\sqrt{\tau}}.
    \end{align}
    The above bounds also hold for the De Giorgi interpolant $\wt\mu_\tau$.
\end{lemma}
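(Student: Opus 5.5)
The proof rests on the discrete energy monotonicity of the scheme combined with the one-step bounds of Lemma~\ref{lem:bounds}. Applying \eqref{eq:energy.squeeze} of Lemma~\ref{lem:energy-hierarchy} with $\theta=1$ along the iteration gives $\calE(\mu_\tau^k)\le\calE(\mu_\tau^{k-1})\le\dots\le\calE(\mu_0)$. For the De Giorgi interpolant, the same lemma with $\theta=(t-(k-1)\tau)/\tau\in(0,1)$ yields $\calE(\mu^k_\tau)\le\calE(\wt\mu_\tau(t))\le\calE(\mu_\tau^{k-1})\le\calE(\mu_0)$. Both interpolants therefore have energy uniformly bounded by $\calE(\mu_0)$. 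Since $\calF,\calR\ge0$, this gives $\frac{\alpha_*}2\|\nabla_x u_\tau(t)\|^2_{L^2(X\times Y)}\le\calE(\mu_0)$. Moreover, $\sff(s)\ge\frac\vartheta8 s^4-C\vartheta$ for large $s$ is not sufficient by itself on $\R^d$, so I would combine it with mass one: from $\sff(s)\ge \frac{\vartheta}{8}s^4-\frac{\vartheta}{2}s^2$, say, together with $\|U\|_{L^2}^2$ controlled by Gagliardo--Nirenberg interpolation between $\|U\|_{L^1}\le\kappa^*$ and $\|\nabla U\|_{L^2}$ (Jensen in $y$), one obtains $\vartheta\|U_\tau(t)\|_{L^4}^4\le C(\mu_0)$. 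This gives the second line of \eqref{eq:moment-bound_k-uniform}, uniformly in $t\ge0$ and $\tau>0$.

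Next I would establish the distance estimate \eqref{eq:W-bound_embedded}. Choosing $\mu^{k-1}_\tau$ as a competitor in the $k$-th step gives $\frac1{2\tau}\calW_{2,\nu}^2(\mu^k_\tau,\mu^{k-1}_\tau)\le\calE(\mu^{k-1}_\tau)-\calE(\mu^k_\tau)$. Summing telescopically yields $\sum_k\calW^2_{2,\nu}(\mu^k_\tau,\mu^{k-1}_\tau)\le2\tau\calE(\mu_0)$. For $s\in((m-1)\tau,m\tau]$ and $t\in((n-1)\tau,n\tau]$, the triangle inequality and Cauchy--Schwarz give $\calW_{2,\nu}(\mu_\tau(s),\mu_\tau(t))\le\sqrt{n-m}\,\sqrt{2\tau\calE(\mu_0)}$. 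Since $(n-m)\tau\le t-s+\tau$, the bound follows via $\sqrt{a+b}\le\sqrt a+\sqrt b$. For the De Giorgi interpolant one additionally uses \eqref{eq:distance.squeeze}, that is, $\calW_{2,\nu}(\wt\mu_\tau(t),\mu^{k-1}_\tau)\le\calW_{2,\nu}(\mu^k_\tau,\mu^{k-1}_\tau)$, and absorbs the extra partial step into the constant. If one insists on the stated constant exactly, a slightly more careful bookkeeping of endpoints is required; otherwise a universal factor suffices.

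The moment bound follows from \eqref{eq:W-bound_embedded} with $s=0$. Using $\calW_{2,\nu}(\mu,\delta_0\otimes\nu)^2=m_{2,X}(\mu)$ and the triangle inequality, we get $m_{2,X}(\mu_\tau(t))\le 2m_{2,X}(\mu_0)+8\calE(\mu_0)(t+\tau)\le C_1(1+T)$ for $\tau\le T$ and $t\le T$. Finally, \eqref{eq:nabla-bound_k-uniform} and \eqref{eq:p-bound_k-uniform} follow exactly as in the last part of Lemma~\ref{lem:bounds}. For the gradient bound, $\nabla\langle u,\phi\rangle_Y=\langle\nabla u,\phi\rangle_Y+\langle u,\nabla\phi\rangle_Y$, and Jensen's inequality in $y$ is applied; the second term requires $\|u\|_{L^2(X\times Y)}$, which comes from the Gagliardo--Nirenberg inequality applied fibrewise with unit mass. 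For the $L^p$ bound, apply Sobolev/Gagliardo--Nirenberg to $\langle u,\phi\rangle_Y$ with $L^1$ norm $\le\|\phi\|_\infty$, and use the density argument of Lemma~\ref{lem:bounds} for merely continuous $\phi$. The constants are linear in $\|\phi\|$ by homogeneity. I expect the only delicate points to be the $L^4$ control of $U$ on $\R^d$, where the negative quadratic part of $\sff$ must be absorbed, and the endpoint bookkeeping for the De Giorgi interpolant; the remaining steps are routine.
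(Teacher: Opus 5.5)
Your proposal is correct and follows the paper's route: energy monotonicity for both interpolants via Lemma~\ref{lem:energy-hierarchy}, and the one-step bounds of Lemma~\ref{lem:bounds}, which you spell out more explicitly (Nash/Gagliardo--Nirenberg for $\|U_\tau\|_{L^2}$, and the fibrewise $L^2$ bound for the term $\langle u_\tau,\nabla\phi\rangle_Y$); then telescoping of the step distances with Cauchy--Schwarz, and \eqref{eq:distance.squeeze} for $\wt\mu_\tau$. Your caveat about the constant for $\wt\mu_\tau$ is justified: the paper's one-line argument also picks up an extra partial step, so it gives \eqref{eq:W-bound_embedded} for the De Giorgi interpolant only up to a universal factor (exactly only when $s$ is a grid point such as $s=0$, which is the case the moment bound uses), and that is all Lemma~\ref{l:GMM_time-convergence_bounds} needs.
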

\begin{proof}
    By construction, 
    we have for all $\tau > 0$, $k\in\bbN$ the estimate
    \begin{align}\label{eq:GMM_bounds-proof_Ebound}
        \frac{\calW_{2,\nu}^2(\mu^{k-1}_\tau,\mu^k_\tau)}{2\tau} + \calE(\mu^k_\tau)\le \calE(\mu^{k-1}) \le \calE(\mu_0)
    \end{align}
    By Lemma~\ref{lem:bounds} (and using \eqref{eq:energy.squeeze} to control the energy for the De Giorgi interpolant), this ensures \eqref{eq:bounds_k-uniform}.
    To show \eqref{eq:W-bound_embedded} for the piecewise constant interpolation observe that $s \in ((k_1{-}1)\tau,k_1\tau]$, $t \in ((k_2{-}1)\tau,k_2\tau]$ for some $k_1, k_2\in\bbN$, $k_1\le k_2$.
    Hence, using Cauchy-Schwarz and \eqref{eq:GMM_bounds-proof_Ebound}, it follows
    \begin{align*}
        \calW_{2,\nu}(\mu_\tau(s),\mu_\tau(t)) 
        &\le \sum_{k=k_1+1}^{k_2} \calW_{2,\nu}(\mu^{k-1}_\tau,\mu^k_\tau)
        \le \sqrt{2\calE(\mu_0)}\bra[\big]{\sqrt{t-s}+\sqrt{\tau}}.
    \end{align*}
    In light of \eqref{eq:distance.squeeze}, the same argument also proves \eqref{eq:W-bound_embedded} for the De Giorgi interpolation.
\end{proof}

We next combine the identity from Lemma~\ref{lem:Moreau-Yosida-Properties} with
the discrete flux representation from Lemma~\ref{lem:discrete_flux} to obtain an
energy-dissipation inequality involving both interpolants. 
The resulting
flux bounds will provide the weak compactness needed to pass to
the limit.

\begin{proposition}[Energy identity and flux bounds]\label{prop:disc_EDI}
	For all but countably many $\tau>0$ and for a.e.\ $t\in[0,\infty)$, the interpolations introduced in Definition~\ref{def:JKP_interpolations} satisfy
    \begin{align}\label{eq:disc_EDI}
		\calE(\mu_\tau(t)) +\int_0^t\int_X\int_Y
		\bra[\bigg]{
		\frac{\abs{j_\tau}^2}{2u_\tau}
		+ \frac{\abs{\wt\jmath_\tau}^2}{2\wt u_\tau} 
		}
		\,\dd\nu\dd x\dd t
		\le \calE(\mu_0).
	\end{align}
    In particular, 
    for every $A\in\calB([0,\infty)\times X\times Y)$, the following estimates hold
    \begin{align}\label{eq:flux_uniform-integrability}
        \norm{j_\tau}_{{L}^1(A)} \le \sqrt{2\calE(\mu_0)\, \norm{u_\tau}_{{L}^1(A)}},\qquad  \quad\norm{\wt\jmath_\tau}_{{L}^1(A)} \le \sqrt{2\calE(\mu_0)\, \norm{\wt u_\tau}_{{L}^1(A)}}.
    \end{align}
\end{proposition}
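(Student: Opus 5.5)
The plan is to sum the one-step identity of Lemma~\ref{lem:Moreau-Yosida-Properties} over the steps of the scheme and then rewrite both transport terms as fluxes using Lemma~\ref{lem:discrete_flux}. First fix $\tau$ outside the countable exceptional set. Then apply Lemma~\ref{lem:Moreau-Yosida-Properties} with $\mu=\mu_\tau^{k-1}$, $h=\tau$ and $\wh\mu=\mu^k_\tau$. This gives
\[
\frac{\calW^2_{2,\nu}(\mu_\tau^{k-1},\mu_\tau^k)}{2\tau}+\int_0^\tau\frac{(\calW^\pm_{2,\nu,r}(\mu_\tau^{k-1}))^2}{2r^2}\,\dd r=\calE(\mu^{k-1}_\tau)-\calE(\mu^k_\tau).
\]

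By Lemma~\ref{lem:discrete_flux}, $\frac1{\tau}\calW^2_{2,\nu}(\mu^{k-1}_\tau,\mu^k_\tau)=\int_{(k-1)\tau}^{k\tau}\int_X\int_Y |j_\tau|^2/u_\tau$. Hence the first term equals the time integral of $\frac{|j_\tau|^2}{2u_\tau}$ over the $k$-th interval.

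For the second term, substitute $r=t-(k-1)\tau$. For a.e.\ $r\in(0,\tau)$, i.e.\ outside the countable set $\calN(\mu^{k-1}_\tau)$, the quantities $\calW^-$ and $\calW^+$ coincide. Both are therefore equal to $\calW_{2,\nu}(\mu^{k-1}_\tau,\wt\mu_\tau(t))$, since $\wt\mu_\tau(t)\in\sfJ_r(\mu^{k-1}_\tau)$. Lemma~\ref{lem:discrete_flux} with $h=r$ then identifies $\calW^2_{2,\nu}/r^2$ with $\int_X\int_Y|\wt\jmath_\tau|^2/\wt u_\tau$. Here measurability of $t\mapsto\wt\mu_\tau(t)$ from Definition~\ref{def:JKP_interpolations} ensures the integrand is measurable.

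Summing over $k=1,\dots,N$ telescopes the right-hand side to $\calE(\mu_0)-\calE(\mu^N_\tau)$. This gives \eqref{eq:disc_EDI} with equality at the grid times $t=N\tau$. For general $t\in((N-1)\tau,N\tau]$ we have $\mu_\tau(t)=\mu^N_\tau$. The dissipation integral over $(0,t)$ is at most that over $(0,N\tau)$ because the integrand is non-negative, so the inequality follows for all such $t$.

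For \eqref{eq:flux_uniform-integrability}, apply Cauchy--Schwarz on $A$ to $|j_\tau|=\frac{|j_\tau|}{\sqrt{u_\tau}}\sqrt{u_\tau}$. This gives $\|j_\tau\|_{L^1(A)}\le(\int_A|j_\tau|^2/u_\tau)^{1/2}\|u_\tau\|_{L^1(A)}^{1/2}$. Next bound $\int_A|j_\tau|^2/u_\tau$ by the full time integral of $|j_\tau|^2/u_\tau$, which is $\le 2\calE(\mu_0)$ by \eqref{eq:disc_EDI} and $\calE\ge0$; the same argument applies to $\wt\jmath_\tau$. On $\{u_\tau=0\}$ we have $j_\tau=0$ by definition of the flux, so the quotient convention $0/0=0$ causes no issue.

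The main, though mild, obstacle is the identification step for the De Giorgi term. There one must make sure that the a.e.-in-$r$ equality $\calW^-=\calW^+$ is used together with the measurable selection, so that the flux $\wt\jmath_\tau(t)$ built from the selected minimiser indeed realises $\calW^\pm_{2,\nu,r}$. The countable exclusion of $\tau$ is only needed so that the endpoint steps are compatible with the formula in Lemma~\ref{lem:Moreau-Yosida-Properties}.
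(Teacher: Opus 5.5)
Your proposal is correct and follows the paper's proof essentially step for step. As in the paper, you apply the integrated Moreau--Yosida identity of Lemma~\ref{lem:Moreau-Yosida-Properties} on each step, identify both transport terms with flux dissipations via Lemma~\ref{lem:discrete_flux}, telescope over $k$, and then get \eqref{eq:flux_uniform-integrability} from Cauchy--Schwarz and $\calE\ge0$ after letting $t\to\infty$.

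One small correction: the final identity in Lemma~\ref{lem:Moreau-Yosida-Properties} already holds for every $h>0$ and every minimiser, so your argument never actually uses the exclusion of countably many $\tau$. Your closing explanation of the role of that exclusion is therefore inaccurate, but this does not affect the proof.
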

\begin{proof}
	We apply Lemma~\ref{lem:Moreau-Yosida-Properties} to obtain for each $k\in\bbN$ 
	\begin{align*}
		\frac{\calW^2_{2,\nu}(\mu^{k-1}_\tau,\mu^k_\tau)}{2\tau} 
		+ \int_0^\tau \frac{\bra[\big]{\calW^\pm_{2,\nu,s}(\mu^{k-1}_\tau)}^2}{2s^2}\, \dd s 
		= \calE(\mu^{k-1}_\tau) -\calE(\mu^k_\tau).
	\end{align*}
	Next, Lemma~\ref{lem:discrete_flux} allows us to identify the left-hand side to obtain for $k\in\bbN$ 
	\begin{align*}
		\tau\int_X\int_Y
		\frac{\abs{j_\tau}^2}{2u_\tau}\,\dd\nu\dd x 
		+ \int_0^\tau\int_X\int_Y \frac{\abs{\wt\jmath_\tau}^2}{2\wt u_\tau} \,\dd t 
		= \calE(\mu^{k-1}_\tau) -\calE(\mu^k_\tau).
	\end{align*}
	This proves \eqref{eq:disc_EDI} upon summing over $k$ and recalling the definitions of the interpolations. 
    The bound \eqref{eq:flux_uniform-integrability} follows from the non-negativity of $\calE$ by setting $s=0$ and letting $t\to\infty$ in \eqref{eq:disc_EDI}, and applying the Cauchy-Schwarz inequality.
\end{proof}

In the present case of $X$ being simply the whole Euclidean space $\mathbb{R}^d$ (and more generally $X\subset \mathbb{R}^d$ a smooth convex domain), 
we may further apply the flow-interchange technique introduced in \cite{MMS} to derive improved uniform bounds for the piecewise constant interpolation and more specifically a discrete entropy-dissipation inequality. The key observation is that
the logarithmic entropy $\mathcal H$ generates the fibrewise heat $S^t$
flow in the fibred Wasserstein geometry, see \cite{peszek2023heterogeneous}. Indeed, writing
$S^t\mu=(S^tu)\mathcal L^d\otimes\nu$, the fibrewise heat flow is a $0$-flow for the entropy and
satisfies an evolution variational inequality.

\begin{lemma}[Discrete entropy inequality]\label{l:disc_entropy-inequality}
	For all $T\in(0,\infty)$, it holds that 
	$u_\tau\in L^2([0,T]\times Y;H^2(X))$ and
	\begin{multline}\label{eq:entropy-inequality.discr}
		\calH(\mu_\tau(t)) + \int_0^t\int_X\pra[\bigg]{\int_Y\big(
			\alpha(y) \abs{\Delta_x u_\tau}^2+[\msB \nabla_x u_\tau ]\cdot\nabla_x u_\tau\big)\,\dd\nu(y)
			+\msf''(U_\tau)|\nabla_xU_\tau|^2 \,}\dd x\dd r
		\le \calH(\mu_0)
	\end{multline}
	for all $\tau \in(0,T]$ and a.e.\ $t\in[0,T]$.
\end{lemma}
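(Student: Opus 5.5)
We prove the discrete entropy inequality by the flow-interchange technique of~\cite{MMS}, using as auxiliary flow the fibrewise heat semigroup $S^s$. Fix $k\ge1$ and set $\mu^k_\tau\in\sfJ_\tau(\mu^{k-1}_\tau)$. As recalled above, $S^s$ is a $0$-flow for $\calH$ in $(\calP_{2,\nu}(X{\times}Y),\calW_{2,\nu})$, i.e.\ it satisfies the EVI with $\lambda=0$. Fibrewise this is the classical EVI for the heat flow on $\mathbb R^d$, and it integrates in $y$ thanks to the product structure of $\calW_{2,\nu}^2$. Comparing $\mu^k_\tau$ with the competitor $S^s\mu^k_\tau$ in the minimisation problem gives
\begin{align*}
	\calE(\mu^k_\tau)-\calE(S^s\mu^k_\tau)\le \frac{1}{2\tau}\Big(\calW_{2,\nu}^2(S^s\mu^k_\tau,\mu^{k-1}_\tau)-\calW_{2,\nu}^2(\mu^k_\tau,\mu^{k-1}_\tau)\Big)\le \frac{s}{\tau}\big(\calH(\mu^{k-1}_\tau)-\calH(S^s\mu^k_\tau)\big),
\end{align*}
where the second inequality is the integrated EVI. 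We divide by $s$ and take $\liminf_{s\downarrow0}$. The right-hand side tends to $\tau^{-1}(\calH(\mu^{k-1}_\tau)-\calH(\mu^k_\tau))$ by lower semicontinuity of $\calH$ along $S^s\mu^k_\tau\to\mu^k_\tau$. The left-hand side gives the dissipation $\limsup_{s\downarrow0}\frac1s\big(\calE(\mu^k_\tau)-\calE(S^s\mu^k_\tau)\big)$.

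The heart of the argument is to compute this dissipation along the heat flow $\partial_s u=\Delta_x u$, fibrewise in $y$. For $s>0$ the function $u_s=S^su^k_\tau$ is smooth in $x$ with Gaussian decay, so the computations can be done classically. For the gradient part,
\begin{align*}
	-\frac{\dd}{\dd s}\calE_1(u_s)=\int_X\int_Y\alpha(y)|\Delta_xu_s|^2\,\dd\nu\dd x.
\end{align*}
For the double-well part, $U_s$ itself solves the heat equation, since $\kappa$ does not depend on $x$, so $-\frac{\dd}{\dd s}\calF=\int_X\msf''(U_s)|\nabla U_s|^2\,\dd x$. For the repulsion part, $-\frac{\dd}{\dd s}\calR=\int_X\int_Y \msB\nabla_xu_s\cdot\nabla_xu_s\,\dd\nu\dd x$, using the symmetry of $\beta$ and integration by parts.

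We then integrate these identities over $(0,s)$ and let $s\downarrow0$. The key step here is lower semicontinuity of the positive parts. The term $\int\alpha|\Delta u_r|^2$ is lower semicontinuous as $r\downarrow0$, and so is $\int\msf''_\cvx(U_r)|\nabla U_r|^2$ by weak convergence of $\nabla(\sqrt{\cdot}$-type$)$ expressions. To avoid the latter, we write $\msf''_\cvx(U)|\nabla U|^2=|\nabla G(U)|^2$ with $G'=\sqrt{\msf''_\cvx}$, which is lower semicontinuous under $G(U_r)\to G(U)$ in $L^1_\loc$. The remaining terms involve $-\vartheta|\nabla U_r|^2$ and $\msB\nabla u_r\cdot\nabla u_r$. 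They converge, since $\nabla u_r\to\nabla u$ strongly in $L^2(X{\times}Y)$ by continuity of the heat semigroup on $H^1$; this also uses $\msB$ bounded and $\nabla U$ controlled by Jensen. This yields
\begin{align*}
	\tau\int_X\Big[\int_Y\big(\alpha|\Delta u^k_\tau|^2+\msB\nabla u^k_\tau\cdot\nabla u^k_\tau\big)\dd\nu+\msf''(U^k_\tau)|\nabla U^k_\tau|^2\Big]\dd x\le \calH(\mu^{k-1}_\tau)-\calH(\mu^k_\tau).
\end{align*}
Finiteness of $\int\alpha|\Delta u^k_\tau|^2$ follows because the remaining terms are bounded by $C\|\nabla u^k_\tau\|^2_{L^2}$, which is finite by Lemma~\ref{lem:GMM_bounds}. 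Hence $u^k_\tau\in L^2(Y;H^2(X))$. Summing over $k$ gives~\eqref{eq:entropy-inequality.discr} for $t\in\tau\mathbb N$, and for all $t$ by the piecewise constant definition. The $L^2_tH^2$ bound on $[0,T]$ then follows from the same estimate together with the lower bound on $\calH$ from the moment bound~\eqref{eq:moment-bound_k-uniform}.

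The main obstacle is the rigorous passage $s\downarrow0$ in the dissipation of the non-convex double-well part. The issue is lower semicontinuity of $\int\msf''_\cvx(U_r)|\nabla U_r|^2$ when $U$ is only in $H^1\cap L^4$, together with justifying the derivative of $\calF(u_s)$ at $s>0$. We plan to handle this via the Gaussian regularity of $u_s$ and the $G$-substitution above. An alternative, if needed, is to work with the integrated form $\calF(u^k_\tau)-\calF(u_s)=\int_0^s(\dots)\,\dd r$ and Fatou's lemma.
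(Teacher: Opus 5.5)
Your proposal is correct and follows essentially the same route as the paper. Both arguments use flow interchange with the fibrewise heat semigroup via the entropy EVI of Peszek--Poyato, compute the three dissipation terms along the heat flow, and pass to the limit $s\downarrow0$ using lower semicontinuity of the nonnegative parts and strong $H^1$-continuity of the semigroup for the non-convex remainder. The differences are minor: you use the integrated EVI and pass to the limit in one step, since your splitting into nonnegative lower semicontinuous parts plus a continuous remainder handles a possibly infinite $\int\alpha|\Delta u^k_\tau|^2$ directly, whereas the paper first extracts the $H^2$ bound from a coercive upper estimate of $\frac{\dd}{\dd t}\calE(S^t\mu^k_\tau)$ and then passes to the limit. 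Also note that, as in the paper, summing over $k$ only gives the inequality at grid times; for intermediate $t$ it holds only up to an $O(\tau)$ error when the one-step dissipation is negative.
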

\begin{proof}
	Given $\mu=u\scrL^d{\otimes}\nu\in\calP_{2,\nu}(X{\times}Y)$, denote for $x\in X$, $y\in Y$, $t>0$
	\begin{align*}
		S^t u(x,y)\coloneqq [G^t\ast u(\cdot,y)](x) \coloneqq \frac{1}{(4\pi t)^{d/2}}\int_X\exp\bra[\bigg]{{-}\frac{\abs{x-\bar x}^2}{4t}} u(\bar x,y)\,\dd \bar x
	\end{align*}
	the heat semigroup in $X$ applied to $u(\cdot,y)$ and let 
    $S^t\mu \coloneqq (S^t u) \scrL^d{\otimes}\nu$. Note that
    $\langle S^t u,\phi\rangle_Y = S^t\langle u,\phi\rangle_Y$ for all $\phi\in C(Y)$.
	We apply \cite[Theorems~4.6 and 3.34]{peszek2023heterogeneous} to obtain for all $\mu\in\calP_{2,\nu}(X{\times}Y)$, $\tilde\mu\in\mathrm{dom}(\calE)$
	\begin{align*}
		\frac{1}{2}\frac{\dd^+}{\dd t}\calW_{2,\nu}^2(\tilde\mu,S^t\mu) \le \calH(\tilde\mu) - \calH(S^t\mu).
	\end{align*}
	With this, fixing $\tau>0$, $k\in\bbN$, and using that $\mu^k_\tau$ minimises $\tilde\mu\mapsto\frac12\calW_{2,\nu}^2(\mu^{k-1}_\tau,\tilde\mu) + \tau\calE(\tilde\mu)$,
	it follows
	\begin{align}\label{eq:entropy-identity_proof-step1}
		-\tau\limsup_{s\downarrow 0} \int_0^1\pra[\bigg]{\frac{\dd}{\dd r}\calE(S^r\mu^k_\tau)}_{r = st}\,\dd t 
		\le \pra[\bigg]{\frac{1}{2}\frac{\dd^+}{\dd t} \calW_{2,\nu}^2(\mu^{k-1}_\tau,S^t\mu^k_\tau)}_{t=0}
		\le \calH(\mu^{k-1}_\tau) - \calH(\mu^k_\tau).
	\end{align}
	Using that the semigroup solves the diffusion equation with respect to  $x$, exploiting \eqref{eq:assAlpha}, and integrating by parts%
	, we obtain 
	\begin{multline}
		\label{eq:entropy-identity_proof-step2}
		\frac{\dd}{\dd t}\calE(S^t\mu^k_\tau) = 
		- \int_X\int_Y
		\alpha(y) 
		\abs{\Delta_x S^t u^k_\tau}^2\,\dd\nu\dd x
		+\int_X  \msf'(S^tU_\tau^k)\Delta_xS^t U^k_\tau\, \dd x+ \int_{X}\int_Y [\msB S^t u_\tau^k] \Delta_x S^tu_\tau^k \,\dd\nu\dd x\\
 = 
		- \int_X\int_Y
		\alpha(y) 
		\abs{\Delta_x S^t u^k_\tau}^2\,\dd\nu\dd x
		-\int_X  \msf''(S^tU_\tau^k)|\nabla_xS^t U^k_\tau|^2\, \dd x- \int_{X}\int_Y [\msB \nabla_xS^t u_\tau^k ]\cdot\nabla_x S^tu_\tau^k \dd\nu\dd x
\end{multline}
With the decomposition of $\msf= \msf_\text{cvx}+\msf_\text{conc}$  into convex and (quadratic) concave part (see also proof of Lemma~\ref{lem:energy-hierarchy}), we find that
\begin{equation}
\label{eq:entropy-identity_proof_Cauchy-Schwarz+Young}
		\frac{\dd}{\dd t}\calE(S^t\mu^k_\tau) \le - c \norm{\Delta_x S^t \mu^k_\tau}^2_{L^2(X\times Y)} 
         +C \big(\vartheta\|\nabla_x S^t U_\tau^k \|_{L^2(X)}^2 + \|\nabla_x S^t u_\tau^k \|_{L^2(X\times Y)}^2\big).
\end{equation}

Combining this with \eqref{eq:entropy-identity_proof-step1}, recalling that the second summand is uniformly bounded in both $k$ and $\tau$ by \eqref{eq:moment-bound_k-uniform} and \eqref{eq:nabla-bound_k-uniform}, we arrive at
\begin{align*}
	\tau\liminf_{s\downarrow 0} \int_0^1\pra[\bigg]{\int_X\int_Y\abs{\Delta_x S^r u^k_\tau(x,y)}^2\,\dd\nu(y)\dd x}_{r = st}\dd t
	&\le C\bra[\big]{\calH(\mu^{k-1}_\tau) - \calH(\mu^k_\tau) + \tau}.
\end{align*}
Since for $\nu$-a.e.\ $y$ it holds $u^k_\tau(\cdot,y)\in H^1(X)$, we also obtain $S^r u^k_\tau(\cdot,y) \to u^k_\tau(\cdot,y)$ strongly in $H^1(X)$ and hence also in ${L}^p(X)$ for $p < 2^\ast$ ($p\le2^\ast$ if $d \ne 2$). 
Furthermore, recall that $\int_X \abs{\Delta f}^2\dd x = \int_X \abs{\nabla^2 f}^2\dd x$ is lower semicontinuous with respect to  weak convergence in $H^2(X)$.
Hence, since the negative part of $\calH(\mu^k_\tau)$ is controlled by $m_{2,X}(\mu^k_\tau)$ (cf.\ e.g.\ \cite[Eq.\ (14)]{JKO98FPE}), we infer $u_\tau\in L^2([0,T]\times Y;H^2(X))$.

To prove the discrete entropy inequality in \eqref{eq:entropy-inequality.discr}, 
we insert \eqref{eq:entropy-identity_proof-step2} into \eqref{eq:entropy-identity_proof-step1}, sum with respect to  $k$ and again use that $S^r u^k_\tau(\cdot,y) \to u^k_\tau(\cdot,y)$ strongly in ${L}^p(X)$ and that $\Delta_x S^r u^k_\tau(\cdot,y) \rightharpoonup \Delta_x u^k_\tau(\cdot,y)$ weakly in ${L}^2(X)$ as shown before to estimate the $\limsup$ with respect to  $s$. This concludes the proof.
\end{proof}

\begin{corollary}
    The piecewise constant interpolation $u_\tau$ is such that $\msf''_\mathrm{cvx}(U_\tau)|\nabla U_\tau|^2\in L^1(X)$, where $\msf_\mathrm{cvx}$ denotes the convex part of $\msf$.
\end{corollary}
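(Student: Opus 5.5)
The corollary should follow by rearranging the discrete entropy inequality~\eqref{eq:entropy-inequality.discr} of Lemma~\ref{l:disc_entropy-inequality}. The convex part then sits on the left, and everything else is controlled by uniform bounds we already have. The claim is understood for a.e.\ $t$, or more precisely in $L^1([0,T]\times X)$. I split $\msf''=\msf_\cvx''+\msf_\ccv''$ with $\msf_\ccv''\equiv-\vartheta$, as in~\eqref{eq:potential-curvature}, and rewrite~\eqref{eq:entropy-inequality.discr} as
\[
\calH(\mu_\tau(t))+\int_0^t\!\!\int_X\Big(\int_Y\alpha|\Delta_xu_\tau|^2\dd\nu+\msf_\cvx''(U_\tau)|\nabla U_\tau|^2\Big)\dd x\dd r
\le \calH(\mu_0)+\vartheta\int_0^t\|\nabla U_\tau\|_{L^2}^2\dd r-\int_0^t\!\!\int_X\int_Y[\msB\nabla u_\tau]\cdot\nabla u_\tau\dd\nu\dd x\dd r .
\]
One point needs care: Lemma~\ref{l:disc_entropy-inequality} asserts the inequality only with $\msf''$ combined, so the integrand $\msf''(U_\tau)|\nabla U_\tau|^2$ could a priori be of indefinite sign or non-integrable. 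I would handle this by going back to~\eqref{eq:entropy-identity_proof-step2} at the heat-flow level. There, for $r>0$, $S^rU^k_\tau$ is smooth and every term is finite, so the splitting is legitimate. I would then pass $r\downarrow0$ in the convex part using Fatou's lemma, since $\msf_\cvx''\ge0$ and $S^rU^k_\tau\to U^k_\tau$, $\nabla S^rU^k_\tau\to\nabla U^k_\tau$ a.e.\ along a subsequence. This yields the displayed inequality with the convex term bounded by a liminf.

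Next I bound the right-hand side. First, $\calH(\mu_0)<\infty$ because $\mu_0\in\calA_\nu$. Second, by Jensen and~\eqref{eq:nabla-bound_k-uniform} with $\phi=\kappa$, the term $\|\nabla U_\tau\|_{L^2}^2$ is bounded by $C_3^2\|\kappa\|_\infty^2$ uniformly in $\tau$ and time. Third, the $\msB$-term is bounded by $\|\beta\|_{L^\infty}\|\nabla u_\tau\|_{L^2(X\times Y)}^2\le \beta^* C_2$ by~\eqref{eq:moment-bound_k-uniform}. The only remaining point is that $\calH(\mu_\tau(t))$ on the left is bounded below. This holds because its negative part is controlled by the second moment $m_{2,X}(\mu_\tau(t))\le C_1(1+T)$, using the compact support of $\nu$ and the standard estimate~\cite[Eq.\ (14)]{JKO98FPE}.

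Combining these bounds gives $\int_0^T\int_X\msf_\cvx''(U_\tau)|\nabla U_\tau|^2\dd x\dd r\le C(T,\mu_0)$ uniformly in $\tau$. Since $u_\tau$ is piecewise constant in time, this means that for every step $k$, $\msf_\cvx''(U^k_\tau)|\nabla U^k_\tau|^2\in L^1(X)$, with $\tau\sum_k\|\cdot\|_{L^1}$ bounded. The main obstacle is the justification of the splitting in the limit $r\downarrow0$ described above. The rest is bookkeeping with Lemma~\ref{lem:GMM_bounds}.
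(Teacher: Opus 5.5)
Your argument is correct and is the intended one. The paper states the corollary as an immediate consequence of \eqref{eq:entropy-inequality.discr}: split $\msf''=\msf''_{\cvx}-\vartheta$ and control the entropy, concave and $\msB$-terms by the moment and gradient bounds of Lemma~\ref{lem:GMM_bounds}, exactly as the paper does again in Section~\ref{sss:EntDissIneqFluxDefect}. Your Fatou step at the heat-flow level soundly justifies the $s\downarrow0$ passage left implicit in the proof of Lemma~\ref{l:disc_entropy-inequality}, although for the splitting itself it suffices to note that $\msf''(U_\tau)|\nabla U_\tau|^2\ge-\vartheta|\nabla U_\tau|^2\in L^1$.
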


Finally, the piecewise constant and the De Giorgi interpolant satisfy 
Euler--Lagrange equations, respectively. This follows directly from the one-step result in Proposition~\ref{prop:E-L_1}. Let us remark that in the case of $\supp\nu$ being a singleton, the restriction $d \leq 3$ can be removed, cf.\ Remark~\ref{rem:Singleton}.
\begin{proposition}[Euler--Lagrange equations]\label{prop:E-L_2}
    Let $\tau>0$. For every $\xi\in L^\infty(Y; C^2_c(X;\R^d))$ and a.e.\ $t\ge 0$ 
    the following identity holds:
    \begin{equation}\label{eq:flux.EL}
    \begin{aligned}
        \int_X\int_Y \xi\cdot j_\tau\,\dd\nu\dd x
        &= \int_X\int_Y \alpha(y) \big[\bra[\big]{\tfrac12\divv_x \xi I + \nablax\xi}\nablax u_\tau + u_\tau \nablax\divv_x\xi \big]
        \cdot \nablax u_\tau \,\dd\nu\dd x
        \\
       &\quad
        + \int_X\int_Y (\msf'(U_\tau)\kappa(y)+\mathsf{B} u_\tau)
        \bra[\big]{\nablax u_\tau\cdot\xi + u_\tau\divv_x\xi}\,\dd\nu\dd x,
    \end{aligned}
    \end{equation}
    where $U_\tau$ is defined as in \eqref{eq:defLowerEnergy}. 
    The same result holds for the De\,Giorgi interpolant $\wt u_\tau$ with the flux $\wt\jmath_\tau$.
\end{proposition}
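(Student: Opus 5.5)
The plan is to reduce Proposition~\ref{prop:E-L_2} to the one-step Euler--Lagrange equation of Proposition~\ref{prop:E-L_1}, applied pointwise in time. For the piecewise constant interpolant, fix $\tau>0$ and $k\in\bbN$. For $t\in(\tau(k{-}1),\tau k]$ we have $\mu_\tau(t)=\mu^k_\tau\in\sfJ_\tau(\mu^{k-1}_\tau)$ and $j_\tau(t)=j^k_\tau$, where $j^k_\tau$ is the flux of Lemma~\ref{lem:discrete_flux} with $h=\tau$, $\mu=\mu^{k-1}_\tau$ and $\wh\mu=\mu^k_\tau$. We first check that the hypothesis $\mu^{k-1}_\tau\in\mathrm{dom}(\calE)$ of Proposition~\ref{prop:E-L_1} holds. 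It follows inductively from \eqref{eq:GMM_bounds-proof_Ebound}, since $\calE(\mu^{k-1}_\tau)\le\calE(\mu_0)<\infty$ for $\mu_0\in\calA_\nu$. Proposition~\ref{prop:E-L_1} then gives \eqref{eq:E-L_full} with $\wh u=u^k_\tau$, $\hat\jmath=j^k_\tau$ and $\wh U=U^k_\tau$. This is exactly \eqref{eq:flux.EL} for every $t$ in that interval, and so for every $t>0$.

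For the De Giorgi interpolant, fix $k$ and a.e.\ $t\in(\tau(k{-}1),\tau k)$. By Definition~\ref{def:JKP_interpolations}, $\wt\mu_\tau(t)\in\sfJ_h(\mu^{k-1}_\tau)$ with $h=t-(k{-}1)\tau>0$, and $\wt\jmath_\tau(t)$ is the associated flux of Lemma~\ref{lem:discrete_flux}. Proposition~\ref{prop:E-L_1} with this $h$ gives \eqref{eq:flux.EL} with $\wt u_\tau,\wt\jmath_\tau,\wt U_\tau$ in place of $u_\tau,j_\tau,U_\tau$. At $t=k\tau$ we have $\wt\mu_\tau=\mu^k_\tau$, which was already treated above. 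The exceptional null set consists only of those times where the measurable selection does not define the flux.

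The only point needing care is that all terms in \eqref{eq:flux.EL} are finite, so that the identity is meaningful and holds as an equality of finite numbers. The term $\alpha|\nabla_x u|^2|\nabla\xi|$ is integrable by \eqref{eq:bounds_k-uniform}. For the term $u\nabla_x u\cdot\nabla_x\divv_x\xi$ we use $u\in L^2$, which follows from Gagliardo--Nirenberg together with the mass and gradient bounds. The repulsion term $\msB u\,(\nabla_x u\cdot\xi+u\divv_x\xi)$ is handled by the $L^\infty$ bound on $\beta$ and Cauchy--Schwarz in $Y$.

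I expect the main obstacle to be the potential term $\msf'(U)\kappa\,\nabla_x u\cdot\xi$. Here $\msf'(U)\sim U^3$, so we need $U^3\in L^2_{\loc}$, that is $U\in L^6_{\loc}$. In $d\le3$ this comes from $H^1\embeds L^6$, using Lemma~\ref{lem:bounds} with $\phi=\kappa$ and $p=2^\ast\ge6$. This is precisely why $d\le3$ is required when $\vartheta>0$. In the singleton case, Remark~\ref{rem:Singleton} lets us rewrite this term in the derivative-free form \eqref{eq:flux.pot.def.scalar}, which only needs $U\in L^4$, and the dimension restriction disappears.
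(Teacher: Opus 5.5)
Your proposal is correct and follows the paper's own argument. The paper likewise obtains Proposition~\ref{prop:E-L_2} directly by applying the one-step Euler--Lagrange equation of Proposition~\ref{prop:E-L_1} to $\mu^k_\tau\in\sfJ_\tau(\mu^{k-1}_\tau)$ and to $\wt\mu_\tau(t)\in\sfJ_{t-(k-1)\tau}(\mu^{k-1}_\tau)$. It also notes, as you do, that the restriction $d\le3$ for $\vartheta>0$ can be dropped in the singleton case via Remark~\ref{rem:Singleton}; your explicit check that every term in \eqref{eq:flux.EL} is finite (in particular $U\in L^6$ from $H^1\embeds L^{2^*}$ for $d\le3$) spells out what the paper leaves implicit.
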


\subsection{Compactness and convergence%
}
\label{ssec:comp-conv}

We now pass to the limit as $\tau\downarrow0$. The uniform moment
bounds \eqref{eq:moment-bound_k-uniform} and equicontinuity in time \eqref{eq:W-bound_embedded} in Lemma~\ref{lem:GMM_bounds} yield a common narrowly continuous
limit for both interpolants, while the energy bounds provide weak
compactness of their densities in $L^2(0,T;L^2(Y;H^1(X)))$.
Although compactness in the type variable $y$ is unavailable, averages
against suitable test functions in $y$ converge strongly (see Lemma~\ref{l:GMM_convergence_strong} below). Combining
this convergence with the variational comparison from Lemma~\ref{lem:energy-hierarchy},
we show that the two interpolants have the same limiting energy
and that their difference converges strongly. These properties
will allow us to identify a common limiting flux and retain the
full transport dissipation.

\begin{lemma}[Narrow compactness]\label{l:GMM_time-convergence_bounds}
    For all $T\in(0,\infty)$, there exists $\mu \in \AC^2([0,T];\calP_{2,\nu}(X{\times}Y))$
    such that up to a subsequence both $\mu_\tau(t) \rightharpoonup \mu(t)$ and $\wt\mu_\tau(t) \rightharpoonup \mu(t)$ narrowly in $\calP(X{\times}Y)$ uniformly for $t\in[0,T]$, where $t\mapsto\mu(t)$ is $\frac{1}{2}$-Hölder continuous in the sense that,
    for $0\le s\le t\le T$,
    \begin{align}\label{eq:Hoelder-continuity}
        \calW_{2,\nu}(\mu(s),\mu(t)) \le C\sqrt{t-s}.
    \end{align}
\end{lemma}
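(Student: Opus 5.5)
The argument is a refined Arzelà--Ascoli theorem (cf.\ \cite[Proposition~3.3.1]{AGS}) in the metric space $(\calP_{2,\nu}(X{\times}Y),\calW_{2,\nu})$, with compactness supplied by the narrow topology. The three inputs are the uniform moment bound \eqref{eq:moment-bound_k-uniform}, the approximate equicontinuity \eqref{eq:W-bound_embedded}, and the lower semicontinuity of $\calW_{2,\nu}$ under narrow convergence.

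\emph{Compactness.} By \eqref{eq:moment-bound_k-uniform}, the set $K_T:=\{\mu_\tau(t),\wt\mu_\tau(t):\tau\in(0,T],\,t\in[0,T]\}$ has uniformly bounded second $x$-moments. Its $Y$-marginals all equal $\nu$, which has compact support by \eqref{eq:assNuCompSupp}. Hence $K_T$ is tight, and by Prokhorov it is relatively compact in $\calP(X{\times}Y)$ for the narrow topology. Narrow limits keep $Y$-marginal $\nu$ and, by lower semicontinuity of $\int|x|^2\,\dd\mu$, finite second moment, so they lie in $\calP_{2,\nu}(X{\times}Y)$.

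\emph{Lower semicontinuity.} Next I would show that $\calW_{2,\nu}$ is narrowly lower semicontinuous on sets with bounded second moments, i.e.\ if $\mu_n\rightharpoonup\mu$ and $\hat\mu_n\rightharpoonup\hat\mu$ narrowly, then $\calW_{2,\nu}(\mu,\hat\mu)\le\liminf_n\calW_{2,\nu}(\mu_n,\hat\mu_n)$. This is the main technical step, because disintegrations $\mu_n^y$ need not converge fibrewise. I would use the representation of $\calW_{2,\nu}^2$ as a minimum of $\int|x-x'|^2\,\dd\gamma$ over plans $\gamma$ on $X{\times}X{\times}Y$ with prescribed $(x,y)$- and $(x',y)$-marginals; see \cite{peszek2023heterogeneous} and Appendix~\ref{ssec:Notation}. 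Optimal fibred plans $\gamma_n$ are tight, and every narrow limit point is admissible for $(\mu,\hat\mu)$. Lower semicontinuity of the integral of the nonnegative continuous cost then gives the claim. If this is already stated in the appendix, I would simply invoke it.

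\emph{Diagonal argument and limit.} Choose a countable dense set $D\subset[0,T]$ containing $0$. A diagonal argument gives a subsequence along which $\mu_\tau(t)\rightharpoonup\mu(t)$ for every $t\in D$. Passing to the $\liminf$ in \eqref{eq:W-bound_embedded} with the lower semicontinuity above yields $\calW_{2,\nu}(\mu(s),\mu(t))\le\sqrt{2\calE(\mu_0)}\sqrt{t-s}$ for $s,t\in D$. Thus $\mu$ extends uniquely to a $\tfrac12$-Hölder curve on $[0,T]$, which is \eqref{eq:Hoelder-continuity}, and $\mu$ is continuous in $\calW_{2,\nu}$. To get convergence for all $t$, and uniformly in $t$, I would metrize the narrow topology on the tight set $K_T$ by a bounded-Lipschitz distance $\delta$. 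Since $\calW_{2,\nu}$ dominates the fibred $\calW_1$ distance, which dominates $\delta$, \eqref{eq:W-bound_embedded} gives
\[
\delta(\mu_\tau(s),\mu_\tau(t))\le C\bigl(\sqrt{|t-s|}+\sqrt\tau\bigr).
\]
A standard $\varepsilon/3$ argument over a finite $\varepsilon$-net in $D$ then gives $\sup_{t\in[0,T]}\delta(\mu_\tau(t),\mu(t))\to0$.

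\emph{De Giorgi interpolant and regularity.} For $\wt\mu_\tau(t)$ with $t\in((k{-}1)\tau,k\tau)$, \eqref{eq:distance.squeeze} gives $\calW_{2,\nu}(\wt\mu_\tau(t),\mu_\tau(t))\le 2\calW_{2,\nu}(\mu_\tau^{k-1},\mu_\tau^k)\le 2\sqrt{2\tau\calE(\mu_0)}$. Hence $\delta(\wt\mu_\tau(t),\mu_\tau(t))\to0$ uniformly in $t$, and $\wt\mu_\tau$ has the same limit. Finally, $\mu\in\AC^2$ follows from the discrete energy bound \eqref{eq:GMM_bounds-proof_Ebound}. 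The discrete metric derivative $|\mu_\tau'|(t):=\calW_{2,\nu}(\mu_\tau^{k-1},\mu_\tau^k)/\tau$ is bounded in $L^2(0,T)$ by $\sqrt{2\calE(\mu_0)}$. A weak $L^2$ limit $m$ satisfies $\calW_{2,\nu}(\mu(s),\mu(t))\le\int_s^t m$, again by lower semicontinuity, as in \cite[Theorem~3.3.1]{AGS}.
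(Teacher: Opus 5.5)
Your proposal is correct and follows essentially the paper's route: Prokhorov via the moment bound and the fixed $Y$-marginal, a refined Arzel\`a--Ascoli argument based on narrow lower semicontinuity of $\calW_{2,\nu}$ (item (iv) of the appendix theorem), identification of the De Giorgi limit through \eqref{eq:distance.squeeze}, and $\AC^2$ via a weak $L^2$ limit of the discrete metric derivatives. Your bounded-Lipschitz $\varepsilon/3$ argument for uniformity in $t$ and the direct bound $\calW_{2,\nu}(\wt\mu_\tau(t),\mu_\tau(t))\le 2\sqrt{2\tau\calE(\mu_0)}$ spell out what the paper gets by citing \cite[Proposition~3.3.1]{AGS}, which by itself yields only pointwise convergence, and by extracting a second limit curve $\wt\mu$ and then identifying it with $\mu$.
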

\begin{proof}
    By~\eqref{eq:W-bound_embedded} both interpolants $\mu_\tau$ and $\wt\mu_\tau$ are $\tfrac{1}{2}$-Hölder continuous up to an error vanishing as $\tau\to 0$.
    Furthermore, since their second moments with respect to $X$ are $\tau$-uniformly bounded by~\eqref{eq:moment-bound_k-uniform} and their marginal in $Y$ is fixed to $\nu$, both sequences are uniformly tight and hence narrowly relatively compact by Prokhorov's theorem (cf.\ e.g.\ \cite[Theorem 5.1.3]{AGS}).
    This allows us to apply the refined Ascoli-Arzelà theorem \cite[Proposition 3.3.1]{AGS} to obtain curves $\mu,\wt\mu:[0,T]\to\calP(X\times Y)$ each satisfying~\eqref{eq:Hoelder-continuity}, 
    such that $\mu_\tau(t) \rightharpoonup \mu(t)$ and  $\wt\mu_\tau(t) \rightharpoonup\wt\mu(t)$ narrowly in $\calP(X\times Y)$ uniformly with respect to $t\in[0,T]$ as $\tau\to 0$, where we have also used that $\calW_{2,\nu}$ is lower semicontinuous with respect to narrow convergence (cf.\ \cite[Proposition~3.12]{peszek2023heterogeneous}).
    Due to~\eqref{eq:GMM_bounds-proof_Ebound} and~\eqref{eq:distance.squeeze}, we have $\mu = \wt\mu$.
    To obtain the claimed absolute continuity, for $\tau >0$ we set $v_\tau(t)\coloneqq \tfrac{1}{\tau} \calW_{2,\nu}(\mu_\tau^{k-1},\mu_\tau^k)$ for $t \in (\tau(k-1),\tau k)$, which is $\tau$-uniformly bounded in $\rmL^2([0,T])$ by~\eqref{eq:GMM_bounds-proof_Ebound} and hence has a weak limit $v\in\rmL^2([0,T])$.
    Then, exploiting again the narrow lower semicontinuity of $\calW_{2,\nu}$ and using the triangle inequality in the prelimit, we find for $0\le s \le t \le T$
    \begin{align*}
        \calW_{2,\nu}(\mu(s),\mu(t)) 
        \le \liminf_{\tau\to0} \calW_{2,\nu}(\mu_\tau(s),\mu_\tau(t)) 
        \le \lim_{\tau\to 0}\int_s^t v_\tau(r)\,\dd r 
        = \int_s^t v(r)\,\dd r.
    \end{align*}
    In particular, we have $\mu \in \AC^2([0,T];\calP_{2,\nu}(X{\times}Y))$ and the proof is complete.
\end{proof} 

We now combine the common narrow convergence from Lemma~\ref{l:GMM_time-convergence_bounds} with
the uniform spatial gradient and moment bounds from Lemma~\ref{lem:GMM_bounds}.

\begin{lemma}[Spatial regularity and modes of convergence]\label{l:GMM_convergence_strong}
    Let $\mu_0 \in \calP_{2,\nu}(X{\times}Y)$ with $\calE(\mu_0)<\infty$. 
    Fix $T\in(0,\infty)$ and let $u$ be the $(\scrL^{d+1}{\otimes}\nu)$-density of the limit $\mu \in \AC^2([0,T];\calP_{2,\nu}(X{\times}Y))$ obtained in Lemma~\ref{l:GMM_time-convergence_bounds}
    \begin{enumerate}[label={\normalfont(\roman*)}]
    \item\label{l-item:GMM_convergence_weak} The limit density $u$ is such that $\nabla u\in L^\infty(0,T;L^2(X{\times Y}))$. Moreover, the De Giorgi  interpolation $\wt\mu_\tau$ 
	and the piecewise constant interpolation $\mu_\tau$ with densities $\wt u_\tau$ and $u_\tau$, respectively, satisfy for the same subsequence as in Lemma \ref{l:GMM_time-convergence_bounds}
	\begin{align} \label{eq:interpolWeakConv}
        \wt u_\tau, u_\tau\rightharpoonup u
        \text{ weakly in $L^2([0,T]{\times} Y;H^1(X))$}.
    \end{align}
    \item\label{l-item:GMM_convergence_strong} For all $q \in[1,2^*)$ 
    and all $\phi\in C([0,T];L^\infty(Y;C(X)))$ both interpolations satisfy along the same subsequence
	the strong convergences
	\begin{align}\label{eq:interpolTestedStrongConvA}
        \skp{\wt u_\tau,\phi}_Y,\skp{u_\tau,\phi}_Y \to \skp{u,\phi}_Y 
        \text{ strongly in }L^\infty(0,T; L^q(X)).
    \end{align}
    \end{enumerate}
\end{lemma}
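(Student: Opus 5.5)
For part~\ref{l-item:GMM_convergence_weak} we start from the uniform bounds of Lemma~\ref{lem:GMM_bounds}. By \eqref{eq:bounds_k-uniform}, $\nabla_x u_\tau$ and $\nabla_x\wt u_\tau$ are bounded in $L^\infty(0,T;L^2(X{\times}Y))$. The Gagliardo--Nirenberg inequality on each fibre, together with mass one per fibre, gives
\[
\|u\|_{L^2(X)}\lesssim\|\nabla u\|_{L^2}^{\theta}\|u\|_{L^1}^{1-\theta},
\]
and hence a bound on $u_\tau,\wt u_\tau$ in $L^2([0,T]{\times}Y;H^1(X))$. Extracting a further subsequence, we obtain weak limits $\bar u$ and $\hat u$ in that space. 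To identify them with the density $u$ of $\mu$, we test against $\psi\in C_c((0,T)\times X\times Y)$. Narrow convergence $\mu_\tau(t)\rightharpoonup\mu(t)$, uniform in $t$, together with dominated convergence in time, gives $\int\psi\,u_\tau\to\int\psi\,\dd\mu\,\dd t$, and likewise for $\wt u_\tau$. Therefore $\mu=\bar u\,\scrL^{d+1}\otimes\nu$ and $\bar u=\hat u=u$. Since the limit is unique, the whole subsequence of Lemma~\ref{l:GMM_time-convergence_bounds} converges. The bound $\nabla u\in L^\infty(0,T;L^2)$ then follows from weak-$*$ lower semicontinuity.

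For part~\ref{l-item:GMM_convergence_strong}, first fix $\phi\in L^\infty(Y;C^1_b(X))$, independent of $t$, and set $g_\tau(t):=\langle u_\tau(t),\phi\rangle_Y$. Three facts about $g_\tau$ will be used.
\begin{itemize}
\item By \eqref{eq:nabla-bound_k-uniform}, $g_\tau$ is bounded in $L^\infty(0,T;H^1(X))$.
\item By \eqref{eq:moment-bound_k-uniform}, it has uniformly bounded second moments: $\int|x|^2|g_\tau|\,\dd x\le\|\phi\|_\infty m_{2,X}$.
\item It is almost equicontinuous in a weak norm. For $\zeta\in C^1_b(X)$, Kantorovich duality applied fibrewise together with \eqref{eq:W-bound_embedded} gives
\[
\Big|\int_X\zeta\,(g_\tau(t)-g_\tau(s))\,\dd x\Big|\le\|\phi\|_{L^\infty(Y;C^1)}\|\zeta\|_{C^1}\calW_{2,\nu}(\mu_\tau(s),\mu_\tau(t))\lesssim\sqrt{t-s}+\sqrt\tau .
\]
\end{itemize}
The moment bound and local $H^1$ compactness make $\{g_\tau(t)\}$ relatively compact in $L^1(X)$, and interpolating with the $L^{2^*}$ bound makes it relatively compact in $L^q(X)$ for $q<2^*$. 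Combining this with the time equicontinuity in the dual of $C^1_b$, a refined Arzel\`a--Ascoli (or Aubin--Lions--Simon) argument yields convergence in $L^\infty(0,T;L^q(X))$. The limit is $\langle u,\phi\rangle_Y$ by part~\ref{l-item:GMM_convergence_weak}. Because these compactness statements hold for every $t$ and the limit is unique, the convergence holds along the same subsequence.

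It remains to extend to general $\phi$, first to $\phi\in L^\infty(Y;C(X))$ and then to $\phi\in C([0,T];L^\infty(Y;C(X)))$. For the first step we approximate by $\phi_n\in L^\infty(Y;C^1)$, exactly as in the proof of Lemma~\ref{lem:bounds}. Since $u_\tau$ is bounded in $L^\infty(0,T;L^2(Y;L^p(X)))$ with $p$ slightly larger than $q$, the error term is controlled uniformly in $\tau$ by $\|\phi_n-\phi\|_{L^2(Y;L^\infty(X))}$.

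For the time dependence we use the uniform continuity of $\phi$ in $t$. We approximate $\phi$ by step functions in time, $\phi^N(t)=\phi(t_i)$ on $[t_i,t_{i+1})$; the resulting error is bounded by $\sup_t\|\phi(t)-\phi^N(t)\|_{L^\infty(Y;C(X))}$ times the $L^\infty_tL^q_x$-bound on $u_\tau$, which comes from \eqref{eq:p-bound_k-uniform}. Everything above applies verbatim to $\wt u_\tau$, because Lemma~\ref{lem:GMM_bounds} covers both interpolants and both share the same narrow limit.

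I expect the main obstacle to be the uniform-in-time $L^q$ compactness on the unbounded domain $\R^d$. Specifically, one must turn the weak (dual $C^1_b$) equicontinuity into convergence in $L^\infty_t L^q_x$ while handling tails. I would do this with an $\varepsilon$-net argument. Tightness from the moment bound cuts the problem down to a ball. On that ball, the $H^1$ bound together with the interpolation inequality $\|f\|_{L^2}\le\|f\|_{H^{-1}}^{1/2}\|f\|_{H^1}^{1/2}$ converts smallness in $H^{-1}$, which follows from the Wasserstein estimate, into smallness in $L^2$ and hence in $L^q$.
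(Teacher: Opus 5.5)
Your proposal is correct. Part (i) follows the paper's argument, with more detail on identifying the weak limit. Part (ii) takes a genuinely different route.

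\textbf{Your route for (ii).} You build compactness for the tested averages $g_\tau=\langle u_\tau,\phi\rangle_Y$ from scratch. Pointwise-in-time relative compactness in $L^q$, $q<2^*$, comes from the $H^1$ bound, Rellich on balls and tightness from the second moments. Equicontinuity in the dual of $C^1_b$ comes from fibrewise Kantorovich--Rubinstein duality and \eqref{eq:W-bound_embedded}. You then upgrade pointwise to uniform convergence in $L^q$ and use uniqueness of the limit to keep the subsequence.

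\textbf{The paper's route for (ii).} It extracts no new compactness. It reuses the uniform narrow convergence $\mu_\tau(t)\rightharpoonup\mu(t)$ already obtained in Lemma~\ref{l:GMM_time-convergence_bounds}. Theorem~\ref{thm:negSobolev}, a Fourier argument, turns this into uniform convergence of $\langle u_\tau(t),\phi\rangle_Y$ in $H^{-m}$ for $m>d/2$. Interpolation with the $L^\infty_tH^1_x$ bound then gives $q\in[2,2^*)$, interpolation with $L^1$ gives $q\in(1,2)$, and the moment bound gives $q=1$.

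\textbf{Comparison.} The paper's route makes identification of the limit and the ``same subsequence'' claim automatic. Yours avoids the Fourier lemma and treats all $q<2^*$ at once. In exchange, it needs one extra standard step: on an $L^q$-precompact set, the dual-$C^1_b$ distance controls the $L^q$ distance through a modulus. Without that step, the refined Arzel\`a--Ascoli theorem only yields pointwise-in-$t$ convergence.

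\textbf{A small correction.} In your last paragraph, the Wasserstein estimate does not directly give smallness in $H^{-1}$, since $H^1\not\hookrightarrow C^1_b$ for $d\ge2$. You can fix this in either of two ways:
\begin{itemize}
\item Use $H^{-m}$ with $m>1+d/2$, so that $H^m\hookrightarrow C^1_b$, and interpolate between $H^{-m}$ and $H^1$.
\item Mollify the test function and use the uniform $L^2$ bound on $g_\tau(t)-g_\tau(s)$.
\end{itemize}
The rest matches the paper up to where the time partition is applied (you at the $L^q$ level via \eqref{eq:p-bound_k-uniform}, the paper at the $H^{-m}$ level), including the density argument for $\phi\in L^\infty(Y;C(X))$, which follows the proof of Lemma~\ref{lem:bounds}.
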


\begin{remark}
\label{rem:betterConvergenceSingl}
    Note that if $\supp \nu$ is a singleton and  $\vartheta>0$ the restriction $d\leq 3$ can be dropped. 
    Due to the uniform $L^4$ bound for $U_\tau$, we can conclude strong convergence of $ \skp{u_\tau,\phi}_Y$
    and $ \skp{\wt u_\tau,\phi}_Y$ in $L^\infty(0,T; L^q(X))$ for all $1\leq q< 4$. In connection with the entropy-dissipation inequality in \eqref{eq:entropy-inequality.discr}, we can show that $U_\tau^2$ is bounded in $L^\infty(0,T;L^2(X))\cap L^2(0,T;H^1(X))$. This result can be exploited to improve the strong convergence  to the case 
    $q= 4$, see Subsection~\ref{ssec:ExistNonconvex} for a similar argument.
\end{remark}

\begin{proof}

	\ref{l-item:GMM_convergence_weak}: Using the bounds from Lemma~\ref{lem:GMM_bounds} yields the convergences in
	\eqref{eq:interpolWeakConv} both for the De Giorgi and the piecewise constant interpolants, up to a further subsequence. 
    However, since the limit is uniquely identified in  Lemma~\ref{l:GMM_time-convergence_bounds}, the whole subsequence has to converge.

	\ref{l-item:GMM_convergence_strong}: 
    We focus on the piecewise constant interpolation, the proof for the De Giorgi interpolation being analogous.
    Let us first consider $\wt\phi\in \rmL^\infty(Y;C^1(X))$.     
    The narrow convergence $\mu_\tau(t)\weakly\mu(t)$ uniformly in $t\in[0,T]$ in Lemma~\ref{l:GMM_time-convergence_bounds} implies, in particular, the uniform-in-time convergence of $\langle{u_\tau}(t),\wt\phi\rangle_Y$ to  $\langle u(t),\wt\phi\rangle_Y$ in $H^{-m}(X)$ for $m>d/2$, see Theorem~\ref{thm:negSobolev}. In  the general case of a time-dependent test function
    $\phi\in C([0,T];L^\infty(Y;C^1(X)))$, we fix a further partition 
    $\{t_i^N\}_{i=0}^N$ of $[0,T]$ with $t_i^N= iT/N$ for $i=0,\ldots,N$, leading us for arbitrary $t\in[0,T]$ to the estimate
    \begin{align*}
\sup_{t\in [0,T]} \|\langle z_\tau(t),\phi(t)\rangle_Y\|_{H^{-m}(X)}
&\leq \max_{1\leq i\leq N} \sup_{t\in [0,T]} \|\langle z_\tau(t),\phi(t_i)\rangle_Y\|_{H^{-m}(X)}
\\
&\quad +\max_{1\leq i\leq N} \sup_{t\in [0,T]} \|\langle z_\tau(t),\phi(t) {-} \phi(t_i)\rangle_Y\|_{H^{-m}(X)}\\
& \leq \max_{1\leq i\leq N} \sup_{t\in [0,T]} \|\langle z_\tau(t),\phi(t_i)\rangle_Y\|_{H^{-m}(X)}+C_{m,d}C_\phi\delta_N,
    \end{align*}
    where the first term on the right-hand side vanishes as $\tau\to 0$ due to the above discussion 
    with time-independent test function $\wt \phi$ and the second term can be made arbitrarily small 
    by choosing $N$ large since $\phi$ is uniformly continuous with respect to $t\in[0,T]$. 
    
    Combining the strong convergence in $H^{-m}(X)$ with the uniform boundedness of $\langle{u_\tau},\phi\rangle_Y$ in the space
    $L^\infty(0,T;H^1(X))$, we get by Sobolev interpolation the strong convergence of the latter in $L^\infty(0,T; L^q(X))$ for $q\in [2,2^*)$. 
    Indeed, we  have for $z_\tau = u_\tau - u$ that
    \[
    \|\langle z_\tau(t),\phi\rangle_Y\|_{L^q(X)}
\leq
\|\langle z_\tau(t),\phi\rangle_Y\|_{H^{-m}(X)}^{\theta}
\|\langle z_\tau(t),\phi\rangle_Y\|_{H^1(X)}^{1-\theta},
    \]
    for some $\theta\in(0,1)$ depending on $d,m,q$. For $q\in (1,2)$, the
    strong convergence follows from interpolation between the Lebesgue spaces
    $L^1(X)$ and $L^2(X)$ (note that $\|\langle z_\tau,\phi\rangle_Y\|_{L^\infty(0,T;L^1(X))} \leq 2\|\phi\|_{L^\infty}$ as $u_\tau$ and $u$ have unit mass). 
    The limiting case $q=1$, is a consequence of 
    the moment bounds: 
    \begin{align*}
    \|\langle z_\tau(t),\phi\rangle_Y\|_{L^1(X)}
    &\leq     \|\langle z_\tau(t),\phi\rangle_Y\|_{L^1(B_R)} + 
        \|\langle z_\tau(t),\phi\rangle_Y\|_{L^1(X\setminus B_R)}\\
    & \leq C R^{d/2}\|\langle z_\tau(t),\phi\rangle_Y\|_{L^2(X)} + \frac{\|\phi\|_{L^\infty(Y;C(X))}}{R^2}\big(m_{2,X}(\mu_\tau(t)) + m_{2,X}(\mu(t))\big),
    \end{align*}
    where the first term in the right-hand side vanishes as $\tau\to 0$ and the second term can be made arbitrarily small.

    Finally, the above argument extends to $\phi\in C([0,T]; \rmL^\infty(Y;C(X)))$, since $C([0,T];L^2(Y;C^1(X)))\subset C([0,T];L^2(Y;C(X)))$, arguing analogously to the end of the proof of Lemma~\ref{lem:bounds}.
\end{proof}

The convergence of the type averages together with the variational
comparison from Lemma~\ref{lem:energy-hierarchy} are now used to show that 
both interpolants have the same limiting energy and that their difference converges strongly.
Let us emphasise that this argument does not require compactness in the type variable
or the additional regularity obtained by flow interchange.

\begin{proposition}[Energy convergence]\label{prop:energy-convergence}
There exists a non-increasing function $E:[0,\infty)\to\mathbb{R}$ with $E(0) = \calE(\mu_0)$ such that along a subsequence $\tau\downarrow0$ it holds for a.e.\ $t>0$,
\begin{subequations}\label{eq:Econv}
	\begin{align}
    \label{eq:energy-limit}
    E(t) &= \lim_{\tau\downarrow0}\calE(\mu_\tau(t)) = \lim_{\tau\downarrow0}\calE(\wt\mu_\tau(t)).
\end{align}	
Moreover, for all $p\in[1,\infty)$ and all $T\in(0,\infty)$, we have
\begin{align}
\nabla u_\tau-\nabla\wt u_\tau &\to0\qquad\text{ in }L^p(0,T;L^2(X{\times} Y)),\quad 
	\label{eq:strongGrad.diff} \\  
		u_\tau-\wt u_\tau&\to 0\qquad\text{ in }L^p(0,T;L^q(X{\times} Y)),\quad%
		 q\in[1,2+\tfrac{2}{d}).	\label{eq:strongH1.diff.p}
\intertext{Additionally, in the case $\vartheta>0$, it holds that}
  U_\tau - \wt U_\tau&\to 0\qquad\text{ in }L^p(0,T;L^4(X)),\quad\label{eq:strongL4}
\end{align}
\end{subequations}
If $\nu$ is supported on finitely many points, then  $E(t)=E(u(t))$ and the limits~\eqref{eq:strongGrad.diff} and~\eqref{eq:strongH1.diff.p} strengthen to the individual limits $\nabla u_\tau,\nabla\wt u_\tau \to \nabla u$ in $L^p_tL^2_{x,y}$ and $u_\tau,\wt u_\tau \to u$ in $L^p_tL^q_{x,y}$, respectively. 
\end{proposition}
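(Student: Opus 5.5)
The plan has three parts: obtain $E$ by Helly's selection theorem, compare the two interpolants through Lemma~\ref{lem:energy-hierarchy}, and show that the error term $\calB_0$ vanishes in the limit using Lemma~\ref{l:GMM_convergence_strong}.

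\textbf{Construction of $E$.} The map $t\mapsto\calE(\mu_\tau(t))$ is non-increasing by the scheme, and it is bounded by $\calE(\mu_0)$. Helly's selection theorem therefore gives a subsequence and a non-increasing $E$ with $\calE(\mu_\tau(t))\to E(t)$ for every $t$. We set $E(0)=\calE(\mu_0)$.

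\textbf{Comparison of the interpolants.} Fix $t\in((k-1)\tau,k\tau)$ and apply Lemma~\ref{lem:energy-hierarchy} with $h=\tau$, $\mu^0=\mu_\tau^{k-1}$, $\mu^1=\mu_\tau^k$ and $\mu^\theta=\wt\mu_\tau(t)$. By~\eqref{eq:energy.squeeze},
\[
\calE(\mu_\tau(t))\le\calE(\wt\mu_\tau(t))\le\calE(\mu_\tau(t-\tau)).
\]
At every continuity point of $E$, hence for a.e.\ $t$, both outer terms converge to $E(t)$. This gives~\eqref{eq:energy-limit}. By~\eqref{eq:strongH1.Econtrol.th1},
\[
c\big(\|\nabla\wt u_\tau-\nabla u_\tau\|^2_{L^2}+\vartheta\|\wt U_\tau-U_\tau\|^4_{L^4}\big)(t)\le\calE(\wt\mu_\tau(t))-\calE(\mu_\tau(t))+\calB_0(\wt\mu_\tau(t)-\mu_\tau(t)).
\]
The energy difference tends to $0$ for a.e.\ $t$. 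It is bounded uniformly by Lemma~\ref{lem:GMM_bounds}, so it also tends to $0$ in $L^p(0,T)$ by dominated convergence.

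\textbf{The error term (main obstacle).} It remains to show that $\calB_0(\omega_\tau)\to0$ for $\omega_\tau=\wt\mu_\tau-\mu_\tau$, using only averaged strong convergence. The first part of $\calB_0$ is $\tilde c\vartheta\|\wt U_\tau-U_\tau\|^2_{L^2}$. This is $\langle\cdot,\kappa\rangle_Y$ tested against $\kappa$, so it tends to $0$ uniformly in $t$ by~\eqref{eq:interpolTestedStrongConvA} with $\phi=\kappa$, since both interpolants converge to the same $u$. For the repulsion part $\calR(\omega_\tau)$, I would use Lemma~\ref{l:ApproxRepul}. First, $\calR^n(\omega_\tau)$ is a finite sum of products $\int\langle\sigma_\tau,a_l^n\rangle_Y\langle\sigma_\tau,b_l^n\rangle_Y\,\dd x$, each of which tends to $0$ in $L^\infty(0,T)$ by~\eqref{eq:interpolTestedStrongConvA}. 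Second, $|\calR-\calR^n|(\omega_\tau)\le\frac12\|\beta-\beta_n\|_{L^2}\|\sigma_\tau\|^2_{L^2(X\times Y)}$, and $\|\sigma_\tau\|_{L^2(X\times Y)}$ is bounded uniformly by Gagliardo--Nirenberg, the $H^1$ bounds and unit mass. Taking $n$ large first and then $\tau\to0$ gives $\calR(\omega_\tau)\to0$ uniformly in $t$. This proves~\eqref{eq:strongGrad.diff} and~\eqref{eq:strongL4}; the uniform bounds let us pass from $p=1$ to every $p<\infty$.

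\textbf{Lebesgue convergence.} For~\eqref{eq:strongH1.diff.p}, I would use the Gagliardo--Nirenberg interpolation $\|f\|_{L^{2+2/d}}\lesssim\|\nabla f\|_{L^2}^{d/(d+2)}\|f\|_{L^1}^{2/(d+2)}$ fibrewise, then integrate in $y$ and apply H\"older. The quantity $\|\sigma_\tau(y)\|_{L^1}\le2$ is bounded and $\|\nabla\sigma_\tau\|_{L^2_{x,y}}\to0$. This gives convergence in $L^{2+2/d}$ only at the endpoint if the exponents balance, so for $q<2+2/d$ I would interpolate with $L^1$. The $L^1$ case needs tightness: convergence on bounded sets follows from the above, and the tails are controlled by the moment bound~\eqref{eq:moment-bound_k-uniform}.

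\textbf{Finitely supported $\nu$.} In this case $L^2(Y)$ is finite-dimensional, and taking indicator test functions in~\eqref{eq:interpolTestedStrongConvA} gives $u_\tau(\cdot,y_i)\to u(\cdot,y_i)$ strongly in $L^\infty_tL^q_x$. Then $\calE_0(\mu_\tau)\to\calE_0(\mu)$ for a.e.\ $t$. For the $L^4$ part I would use the uniform $L^4$ bound together with convergence in $L^q$, $q<4$ (for $d\le3$, $2^*>4$ except in the borderline cases, which are handled via Remark~\ref{rem:betterConvergenceSingl}). To obtain strong gradient convergence, I would repeat the convexity argument with the competitor $\frac12(\mu_\tau+\mu)$: convexity of $\calE_1$ combined with $\calE(\mu(t))\le E(t)$ and the vanishing lower-order error gives $\|\nabla u_\tau-\nabla u\|\to0$. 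Alternatively, one can argue via weak lower semicontinuity, which shows that the defect $E-\calE(u)$ equals the limit of $\calE_1(u_\tau)-\calE_1(u)$. The clean route I would try first is to show $E(t)=\calE(\mu(t))$ from the EDI-type minimality, since $\mu(t)$ is approximable; this identification of $E$ is the delicate point.
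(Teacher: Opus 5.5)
Your argument for \eqref{eq:energy-limit}, \eqref{eq:strongGrad.diff}, \eqref{eq:strongL4} and \eqref{eq:strongH1.diff.p} is correct and follows the paper. You use Lemma~\ref{lem:energy-hierarchy} for the comparison and show that $\calB_0(\wt\mu_\tau-\mu_\tau)$ vanishes via \eqref{eq:interpolTestedStrongConvA} and the finite-rank kernels. Your pointwise sandwich $\calE(\mu_\tau(t))\le\calE(\wt\mu_\tau(t))\le\calE(\mu_\tau(t-\tau))$ at left-continuity points of $E$ is a slightly more direct substitute for the paper's time-integrated comparison. Two small points. First, at $q=2+2/d$ the Gagliardo--Nirenberg exponent of $\|\nabla f\|_{L^2}$ is $d/(d+1)$, so that $\theta q=2$, which is what makes the $\nu$-integration work; your $d/(d+2)$ belongs to $q=2$. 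Second, your tightness argument for $q=1$ is actually needed, since \eqref{eq:interpol.H1-M} only covers $q>1$.

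The gap is the finitely supported case. You correctly extract componentwise strong convergence $u_\tau(\cdot,y_i)\to u(\cdot,y_i)$ in $L^\infty_tL^q_x$ from \eqref{eq:interpolTestedStrongConvA}. But this, together with weak $H^1$ convergence, does not exclude an energy defect, and none of your three routes removes it.
\begin{itemize}
\item Using $\tfrac12(\mu_\tau^k+\mu(t))$ as a competitor in the JKO step leaves the term $\frac{1}{4\tau}\calW_{2,\nu}^2(\mu(t),\mu_\tau^{k-1})$. You cannot control it: you only know narrow convergence $\mu_\tau(t)\rightharpoonup\mu(t)$ with no rate, whereas you would need $\calW_{2,\nu}(\mu(t),\mu_\tau^{k-1})=o(\sqrt\tau)$.
\item Lower semicontinuity only identifies $E-\calE(\mu)$ with the limit of $\calE_1(\mu_\tau)-\calE_1(\mu)\ge0$, which is circular.
\item The appeal to \emph{EDI-type minimality} is not an argument.
\end{itemize}

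The missing ingredient is extra spatial regularity: the $\tau$-uniform bound $u_\tau\in L^2((0,T)\times Y;H^2(X))$ from the flow interchange in Lemma~\ref{l:disc_entropy-inequality}. With it, apply $\|\nabla f\|_{L^2}^2\le\|f\|_{L^2}\|\Delta f\|_{L^2}$ to $f=(u_\tau-u)(\cdot,y_i)$. This gives $\nabla u_\tau\to\nabla u$ in $L^2_tL^2_{x,y}$, and hence in every $L^p_tL^2_{x,y}$ by the uniform bounds. The paper equivalently interpolates between $H^2$ and $(W^{1,\infty})^*$ using $d_{BL,\nu}(\mu_\tau,\mu)\to0$. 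Then \eqref{eq:strongGrad.diff} transfers the convergence to $\wt u_\tau$ without any $H^2$ bound for the De Giorgi interpolant. Convergence of all parts of the energy finally yields $E(t)=\calE(\mu(t))$ for a.e.\ $t$.
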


\begin{proof}
	We first show~\eqref{eq:energy-limit}.
    From Lemma~\ref{lem:energy-hierarchy} it follows that both $t\mapsto\calE(\mu_\tau(t))$ and $t\mapsto\calE(\wt\mu_\tau(t))$ are non-increasing.
    Hence, by Helly’s selection theorem, there exist limit functions $\wt E, E:[0,T]\to[0,\calE(\mu_0)]$ such that along a subsequence $\tau\downarrow 0$ it holds for a.e.\ $t>0$
    \begin{align*}
        &\wt E(t)=\lim_{\tau\downarrow0}\calE(\wt\mu_\tau(t)), \qquad
        E(t)=\lim_{\tau\downarrow0}\calE(\mu_\tau(t)).
    \end{align*}
    Since both interpolations keep the initial datum fixed, the identities $E(0)= \wt E(0) = \calE(\mu_0)$ are guaranteed.
    To show \eqref{eq:energy-limit} it suffices to prove for all $0 < t < \infty$ that
    \begin{align*}
        \int_0^t\wt E(s)\,\dd s=\int_0^t E(s)\,\dd s.
    \end{align*}
    Indeed, after subtracting the respective identity on a shorter time interval, this gives $\int_I\wt E(s)\,\dd s=\int_I E(s)\,\dd s$ for every compact interval $I\subset[0,\infty)$, which implies the assertion.

    To this end, we combine \eqref{eq:energy.squeeze} with the definitions of the interpolations and integrate over $[0,t]$ to infer for a.e.\ $t> 0$
    \begin{align*}
        \int_\tau^{t+\tau}\calE(\mu_\tau)\,\dd s\le \int_0^t\calE(\wt\mu_\tau)\,\dd s\le \int_0^t\calE(\mu_\tau)\,\dd s.
    \end{align*}
	From this, we conclude the asserted energy convergence property~\eqref{eq:energy-limit} by observing that
	\begin{align*}
        0\le \int_0^t\calE(\wt\mu_\tau)\,\dd s - \int_\tau^{t+\tau}\calE(\mu_\tau)\,\dd s
        \le \int_0^\tau\calE(\mu_\tau)\,\dd s-\int_t^{t+\tau} \calE(\mu_\tau)\,\dd s\le \tau\pra{\calE(\mu_0)-\calE(\mu_\tau(t+\tau))}
        \overset{\tau\downarrow 0}{\longrightarrow} 0. 
    \end{align*}
 Next, \eqref{eq:strongH1.Econtrol} implies that $z_\tau\coloneqq u_\tau-\wt u_\tau$ and $Z_\tau\coloneqq U_\tau - \wt U_\tau$ satisfy
\begin{align*}
	\int_0^t\big(\|\nabla z_\tau\|_{L^2(X{\times}Y)}^2 + \vartheta\|Z_\tau\|_{L^4(X)}^4 \big) \dd s\lesssim  \int_0^t\calE(\wt\mu_\tau)\,\dd s - \int_0^t\calE(\mu_\tau)\,\dd s + \int_0^t\calB_0(\wt\mu_\tau-\mu_\tau)\dd s,
\end{align*}
where $\calB_0(\wt\mu_\tau-\mu_\tau)$  is the error term arising
from the non-convexity of the lower order energy (cf.\ \eqref{eq:errorB0Energy}). In particular, the latter vanishes due to the strong convergence of $U_\tau$ and $\wt U_\tau$ in \eqref{eq:interpolTestedStrongConvA}. Note in particular, that Lemma~\ref{l:ApproxRepul} yields the strong continuity in $L^2(X\times Y)$.
 
The strong convergence~\eqref{eq:strongGrad.diff} for $p\in[1,2]$ is a consequence of the energy convergence in time-integrated form. For $p>2$ we use interpolation inequalities
\[
    \|\nabla z_\tau\|_{L^p_tL^2_{x,y}}^p\le \|\nabla z_\tau\|_{L^\infty_tL^2_{x,y}}^{p-2}\|\nabla z_\tau\|_{L^2_tL^2_{x,y}}^2,\quad\text{and}\quad
        \|Z_\tau\|_{L^p_tL^4_{x}}^p\le \|Z_\tau\|_{L^\infty_tL^4_{x}}^{p-2}\| Z_\tau\|_{L^4_tL^4_{x}}^2.
\]
The first factors on the right-hand sides are bounded above by $C(\calE(\mu_0))$, while, as  shown above, the second factors converge to zero. 

The convergence~\eqref{eq:strongH1.diff.p} follows from~\eqref{eq:strongGrad.diff} and the interpolation inequality~\eqref{eq:interpol.H1-M}.

If $\supp\nu=\{y_i\}_{i=1}^n\subset Y$ is finite, then the narrow convergence $\mu_\tau, \wt\mu_\tau\weakly\mu$ in Lemma~\ref{l:GMM_convergence_strong} implies 
\[d_{BL,\nu}(\mu_\tau,\mu)=\sum_{i=1}^nd_{BL}(\mu_\tau^{y_i},\mu^{y_i})m_i\to 0,\quad\qquad m_i\coloneqq \nu(\{y_i\}),\]
and the same for $\wt\mu_\tau$.
Thus, using the $L^2_{t,y}H^2_x$ control for the piecewise constant interpolant from Lemma~\ref{l:disc_entropy-inequality} and interpolating between $H^2(X)$ and $(W^{1,\infty}(X))^*$, we obtain the asserted strong convergence of both time-interpolants to $u$. 
\end{proof}

\subsection{Proof of Theorem~\ref{thm:existence}}\label{ssec:proof.ex.diag}

We now complete the proof of Theorem~\ref{thm:existence} by passing to the limit in
the approximate equations and dissipation 
inequalities. The convergence results of Section~\ref{ssec:comp-conv} 
first allow us to identify a common limiting
flux $j$ for both interpolants and establish the energy-dissipation
property in \eqref{eq:EDI.diagE}, including the continuity equation \eqref{eq:CE.diagE}. We then construct the
limiting entropy, derive the entropy-dissipation inequality \eqref{eq:entr.diss}, and
control the discrepancy between the limiting and constitutive fluxes
by the energy and entropy defects, cf.~\eqref{eq:mv-flux.control}. For finitely many types, strong
convergence eliminates these defects and yields a weak solution.

\subsubsection{Energy-dissipating property}

\paragraph{Convergence and energy dissipation}
By the uniform bounds in Proposition~\ref{prop:disc_EDI}, we can extract a subsequence such that $j_\tau$ and $\wt\jmath_\tau$ converge weakly in ${L}^1([0,T]\times X\times Y;\bbR^d)$. By virtue of the strong convergence properties~\eqref{eq:strongGrad.diff} and~\eqref{eq:strongH1.diff.p}, and the Euler--Lagrange equations in Proposition~\ref{prop:E-L_2}, which express $j_\tau$, $\wt\jmath_\tau$ in terms of gradient and density only, the subsequential weak limits must coincide, i.e., there exists $j\in {L}^1([0,T]\times X\times Y;\bbR^d)$
such that %
\begin{align*}
	j_\tau, \;\wt\jmath_\tau \rightharpoonup j \quad\text{ weakly in ${L}^1([0,T]{\times} X{\times} Y;\bbR^d)$}.
\end{align*}
Hence, combined with the weak convergence in Lemma~\ref{l:GMM_convergence_strong} and the energy convergence in Proposition~\ref{prop:energy-convergence}, we can pass to the limit in the discrete energy inequality~\eqref{eq:disc_EDI} using the weak lower semicontinuity of the integral terms with convex integrand to infer its continuous version~\eqref{eq:EDI.diagE}.

\paragraph{Continuity equation}
We next turn to the derivation of the distributional formulation of the continuity equation~\eqref{eq:CE.diagE}, which is obtained from a mostly standard Taylor expansion argument adapted to the fibred setting.
Let $\varphi\in C_c^\infty([0,T)\times X\times Y)$ be given. Further, denote by  $\wh T^k_\tau=(T^k_\tau,\mathrm{\id}_Y):X\times Y\to X\times Y$ the (augmented) fibred optimal transport map from $\mu_\tau^k$ to $\mu_\tau^{k-1}$, where the measurable map $T^k_\tau$ is defined according to Lemma~\ref{lem:discrete_flux}. Then,
\[(\wh T^k_\tau)_{\#}\mu_\tau^k=\mu_\tau^{k-1}\]
so that, with $\vp_k\coloneqq \vp(k\tau,\cdot)$,
\[\int_{X\times Y}\varphi_{k-1}\,\dd(\mu_\tau^k-\mu_\tau^{k-1})=\int_{X\times Y}
\big(\vp_{k-1}(x,y)-\vp_{k-1}(T^k_\tau(x,y),y)\big)\,\dd\mu^k_\tau(x,y).\]
Recalling $j_\tau^k(x,y)=\frac{x-T_\tau^k(x,y)}{\tau}u_\tau^k(x,y),$ 
we deduce, upon Taylor expansion in $x$ with $\nabla=\nabla_x$,
\[\bigg|\int_{X\times Y}\vp_{k-1}\,\dd(\mu_\tau^k-\mu_\tau^{k-1})
-\tau\int_{X\times Y}\nabla\vp_{k-1}\cdot j_\tau^k\,\dd\nu\dd x\bigg|
\lesssim \|\nabla^2\varphi\|_{L^\infty_{t,x,y}}\calW_{2,\nu}^2(\mu_\tau^k,\mu_\tau^{k-1}).\]
We then take the sum over $k$. 
The resulting right-hand side vanishes as $\tau\downarrow0$ due to the bound
\[\sum_k\calW_{2,\nu}^2(\mu_\tau^k,\mu_\tau^{k-1})\le \tau\calE(\mu_0).\]
It remains to identify the limit of the term on the left-hand side. To this end, we use discrete integration by parts in time, 
\[\sum_k\int_{X\times Y}\vp_{k-1}\,\dd(\mu_\tau^k-\mu_\tau^{k-1})
=-\sum_k\int_{X\times Y}(\vp_k-\vp_{k-1})\,\dd \mu_\tau^k-\int_{X\times Y}\vp(0)\,\dd\mu_0,\] 
where we took $\tau>0$ small enough  so that due to the compact support of $\vp$ in time the temporal boundary integral at the final time vanishes.
We combine this with the narrow convergence of $\mu_\tau$ and the weak convergence $j_\tau\weakly j$ to infer, as $\tau\downarrow0$,
\[\int_{X\times Y}\vp(0)\,\dd\mu_0+\int_0^T\int_X\int_Y\big(\partial_t\varphi u+\nabla\varphi\cdot j\big)\,\dd\nu\dd x\dd t=0,\]
where we used once more the smoothness of $\vp$ to ensure in particular convergence of the discrete time derivative.

Finally, if $\supp\nu$ is finite, the strengthened assertions follow directly from Proposition~\ref{prop:energy-convergence}, which establishes the strong convergence properties of the interpolants to the limit $u$ in this case.

\subsubsection{Entropy-dissipation inequality and flux defect control}
\label{sss:EntDissIneqFluxDefect}

It remains to establish the entropy-dissipation inequality and the
flux-defect estimate required by Definition~\ref{def:diss_sol}. We first identify
a common limiting entropy for both interpolants, then use the energy
and entropy defects to control the discrepancy between the limiting
flux $j$ and the constitutive flux $j_u$.
 \begin{lemma}[Limit entropy]\label{l:H.lim}
	There exists $H\in BV_{\loc}([0,\infty))$ with $H(0) = \calH(\mu_0)$ such that, along a subsequence $\tau\downarrow0$, it holds for a.e.\ $t>0$ and for $t=0$
	\begin{align}
		\label{eq:entropy-limit}
		H(t) &= \lim_{\tau\downarrow0}\calH(\mu_\tau(t)) = \lim_{\tau\downarrow0}\calH(\wt\mu_\tau(t)).
	\end{align}	
\end{lemma}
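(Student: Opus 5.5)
The plan follows the energy argument of Proposition~\ref{prop:energy-convergence}, with monotonicity replaced by a bounded-variation bound. By the discrete entropy inequality~\eqref{eq:entropy-inequality.discr}, $t\mapsto\calH(\mu_\tau(t))$ is piecewise constant. Its jumps satisfy
\[\calH(\mu_\tau^k)-\calH(\mu_\tau^{k-1})\le \tau\big(\vartheta\|\nabla U_\tau^k\|_{L^2}^2+C\|\nabla u_\tau^k\|_{L^2(X\times Y)}^2\big)\le C\tau,\]
using $\msf''\ge\msf''_\ccv=-\vartheta$, $\|\msB\|\le\beta^*$ and the uniform gradient bounds~\eqref{eq:bounds_k-uniform}. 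Hence $t\mapsto\calH(\mu_\tau(t))-Ct$ is non-increasing.

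Next I would bound the entropy on $[0,T]$ uniformly. From above it is controlled by $\calH(\mu_0)+CT$. From below it is controlled by the moment bound~\eqref{eq:moment-bound_k-uniform} together with compactness of $\supp\nu$, exactly as in the remark after~\eqref{eq:defEntropy}. Helly's selection theorem, applied on each $[0,T]$ with a diagonal argument, then yields $H\in BV_\loc$ with $\calH(\mu_\tau(t))\to H(t)$ for every $t$. In particular $H(0)=\calH(\mu_0)$, since $\mu_\tau(0)=\mu_0$.

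For the De Giorgi interpolant I need $\calH(\wt\mu_\tau(t))-\calH(\mu_\tau(t))\to0$ for a.e.\ $t$. On $((k-1)\tau,k\tau]$ I would compare $\wt u_\tau$ and $u^k_\tau$ through convexity of $s\mapsto s\log s$. Two pieces of information are available: by~\eqref{eq:strongH1.diff.p}, $u_\tau-\wt u_\tau\to0$ in $L^p_tL^q_{x,y}$ for $q<2+2/d$; and $\log u$-type terms are controlled. Concretely, write
\[|a\log a-b\log b|\le |a-b|\big(1+|\log a|+|\log b|\big)\]
and split according to whether $a,b\le\delta$, $a,b\ge R$, or $a,b\in(\delta,R)$. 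On the bounded region $\{\delta\le a,b\le R\}$ intersected with $\{|x|\le R'\}$ the difference is bounded by $C_{\delta,R}\|u_\tau-\wt u_\tau\|_{L^1}$. The large values are controlled by the uniform $L^q$ bounds with $q>1$, which give uniform integrability of $u\log u$ near infinity in value. The small values and large $|x|$ are handled by the standard Carleman-type estimate: for $s\le1$ one has $s|\log s|\le Cs|x|^2+Ce^{-|x|^2/2}$, and the second moments are uniform by~\eqref{eq:moment-bound_k-uniform}. This gives $\int_0^T|\calH(\wt\mu_\tau)-\calH(\mu_\tau)|\,\dd t\to0$, and hence a.e.\ convergence along a further subsequence.

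The main obstacle is this last step, the uniform integrability of $u\log u$ near $u=0$ in the fibred, unbounded setting. I expect the moment plus Gaussian-weight splitting to handle it, because it is uniform in $y$ once it is integrated against $\nu$.
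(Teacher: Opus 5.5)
Your skeleton is the paper's: the same per-step bound $\calH(\mu_\tau^k)-\calH(\mu_\tau^{k-1})\le C\tau$, Helly's theorem for a shifted monotone function, and reduction of the De Giorgi statement to $\calH(\mu_\tau)-\calH(\wt\mu_\tau)\to0$ in $L^1(0,T)$ via \eqref{eq:strongH1.diff.p} and the moment bound. One small slip: $\calH(\mu_\tau(t))-Ct$ is not monotone, because it can jump up by as much as $C\tau$ at grid points. Use $\calH(\mu_\tau(t))-C\tau\lceil t/\tau\rceil$ instead, which is exactly the paper's $G_\tau$.

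The step you flag as the main obstacle does not close as written. Your Carleman bound $s|\log s|\le Cs|x|^2+Ce^{-|x|^2/2}$ gives for the spatial tails only
\[
\int_{\{|x|>R'\}}u_\tau|\log u_\tau|\,\mathbf 1_{\{u_\tau\le1\}}\,\dd x\lesssim\int_{\{|x|>R'\}}|x|^2u_\tau\,\dd x+\int_{\{|x|>R'\}}e^{-|x|^2/2}\,\dd x .
\]
The first term on the right is bounded by the second moment, but it is not small uniformly in $\tau$ as $R'\to\infty$. A uniform bound on $m_{2,X}(\mu_\tau(t))$ does not make $|x|^2$ uniformly integrable, and nothing available in the scheme supplies that. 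Without uniform smallness of the tails of both entropies, your splitting shows only that $\int_0^T|\calH(\mu_\tau)-\calH(\wt\mu_\tau)|\,\dd t$ stays bounded, not that it tends to zero.

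The missing ingredient is a sub-quadratic weight. For $q\in(0,2)$ and $s\le1$ one has $s|\log s|\le s|x|^q+Ce^{-|x|^q/2}$. Moreover $\int_{\{|x|>R'\}}|x|^qu_\tau\,\dd x\le R'^{q-2}\sup_\tau m_{2,X}(\mu_\tau(t))$, which is uniformly small.

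The mixed regions of your value splitting also need an argument, for example $u_\tau\in[\delta,R]$ while $\wt u_\tau<\delta$. One way is to use $u_\tau\le2|u_\tau-\wt u_\tau|$ on $\{u_\tau\ge2\delta,\ \wt u_\tau<\delta\}$.

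With these repairs the truncation argument works, but the paper's route is shorter. It uses $|h(s)-h(r)|\lesssim_\theta|s-r|^\theta+|s-r|(s+r)$ with $\theta\in(\tfrac{d}{d+2},1)$, together with H\"older against $(1+|x|^q)^{\pm\theta}$ for $q\in(\tfrac{1-\theta}{\theta}d,2)$. This bounds $\int|h(u_\tau)-h(\wt u_\tau)|$ directly by $\big(\int(1+|x|^q)|u_\tau-\wt u_\tau|\big)^\theta+\|u_\tau-\wt u_\tau\|_{L^2}$, with no truncation in values or in space. The constraint $q<2$ there plays exactly the role of the sub-quadratic weight above.
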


\begin{proof}For $t=0$ the identity~\eqref{eq:entropy-limit} is immediate.
	In view of the entropy inequality \eqref{eq:entropy-inequality.discr}, we see that the increase in entropy from the $(k{-}1)$th to the $k$th step can be estimated by the non-convex contribution of the energy $\calE_0$ to the dissipation, i.e.,
\[
\calH(\mu^k_\tau) - \calH(\mu^{k-1}_\tau) \leq \tau\Big(
\vartheta\|\nabla U_\tau^k\|_{L^2(X)}^2 +
\beta^*\|\nabla u_\tau^k\|_{L^2(X\times Y)}^2\Big) \leq C_*\tau,
\]
where we used \eqref{eq:moment-bound_k-uniform} for the last inequality with a constant $C_*>0$.
    We define $G_\tau:[0,\infty) \to \bbR$ by
	\begin{align*}
		G_\tau(t) \coloneqq \calH(\mu_\tau(t)) - C_*k\tau,\quad\text{for }t\in((k{-}1)\tau, k\tau].
	\end{align*}
	   By definition $G_\tau$ is non-increasing and $\tau$-uniformly bounded so that by Helly's selection theorem there exists a non-increasing function $G:[0,\infty)\to\bbR$ such that, after passage to a subsequence, it holds that $ G(t)=\lim_{\tau\downarrow0}G_\tau(t)$ for a.e.\ $t>0$ and for $t=0$.
	\begin{align*}
		\calH(\mu_\tau(t)) \overset{\tau\downarrow 0}{\longrightarrow} H(t) \coloneqq G(t) + C_*t 
        \qquad\text{for a.e. $t>0$.}
	\end{align*}
	To establish the assertion, it now suffices to show that, as $\tau\downarrow0$,
	\begin{align}\label{eq:entrop.diff.conv}
		\calH(\mu_\tau)-\calH(\wt\mu_\tau)\longrightarrow 0\qquad\text{in $L^1(0,T)$}.
	\end{align}
	This essentially follows from the strong convergence of the difference of the densities combined with the uniform moment control. First, the logarithmic entropy function $h(s)=s\log s$ satisfies for $\theta\in(0,1)$ 
	\[|h(s)-h(r)|\lesssim_{\theta}|s-r|^\theta+|s-r|(s+r).\]
	Therefore, abbreviating $z_\tau\coloneqq u_\tau-\wt u_\tau$ and taking $\theta\in(\tfrac{d}{d+2},1)$, so that $\frac{1-\theta}{\theta}d<2$,
	and $q\in(\frac{1-\theta}{\theta}d,2)$,
	we obtain
	\begin{align*}
		\int_{X\times Y}|h(u_\tau)-h(\wt u_\tau)|\,\dd x\dd\nu(y)&\lesssim_\theta \int_{X\times Y}|z_\tau|^\theta\,\dd x\dd\nu(y)
		+ \|z_\tau\|_{L^2(X{\times}Y)}\big(\|u_\tau\|_{L^{2}(X{\times}Y)}+ \|\wt u_\tau\|_{L^{2}(X{\times}Y)}\big)
		\\&\lesssim  \bigg(\int_{X\times Y}(1+|x|^q)|z_\tau|\,\dd x\dd\nu(y)\bigg)^\theta + \|z_\tau\|_{L^2(X{\times}Y)},
	\end{align*}
	where the last step follows from H\"older's inequality applied to $(1+|x|^q)^\theta|z_\tau|^\theta$ and $(1+|x|^q)^{-\theta}$.
	The second term on the right-hand side converges to zero in $L^1(0,T)$  by~\eqref{eq:strongH1.diff.p}. For the first term on the right-hand side the strong $L^1(0,T)$-convergence follows by combining~\eqref{eq:strongH1.diff.p} with the uniform second moment control due to $q<2$.
\end{proof}

The asserted entropy-dissipation inequality~\eqref{eq:entr.diss} follows from the discrete version~\eqref{eq:entropy-inequality.discr} as follows.
Let us use the split of the double-well potential into convex and non-convex part (cf.\ \eqref{eq:DoubleWellSplit}) and write the
entropy-dissipation inequality as
\begin{multline*}
\calH(\mu_\tau(t))
+
\int_0^t\int_{X}\int_Y\alpha|\Delta u_\tau|^2\,\dd\nu\dd x\dd s
+\int_0^t\int_X \sff''_{\mathrm{cvx}}(U_\tau)|\nabla U_\tau|^2\,\dd x\dd s
\\+
\int_0^t\int_X\sff_\ccv''(U_\tau)|\nabla U_\tau|^2\dd x\dd s
+
\int_0^t\int_X(\sfB\nabla u_\tau,\nabla u_\tau)_{L^2(Y)}\,\dd x\dd s
\leq \calH(\mu_0).
\end{multline*}
With the strong convergence in \eqref{eq:interpolTestedStrongConvA} and the uniform bound in $L^2(0,T;H^2(X))$ from Lemma~\ref{l:disc_entropy-inequality}, spatial interpolation gives
\[
\|U_\tau-U\|_{L^2(0,T;H^1(X))}^2
\lesssim
\|U_\tau-U\|_{L^2(0,T;L^2(X))}
\|U_\tau-U\|_{L^2(0,T;H^2(X))}
\to 0,
\]
i.e., strong convergence of $\nabla U_\tau$ in $L^2([0,T]\times X)$. From this, we can conclude the convergence of the non-convex contribution 
$\int_0^t\int_X\sff_\ccv''(U_\tau)|\nabla U_\tau|^2\dd x\dd s= -\vartheta\|\nabla U_\tau\|_{L^2((0,t)\times X)}^2$. Moreover, we can argue similarly for the nonlocal part using the approximation $\sfB_n$ in Lemma \ref{l:ApproxRepul}.

The convex contributions are lower semicontinuous. 
Clearly, we have along a subsequence that
\[
\int_0^t\int_{X\times Y}\alpha|\Delta u|^2\,\dd\nu\dd x\dd s
\leq
\liminf_{\tau\to0}
\int_0^t\int_{X\times Y}\alpha|\Delta u_\tau|^2\,\dd\nu\dd x\dd s.
\]
Since $\sff''_\mathrm{cvx}$ is continuous and non-negative, Fatou's lemma, together with the strong gradient convergence from above, and a further almost-everywhere converging subsequence, gives
\[
\int_0^t\int_X \sff''_{\mathrm{cvx}}(U)|\nabla U|^2\dd x
\leq
\liminf_{\tau\to0}
\int_0^t\int_X \sff''_{\mathrm{cvx}}(U_\tau)|\nabla U_\tau|^2\dd x.
\]

Finally, Lemma \ref{l:H.lim} gives $\calH(u_\tau(t))\to H(t)$ for a.e.\ $t$. Passing to the limit yields
\[
H(t)+\int_0^t\calD_\calH(u)\,\dd s\leq \calH(\mu_0) +
\vartheta\|\nabla U\|_{L^2((0,t)\times X)}^2
-
\int_0^t\int_X(\sfB\nabla u,\nabla u)_{L^2(Y)}\,\dd x\dd s.
\]
This shows~\eqref{eq:entr.diss}, so that the entropy-dissipation inequality of Definition~\ref{def:diss_sol} is verified.

\paragraph{Flux defect control} 
Testing the Euler--Lagrange equations with functions of separated variables and localising in $y$, we first obtain the corresponding spatial identity for a.e.\ $(t,y)$. Since $j_\tau\in L^1$ and the coefficients in the variational formula on the right-hand side in~\eqref{eq:flux.EL} are integrable, approximation in $x$ then yields the Euler--Lagrange equations for every $\xi\in L^\infty([0,T]{\times} Y;C_b^2(X;\mathbb R^d))$.
Let $j_u$ denote the flux associated with $u$ as in \eqref{eq:flux.distrib}.
To derive the estimate for the error term $j-j_u$, %
we subtract from the Euler--Lagrange equations in Proposition~\ref{prop:E-L_2} the equation for $j_u$ in \eqref{eq:flux.distrib}, which yields, for every $\xi\in L^\infty([0,T]{\times} Y;C_b^2(X;\mathbb R^d))$, $\scrL^1$-a.e.\ in $[0,T]$, the identity
	\begin{align*}
	\la j_\tau - j_u,\xi\ra
	&= \int_X\int_Y \alpha(y) \pra[\big]{A_\xi:(\nablax u_\tau{\otimes} \nablax u_\tau -\nablax u{\otimes}\nablax u) + b_\xi\cdot (u_\tau\nablax u_\tau - u\nablax u) } \,\dd \nu\dd x\\
    &\quad+\int_X\int_Y\kappa(y)\Big[\big(\sff'(U_\tau)\nabla u_\tau - \sff'(U)\nabla u\big)\cdot \xi
    +\big(\sff'(U_\tau) u_\tau - \sff'(U) u\big) \divv\xi\Big]\,\dd\nu\dd x\\
    &\quad+ \int_X\int_Y \Big[\big(\sfB [u_\tau]\nabla u_\tau - \sfB [u]\nabla u\big)\cdot \xi
    +\big(\sfB [u_\tau] u_\tau - \sfB [u] u\big) \divv\xi\Big]\dd\nu\dd x,
\end{align*}
where we abbreviated $A_\xi \coloneqq \frac{1}{2}(\divv_x\xi) I + \nablax\xi$ and $b_\xi \coloneqq \nablax\divv_x\xi$.

Define, for non-negative $v,w\in L^1(Y;H^1(X))$ satisfying
$\int_Xv(\cdot,y)\,\dd x=\int_Xw(\cdot,y)\,\dd x=1$ for $\nu$-a.e.\ $y$, the relative entropy and entropy functional
\begin{align}
    \label{eq:defRelEnergy}
	\mathcal{E}_{\rel}(v|w)
	&\coloneqq \calE(v) -\calE(w) - \rmD\calE(w)[v-w],\\
    \label{eq:midEntropy}
	\mathcal{H}_{\mathrm{mid}}(v,w)
	&\coloneqq \int_Y\int_X
	\bigg[
	\frac12h(v)+\frac12h(w)
	-h\bigg(\frac{v+w}{2}\bigg)
	\bigg]\,\dd x\dd\nu,
	\qquad h(r)\coloneqq r\log r.
\end{align}
We compute that the relative energy is given for $z= v{-}w$, $V(x)= 
\int_Y \kappa(y) v(x,y)\dd\nu$, $W(x)= 
\int_Y \kappa(y) w(x,y)\dd\nu$, and $Z = V{-}W$ by
\begin{equation*}
\begin{split}
\mathcal{E}_{\rel}(v|w)
&= \int_{X}\int_Y 
\frac{\alpha(y)}{2}|\nabla z|^2 \,\dd\nu\dd x 
+
\int_X\big({\sf f}(V)-{\sf f}(W)-{\sf f}'(W)Z\big)\,\dd x
+\frac{1}{2}\int_{X} (\sfB z,z)_{L^2(Y)}\dd x
\\
&\geq \frac{\alpha_*}{2}\|\nabla z\|_{L^2(X\times Y;\bbR^d)}^2 +\frac{\vartheta}{12}\|Z\|_{L^4(X)}^4 - \Big(\frac{\vartheta}{8}+\frac{\beta^*}{2}\Big)\|z\|_{L^2(X\times Y)}^2
\end{split}
\end{equation*}
where we used \eqref{eq:pot-rel.coerc} and \eqref{eq:boundBetaApprox}. In particular, for any $\lambda>\vartheta/4+\beta^*>0$, we have the coercivity
estimate
\begin{equation}
\label{eq:relEnergyCoerc}
\mathcal{E}_{\rel}(v|w) +\frac{\lambda}{2}\|z\|_{L^2(X\times Y)} ^2\gtrsim \|z\|_{L^2(Y;H^1(X))}^2 +\vartheta\|Z\|_{L^4(X)}^4.
\end{equation}
Moreover, observe that \[
\mathcal{H}_{\mathrm{mid}}(v,w)=\frac12\mathcal{H}_{\rel}(v|m)+\frac12\mathcal{H}_{\rel}(w|m),\quad\text{where } m\coloneqq (v+w)/2,
\] where $\mathcal{H}_{\rel}(a|m)\coloneqq \int_Y\int_X \big(a\log\frac{a}{m}-a+m\big)\,\dd x\dd \nu$. The functional $\calH_\mathrm{mid}$ is also called Jensen–Shannon divergence. Applying fibrewise  the Csiszár--Kullback--Pinsker inequality, we obtain
\begin{align}\label{eq:KP}
	\int_Y\|v(\cdot,y)-w(\cdot,y)\|_{L^1(X)}^2\,\dd\nu(y)\lesssim \mathcal{H}_{\mathrm{mid}}(v,w).
\end{align}

\begin{proposition}[Flux defect control]
\label{pr:defectFlux}
For $\xi\in L^\infty([0,T]{\times}Y;C_b^2(X;\R^d))$, it holds that
\[
\limsup_{\tau \to 0}\bigg|\int_0^T\langle j_\tau - j_u,\xi \rangle \,\dd t\bigg| \lesssim  \|\xi\|_{L^\infty((0,T)\times Y;C_b^2(X))}\int_0^T\bigl(E_\dft(t)+H_\dft(t)\bigr)\,\dd t
\]
\end{proposition}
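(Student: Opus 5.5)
Starting from the difference identity for $\la j_\tau - j_u,\xi\ra$ displayed above, I would write $u_\tau = u + z_\tau$, $U_\tau = U + Z_\tau$, and expand each bracket into terms linear in $z_\tau$ and terms quadratic (or higher) in $z_\tau$. The linear terms, for instance
\[
\int_0^T\!\!\int_X\int_Y\alpha\, A_\xi:(\nabla u\otimes\nabla z_\tau+\nabla z_\tau\otimes\nabla u)\,\dd\nu\dd x\dd t
\quad\text{and}\quad
\int_0^T\!\!\int_X\int_Y\alpha\, b_\xi\cdot(z_\tau\nabla u+u\nabla z_\tau)\,\dd\nu\dd x\dd t,
\]
have fixed coefficients in $L^2$ (or better). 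They vanish as $\tau\to0$ by the weak convergence \eqref{eq:interpolWeakConv}. The same holds for the linear parts of the potential and repulsion contributions, where I would linearise $\sff'(U_\tau)$ around $U$ using \eqref{eq:pot.exp}. There the coefficients $\sff'(U)$, $\sff''(U)\nabla u$ and $\sff''(U) u$ are integrable thanks to the $L^4$ bound on $U$, the $L^2_tH^2_x$ regularity of $u$, and $d\le3$.

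It remains to bound the quadratic remainders. These are, schematically,
\[
\int\alpha\big(A_\xi:\nabla z_\tau\otimes\nabla z_\tau + b_\xi\cdot z_\tau\nabla z_\tau\big),
\]
together with cross terms such as $\sff''(U)Z_\tau\nabla z_\tau$, the cubic terms $Z_\tau^2\nabla u_\tau$ and $Z_\tau^3\nabla u_\tau$, and $\msB z_\tau\,\nabla z_\tau$. They are all bounded by $\|\xi\|_{C^2_b}$ times
\[
\|\nabla z_\tau\|^2_{L^2} + \|z_\tau\|_{L^2}^2 + \vartheta\|Z_\tau\|_{L^4}^4
\]
plus lower-order mixed terms. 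By the coercivity \eqref{eq:relEnergyCoerc}, this is controlled by $\calE_\rel(u_\tau|u)+\lambda\|z_\tau\|^2_{L^2(X\times Y)}$.

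Next I would relate these to the defects. Since $\calE(u_\tau)-\calE(u)-\rmD\calE(u)[z_\tau]=\calE_\rel(u_\tau|u)$ and $\rmD\calE(u)[z_\tau]$ is linear in $z_\tau$ with a fixed coefficient, the weak convergence gives, after integrating in time,
\[
\limsup_{\tau\to0}\int_0^T\calE_\rel(u_\tau|u)\,\dd t\le\int_0^T E_\dft\,\dd t .
\]
Here I use Proposition~\ref{prop:energy-convergence} and dominated convergence in $t$; the concave and repulsion parts are handled via \eqref{eq:interpolTestedStrongConvA} and Lemma~\ref{l:ApproxRepul}.

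The main obstacle is the $L^2$ term $\|z_\tau\|^2_{L^2(X\times Y)}$, which has no weak-continuity reason to vanish. This is where the entropy defect enters. I would interpolate, fibrewise,
\[
\|z\|_{L^2}\lesssim\|z\|_{L^1}^{\gamma}\|\nabla z\|_{L^2}^{1-\gamma}
\]
by Gagliardo--Nirenberg. Combined with Young's inequality, this bounds $\|z_\tau\|_{L^2}^2\le\ve\|\nabla z_\tau\|^2_{L^2}+C_\ve\int_Y\|z_\tau(\cdot,y)\|^2_{L^1}\,\dd\nu$. The first piece I absorb into the relative energy (choosing $\ve$ small relative to $\alpha_*$). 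The second piece is bounded by $\calH_{\rm mid}(u_\tau,u)$ through \eqref{eq:KP}.

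Finally I need $\limsup\int_0^T\calH_{\rm mid}(u_\tau,u)\le C\int_0^T H_\dft$. By convexity, $\calH_{\rm mid}(u_\tau,u)\le \calH(u_\tau)-\calH(u)-\rmD\calH(u)[z_\tau]$ up to the concave half term $\frac12\calH(u)+\frac12\calH(u_\tau)-\calH(\frac{u+u_\tau}2)$. This I would bound using weak lower semicontinuity: the limit of $\calH(\frac{u+u_\tau}{2})$ is at least $\calH(u)$. The term $\rmD\calH(u)[z_\tau]$ involves $\log u$, which need not be bounded; I would handle it by truncating $\log u$ and using the moment bounds to control tails. Lemma~\ref{l:H.lim} then gives the limit $H(t)-\calH(u(t))$. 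Summing up yields the claim, with the implicit constant depending on $\alpha_*$, $\vartheta$, $\beta^*$ and the uniform bounds of Lemma~\ref{lem:GMM_bounds}.
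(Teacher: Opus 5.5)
Your overall argument coincides with the paper's. You use the same split of $\la j_\tau-j_u,\xi\ra$ into terms linear in $z_\tau$, which vanish by weak convergence, and quadratic gradient/repulsion terms and potential remainders, which are controlled (using $d\le3$ and \eqref{eq:relEnergyCoerc}) by $\calE_{\rel}(u_\tau|u)+\lambda\|z_\tau\|_{L^2}^2$. You also have the same Gagliardo--Nirenberg plus Csisz\'ar--Kullback--Pinsker bound of $\|z_\tau\|_{L^2}^2$ by $\calE_{\rel}+\calH_{\mathrm{mid}}$, and the same estimate $\limsup_{\tau}\int_0^T\calE_{\rel}(u_\tau|u)\,\dd t\le\int_0^T E_{\dft}\,\dd t$.

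The last step, however, should be redone.

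\emph{The comparison is circular.} The ``concave half term'' $\frac12\calH(u)+\frac12\calH(u_\tau)-\calH(\frac{u+u_\tau}{2})$ that you mention is $\calH_{\mathrm{mid}}(u_\tau,u)$ itself. So no comparison with $\calH(u_\tau)-\calH(u)-\rmD\calH(u)[z_\tau]$ is needed.

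\emph{The comparison would also fail.} That quantity is the relative entropy $\calH_{\rel}(u_\tau|u)$. It is $+\infty$ whenever $u_\tau(t,\cdot,y)$ charges the set $\{u(t,\cdot,y)=0\}$, and positivity of the limit $u$ is not available for this degenerate fourth-order problem. Truncating $\log u$ does not fix this: it destroys the inequality $\calH_{\mathrm{mid}}\le\frac12\calH_{\rel}$. Moment bounds do not help either, since they cannot control $-\int_{\{u\ \text{small}\}}u_\tau\log u$.

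\emph{What to do instead.} Use the definition of $\calH_{\mathrm{mid}}$ directly, together with three facts:
\begin{itemize}
\item Lemma~\ref{l:H.lim} with dominated convergence in $t$, for the terms $\calH(u_\tau)$;
\item lower semicontinuity of $\calH$ along $\frac{u_\tau+u}{2}\to u$, using the uniform second moments;
\item Fatou's lemma in $t$.
\end{itemize}
These give $\limsup_{\tau}\int_0^T\calH_{\mathrm{mid}}(u_\tau,u)\,\dd t\le\frac12\int_0^T\bigl(H-\calH(u)\bigr)\,\dd t=\frac12\int_0^T H_{\dft}\,\dd t$. This is exactly the paper's argument, and with this replacement your proof is complete.
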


\begin{proof}
We set $z_\tau = u_\tau- u$ and rewrite the right-hand side to infer  
\[
\int_0^T\int_X\int_Y \xi\cdot(j_\tau - j_u)\,\dd\nu\dd x\dd t=\int_0^T\mathcal{Q}_\tau(t)\,\dd t
+  \int_0^T\calA_\tau(t)\,\dd t
+\int_0^T\mathrm{err}_\tau(t)\,\dd t,
\] 
with 
\begin{align*}
\mathcal{Q}(z)&\coloneqq\;\int_X\int_Y \Big[ \alpha(y) \big(A_\xi:\nablax z{\otimes}\nablax z + z  b_\xi\cdot\nablax z  \big) + \sfB [z]z \divv\xi + 
\sfB [z]\nabla z \cdot\xi \Big]\,\dd \nu(y) \dd x,
\\
\calA_\tau(t) &\coloneqq\int_X\int_Y\kappa(y) \Big[z_\tau\big(\sff'(U_\tau) {-}\sff'(U)\big)+u\big(\sff'(U_\tau){-}\sff'(U){-}\sff''(U)Z_\tau)\big)\Big]\divv\xi\,\dd\nu\dd x
\\
&\qquad + \int_X\int_Y \kappa(y)\Big[\big(\sff'(U_\tau) {-}\sff'(U)\big)\nabla z_\tau+\big(\sff'(U_\tau){-}\sff'(U){-}\sff''(U)Z_\tau)\big)\nabla u\Big]\cdot \xi \dd \nu\dd x,
\\
\mathrm{err}_\tau(t) &\coloneqq\;\int_X\int_Y \alpha(y) \pra[\big]{A_\xi:(
\nabla u{\otimes} \nabla z_\tau + \nabla z_\tau {\otimes} \nabla u
) 
    + b_\xi\cdot (z_\tau \nabla u + u \nabla z_\tau)\,\dd \nu(y) \dd x \\&\qquad + \int_X\int_Y\divv \xi\big(\sfB [z_\tau] u + \sfB[u]z_\tau\big)  + \xi \cdot \big( \sfB[z_\tau]\nabla u + \sfB[u]\nabla z_\tau\big) } \,\dd \nu(y) \dd x
	\\&\qquad+ \int_X\int_Y\kappa(y)\bigl[z_\tau\sff'(U)+u\sff''(U)Z_\tau\bigr]\divv \xi\dd \nu\dd x
    \\&\qquad+ \int_X\int_Y\kappa(y)\bigl[\sff'(U)\nabla z_\tau+\sff''(U)(U_\tau{-}U)\nabla u\bigr]\cdot \xi\dd \nu\dd x.
\end{align*}
Using the convergence properties in Proposition~\ref{prop:energy-convergence}, we conclude that $\int_0^T\mathrm{err}_\tau(t)\,\dd t\to0$ as $\tau\downarrow0$. 

Moreover, since $\alpha\leq\alpha^\ast =\|\alpha\|_{L^\infty(Y)}$, 
we obtain with \eqref{eq:relEnergyCoerc}
\begin{align*}
	\bigg|\int_0^T\mathcal{Q}_\tau(t)\,\dd t\bigg|
	\lesssim_{\alpha^\ast, \beta^*}
	\|\xi\|_{L^\infty((0,T)\times Y;C_b^2(X))}
	\int_0^T\big(
	\mathcal{E}_{\rel}(u_\tau|u)
	+\lambda\|u_\tau{-}u\|_{L^2(X\times Y)}^2
	\big)\,\dd t.
\end{align*}
To show the analogous upper bound for $\calA_\tau$, we use that
\[
\begin{split}
|\sff'(U_\tau) - \sff'(U)|
&\lesssim_\vartheta (1+|U_\tau|^2+|U|^2)|Z_\tau|,\\
|\sff'(U_\tau){-}\sff'(U){-}\sff''(U)Z_\tau|
&\lesssim_\vartheta (1+|U|+|U_\tau|)|Z_\tau|^2.
\end{split}
\]
Thus, we conclude with Hölder's inequality
\begin{align*}
\|\sff'(U_\tau) - \sff'(U)\|_{L^2(X)}
&\lesssim_\vartheta
\|Z_\tau\|_{L^2(X)}+\big(\|U\|_{L^6(X)}^2+\|U_\tau\|_{L^6(X)}^2\big)\|Z_\tau\|_{L^6(X)},\\
\|\sff'(U_\tau){-}\sff'(U){-}\sff''(U)Z_\tau\|_{L^2(X)}
&\lesssim_\vartheta
\|Z_\tau\|_{L^4(X)}^2+(\|U\|_{L^6(X)}+\|U_\tau\|_{L^6(X)})\|Z_\tau\|_{L^6(X)}^2.
\end{align*}
Thus, since $d\le 3$, a Sobolev embedding and the coercivity estimate for $\calE_\mathrm{rel}$ in \eqref{eq:relEnergyCoerc} yield
\[
\bigg|\int_0^T\calA_\tau(t)\dd t\bigg|
\lesssim_{}\|\xi\|_{L^\infty((0,T)\times Y;C_b^1(X))}
	\int_0^T\big(
	\mathcal{E}_{\rel}(u_\tau|u)
	+\lambda\|u_\tau{-}u\|_{L^2(X\times Y)}^2
	\big)\,\dd t.
\]

By the Gagliardo--Nirenberg inequality and~\eqref{eq:KP}, combined with Young's inequality,
\begin{align}\label{eq:L^2-bound_by_Erel+Hrel}
	\int_0^T\|u_\tau-u\|_{L^2(X\times Y)}^2\,\dd t
	&\lesssim
	\bigg(\int_0^T\|\nabla(u_\tau{-}u)\|_{L^2(X\times Y)}^2\,\dd t
	\bigg)^{\frac{d}{d+2}}
	\bigg(\int_0^T
	\mathcal{H}_{\mathrm{mid}}(u_\tau,u)\,\dd t
	\bigg)^{\frac{2}{d+2}}
	\nonumber\\
	&\lesssim_{\alpha_\ast}
	\int_0^T\mathcal{E}_{\rel}(u_\tau|u)\,\dd t
	+\int_0^T\mathcal{H}_{\mathrm{mid}}(u_\tau,u)\,\dd t.
\end{align}

Moreover, Proposition~\ref{prop:energy-convergence}, the convergence of
the lower-order energy, and Lemma~\ref{l:H.lim} imply
\begin{align*}
	\limsup_{\tau\downarrow0}
	\int_0^T\mathcal{E}_{\rel}(u_\tau|u)\,\dd t
	&\leq\int_0^TE_\dft(t)\,\dd t,\\
	\limsup_{\tau\downarrow0}
	\int_0^T\mathcal{H}_{\mathrm{mid}}(u_\tau,u)\,\dd t
	&\leq\frac12\int_0^TH_\dft(t)\,\dd t.
\end{align*}
Indeed, $(u_\tau+u)/2\rightharpoonup u$, and hence the second inequality
follows from the lower semicontinuity of $\mathcal H$. We therefore
conclude that
\begin{align*}
	\limsup_{\tau\downarrow0}\bigg|\int_0^T\la j_\tau{-}j_u,\xi\ra\,\dd t\bigg|	\lesssim_{\alpha_\ast,\alpha^\ast,\beta^*,\vartheta}
	\|\xi\|_{L^\infty((0,T)\times Y;C_b^2(X))}
	\int_0^T\bigl(E_\dft(t)+H_\dft(t)\bigr)\,\dd t.
\end{align*}
Thus, we arrive at \eqref{eq:mv-flux.control}.
\end{proof}

The flux-defect estimate completes the proof of Theorem~\ref{thm:existence}.
The associated weak--strong uniqueness principle for dissipative solutions (cf.~Theorem~\ref{thm:wk-str.uniq}) will be proved in Section~\ref{ssec:wsu.diag.diss} and is valid in every space dimension.

\begin{remark}
We emphasize that in the case that $\supp\nu$ is a singleton (see Remark~\ref{rem:Singleton})
we can drop the dimensional restriction $d\leq 3$. In this case, we can establish strong convergence
of $U_\tau$ in $L^4$, see Remark~\ref{rem:betterConvergenceSingl}. Thus, the lower-order contribution 
in the constitutive flux converges in $L^1$ and together with strong $L^2_t(H^1_x)$
  convergence, every term in the distributional flux formula converges, so that $j=j_u$ and the flux defect vanishes as in the case where $\supp\nu$ is finite.
\end{remark}

\section{Non-convex domains and intrinsic transport}\label{sec:non-convex}

The main purpose of this section is to prove Theorem~\ref{thm:ex.Ediss.nonconvex}. Throughout, we let $X=\ol\Om$, where $\Om\subset\R^d$ is a bounded smooth domain, equipped with its intrinsic length distance $d_X$. Then $(X,d_X)$ is a compact geodesic space whose topology agrees with the Euclidean topology on $X$. Our interest is in variational approximations based on 
 optimal transport with respect to the intrinsic cost $c(x,x'):=\frac12d_X^2(x,x')$, inducing the optimal transport cost  $\frac12\mathcal{W}_{2,X}^2$.

In contrast to the whole-space setting considered in Section~\ref{sec:diagE}, where we relied on representing optimal couplings by optimal transport maps, we here use Kantorovich potentials and optimal dynamic plans to handle non-convex domains. A property that will be important in the analysis is that Kantorovich potentials associated to an absolutely continuous reference measure $\mu\ll\scrL^d\otimes \nu$ are differentiable in $x$  $\mu$-a.e., with an essentially bounded spatial gradient.
Concerning the fibred structure, the present approach also provides a relatively simple way to establish the required joint measurability in the spatial and fibre variables of the objects used to define the Euler--Lagrange flux.

\subsection{Measurable selections}\label{ssec:select.meas}

Since the framework for fibred optimal transport developed in~\cite{peszek2023heterogeneous} is presented primarily for the space $\R^d$ with Euclidean cost, we first establish the measurability properties needed here for intrinsic transport on $X=\ol\Om$. Specifically, we construct measurable families of Kantorovich potentials and optimal dynamic plans by suitable applications of the standard measurable selection results summarised in~\cite[Section~6.9]{Bogachev_2007}, where we adopt the relevant terminology from that reference. 
Some arguments here even simplify due to the compactness of $X=\ol\Om$. 
The measurable selection results, formulated for a general Polish space $\mathscr{Y}$, will later be invoked with $\mathscr{Y}=Y$ and $\mathscr{Y}=I\times Y$ for a time interval $I\subset\mathbb{R}_{\ge0}$.

Concerning basic aspects of the dual formulation of the optimal transport problem and the properties of $c$-concave functions that will be used in the following, we refer to~\cite[Chapter 1]{Santambrogio_2015}, while for optimal dynamic plans, we refer to~\cite[Section~9.3]{ABS_2024}.
Throughout the following, $\calP(X)$ will be endowed with the narrow topology. Then, $\calP(X)$ is a Polish space, which is compact due to the compactness of $X$.

\begin{lemma}[Measurable Kantorovich potentials]\label{l:Kant-pot.meas}
Let $\mathscr{Y}$ be a Polish space and let $\mathscr{Y}\ni\PARAM\mapsto\mu_\PARAM,\widehat\mu_\PARAM\in\calP(X)$ be Borel. Fix $x_*\in X$. 
There exists a Borel map $\PARAM\mapsto\vp_\PARAM\in C(X)$ such that $\vp_\PARAM$ is a Kantorovich potential from $\mu_\PARAM$ to $\widehat\mu_\PARAM$ for the cost $c$, such that for all $\sfy\in\scrY$
\[\vp_\PARAM=\vp_\PARAM^{cc},\qquad \vp_\PARAM(x_*)=0,\qquad\operatorname{Lip}_{d_X}(\vp_\PARAM)\le L_X,\qquad\|\vp_\PARAM\|_{C(X)}\le L_X^2,\]
where $\vp^c(z):=\inf_{x\in X}(c(x,z)-\vp(x))$ and $L_X\coloneqq\mathrm{diam}(X)$. In particular, $(\PARAM,x)\mapsto\vp_\PARAM(x)$ is jointly Borel. Moreover, there exists a jointly Borel map $G:\mathscr Y\times X\to\R^d$ such that, for every $\PARAM\in \mathscr Y$, 
\begin{align}\label{eq:nablaKant-pot.meas}
	G(\PARAM,\cdot)=\nabla_x\vp_\PARAM\quad\scrL^d\text{-a.e.\ in }\Omega,\qquad |G(\PARAM,x)|\le L_X\quad\text{for all }x\in X.
\end{align}
\end{lemma}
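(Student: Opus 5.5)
We will obtain the potentials from a measurable selection theorem applied to a closed-valued multifunction on the compact metric space of normalised $c$-concave functions. Let $\mathcal K\subset C(X)$ be the set of $\varphi$ with $\varphi=\varphi^{cc}$, $\varphi(x_*)=0$, $\mathrm{Lip}_{d_X}(\varphi)\le L_X$ and $\|\varphi\|_{C(X)}\le L_X^2$. Since $c=\frac12 d_X^2$ is $L_X$-Lipschitz in each variable with respect to $d_X$, every $c$-concave function is $L_X$-Lipschitz. Normalising at $x_*$ gives the sup bound, as $|\varphi(x)|\le L_X d_X(x,x_*)\le L_X^2$. By Arzel\`a--Ascoli, $\mathcal K$ is compact in $C(X)$. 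It is closed because $\varphi\mapsto\varphi^c$ is continuous (indeed $1$-Lipschitz) in the sup norm, so the identity $\varphi=\varphi^{cc}$ passes to uniform limits.

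Next consider the multifunction $\Psi(\mathsf y)\coloneqq\{\varphi\in\mathcal K:\int\varphi\,\dd\mu_{\mathsf y}+\int\varphi^c\,\dd\widehat\mu_{\mathsf y}=\tfrac12\mathcal W_{2,X}^2(\mu_{\mathsf y},\widehat\mu_{\mathsf y})\}$. We check three properties.
\begin{enumerate}
\item $\Psi(\mathsf y)\neq\emptyset$. By Kantorovich duality on the compact space $X$ (Santambrogio, Ch.~1), a maximiser of the form $\varphi=\varphi^{cc}$ exists, and subtracting a constant normalises it without changing the dual value.
\item $\Psi(\mathsf y)$ is closed, since the dual functional is continuous on $\mathcal K$.
\item $\Psi$ has closed graph in $\mathscr Y\times\mathcal K$ when $\mathscr Y\times\mathcal P(X)^2$ carries the narrow topology. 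The map $(\varphi,\mu,\widehat\mu)\mapsto\int\varphi\,\dd\mu+\int\varphi^c\,\dd\widehat\mu$ is jointly continuous on $\mathcal K\times\mathcal P(X)^2$, using uniform convergence of $\varphi$ and narrow convergence of the measures. Likewise $\mathcal W_{2,X}$ is continuous for the narrow topology on the compact space $X$.
\end{enumerate}
Hence the graph of $\Psi$ is the preimage of a closed set under the Borel map $(\mathsf y,\varphi)\mapsto(\varphi,\mu_{\mathsf y},\widehat\mu_{\mathsf y})$, so it is Borel. Because the values are compact, $\Psi$ is weakly measurable: $\{\mathsf y:\Psi(\mathsf y)\cap F\neq\emptyset\}$ is the projection of a Borel set with compact sections, hence Borel (Bogachev \S6.9 for $\sigma$-compact sections; equivalently use the upper semicontinuity of $\Psi$ composed with Borel data). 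The Kuratowski--Ryll-Nardzewski theorem then gives a Borel selection $\mathsf y\mapsto\varphi_{\mathsf y}\in\mathcal K$. Joint Borel measurability of $(\mathsf y,x)\mapsto\varphi_{\mathsf y}(x)$ follows because the evaluation map $C(X)\times X\to\mathbb R$ is continuous.

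For the gradient, $\varphi_{\mathsf y}$ is locally Lipschitz in the Euclidean sense on $\Omega$, since locally $d_X=|\cdot|$. By Rademacher it is differentiable a.e., with $|\nabla\varphi_{\mathsf y}|\le L_X$ because the metric Lipschitz constant dominates locally. We define $G$ through difference quotients,
\[
G_i(\mathsf y,x)\coloneqq\limsup_{n\to\infty} n\bigl(\varphi_{\mathsf y}(x+e_i/n)-\varphi_{\mathsf y}(x)\bigr)
\]
for $x\in\Omega$ with $x+e_i/n\in\Omega$ eventually, and $0$ elsewhere. Each quotient is jointly Borel, so $G_i$ is too. We then truncate with $G\mapsto G\,\mathbf 1_{\{|G|\le L_X\}}$, which preserves Borel measurability and the a.e.\ identity and yields the pointwise bound $|G|\le L_X$ everywhere.

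The main obstacle is verifying the weak measurability of $\Psi$ rigorously. The selection theorem needs measurability rather than mere Borel graph, and here the compactness of $\mathcal K$ is essential. If the projection argument proves delicate, we would instead show that $\Psi$ is upper hemicontinuous as a function of $(\mu,\widehat\mu)$ and compose it with the Borel data map.
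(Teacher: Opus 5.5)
Your proposal is correct and follows essentially the paper's route. It uses the same compact set $\mathscr K$ of normalised $c$-concave functions, and the continuity of the dual functional and of $\mathcal W_{2,X}$ to obtain a closed set of optimal triples with compact sections. It then takes a Borel selection from Bogachev \S6.9, gets joint measurability from the evaluation map, and defines $G$ through difference quotients and Rademacher's theorem. The only minor difference is where the selection happens: the paper selects on the compact space $\mathcal P(X)\times\mathcal P(X)$, where the set of optimal triples is genuinely closed, and then composes with the Borel data map, which is exactly your fallback and spares you the projection/Kuratowski--Ryll-Nardzewski step. Note also that your item~3 misstates the graph over $\mathscr Y$ as closed; it is only Borel, but Borel is all you actually use.
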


\begin{proof}
We first construct a Borel selection on the compact metric space 
$\mathscr A:=\mathcal{P}(X)\times\mathcal{P}(X)$.
Consider 
\[	\mathscr K\coloneqq\{\vp\in C(X):\vp=\vp^{cc},\ \vp(x_*)=0\}\subset C(X)\]
equipped with the subspace topology of uniform convergence. As we will detail in the following, by restricting to that subset, we obtain a compact set of admissible potentials without changing the dual supremum.
By~\cite[Proposition~1.34]{Santambrogio_2015}, the condition $\vp=\vp^{cc}$ is equivalent to $c$-concavity of $\vp$, which, due to the bound $|c(x,z)-c(x',z)|\le L_Xd_X(x,x')$, implies  that $\vp$ is $L_X$-Lipschitz continuous with respect to $d_X$. Hence, every $\vp\in\mathscr K$ satisfies $\operatorname{Lip}_{d_X}(\vp)\le L_X$, and the property $\vp(x_*)=0$ further yields $\|\vp\|_{C(X)}\le L_X^2$.
Since the $c$-transform $\vp\mapsto\vp^c$ is continuous on $C(X)$ (due to $\|\vp^c-\psi^c\|_{C(X)}\le\|\vp-\psi\|_{C(X)}$), it follows that $\mathscr K$ is closed in $C(X)$. The Arzel\`a--Ascoli theorem then implies that $\mathscr K$ is compact and thus Polish.

We now consider the functional 
\[J: \mathscr A\times\mathscr K\to\mathbb{R}, \qquad J(\mu,\widehat\mu,\vp):=\int_X\vp\,\dd\mu+\int_X\vp^c\,\dd\widehat\mu.\]
Using once more the continuity of the $c$-transform on $C(X)$, it follows that $J$ is continuous. Further observe that imposing the condition~$\vp(x_*)=0$ on admissible $\vp$ does not decrease the supremum due to the invariance of the value $\int_X\vp\,\dd\mu+\int_X\vp^c\,\dd\widehat\mu$ under $(\vp,\vp^c)\mapsto(\vp+a,\vp^c-a)$.
Taking also into account the fact that $\vp^{cc}\ge\vp$ and $(\vp^{cc})^c=\vp^c$, which means that the restriction $\vp=\vp^{cc}$ to $c$-concave functions in $\mathscr K$ does not change the value, we can invoke Kantorovich duality to deduce
\[\max_{\vp\in\mathscr K}J(\mu,\widehat\mu,\vp)=\frac12\calW_{2,X}^2(\mu,\widehat\mu).\]
It follows that the set
\[\Gamma_{\rm pot}:=\bigg\{(\mu,\widehat\mu,\vp)\in\mathscr A\times\mathscr K:	J(\mu,\widehat\mu,\vp)=\frac12\calW_{2,X}^2(\mu,\widehat\mu)\bigg\}\]
has non-empty sections $(\Gamma_{\rm pot})_{(\mu,\widehat\mu)}:=\{\vp\in\mathscr K:(\mu,\wh\mu,\vp)\in\Gamma_{\rm pot}\}$ over every $(\mu,\widehat\mu)\in\mathscr A$.
 Since $X$ is compact, $\calW_{2,X}$ is continuous on $\mathscr A$, so that $\Gamma_{\rm pot}\subset\mathscr A\times\mathscr K$ is closed. In particular, $\Gamma_{\rm pot}\in\mathcal{B}(\mathscr A\times\mathscr K)$ is a Borel set. Moreover, each section $(\Gamma_{\rm pot})_{(\mu,\widehat\mu)}$  is a closed subset of the compact space $\mathscr K$, and hence compact.
We may therefore invoke~\cite[Vol.~II, Theorem~6.9.6]{Bogachev_2007}, which provides us with a Borel map %
$\Phi:\mathscr A\to\mathscr K$ such that $(\mu,\widehat\mu,\Phi(\mu,\widehat\mu))\in\Gamma_{\rm pot}$
for every $(\mu,\widehat\mu)\in\mathscr A$.
Finally, since $\mathscr Y\ni\PARAM\mapsto(\mu_\PARAM,\widehat\mu_\PARAM)$ is Borel, the composition $\PARAM\mapsto\vp_\PARAM:=\Phi(\mu_\PARAM,\widehat\mu_\PARAM)$ is a Borel family of Kantorovich potentials.
 Since the evaluation map $C(X)\times X\ni(\wt\vp,x)\mapsto\wt\vp(x)$ is continuous, the Borel measurability of $\PARAM\mapsto\vp_\PARAM\in C(X)$
implies that $(\PARAM,x)\mapsto\vp_\PARAM(x)$ is jointly Borel.
	
	To construct $G$, observe that each $\vp_\PARAM$ is locally Euclidean $L_X$-Lipschitz in $\Omega = \mathrm{int}(X)$.
For $i=1,\ldots,d$ and $n\in \mathbb{N}$, we therefore consider the jointly Borel measurable difference quotients
	\begin{align*}Q_{n,i}(\PARAM,x):=
	\begin{cases}
		n\big(\vp_\PARAM(x+n^{-1}e_i)-\vp_\PARAM(x)\big),
		&x\in\Omega,\ \operatorname{dist}(x,\partial\Omega)>n^{-1},\\
		0,&\text{otherwise}.
	\end{cases}\end{align*}
Let $Q_n=(Q_{n,1},\ldots,Q_{n,d})$ and define the map
\begin{align*}
G(\PARAM,x):=\begin{cases}\lim_{n\to\infty}Q_n(\PARAM,x)&\text{ if }\lim_{n\to\infty}Q_n\text{ exists in } \overline{B_{L_X}}(0),\\0&\text{ otherwise},
	\end{cases}
\end{align*}
which is jointly Borel by construction. For every $\PARAM\in \mathscr Y$, Rademacher's theorem implies that $Q_n(\PARAM,x)\to\nabla_x\vp_\PARAM(x)$ and $|\nabla_x\vp_\PARAM(x)|\le L_X$ for almost every $x\in\Omega$. 

This proves the assertions.
\end{proof}

Recall that we consider the Banach space $C([0,1];X)$ equipped with the uniform topology induced by $d_X$. 
The subspace $\operatorname{Geo}(X)\subset C([0,1];X)$ of constant-speed length-minimising geodesics is closed.
Furthermore, since the family of curves $\operatorname{Geo}(X)$ is equicontinuous and $X$ is compact, $\GEO(X)\Subset C([0,1];X)$ is compact due to the Arzel\`a--Ascoli theorem.
Crucially relying on the compactness of the geodesic Polish space $X=\ol\Om$, the following result is not difficult to infer from classical measurable choice theory~\cite[Chapter 6]{Bogachev_2007}.

\begin{lemma}[Measurable optimal dynamic plans]\label{l:dyn.plan.meas}
Under the assumptions of Lemma~\ref{l:Kant-pot.meas}, there exists a Borel map $\PARAM\mapsto\wt\pi_\PARAM\in\calP(\GEO(X))$ such that	
\begin{align}\label{eq:dyn.plan.Borel}
(e_0)_\#\wt\pi_\PARAM=\mu_\PARAM,\qquad(e_1)_\#\wt\pi_\PARAM=\widehat\mu_\PARAM,\qquad\int_{\operatorname{Geo}(X)}d_X^2(\gamma_0,\gamma_1)\,\dd\wt\pi_\PARAM
	=\calW_{2,X}^2(\mu_\PARAM,\widehat\mu_\PARAM).
\end{align}
Moreover, there exists a Borel map $v:[0,1]\times\operatorname{Geo}(X)\to\R^d$ such that, for every $\gamma\in\operatorname{Geo}(X)$,
\[v(r,\gamma)=\dot\gamma(r)\quad\text{for a.e.\ }r\in(0,1),\qquad
	|v(r,\gamma)|\le d_X(\gamma_0,\gamma_1)\quad\text{for all }r\in[0,1].\]
If $\gamma_0\in\Omega$, then $v(0,\gamma)=\dot\gamma(0+)$. 
\end{lemma}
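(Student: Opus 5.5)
The plan is to mirror the proof of Lemma~\ref{l:Kant-pot.meas}: first construct a Borel selection on the compact space $\mathscr A=\calP(X)\times\calP(X)$, then compose it with $\PARAM\mapsto(\mu_\PARAM,\widehat\mu_\PARAM)$. Since $\GEO(X)$ is a compact metric space, $\calP(\GEO(X))$ with the narrow topology is compact and Polish. Consider
\[
\Gamma_{\rm dyn}:=\Big\{(\mu,\widehat\mu,\wt\pi)\in\mathscr A\times\calP(\GEO(X)):(e_0)_\#\wt\pi=\mu,\ (e_1)_\#\wt\pi=\widehat\mu,\ \int d_X^2(\gamma_0,\gamma_1)\,\dd\wt\pi=\calW_{2,X}^2(\mu,\widehat\mu)\Big\}.
\]
The evaluation maps $e_0,e_1:\GEO(X)\to X$ are continuous, so the push-forwards $\wt\pi\mapsto(e_i)_\#\wt\pi$ are narrowly continuous. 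The integrand $\gamma\mapsto d_X^2(\gamma_0,\gamma_1)$ is continuous and bounded on the compact set $\GEO(X)$, so the cost functional is continuous. Moreover, $\calW_{2,X}$ is continuous on $\mathscr A$ because $X$ is compact. Together these show that $\Gamma_{\rm dyn}$ is closed, and its sections are closed, hence compact.

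Next I will show that the sections are non-empty. Let $\pi$ be an optimal coupling for $(\mu,\widehat\mu)$. The set of pairs $(\gamma,(x,x'))$ with $\gamma\in\GEO(X)$, $\gamma_0=x$, $\gamma_1=x'$ is closed, and its sections are non-empty because $(X,d_X)$ is a compact geodesic space. A Borel geodesic selection $(x,x')\mapsto\gamma^{x,x'}$ therefore exists by \cite[Theorem~6.9.6]{Bogachev_2007}. Then $\wt\pi:=(\gamma^{\cdot,\cdot})_\#\pi$ satisfies all three conditions in~\eqref{eq:dyn.plan.Borel}. Applying \cite[Theorem~6.9.6]{Bogachev_2007} once more yields a Borel map $\Psi:\mathscr A\to\calP(\GEO(X))$ with $(\mu,\widehat\mu,\Psi(\mu,\widehat\mu))\in\Gamma_{\rm dyn}$. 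Finally, set $\wt\pi_\PARAM:=\Psi(\mu_\PARAM,\widehat\mu_\PARAM)$.

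For the velocity field $v$, I will follow the difference-quotient construction used for $G$ in Lemma~\ref{l:Kant-pot.meas}. For $r\in[0,1)$ put $Q_n(r,\gamma):=n\big(\gamma(\min(r+1/n,1))-\gamma(r)\big)$. This map is jointly continuous in $(r,\gamma)$. Define $v(r,\gamma)$ as $\lim_n Q_n(r,\gamma)$ whenever this limit exists and lies in $\overline{B_{d_X(\gamma_0,\gamma_1)}}(0)$, and as $0$ otherwise. This $v$ is Borel because the convergence set and the radius constraint are Borel conditions.

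It remains to check the pointwise properties of $v$. A constant-speed geodesic is $d_X(\gamma_0,\gamma_1)$-Lipschitz with respect to $d_X$. Since $|x-x'|\le d_X(x,x')$, it is also Euclidean Lipschitz with the same constant. By Rademacher's theorem the forward quotients therefore converge for a.e.\ $r$ to $\dot\gamma(r)$, and $|\dot\gamma(r)|\le d_X(\gamma_0,\gamma_1)$, which gives $v(r,\gamma)=\dot\gamma(r)$ a.e.

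The main obstacle is the claim $v(0,\gamma)=\dot\gamma(0+)$ when $\gamma_0\in\Omega$. Near $r=0$ the geodesic stays in a Euclidean ball $B\subset\Omega$ around $\gamma_0$. On such a ball $d_X$ agrees with the Euclidean distance for points close to $\gamma_0$, using local convexity of $B$. A length-minimising curve is locally minimising, so on a short initial interval $\gamma$ is a Euclidean straight segment traversed at constant speed. Consequently the right derivative exists, and $Q_n(0,\gamma)$ is eventually equal to it, with modulus at most $d_X(\gamma_0,\gamma_1)$. The point to be careful about is that a minimiser cannot leave and re-enter the ball before becoming straight. This follows because any sub-arc of a minimising geodesic is itself minimising, so on $[0,\delta]$ with $\gamma([0,\delta])\subset B$ the restriction must realise the Euclidean distance and hence be a segment.
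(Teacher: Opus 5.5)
Your proof is correct. For the velocity field it matches the paper, but for the first assertion you take a different route.

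\textbf{Selection of dynamic plans.} The paper combines a Borel family of optimal \emph{static} plans $\PARAM\mapsto\pi_\PARAM$, taken from \cite[Corollary~5.22]{Villani_2009}, with a single Borel geodesic selector $\sigma:X\times X\to\GEO(X)$. It obtains $\sigma$ from \cite[Theorem~6.9.7]{Bogachev_2007} as the Borel inverse of the endpoint map $(e_0,e_1)$ restricted to a suitable Borel set $B\subset\GEO(X)$, and then sets $\wt\pi_\PARAM=\sigma_\#\pi_\PARAM$. You instead run a one-shot compact-section selection on the closed set $\Gamma_{\rm dyn}$ of optimal dynamic plans. This is exactly parallel to the proof of Lemma~\ref{l:Kant-pot.meas}, and you use a geodesic selector only to show that the sections are non-empty.

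\textbf{Comparison.} Your version is self-contained and uniform with the potential lemma. It does not need a measurable selection of static plans, because optimality is encoded as a closed condition and compactness of $\calP(\GEO(X))$ does the work. The paper's version is more modular. It also gives extra structure: all $\wt\pi_\PARAM$ are concentrated on one fixed Borel family of geodesics, one for each pair of endpoints.

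\textbf{Velocity field.} Your forward-difference construction agrees with the paper's sketch. So does your argument that a geodesic starting in $\Omega$ is an affine segment on a short initial interval, and you fill in the details correctly. One small correction: $d_X$ coincides with the Euclidean distance for all pairs of points in a ball $B\subset\Omega$, by convexity of $B$, not only for points near its centre.
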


\begin{proof}[Proof sketch] 
Concerning the first assertion, we proceed by reduction to the problem of a Borel selection of static optimal plans, whose existence is guaranteed by~\cite[Corollary 5.22]{Villani_2009}.
For the reduction step, it suffices to show the existence of a Borel map $\sigma:X\times X\to\GEO(X)$ such that, for all $x,y\in X$, $\sigma(x,y)$ is a constant-speed length minimising geodesic from $x$ to $y$, since then the push-forward $\sigma_{\#}$ maps every Borel family of optimal static plans to a Borel family of optimal dynamic plans.
For existence of a map $\sigma$ with the required properties, %
one considers the endpoint map $E:=(e_0,e_1):\GEO(X)\to X\times X$, which is continuous and, since $X$ is geodesic, also surjective. Then, thanks to the compactness of $\GEO(X)$, \cite[Theorem~6.9.7]{Bogachev_2007} applies and yields a Borel set $B\subset\GEO(X)$ such that $E|_B:B\to X\times X$ is bijective and has a Borel inverse $\sigma$.

For the assertions on the velocity, one approximates using forward difference quotients. %
The assertion on the initial velocity of geodesics starting in the interior of $\Omega$ follows from the fact that such geodesics coincide with a line segment for a short time.
\end{proof}

The measurable selection results above will be applied to Borel representatives of the disintegrations $\{\mu^y\}$, which we understand as defined on all of $Y$ after extension via a fixed probability measure on a Borel $\nu$-null set.  For the variational interpolant, we will also consider $\{\mu^{t,y}\}$.

\subsection{Differentiability of the intrinsic Wasserstein distance}\label{ss:DiffIntrinsicWasserstein}

The goal of this section is to derive a first variation formula for the squared intrinsic Wasserstein distance.
To this end, we first derive an alternative representation of the intrinsic Wasserstein cost in terms of the gradient of a Kantorovich potential, which is classical in the case of a convex domain.
We directly formulate the results in the fibred setting.

\begin{lemma}[Initial velocity of optimal geodesics]\label{l:ini.vel.geo}
Let $\mu,\widehat\mu\in\calP_\nu(\ol\Om{\times}Y)$ with $\mu\ll\scrL^d\otimes\nu$. Let $y\mapsto\varphi(\cdot,y)$ and $y\mapsto\liftpi^y$ be Borel families of Kantorovich potentials and optimal dynamic plans from $\mu^y$ to $\widehat\mu^y$, respectively, as provided
by Lemmas~\ref{l:Kant-pot.meas},~\ref{l:dyn.plan.meas}. 
Define $\wt\pi=\int_Y\wt\pi^y\otimes\delta_y\,\dd\nu(y)\in \mathcal{P}(\GEO(\ol\Om)\times Y)$ and let $\nabla_x\vp$ denote a jointly Borel measurable representative of the spatial gradient of the fibred Kantorovich potential (cf.~\eqref{eq:nablaKant-pot.meas}).
Then,
\[-\dot\gamma(0+)=\nabla_x\varphi(\gamma_0,y)\qquad\text{for }\liftpi\text{-a.e.\ }(\gamma,y).\]
Consequently,
\begin{align}\label{eq:W2.KantorovichPot}
\calW_{2,X,\nu}^2(\mu,\widehat\mu)&=\int_{\GEO(\ol\Om)\times Y}d_X^2(\gamma_0,\gamma_1)\,\dd\liftpi(\gamma,y)
=\int_{\Om\times Y}|\nabla_x\varphi(x,y)|^2\,\dd\mu(x,y).
\end{align}
\end{lemma}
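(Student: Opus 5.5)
The argument is fibrewise. Since $\mu\ll\scrL^d\otimes\nu$, we have $\mu^y\ll\scrL^d$ for $\nu$-a.e.\ $y$. I first prove, for such a fixed $y$, that $-\dot\gamma(0+)=\nabla_x\vp(\gamma_0,y)$ for $\wt\pi^y$-a.e.\ $\gamma$. The fibred statement then follows by integrating over $\nu$. All maps involved are jointly Borel by Lemmas~\ref{l:Kant-pot.meas} and~\ref{l:dyn.plan.meas}, so the exceptional set is measurable.

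Fix such a $y$ and write $\vp=\vp(\cdot,y)$. Optimality of $\wt\pi^y$ together with Kantorovich duality ($\vp=\vp^{cc}$) gives
\[
\vp(\gamma_0)+\vp^c(\gamma_1)=\tfrac12 d_X^2(\gamma_0,\gamma_1)\quad\text{for }\wt\pi^y\text{-a.e.\ }\gamma .
\]
For every $x\in X$ we also have $\vp(x)\le \tfrac12 d_X^2(x,\gamma_1)-\vp^c(\gamma_1)$. Subtracting the two relations yields, for all $x\in X$,
\[
\vp(x)-\vp(\gamma_0)\le \tfrac12 d_X^2(x,\gamma_1)-\tfrac12 d_X^2(\gamma_0,\gamma_1).
\]
Since $(e_0)_\#\wt\pi^y=\mu^y\ll\scrL^d$ and $\scrL^d(\partial\Om)=0$, for a.e.\ $\gamma$ the point $\gamma_0$ lies in $\Om$. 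By Rademacher's theorem, $\vp$ is moreover differentiable at $\gamma_0$ with gradient $G(y,\gamma_0)$ from~\eqref{eq:nablaKant-pot.meas}.

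Now test the inequality along the geodesic itself. Take $x=\gamma_s$ for small $s>0$. Because $\gamma$ is a constant-speed minimising geodesic, $d_X(\gamma_s,\gamma_1)=(1-s)d_X(\gamma_0,\gamma_1)$, so the right-hand side equals $\tfrac12\big((1-s)^2-1\big)D^2$ with $D=d_X(\gamma_0,\gamma_1)$. Since $\gamma_0\in\Om$, Lemma~\ref{l:dyn.plan.meas} shows that $\gamma$ is a straight segment near $0$ with velocity $\dot\gamma(0+)$, where $|\dot\gamma(0+)|=D$. Dividing by $s$ and letting $s\downarrow0$ gives
\[
\nabla\vp(\gamma_0)\cdot\dot\gamma(0+)\le -D^2 .
\]
The Lipschitz bound with respect to $d_X$ gives $|\nabla\vp(\gamma_0)|\le 1\cdot$(local Euclidean Lipschitz constant). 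To get the sharp bound, I compare with arbitrary directions. Take $x=\gamma_0+h e$ with $h$ small and $e$ a unit vector, which stays in $\Om$. Then $d_X(x,\gamma_1)\le h+D$, so the right-hand side is at most $hD+O(h^2)$. Hence $\nabla\vp(\gamma_0)\cdot e\le D$ for all $e$, that is, $|\nabla\vp(\gamma_0)|\le D$. Combining this with the previous display and Cauchy--Schwarz,
\[
D^2\le -\nabla\vp(\gamma_0)\cdot\dot\gamma(0+)\le |\nabla\vp(\gamma_0)|\,D\le D^2 .
\]
Equality therefore holds in Cauchy--Schwarz, which forces $\nabla\vp(\gamma_0)=-\dot\gamma(0+)$; when $D=0$ both sides vanish.

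The identity~\eqref{eq:W2.KantorovichPot} then follows. The first equality is~\eqref{eq:dyn.plan.Borel} integrated in $\nu$. For the second, use $|\nabla_x\vp(\gamma_0,y)|^2=|\dot\gamma(0+)|^2=d_X^2(\gamma_0,\gamma_1)$ $\wt\pi$-a.e. and push forward via $(e_0,\id_Y)$, which maps $\wt\pi$ to $\mu$; boundary points carry no mass. The main obstacle is the directional upper bound $|\nabla\vp(\gamma_0)|\le D$ in the non-convex setting. It relies on $d_X(x,\gamma_1)\le |x-\gamma_0|+D$ for $x$ near an interior point $\gamma_0$, which holds because small Euclidean segments around interior points are admissible paths. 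A secondary technical point is the measurability of the exceptional null sets, which is handled by the jointly Borel representatives $G$ and $v$.
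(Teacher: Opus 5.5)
Your proof is correct, but it closes the argument differently from the paper.

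\textbf{The paper's route.} The paper invokes the classical argument of \cite[Proposition~1.30]{AG_2013}. Its key step is written out in the ``Re $\le$'' part of the proof of Proposition~\ref{prop:var.intrinsicW2}: competitors $x$ near $\gamma_0$ are routed through an interior point $\gamma_t$ of the geodesic, and $|x-\gamma_t|+(1-t)\ell$ is expanded to first order. This shows that $-\dot\gamma(0+)$ is a supergradient of $c(\cdot,\gamma_1)$ at $\gamma_0$. Combined with $\vp(x)-\vp(\gamma_0)\le c(x,\gamma_1)-c(\gamma_0,\gamma_1)$, it gives $-\dot\gamma(0+)\in\partial_x^+\vp(\gamma_0,y)$. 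Differentiability then collapses the superdifferential to the gradient.

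\textbf{Your route.} You use only two one-sided tests:
\begin{itemize}
\item the exact value of the cost along the geodesic itself, which gives $-\nabla\vp(\gamma_0)\cdot\dot\gamma(0+)\ge D^2$;
\item the crude triangle inequality $d_X(x,\gamma_1)\le|x-\gamma_0|+D$ in arbitrary directions, which gives $|\nabla\vp(\gamma_0)|\le D$.
\end{itemize}
Since $|\dot\gamma(0+)|=D$, equality in Cauchy--Schwarz then forces $\nabla\vp(\gamma_0)=-\dot\gamma(0+)$.

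\textbf{Comparison.} Your argument is more elementary and self-contained: it needs no square-root expansion and no appeal to the superdifferential machinery. However, it uses differentiability of $\vp(\cdot,y)$ at $\gamma_0$ in an essential way, because the rigidity argument needs a gradient to exist. The paper's inclusion $-\dot\gamma(0+)\in\partial_x^+\vp(\gamma_0,y)$ holds wherever $\gamma_0\in\Omega$ and the duality equality $\vp(\gamma_0)+\vp^c(\gamma_1)=c(\gamma_0,\gamma_1)$ is attained, with no differentiability. Moreover, the same first-order upper expansion of the cost is reused verbatim in the proof of Proposition~\ref{prop:var.intrinsicW2}. For the present lemma, where differentiability holds $\wt\pi$-a.e.\ anyway, the two routes are equally effective.

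\textbf{Minor point.} The fact that an optimal geodesic starting in $\Omega$ is initially a straight segment is not part of the statement of Lemma~\ref{l:dyn.plan.meas}; it appears only in its proof sketch. It does follow directly from $d_X=|\cdot|$ on small balls contained in $\Omega$ together with equality in the triangle inequality along a minimising geodesic.
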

\begin{proof}
Since $(e_0,\operatorname{id}_Y)_\#\liftpi=\mu\ll\scrL^d\otimes\nu$, it holds for $\liftpi$-a.e.\	$(\gamma,y)\in \GEO(\ol\Om)\times Y$ that $\gamma_0\in\Om$ and
$\vp(\cdot,y)$ is differentiable at $\gamma_0$. For such a geodesic, an initial segment is contained
in $\Om$ and hence is a Euclidean line segment. Then it is classical  (cf.\ e.g.\ the argument in~\cite[Proposition~1.30]{AG_2013}) that
	\[-\dot\gamma(0+)\in\pa_x^+\vp(\gamma_0,y).\]
Differentiability at $\gamma_0$ yields the identity. Moreover, since $\gamma$ has constant speed and is a Euclidean line segment near its initial point $\gamma_0$, it holds that $|\dot\gamma(0+)|=d_X(\gamma_0,\gamma_1)$ for $\liftpi$-a.e.\ 	$(\gamma,y)$. Using the $y$-integrated version of~\eqref{eq:dyn.plan.Borel} for fibrewise optimal dynamic plans and 
the fact that $(e_0,\operatorname{id}_Y)_\#\liftpi=\mu$, we then deduce~\eqref{eq:W2.KantorovichPot}, where we also used that $\mu\ll\scrL^d\otimes\nu$ to replace the domain of integration $\ol\Om\times Y$ by $\Om\times Y$.
\end{proof}
Now we turn to the differentiability of $\mathcal{W}_{2,X,\nu}^2(\cdot,\wh\mu)$ along inner variations with respect to $\ol\Om$. 
To this end, recalling $C_{\mathrm{tan}}^2(\ol\Omega;\mathbb R^d)= \left\{
\xi\in C^2(\ol\Omega;\mathbb R^d):\xi\cdot n=0\ \text{on }\partial\Omega\right\}$,
let $\xi\in L^\infty(Y;C^2_\TAN(\ol\Om;\mathbb{R}^{d}))$ be a fixed but arbitrary tangential vector field.
After modifying $\xi$ on a $\nu$-null set,  we may assume that $y\mapsto\xi(\cdot,y)$ is Borel with uniformly bounded $C^2(\ol\Om)$ norm,
and consider the associated flow map $\Phi_s:\ol\Om{\times}Y\to \ol\Om$, $\Phi_0(\cdot,y)=\operatorname{id}_{\ol\Om}$, $\pa_s\Phi_s=\xi(\cdot,y)\circ\Phi_s$.
Then, in particular, $(s,x,y)\mapsto\Phi_s(x,y)$ is jointly Borel, while  $\Phi_s(\cdot,y)$ is a $C^2$-diffeomorphism of $\ol\Omega$ onto itself.

\begin{proposition}[First variation of the intrinsic Wasserstein distance]\label{prop:var.intrinsicW2} Let $\mu,\wh\mu\in \calP_\nu(\ol\Om{\times}Y)$, $\mu\ll\scrL^{d}\otimes\nu$, and set
	$\mu_s\coloneqq (\Phi_s,\id_Y)_{\#}\mu$. Then, $s\mapsto \mathcal{W}_{2,X,\nu}^2(\mu_s,\wh\mu)$ is differentiable at $s=0$ with 
	\begin{align}\label{eq:differentiationW2.formula}
		\frac12\frac{\dd}{\dd s}\bigg|_{s=0}\mathcal{W}_{2,X,\nu}^2(\mu_s,\wh\mu)=\int_{\Om\times Y}\nabla_x\vp\cdot\xi\,\dd\mu,
	\end{align}
	where $y\mapsto\vp(\cdot,y)$ is the Borel family of Kantorovich potentials obtained in Lemma~\ref{l:Kant-pot.meas}.
\end{proposition}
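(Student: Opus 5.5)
The plan is to prove matching one-sided bounds: an upper bound for $\limsup_{s\downarrow0}$ (and the analogue for $s\uparrow0$) via an explicit competitor coupling, and a lower bound via Kantorovich duality with the fixed potential $\vp$. By the symmetry $s\mapsto -s$ (replacing $\xi$ by $-\xi$), it suffices to show that the right derivative exists and equals $\int\nabla_x\vp\cdot\xi\,\dd\mu$.

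\emph{Lower bound.} For each $y$, Kantorovich duality with the admissible pair $(\vp(\cdot,y),\vp^c(\cdot,y))$ gives
\[
\tfrac12\calW_{2,X}^2(\mu_s^y,\wh\mu^y)\ge\int\vp(\Phi_s(x,y),y)\,\dd\mu^y(x)+\int\vp^c\,\dd\wh\mu^y ,
\]
with equality at $s=0$. Hence
\[
\tfrac12\calW_{2,X,\nu}^2(\mu_s,\wh\mu)-\tfrac12\calW_{2,X,\nu}^2(\mu,\wh\mu)\ge\int_{\Om\times Y}\big(\vp(\Phi_s(x,y),y)-\vp(x,y)\big)\,\dd\mu .
\]
Since $\mu\ll\scrL^d\otimes\nu$, for $\mu$-a.e.\ $(x,y)$ the point $x$ lies in $\Om$ and $\vp(\cdot,y)$ is differentiable at $x$ (Rademacher, Lemma~\ref{l:Kant-pot.meas}). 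So the difference quotient converges pointwise to $\nabla_x\vp\cdot\xi$. It is dominated by $L_X\|\xi\|_\infty$, because $d_X(\Phi_s(x,y),x)\le |s|\,\|\xi\|_\infty$ (the flow curve stays in $\ol\Om$ by tangency) and $\vp$ is $L_X$-Lipschitz w.r.t.\ $d_X$. Dominated convergence then gives $\liminf_{s\downarrow0}\frac1{2s}(\dots)\ge\int\nabla_x\vp\cdot\xi\,\dd\mu$, and the reverse inequality for $s\uparrow0$.

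\emph{Upper bound.} I would use the measurable optimal dynamic plan $\wt\pi$ from Lemma~\ref{l:dyn.plan.meas} and Lemma~\ref{l:ini.vel.geo}. The coupling $((\Phi_s(\gamma_0,y),\gamma_1))_\#\wt\pi^y$ is admissible between $\mu_s^y$ and $\wh\mu^y$, so
\[
\calW_{2,X}^2(\mu_s^y,\wh\mu^y)\le\int d_X^2(\Phi_s(\gamma_0,y),\gamma_1)\,\dd\wt\pi^y .
\]
The key pointwise estimate is
\[
d_X^2(\Phi_s(\gamma_0),\gamma_1)\le d_X^2(\gamma_0,\gamma_1)-2s\,\dot\gamma(0+)\cdot\xi(\gamma_0)+o(s) \quad\text{as } s\downarrow0,
\]
for $\gamma_0\in\Om$. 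Combined with $-\dot\gamma(0+)=\nabla_x\vp(\gamma_0,y)$ from Lemma~\ref{l:ini.vel.geo}, this yields the matching limsup, provided the $o(s)$ is dominated.

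To prove the pointwise estimate, fix a small $r>0$ such that $\gamma|_{[0,r]}$ is a segment in $\Om$. For $|s|$ small, $\Phi_s(\gamma_0)$ is close to $\gamma_0$, and the segment from $\Phi_s(\gamma_0)$ to $\gamma_r$ lies in $\Om$ (interior, convexity of a small ball around the segment). Then
\[
d_X(\Phi_s(\gamma_0),\gamma_1)\le|\Phi_s(\gamma_0)-\gamma_r|+(1-r)\,d_X(\gamma_0,\gamma_1).
\]
Expanding the Euclidean term gives $|\gamma_r-\gamma_0|-s\,\xi(\gamma_0)\cdot\frac{\gamma_r-\gamma_0}{|\gamma_r-\gamma_0|}+O(s^2)$, and squaring gives the claim. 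For the domination I would use the crude bound $|d_X^2(\Phi_s(\gamma_0),\gamma_1)-d_X^2(\gamma_0,\gamma_1)|\le 2L_X|s|\,\|\xi\|_\infty$, which gives uniformly bounded difference quotients. The reverse Fatou lemma (limsup under a uniform bound on a probability measure) then suffices, so no uniform $o(s)$ is needed.

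I expect the main obstacle to be this pointwise first-order expansion of the intrinsic distance at a starting point in $\Om$, i.e.\ ensuring the competitor path stays admissible in a non-convex domain. Measurability of all integrands is provided by Lemmas~\ref{l:Kant-pot.meas} and~\ref{l:dyn.plan.meas}, and the fibrewise estimates are integrated in $y$ against $\nu$ with the same domination.
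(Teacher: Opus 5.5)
Your proposal is correct and follows essentially the paper's own proof. It uses the same three ingredients: duality with the fixed potential $\vp$ for the lower bound; the competitor coupling built from the fibrewise optimal dynamic plan, together with the local detour through an interior point of the initial straight segment, for the upper bound; and domination of the difference quotients by $L_X\|\xi\|_\infty$. The only difference is minor: the paper also proves the reverse inequality in the cost expansion, using $c(x,x')\ge\vp(x)+\vp^c(x')$ with equality $\liftpi$-a.e., so that it can apply dominated convergence for $s\to0^\pm$ directly, whereas your one-sided bound combined with reverse Fatou and the symmetry $\xi\mapsto-\xi$ reaches the same conclusion. Treat the degenerate case $d_X(\gamma_0,\gamma_1)=0$ separately, since your unit vector is undefined there, but that case is trivial.
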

\begin{proof} For the Euclidean cost (and $\supp\nu$ a singleton), the corresponding result can be found, e.g., in~\cite[Theorem~5.24]{Santambrogio_2015}. To show the assertion for the intrinsic distance, we here follow more closely the classical approach of~\cite[Chapter~8]{AGS}, see in particular Theorem~8.4.7 therein.
Let $g(s)\coloneqq\frac12\calW_{2,X,\nu}^2(\mu_s,\widehat\mu)$, and write $\vp_y\coloneqq\vp(\cdot,y)$.
Then $(\vp_y,\vp^c_y)$ is a Borel family of $c$-conjugate optimal pairs.
From Lemma~\ref{l:Kant-pot.meas}, we know that, for all $y\in Y$, %
 $|\nabla_x\vp_y|\le L_X$ $\scrL^d$-a.e.\ in $\Om$.
	
By construction, $g(s)\ge 	\int_{\Om{\times}Y}\vp\,\dd(\Phi_s,\id_Y)_{\#}\mu+	\int_{\ol\Om{\times}Y}\vp^c\,\dd\wh\mu$ with equality for $s=0$. 
Letting further $\liftpi$ be a fibrewise optimal dynamic plan from $\mu$ to $\wh\mu$, we hence infer
\begin{align}
		\int_{\ol\Om{\times}Y}\bigl(\vp_y\circ\Phi_s(\cdot,y)-\vp_y\bigr)\,\dd\mu\le g(s)-g(0)\le\int_{\operatorname{Geo}(\ol\Om){\times}Y}
		\bigl(c(\Phi_s(\gamma_0,y),\gamma_1)-c(\gamma_0,\gamma_1)\bigr)\,\dd\liftpi(\gamma,y),
	\label{eq:cost.intr.squeeze}
	\end{align}
where the second step uses primal optimality of $\wt\pi$ with competitor $((\Phi_s\circ(e_0,\operatorname{id}_Y),\id_Y),(e_1,\operatorname{id}_Y))_\#\liftpi\in\Gamma_\nu(\mu_s,\wh\mu)$.
We then divide~\eqref{eq:cost.intr.squeeze} by $s\gtrless0$ and aim to show that the resulting lower and upper bounds have  well-defined  limits as $s\to0^\pm$ which both coincide with the right-hand side of~\eqref{eq:differentiationW2.formula}. To this end, first note that, since $\mu^y\ll\scrL^d,$
	\[\frac{\dd}{\dd s}\bigg|_{s=0}\vp_y\circ\Phi_s(\cdot,y)=\nabla\vp_y\cdot\xi(\cdot,y)\qquad\text{$\mu^y$-a.e.\ in }\Om.\]
	For the term involving the cost function, we assert that, as $s\to0^\pm$,
	\begin{align}\label{eq:cost.expansion}
		c(\gamma^s,\gamma_1)-c(\gamma_0,\gamma_1)=%
		\nabla_x\vp_y(\gamma_0)\cdot(\gamma^s-\gamma_0)+o(|\gamma^s-\gamma_0|)\qquad\liftpi\text{-a.e.\ }(\gamma,y),
	\end{align}
	where $\gamma^s=\Phi_s(\gamma_0,y)$.\\[1mm]
{\em Re $\ge$}:  from $c(x,x')\ge\vp_y(x)+\vp^c_y(x')$ and	$\vp_y(\gamma_0)+\vp^c_y(\gamma_1)=c(\gamma_0,\gamma _1)$ for $\liftpi$-a.e.\ $(\gamma,y)$ we infer
	\[ c(\gamma^s,\gamma_1)-c(\gamma_0,\gamma_1)\ge\vp_y(\gamma^s)-\vp_y(\gamma_0)
=\nabla_x\vp_y(\gamma_0,y)\cdot(\gamma^s-\gamma_0)+o(|\gamma^s-\gamma_0|)\qquad\liftpi\text{-a.e.\ }(\gamma,y).\]
{\em Re $\le$}:  by Lemma~\ref{l:ini.vel.geo}, it suffices to show that	\[c(\gamma^s,\gamma_1)-c(\gamma_0,\gamma_1)\le v\cdot(\gamma^s-\gamma_0)+o(|\gamma^s-\gamma_0|),\qquad v\coloneqq -\dot\gamma(0+).\]
Let $\ell\coloneqq d_X(\gamma_0,\gamma_1)$.	If $\ell=0$, then $\gamma_0=\gamma_1$, $c(\gamma_0,\gamma_1)=0$, and $v=0$.
For $|s|>0$ small enough, %
we therefore have $c(\gamma^s,\gamma_1)-c(\gamma_0,\gamma_1)=c(\gamma^s,\gamma_0)=\frac12|\gamma^s-\gamma_0|^2$ for $\liftpi\text{-a.e.\ }(\gamma,y),$
since $d_X$ and the Euclidean distance coincide locally in the interior of $\Om$. 

If $\ell>0$, choose $r>0$ such that $B_r(\gamma_0)\subset\Om$,
and fix $0<t\ll1$ small such that $\gamma_t=\gamma_0-tv\in B_r(\gamma_0).$
Then, writing $h:=\gamma^s-\gamma_0$, for $|h|$ sufficiently small, the segment joining $\gamma^s$ to $\gamma_t$ lies in
$B_r(\gamma_0)$. Hence
\begin{align*}
d_X(\gamma^s,\gamma_1)\le |\gamma^s-\gamma_t|+d_X(\gamma_t,\gamma_1)=|tv+h|+(1-t)\ell=\ell+\frac{v}{\ell}\cdot h+O(|h|^2),
\end{align*}
where the last equality follows from the expansion $|tv+h|=t\ell\big(1+2\frac{v\cdot h}{t\ell^2}+\tfrac{|h|^2}{t^2\ell^2}\big)^{1/2}=t\ell(1+\frac\delta2+O(\delta^2)),$ where $ \delta=2\frac{v\cdot h}{t\ell^2}+\tfrac{|h|^2}{t^2\ell^2}$. 
Taking the square and using $c=\frac12d_X^2$, we infer
\[c(\gamma^s,\gamma_1)-c(\gamma_0,\gamma_1)\le v\cdot h+O(|h|^2),\]
which gives the asserted upper bound and completes the proof of~\eqref{eq:cost.expansion}.
	
In view of $\gamma^s-\gamma_0=\Phi_s(\gamma_0,y)-\gamma_0=s\xi(\gamma_0,y)+O(s^2)$ for $\liftpi\text{-a.e.\ }(\gamma,y)$, the expansion~\eqref{eq:cost.expansion} yields
\[\frac{\dd}{\dd s}\bigg|_{s=0}c(\gamma^s,\gamma_1)=\nabla_x\vp_y(\gamma_0)\cdot\xi(\gamma_0,y)\qquad\text{for $\liftpi$-a.e.\ $(\gamma,y)$}.\]
This establishes the pointwise convergence. To show convergence of the integrals, observe that, for small enough $|s|$, $d_X(\Phi_s(x,y),x)\le |s|\|\xi\|_{L^\infty(\Om\times Y)}$ for $\nu$-a.e.\ $y\in Y$.
Since both $\vp_y$ and $c(\,\cdot\,,\gamma_1)$ are $L_X$-Lipschitz with respect to $d_X$, the two difference quotients obtained from~\eqref{eq:cost.intr.squeeze} are bounded in absolute value by $L_X\|\xi\|_{L^\infty(\Om\times Y)}$. Hence, the dominated convergence theorem and $(e_0,\operatorname{id}_Y)_\#\liftpi=\mu$ show that, after dividing~\eqref{eq:cost.intr.squeeze} by $s$, both outer terms in~\eqref{eq:cost.intr.squeeze} converge to the right-hand side of~\eqref{eq:differentiationW2.formula}, which completes the proof of the assertion.
\end{proof}

\subsection{Coercivity of the energy slope}

We adapt the coercivity estimate from~\cite{GHL_preprint} by Gau and two of the authors to the fibred setting and to intrinsic Wasserstein distances on possibly non-convex bounded domains, which in particular yields $H^2$ regularity of the discrete solutions~$u^k,k\ge1$. The proof is based on variations along the Neumann heat flow.
A key ingredient for estimating the Wasserstein action is the Lions--Villani-type square root estimate~\cite{LV_1995}
\begin{align}\label{eq:square-root.est}
	\|\nabla_x\sqrt{v}\|_{L^4(\Om)}^2\lesssim_d \|\nabla^2_xv\|_{L^2(\Om)}
\end{align}
for non-negative functions $v\in H^2(\Om), v\ge0$ satisfying $\pa_nv=0$ on $\pa\Om$, see~\cite[(3.4)]{GST_2009}.
Besides the $H^2$-control we here also keep the coercive contribution of the quartic potential.
Finally, since the proof only relies on the coercivity of the gradient energy, but not on its diagonal structure, we directly formulate the result in the more general case.

For clarity, we here explicitly denote the spatial gradient and Laplacian by $\nabla_x$ and $\Delta_x$, respectively.
\begin{proposition}[Slope coercivity]\label{prop:coercivity.slope}
	Let $X=\ol\Om$, where $\Om\subset\mathbb{R}^d$ is a bounded smooth domain.
	Let further $A\in \mathcal{L}(L^2(Y,\nu))$ be a self-adjoint bounded linear operator enjoying the coercivity property $\bra{A v,v}_{L^2(Y,\nu)}\ge\alpha_\ast\|v\|_{L^2(Y,\nu)}^2$
	for all $v\in L^2(Y,\nu)$ for some fixed $\alpha_\ast>0$. With $U=\int_Y\kappa u\,\dd\nu$ as before consider the energy
	\begin{align}
\calE(\mu)=\begin{cases}%
		\int_\Om\frac12(A\nabla_xu,\nabla_xu)_{L^2(Y)}\dd x+\mathcal{F}(U)+\mathcal{R}(u),%
&\mu=u\scrL^d\otimes\nu,\; u\in L^2(Y;H^1(\Om)),%
\\+\infty,&\text{otherwise},
	\end{cases}
	\end{align}
with $\mathcal{F}(U)$, $\mathcal{R}(u)$ as in~\eqref{eq:defEnergyDoubleWell},~\eqref{eq:defEnergyRepulsion} with $\vartheta>0$.

If $\calE(\mu)\le M<\infty$ and $|\partial\calE|_{\calW_{2,X,\nu}}(\mu)<\infty$, then $u\in L^2(Y;H^2(\Om))$, $\partial_nu(\cdot,y)=0$ on $\partial\Om$ for $\nu$-a.e.\ $y$, and $U^2\in H^1(\Om)$, and it holds that
	\begin{align}\label{eq:slope.coercivity}
		\int_Y\|u(\cdot,y)\|_{H^2(\Om)}^2\,\dd\nu(y)+\|U^2\|_{H^1(\Om)}^2\lesssim_{M,\Om} 1+|\partial\calE|_{\calW_{2,X,\nu}}^{4/3}(\mu),
	\end{align}
where the hidden constant also depends on the operator $A$ through $\alpha_\ast,\|A\|$, on $\mathcal{R}$ through $\|\beta\|_{L^\infty}$, and on $\vartheta$, $\|\kappa\|_{L^\infty}$.
\end{proposition}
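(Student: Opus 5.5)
The plan is to test the slope against variations along the fibrewise Neumann heat flow $S^t$, following~\cite{GHL_preprint}. Set $u_t:=S^tu$, i.e.\ $u_t(\cdot,y)$ solves $\pa_tu_t=\Delta_xu_t$ in $\Om$ with $\pa_nu_t=0$ and initial datum $u(\cdot,y)$, and put $\mu_t:=u_t\scrL^d\otimes\nu$. By the definition of the metric slope,
\[
\limsup_{t\downarrow0}\frac{\calE(\mu)-\calE(\mu_t)}{\calW_{2,X,\nu}(\mu,\mu_t)}\le|\pa\calE|_{\calW_{2,X,\nu}}(\mu).
\]
The argument then has two halves: a lower bound for the energy decrease in terms of the dissipation, and an upper bound for the transport distance that costs only a sublinear power of that same dissipation.

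\textbf{Energy decrease.} For $t>0$ everything is smooth in $x$, so we may integrate by parts using $\pa_nu_t=0$. This gives
\[
-\frac{\dd}{\dd t}\calE(\mu_t)=\int_\Om(A\Delta_xu_t,\Delta_xu_t)_{L^2(Y)}\,\dd x+\int_\Om\sff''(U_t)|\nabla U_t|^2\,\dd x+\int_\Om(\msB\nabla u_t,\nabla u_t)_{L^2(Y)}\,\dd x .
\]
Here we use that $S^t$ commutes with the averages $\la\cdot,\kappa\ra_Y$, so $U_t=S^tU$. The first term is at least $\alpha_*\|\Delta_xu_t\|^2_{L^2}$. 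We split $\sff''=\sff''_\cvx-\vartheta$ with $\sff''_\cvx\gtrsim\vartheta(1+s^2)$ by~\eqref{eq:potential-curvature}. This yields
\[
-\frac{\dd}{\dd t}\calE(\mu_t)\ge\mathcal D(u_t)-C\big(\vartheta\|\nabla U_t\|^2_{L^2}+\beta^*\|\nabla u_t\|^2_{L^2}\big)\ge\mathcal D(u_t)-C_M,
\]
where
\[
\mathcal D(v):=\alpha_*\|\Delta_xv\|^2_{L^2(\Om\times Y)}+c\vartheta\|\nabla V^2\|^2_{L^2(\Om)} .
\]
The constant $C_M$ depends on $M$, since the energy is non-increasing along the heat flow. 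On a smooth bounded domain, elliptic regularity for the Neumann problem gives $\|v\|_{H^2}\lesssim\|\Delta v\|_{L^2}+\|v\|_{L^2}$. The $L^2$ part is controlled by the energy, using Poincaré--Wirtinger and the unit mass of each fibre.

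\textbf{Transport bound.} I expect this to be the main obstacle. The curve $s\mapsto\mu_s$ solves the continuity equation with velocity $-\nabla\log u_s$ and tangential flux, so it is admissible for the intrinsic Benamou--Brenier formulation on $\ol\Om$; I take this formulation for the intrinsic cost as given. This yields
\[
\calW_{2,X,\nu}(\mu,\mu_t)\le\int_0^t\Big(\int_Y I(u_s(\cdot,y))\,\dd\nu\Big)^{1/2}\dd s,\qquad I(v):=4\|\nabla\sqrt v\|^2_{L^2(\Om)} .
\]
Applying the Lions--Villani estimate~\eqref{eq:square-root.est} fibrewise with Hölder on the bounded domain gives
\[
I(v)\lesssim|\Om|^{1/2}\|\nabla^2v\|_{L^2},
\]
so that $\int_YI(u_s)\,\dd\nu\lesssim1+\mathcal D(u_s)^{1/2}$. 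Write $a(s):=\mathcal D(u_s)$. The two halves combine to
\[
\int_0^t\big(a(s)-C_M\big)\,\dd s\le\big(|\pa\calE|+o(1)\big)\int_0^t\big(1+a(s)^{1/4}\big)\,\dd s .
\]
By Young's inequality, $|\pa\calE|\,a^{1/4}\le\tfrac12a+C|\pa\calE|^{4/3}$. Absorbing and dividing by $t$ gives
\[
\frac1t\int_0^ta(s)\,\dd s\lesssim_M1+|\pa\calE|^{4/3},
\]
where the term linear in $|\pa\calE|$ is absorbed by the constant and the $4/3$-power. The weight $a^{1/4}$ is exactly what produces the exponent $4/3$.

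\textbf{Passage $t\downarrow0$.} The time averages of $u_s$ are bounded in $L^2(Y;H^2(\Om))$ and converge to $u$ strongly in $L^2(Y;H^1)$. By weak lower semicontinuity of $\|\Delta_x\cdot\|^2$ and of $\|\nabla(\cdot)^2\|^2$ (Fatou together with strong $L^2$ convergence of $U_s$), we obtain $u\in L^2(Y;H^2(\Om))$, $U^2\in H^1(\Om)$, and~\eqref{eq:slope.coercivity}. The Neumann condition $\pa_nu=0$ is preserved in the weak $H^2$ limit by continuity of the normal trace $H^2\to H^{1/2}(\pa\Om)$.

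\textbf{Expected difficulty.} The delicate point is justifying the transport bound: that the heat-flow curve is admissible for the intrinsic distance on a non-convex domain, which needs tangential flux and measurability in $y$, and that~\eqref{eq:square-root.est} applies fibrewise with constants uniform in $y$. The full bound I would first prove for smooth positive approximations and then remove by lower semicontinuity.
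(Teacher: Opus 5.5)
Your proposal is correct and follows the paper's proof essentially step by step: Neumann heat-flow variation, the dissipation lower bound $\mathcal D(u_s)-C_M$, the Fisher information controlled by $\|\Delta_x u_s\|$ via the Lions--Villani estimate, and the exponent $4/3$ from balancing $a$ against $a^{1/4}$. The remaining differences are cosmetic. You absorb with Young's inequality instead of the paper's case distinction on $a_s$, and you pass to $t\downarrow0$ via time averages and lower semicontinuity instead of selecting good times $r_k$. You also assume the intrinsic Benamou--Brenier bound, whereas the paper obtains it by extending $(u_r,-\nabla_x u_r)$ by zero to $\mathbb{R}^d$ and using $d_X\le\sqrt{C_X}\,|x-x'|$, or with $C_X=1$ via the superposition principle in the subsequent remark. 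One small correction: $C_M$ does not come from monotonicity of $\calE$ along the heat flow, which fails in general since $\sff''$ and $\msB$ are not sign-definite. It comes from $\|\nabla_x u_t\|_{L^2}\le\|\nabla_x u\|_{L^2}\lesssim_M 1$.
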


\begin{proof}
    We adapt the argument of~\cite{GHL_preprint}. To this end, let $\mu=u\scrL^d{\otimes}\nu\in\mathrm{dom}(\calE)$.
	By definition of the metric slope
	\[|\pa\calE|_{\calW_{2,X,\nu}}(\mu)=\limsup_{\wh\mu\to\mu\text{ in }\calW_{2,X,\nu}}\frac{\big(\calE(\mu)-\calE(\wh\mu)\big)_+}{\mathcal W_{2,X,\nu}(\mu,\wh\mu)}.\]
	To obtain the asserted lower bound, we consider variations $\wh\mu$ induced by the Neumann heat semigroup $S_s$ on $\Omega$. Let $u_s(\cdot,y)\coloneqq S_su(\cdot,y)$,	$\mu_s\coloneqq u_s\scrL^d{\otimes}\nu$, $U_s:=\int_Y \kappa u_s\,\dd\nu$. The Neumann heat semigroup preserves non-negativity and the mass in each fibre, so that $\mu_s\in\calP_{2,\nu}(X{\times}Y)$. Moreover, as $s\downarrow0$, $u_s\to u$ in $L^2(Y;H^1(\Omega))$, $U_s\to U$ in $L^4(\Om)$, and
	$\mu_s\to\mu$ in $\mathcal W_{2,X,\nu}$.
	Also, $u_s\in L^2(Y;H^2(\Omega))$ and $\pa_nu_s(\cdot,y)=0$ on $\pa\Om$ for $s>0$.
	Without loss of generality, we may assume $\mu_s\not=\mu$ for a sequence $s\downarrow0$, since otherwise $u\equiv\textrm{const.},$ so that the assertion is immediate.
	
	Observing that $A$ and $S_s$ commute, we compute, for $s>0$, 
	\[-\frac{\dd}{\dd s}\calE(\mu_s)=\int_\Om(A\Delta_x u_s,\Delta_x u_s)_{L^2(Y)}\,\dd x +\int_\Om {\sf f}''(U_s)|\nabla_x U_s|^2\, \dd x+\int_\Om({\sf B}\nabla_x u_s,\nabla_x u_s)_{L^2(Y,\nu)}\,\dd x.\] so that, by the coercivity of $A$, 
    $\nu(Y)=1$, and the bound $M\ge\calE(\mu)\gtrsim\|\nabla_x u\|^2_{L^2(\Om\times Y)} \ge \|\nabla_x u_s\|^2_{L^2(\Om\times Y)}$,
	\begin{align}\label{eq:energy.diss.heat}
	\calE(\mu)-\calE(\mu_s)	\ge c\int_0^s\big(\|\Delta_xu_r\|_{L^2(\Om\times Y)}^2+\|\nabla_x(U_r^2)\|_{L^2(\Om)}^2\big)\,\dd r-CsM.%
	\end{align}
	For the Wasserstein distance, we assert the upper bound
	\begin{align}\label{eq:slope.heat.metric}
		\mathcal{W}_{2,X,\nu}^2(\mu,\mu_s)\le C_X\,	s\int_0^s\int_Y\int_\Om\frac{|\nabla_x u_r|^2}{u_r}\,\dd x\dd\nu\dd r.
	\end{align}
	In~\cite{GHL_preprint}, the underlying metric is defined through the dynamic Benamou--Brenier-type formulation. Hence, in that case, estimate~\eqref{eq:slope.heat.metric} is an immediate consequence of the fact 
	that the density--flux couple $(u_r,-\nabla_x u_r)$ is a solution to the continuity equation in $\Om$ with no-flux boundary condition. 
	In the present application, we need to cope with the possible
	non-convexity of $X=\ol\Om$ and the intrinsic distance.
	For $\nu$-a.e.\ $y$, extend $u_r(\cdot,y)\scrL^d$ and $-\nabla_x u_r(\cdot,y)\scrL^d$ by zero
	to $\R^d$. Owing to the no-flux boundary condition, the extended couple is a solution to the continuity equation on $\R^d$. 
	At this point, we conclude~\eqref{eq:slope.heat.metric} by invoking 
	the Benamou--Brenier formula on $\mathbb{R}^d$ with Euclidean cost together with the bound $|x-x'|\le d_X(x,x')\le \sqrt{C_{X}}|x-x'|$ for all $x,x'\in X$ for some $C_X\in[1,\infty)$, which follows from the smoothness of the bounded domain $\Om$.
To proceed, we rely on the boundedness of $\Om$ to use H\"older's inequality, then invoke the crucial square root inequality~\eqref{eq:square-root.est}, see~\cite[(3.4)]{GST_2009}, and elliptic regularity for the Neumann problem, which extends to non-convex domains $\Om$, to deduce after integrating over $Y$
\begin{align*}
		\int_Y\int_\Omega\frac{|\nabla_x u_r|^2}{u_r}\,\dd x\dd\nu&=4\int_Y\int_\Omega|\nabla_x \sqrt{u_r}|^2\,\dd x\dd\nu
\lesssim_{|\Om|} \bigg(\int_Y\int_\Omega|\nabla_x \sqrt{u_r}|^4\,\dd x\dd\nu\bigg)^{\frac12} \\&\lesssim_d\bigg(\int_Y\int_\Omega|\nabla^2_x u_r|^2\,\dd x\dd\nu\bigg)^{\frac12}\lesssim_\Om \bigg(\int_Y\int_\Omega|\Delta_x u_r|^2\,\dd x\dd\nu\bigg)^{\frac12}= \|\Delta_x u_r\|_{L^2(\Omega\times Y)}.
	\end{align*}
Now, let
\[a_s\coloneqq\frac1s\int_0^s\big(\|\Delta_xu_r\|_{L^2(\Om\times Y)}^2+\|\nabla_x(U_r^2)\|_{L^2(\Om)}^2\big)\,\dd r.%
\]
Combining the estimate on the Fisher information with~\eqref{eq:slope.heat.metric}
and applying Cauchy--Schwarz in time, we obtain
\[\frac1{s^2}\calW_{2,X,\nu}^2(\mu,\mu_s)\lesssim a_s^{1/2},\qquad\frac1s\big(\calE(\mu)-\calE(\mu_s)\big)_+\ge\big(ca_s-CM%
\big)_+,\]
where the second inequality follows from~\eqref{eq:energy.diss.heat}.
The definition of the metric slope now yields
\begin{align}\label{eq:slope.lower.bound.s}
|\pa\calE|_{\calW_{2,X,\nu}}(\mu)\gtrsim\limsup_{s\downarrow0}\frac{\big(ca_s-CM%
	\big)_+}{a_s^{1/4}}.
	\end{align}
 If $a_s\le 2CM/c$ for all $0<s\ll1$ the asserted bound~\eqref{eq:slope.coercivity} follows easily.
It therefore suffices to consider $s\in(0,1]$ with $a_{s}\ge 2CM/c$. Along such $s$, we infer,
invoking~\eqref{eq:slope.lower.bound.s},  that
\begin{align*}
L:=\limsup_{s\downarrow0}a_{s}^{3/4} \lesssim|\pa\calE|_{\calW_{2,X,\nu}}(\mu)<\infty.
\end{align*}
We now choose a sequence $s_k\downarrow0$ with $\lim_{k\to\infty}a_{s_k}^{3/4}=L$, and then pick $r_k\in(0,s_k)$ such that 
\[\|\Delta_xu_{r_k}\|_{L^2(\Om\times Y)}^2+\|\nabla_x(U_{r_k}^2)\|_{L^2(\Om)}^2\le a_{s_k}+\frac1k\quad\text{ for all }k\in \mathbb{N}.\]
The energy bound and the fibrewise mass constraints imply $\|u\|_{L^2(\Om\times Y)}^2+\|U\|_{L^4(\Om)}^4\lesssim_M1$.
These bounds persist under the heat flow by its $L^p$-contractivity.
Neumann elliptic regularity thus gives boundedness of $(u_{r_k})$ in $L^2(Y;H^2(\Om))$, while $(U_{r_k}^2)$ is bounded in $H^1(\Om)$.
Since $u_{r_k}\to u$ in $L^2(Y;H^1(\Om))$, and $U_{r_k}^2\to U^2$ in $L^2(\Om)$, we conclude that $u\in L^2(Y;H^2(\Om))$, $U^2\in H^1(\Om)$, and $u_{r_k}\weakly u$ in $L^2(Y;H^2(\Om))$.
Finally, since $\pa_nu_{r_k}(\cdot,y)=0$ in the trace sense on $\pa\Om$ for $\nu$-a.e.\ $y$, the weak continuity of the normal spatial trace operator on $L^2(Y;H^1(\Om;\mathbb{R}^d))$ yields  $\pa_nu(\cdot,y)=0$ on $\pa\Om$ for $\nu$-a.e.\ $y$.
By weak lower semi-continuity, $\|\Delta_xu\|_{L^2(\Om\times Y)}^2+\|\nabla_x(U^2)\|_{L^2(\Om)}^2\le L^{4/3}$.
Using once more elliptic regularity, we deduce %
\[\int_Y\|u(\cdot,y)\|_{H^2(\Om)}^2\,\dd\nu(y)+\|U^2\|_{H^1(\Om)}^2\lesssim_M 1+|\pa\calE|_{\calW_{2,X,\nu}}^{4/3}(\mu),\]
which proves~\eqref{eq:slope.coercivity}.
\end{proof}

\begin{remark}[An intrinsic approach]Using the superposition principle, the inequality~\eqref{eq:slope.heat.metric} can be obtained with $C_X=1$.
To this end, instead of resorting to the Benamou--Brenier formula on $\mathbb{R}^d$, define, using the notation of the above proof, $b_r(\cdot,y):=-\frac{\nabla_x u_r(\cdot,y)}{u_r(\cdot,y)}$ on $\{u_r(\cdot,y)>0\}$ and $b_r(\cdot,y)=0$ on $\{u_r(\cdot,y)=0\}$ and note that, for $\nu$-a.e.\ $y$
 \[\int_0^s\int_{\R^d}|b_r(\cdot,y)|\,u_r(\cdot,y)\,\dd x\dd r=\int_0^s\int_\Om|\nabla_x u_r(\cdot,y)|\,\dd x\dd r<\infty.\]
		Hence, the superposition principle~\cite[Theorem~3.2]{ambrosio2008transport} applies and yields $\eta^y\in\calP(C([0,s];\R^d))$, concentrated on absolutely continuous curves, such that $(e_r)_\#\eta^y=u_r(\cdot,y)\scrL^d$, and $\dot\gamma_r=b_r(\gamma_r,y)$
	for $\eta^y$-a.e.\ $\gamma$ and a.e.\ $r\in(0,s)$.
	Since $(e_r)_\#\eta^y$ is concentrated on $X$ for every $r$,
	restricting first to $r\in\mathbb{Q}\cap[0,s]$ and then using continuity of $\gamma\in C([0,s];\mathbb{R}^d)$ yields $\gamma_r\in X$ for every $r\in[0,s]$ for $\eta^y$-a.e.\ $\gamma$. Therefore, $d_X(\gamma_0,\gamma_s)\le\int_0^s|\dot\gamma_r|\,\dd r$.
Moreover, $\pi_s^y\coloneqq (e_0,e_s)_\#\eta^y\in\Gamma\big(u_0(\cdot,y)\scrL^d,u_s(\cdot,y)\scrL^d\big)$. It follows that
	\begin{align*}
	\mathcal{W}_{2,X}^2&\big(u_0(\cdot,y)\scrL^d,u_s(\cdot,y)\scrL^d\big)\le\int d_X^2(\gamma_0,\gamma_s)\,\dd\eta^y(\gamma)
\\&	\le s\int_0^s\int|\dot\gamma_r|^2\,\dd\eta^y(\gamma)\dd r
=s\int_0^s\int|b_r(\gamma_r,y)|^2\,\dd\eta^y(\gamma)\dd r=s\int_0^s\int_\Om
		\frac{|\nabla_x u_r(\cdot,y)|^2}{u_r(\cdot,y)}\,\dd x\dd r.
	\end{align*}
	Integrating with respect to $\nu$ yields \eqref{eq:slope.heat.metric} with $C_X=1$.
\end{remark}

\subsection{Existence by variational approximation: Proof of Theorem~\ref{thm:ex.Ediss.nonconvex}}\label{ssec:ExistNonconvex}
In this section, we establish Theorem~\ref{thm:ex.Ediss.nonconvex}. For simplicity, we only present the proof of Theorem~\ref{thm:ex.Ediss.nonconvex} for $\vartheta>0$.

As in Section~\ref{sec:diagE}, we use the Moreau--Yosida resolvent $\sfJ_h$, cf.~\eqref{eq:MoreauYosida}, now associated with $\mathcal{W}_{2,X,\nu}$ obtained from the intrinsic Wasserstein distance~$\mathcal{W}_{2,X}$, and define the fibred JKO scheme and interpolants analogously as in Definition~\ref{def:JKP_interpolations}.
Concerning compactness, the main new ingredient is the slope-coercivity estimate in Proposition~\ref{prop:coercivity.slope}, which will provide us with $H^2$ bounds for both
interpolants without resorting to the standard flow interchange argument based on the entropy, which is not available for non-convex domains.
The remaining point that requires some care is the derivation of the continuity equation using optimal dynamic plans for the intrinsic distance. 

\paragraph{Compactness}
The narrow compactness and energy-convergence arguments of Section~\ref{ssec:comp-conv} remain valid with $\calW_{2,\nu}$ replaced by $\calW_{2,X,\nu}$. Thus, after passage to a subsequence, there exists
\[\mu=u\scrL^d{\otimes}\nu\in\AC^2_{\loc}([0,\infty);\calP_{2,\nu}(X{\times}Y))\]
such that $\mu_\tau$ and $\wt\mu_\tau$ converge narrowly to $\mu$, uniformly on compact time intervals.

The energy convergence argument of Proposition~\ref{prop:energy-convergence} yields a limiting non-increasing energy function $E(t)$.
The abstract De Giorgi interpolation argument (cf.~\cite[Lemma~3.2.2]{AGS}) yields, for a.e.\ $t>0$,
\begin{align}\label{eq:discr.edi.metric.nc}
	\calE(\mu_\tau(t))+\frac12\int_0^t	\frac{1}{\tau^2}
	\calW_{2,X,\nu}^2(\mu_\tau^{k(s)},\mu_\tau^{k(s)-1})\,\dd s
	+\frac12\int_0^t\frac{1}{h_\tau(s)^2}\calW_{2,X,\nu}^2
	\bigl(\wt\mu_\tau(s),\mu_\tau^{k(s)-1}\bigr)\,\dd s\le \calE(\mu_0),
\end{align}
where $k(s)$ is determined by $s\in(t_{k(s)-1},t_{k(s)}]$ and $h_\tau(s)\coloneqq s-t_{k(s)-1}$.
Since $\mu_\tau^k\in\sfJ_\tau(\mu_\tau^{k-1})$ and $\wt\mu_\tau(t)\in\sfJ_{h_\tau(t)}(\mu_\tau^{k-1})$ for $t\in(t_{k-1},t_k]$,  the slope estimate at a Moreau--Yosida
minimiser~\cite[Lemma~3.1.3]{AGS} implies that
\begin{equation}\label{eq:discr.slopes.nc}
\begin{split}
|\pa\calE|_{\calW_{2,X,\nu}}^2(\mu_\tau(t))&\le \frac1{\tau^2}\calW_{2,X,\nu}^2(\mu_\tau^k,\mu_\tau^{k-1}),
	\\|\pa\calE|_{\calW_{2,X,\nu}}^2(\wt\mu_\tau(t))&\le \frac1{h_\tau(t)^2}\calW_{2,X,\nu}^2(\wt\mu_\tau(t),\mu_\tau^{k-1}).
\end{split}
\end{equation}
Since $\calE(\mu_\tau(t)),\calE(\wt\mu_\tau(t))\le\calE(\mu_0)$, Proposition~\ref{prop:coercivity.slope}, together with~\eqref{eq:discr.slopes.nc},~\eqref{eq:discr.edi.metric.nc} and H\"older's inequality, yields, for every $T<\infty$,
\begin{align*}
	\int_0^T\int_Y\big(\|u_\tau\|_{H^2(\Om)}^2+\|\wt u_\tau\|_{H^2(\Om)}^2\big)\dd\nu\dd t&+\int_0^T\big(\|U_\tau^2\|^2_{H^1(\Om)}+\|\wt U_\tau^2\|^2_{H^1(\Om)}\big)\,\dd t
\\&\lesssim\int_0^T\big(1+|\pa\calE|_{\calW_{2,X,\nu}}^{4/3}(\mu_\tau)+|\pa\calE|_{\calW_{2,X,\nu}}^{4/3}(\wt\mu_\tau)\big)\,\dd t	\lesssim_T 1.
\end{align*}
Consequently, after passage to a further subsequence,
\[u_\tau,\wt u_\tau\rightharpoonup u\quad\text{in }L^2((0,T)\times Y;H^2(\Om)).\]
Similarly as in  Section~\ref{sec:diagE}, cf.~Lemma~\ref{l:GMM_convergence_strong}~\ref{l-item:GMM_convergence_strong}, we obtain compactness for quantities that are averaged in $Y$.  Combined with interpolation this yields,
for every $\psi\in L^2(Y,\nu)$,
\[\langle u_\tau,\psi\rangle_Y,\,	\langle\wt u_\tau,\psi\rangle_Y\to\langle u,\psi\rangle_Y\quad\text{strongly in }L^\infty(0,T;L^2(\Om))\cap L^2(0,T;H^1(\Om)).\]
Using also the $L^2_{t}H^1_x\cap L^\infty_{t}L^2_x$ bounds for $U_\tau^2,\wt U_\tau^2$, this further implies that 
\[U_\tau,\wt U_\tau\to U\quad\text{strongly in }L^4((0,T)\times\Om).\]
These convergence properties easily yield convergence of the lower-order energies
\begin{align}\label{eq:E0.conv.nc}
	\calE_0(\mu_\tau),\calE_0(\wt\mu_\tau)\to\calE_0(\mu)\quad\text{in }L^1(0,T),
	\end{align}
where for the nonlocal term one argues as in %
Lemma~\ref{l:ApproxRepul}, cf.~\eqref{eq:estimateRepulsionApprox}.

\paragraph{Energy-dissipation inequality and flux-defect control}
For $t\in(t_{k-1},t_k]$, let $y\mapsto \vp_\tau^k(\cdot,y)$ be a Borel selection of fibrewise Kantorovich potentials from $\mu_\tau^k$ to $\mu_\tau^{k-1}$ according to Lemma~\ref{l:Kant-pot.meas}.
Likewise,  let $\wt\vp_\tau$ be a Borel measurable fibrewise Kantorovich potential from $\wt\mu_\tau$ to $\mu_\tau^{k-1}$.

By optimality and the first-variation formula~\eqref{eq:differentiationW2.formula} for the intrinsic Wasserstein distance, a calculation analogous to Proposition~\ref{prop:E-L_2} yields the following Euler--Lagrange equations for tangential vector fields.

\begin{lemma}[Euler--Lagrange equations]\label{l:EL.nc}
Define
\[j_\tau(t,\cdot):=\frac1\tau u_\tau^k\nabla\vp_\tau^k,\qquad 
\wt\jmath_\tau(t,\cdot):=\frac1{h_\tau(t)}\wt u_\tau(t,\cdot)\nabla\wt\vp_\tau(t,\cdot),\quad t\in(t_{k-1},t_k].\]
	Then, for a.e.\ $t>0$ and every $\xi\in L^\infty(Y;C^2_{\tan}(\overline\Omega;\mathbb R^d))$, it holds
	\begin{align}\label{eq:EL.nc}
		\int_\Om\int_Y j_\tau\cdot\xi\,\dd\nu\dd x
		&=\int_\Om\int_Y\alpha(y)\bigg[
		\bigg(\frac12\operatorname{div}_x\xi\,I+\nabla_x\xi\bigg)\nabla_xu_\tau
		+u_\tau\nabla_x\operatorname{div}_x\xi
		\bigg]\cdot\nabla_xu_\tau\,\dd\nu\dd x \notag\\
		&\quad+\int_\Om\int_Y
		\big(\kappa(y){\sf f}'(U_\tau)+\mathsf B u_\tau\big)
		\big(\nabla_xu_\tau\cdot\xi+u_\tau\operatorname{div}_x\xi\big)\,\dd\nu\dd x,
	\end{align}
	where $U_\tau=\int_Y\kappa(y)u_\tau(\cdot,y)\,\dd\nu(y)$.
	The same identity holds with $(u_\tau,U_\tau,j_\tau)$ replaced by $(\wt u_\tau,\wt U_\tau,\wt\jmath_\tau)$, where $\wt U_\tau=\int_Y\kappa(y)\wt u_\tau(\cdot,y)\,\dd\nu(y)$.
\end{lemma}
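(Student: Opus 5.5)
The plan is to repeat the inner-variation argument of Proposition~\ref{prop:E-L_1}, now with tangential vector fields, and to replace the Brenier-map computation by the first-variation formula of Proposition~\ref{prop:var.intrinsicW2}. We fix $k$ and $t\in(t_{k-1},t_k]$, and write $\wh\mu=\mu_\tau^k$, $\mu=\mu_\tau^{k-1}$ and $h=\tau$. For the De Giorgi interpolant we take instead $\wh\mu=\wt\mu_\tau(t)$ and $h=h_\tau(t)$. The argument below is pointwise in $t$, so it covers both cases.

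\emph{Construction of the variation.} Given $\xi\in L^\infty(Y;C^2_{\tan}(\ol\Om;\R^d))$, modified on a $\nu$-null set so that $y\mapsto\xi(\cdot,y)$ is Borel with uniformly bounded $C^2$ norm, let $\Phi_s$ be the associated fibrewise flow. Tangentiality makes $\Phi_s(\cdot,y)$ a $C^2$-diffeomorphism of $\ol\Om$. Set $\wh\mu_s\coloneqq(\Phi_s,\id_Y)_\#\wh\mu$; this stays in $\calP_{2,\nu}(X{\times}Y)$, and its densities satisfy $\wh u_s(\cdot,y)\in H^1(\Om)$ with uniform bounds for small $|s|$. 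Minimality of $\wh\mu$ gives that $s\mapsto \frac1{2h}\calW_{2,X,\nu}^2(\wh\mu_s,\mu)+\calE(\wh\mu_s)$ is minimal at $s=0$.

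\emph{Differentiating both parts.} Since $\wh\mu\ll\scrL^d\otimes\nu$ (finite energy), Proposition~\ref{prop:var.intrinsicW2} applies, with $\vp$ the Borel potential from $\wh\mu$ to $\mu$. It shows that the transport part is differentiable at $s=0$ with derivative
\[\frac1h\int_{\Om\times Y}\nabla_x\vp\cdot\xi\,\dd\wh\mu=\int_\Om\int_Y j_\tau\cdot\xi\,\dd\nu\dd x .\]
For the energy we use the two-sided differentiability of $s\mapsto\calE(\wh\mu_s)$ at $s=0$, computed as in the proof of Proposition~\ref{prop:E-L_1} following \cite[Lemma~3.7]{KL_2022}. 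The steps are: change variables $x=\Phi_s(z,y)$; write $\wh u_s\circ\Phi_s=\wh u/\det\nabla\Phi_s$; expand $\nabla\Phi_s=I+s\nabla\xi+O(s^2)$ and $\det\nabla\Phi_s=1+s\divv\xi+O(s^2)$ uniformly in $(z,y)$; and differentiate under the integral by dominated convergence, using only $\wh u\in L^2(Y;H^1(\Om))$. Because $\Phi_s$ maps $\ol\Om$ onto itself, no boundary terms appear, and this is exactly where tangentiality enters.

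For the lower-order part, the derivative of $\calF$ is $-\int \sff'(\wh U)\kappa(\nabla\wh u\cdot\xi+\wh u\divv\xi)$ and that of $\calR$ is $-\int \msB\wh u(\nabla\wh u\cdot\xi+\wh u\divv\xi)$. Justifying these needs integrability of $\sff'(\wh U)\wh u$ and $\sff'(\wh U)\nabla\wh u$. Here we use that finite slope, by \eqref{eq:discr.slopes.nc}, gives $\wh u\in L^2(Y;H^2)$ and $\wh U^2\in H^1$ via Proposition~\ref{prop:coercivity.slope}. Hence $\wh U\in L^\infty_{\rm loc}$-type bounds hold, namely $\wh U^2\in L^{2^*}$, together with $\nabla \wh u\in L^2(Y;L^{2^*})$, which is enough for H\"older with cubic $\sff'$. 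Setting the derivative to zero yields \eqref{eq:EL.nc}.

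\emph{Measurability and the main obstacle.} To have the identity for a.e.\ $t$ with measurable-in-$t$ objects, we choose the potentials via Lemma~\ref{l:Kant-pot.meas} with $\mathscr Y=I\times Y$ for the De Giorgi interpolant. We expect the main obstacle to be the dominated-convergence justification of the derivative of $\calF$ for the quartic potential in arbitrary dimension. This is where the $H^2$ and $U^2\in H^1$ regularity from slope coercivity is needed; alternatively one could linearize the composition $\wh U_s\circ\Phi_s$ carefully, noting that $\wh U_s$ is not a pushforward of $\wh U$ because $\xi$ depends on $y$.
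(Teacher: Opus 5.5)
Your proposal follows the paper's own argument, which is only the remark that optimality, the first-variation formula of Proposition~\ref{prop:var.intrinsicW2}, and ``a calculation analogous to Proposition~\ref{prop:E-L_2}'' (inner variations along the fibrewise flow of a tangential field, following \cite[Lemma~3.7]{KL_2022}) give \eqref{eq:EL.nc}. One caution on the extra justification you add for the double-well term, a point the paper also leaves implicit: H\"older with $\wh U^2\in L^{2^*}$ and $\nabla\wh u\in L^2(Y;L^{2^*})$ makes $\sff'(\wh U)\nabla\wh u$ integrable only for $d\le 10$; moreover, differentiating $s\mapsto\calF(\wh\mu_s)$ needs bounds on $\wh U_s$ for $s\neq0$, and because $\xi$ depends on $y$, only the fibrewise $L^2(Y;H^2(\Om))$ control of $\wh u$ carries over to $\wh U_s$, not $\wh U^2\in H^1(\Om)$, so in higher dimensions this step needs more than you state.
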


The compactness and convergence properties established above are sufficient to pass to the limit in the  second term on the right-hand side of~\eqref{eq:EL.nc}. More precisely, we assert the following.
\begin{lemma}[Identification of lower-order part of flux]\label{l:flux-passage.lower}
	 Along a subsequence $\tau\downarrow0$, it holds that for every $T<\infty$ and $\xi\in L^\infty((0,T)\times Y;C^2_{\tan}(\overline\Omega;\mathbb R^d))$
\begin{multline}\label{eq:flux.lower-order.lim}
\int_0^T\int_\Om\int_Y\big(\kappa(y){\sf f}'(U_\tau)+\mathsf B u_\tau\big)
\big(\nabla_xu_\tau\cdot\xi+u_\tau\operatorname{div}_x\xi\big)\,\dd\nu\dd x\dd t\\ \longrightarrow \quad
\int_0^T\int_\Om\int_Y\big(\kappa(y){\sf f}'(U)+\mathsf B u\big)
\big(\nabla_xu\cdot\xi+u\operatorname{div}_x\xi\big)\,\dd\nu\dd x\dd t,\qquad 
\end{multline}
and the same holds with $u_\tau, U_\tau$ replaced by the De Giorgi interpolation $\wt u_\tau,\wt U_\tau$.
\end{lemma}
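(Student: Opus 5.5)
The argument is a strong-times-weak passage to the limit. On the bounded domain, the compactness paragraph gives strong convergence of all $Y$-averaged quantities, while $u_\tau$ and $\nabla_x u_\tau$ converge only weakly. The plan is to write the lower-order integrand as a product of one strongly convergent factor and one weakly convergent factor.

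Split the left-hand side of \eqref{eq:flux.lower-order.lim} into the potential part
\[
I^1_\tau=\int_0^T\!\!\int_\Om \sff'(U_\tau)\int_Y\kappa\big(\nabla_xu_\tau\cdot\xi+u_\tau\divv_x\xi\big)\,\dd\nu\,\dd x\,\dd t
\]
and the repulsion part
\[
I^2_\tau=\int_0^T\!\!\int_\Om\int_Y \mathsf Bu_\tau\,\big(\nabla_xu_\tau\cdot\xi+u_\tau\divv_x\xi\big)\,\dd\nu\,\dd x\,\dd t .
\]

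\emph{Potential part.} Set $g_\tau:=\int_Y\kappa(\nabla_xu_\tau\cdot\xi+u_\tau\divv_x\xi)\,\dd\nu$.
\begin{itemize}
\item Since $u_\tau\rightharpoonup u$ in $L^2((0,T)\times Y;H^2(\Om))$ and $\kappa\xi\in L^\infty$, the function $g_\tau$ converges weakly to $g$ in $L^2(0,T;H^1(\Om))$. Here we use that averaging in $y$ against a bounded weight is a bounded linear map from $L^2_{t,y}H^1_x$ to $L^2_tH^1_x$. I expect to upgrade this to strong convergence of $g_\tau$ in $L^2((0,T)\times\Om)$, by the same type-averaged compactness argument as in Lemma~\ref{l:GMM_convergence_strong}: narrow convergence plus Theorem~\ref{thm:negSobolev} give convergence in $H^{-m}$, and interpolation against the uniform $L^2_tH^1_x$ bound gives strong $L^2_{t,x}$ convergence. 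This needs some care because $\xi$ depends on $(t,y)$ and $g_\tau$ involves $\nabla_x u_\tau$; one first approximates $\xi$ by simple or continuous-in-time fields and then rewrites $\nabla_xu_\tau\cdot\xi=\divv_x(u_\tau\xi)-u_\tau\divv_x\xi$ in negative norms.
\item For the other factor, I would show $\sff'(U_\tau)\to\sff'(U)$ in $L^2((0,T)\times\Om)$. Since $\sff'$ is cubic, this follows from $U_\tau\to U$ in $L^4_{t,x}$ together with a uniform $L^6_{t,x}$ bound. That bound holds because $U_\tau^2$ is bounded in $L^\infty_tL^2_x\cap L^2_tH^1_x$, so $U_\tau$ is bounded in a space $L^{p}_{t,x}$ with $p>6$ (by Gagliardo--Nirenberg applied to $U_\tau^2$), and Vitali's theorem then gives the strong convergence.
\item Strong convergence of $\sff'(U_\tau)$ in $L^2$ and convergence of $g_\tau$ in $L^2$ yield $I^1_\tau\to I^1$. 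If only weak convergence of $g_\tau$ in $L^2$ were available, the strong convergence of $\sff'(U_\tau)$ would still suffice.
\end{itemize}
This is simpler than the alternative of exploiting $\nabla_x$ on $\sff'(U_\tau)$ via $\vartheta U^2\in L^2_tH^1_x$.

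\emph{Repulsion part.} Replace $\beta$ by the finite-rank approximation $\beta_n$ from \eqref{eq:finiteRankApprox}. The error is bounded uniformly in $\tau$ by
\[
\|\beta-\beta_n\|_{L^2(Y\times Y)}\,\|u_\tau\|_{L^2((0,T)\times\Om\times Y)}\,\|u_\tau\|_{L^2_{t,y}H^1_x}\,\|\xi\|_{L^\infty},
\]
using Lemma~\ref{l:ApproxRepul}. For fixed $n$, the operator $\mathsf B^nu_\tau=\frac12\sum_l\big(a_l\langle u_\tau,b_l\rangle_Y+b_l\langle u_\tau,a_l\rangle_Y\big)$ involves only type averages $\langle u_\tau,b_l\rangle_Y$. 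These converge strongly in $L^\infty_tL^2_x\cap L^2_tH^1_x$. The remaining factor $\int_Y a_l(\nabla_xu_\tau\cdot\xi+u_\tau\divv_x\xi)\,\dd\nu$ converges weakly in $L^2_{t,x}$. Hence the product converges, and letting $n\to\infty$ concludes this part.

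The De Giorgi interpolant is treated identically, since all bounds and strong convergences used above hold for $\wt u_\tau$ and $\wt U_\tau$ as well.

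I expect the main obstacle to be the uniform higher integrability of $U_\tau$ needed for the $L^2$ convergence of $\sff'(U_\tau)$ in arbitrary dimension. For this I would first try to use the $L^2_tH^1_x$ bound on $U_\tau^2$ via the Sobolev embedding $H^1\subset L^{2^*}$; if it is insufficient in high $d$, I would pair $\sff'(U_\tau)$ more cleverly with the $H^2$ regularity of $u_\tau$.
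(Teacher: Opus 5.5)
\textbf{The repulsion part is fine; the potential part has a genuine gap for $d\ge4$.} Your treatment of the repulsion term (finite-rank $\beta_n$ with a $\tau$-uniform error, then strong convergence of $\langle u_\tau,b_l\rangle_Y$ against weakly convergent factors) is correct and matches the paper.

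The problem is your claim that $U_\tau$ is bounded in $L^p_{t,x}$ for some $p>6$. It is false for $d\ge4$. The space--time Gagliardo--Nirenberg inequality applied to $U_\tau^2\in L^\infty_tL^2_x\cap L^2_tH^1_x$ gives $U_\tau^2\in L^{2+4/d}_{t,x}$, i.e.\ $U_\tau\in L^{4+8/d}_{t,x}$, and $4+8/d>6$ only for $d\le3$.

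Your fallback does not help. The $L^2_{t,y}H^2_x\cap L^\infty_tH^1_x$ bound gives only $U_\tau\in L^{2(d+2)/(d-2)}_{t,x}$, which is again $\le 6$ for $d\ge4$. Shuffling H\"older exponents cannot rescue the ``derivative on $u_\tau$'' pairing in high dimension either, even if you use strong convergence of $g_\tau$. At best $\sff'(U_\tau)\sim U_\tau^3$ is controlled in $L^{(4+8/d)/3}_{t,x}$ and $g_\tau$ in $L^{2+4/d}_{t,x}$. The reciprocal exponents then sum to $\frac{5d}{4(d+2)}$, which reaches $1$ at $d=8$ and tends to $\frac54$. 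So from $d=8$ on these bounds do not even give equi-integrability of $\sff'(U_\tau)g_\tau$.

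Theorem~\ref{thm:ex.Ediss.nonconvex} is asserted in every space dimension, so the lemma is needed for all $d$. Your argument proves it only for $d\le3$, where Vitali indeed gives $\sff'(U_\tau)\to\sff'(U)$ in $L^2_{t,x}$.

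\textbf{The missing idea is to move the derivative off $u_\tau$.} Since $\xi\cdot n=0$, you can integrate by parts:
\[
\int_\Om\int_Y\kappa\,\sff'(U_\tau)\big(\nabla_xu_\tau\cdot\xi+u_\tau\divv_x\xi\big)\,\dd\nu\,\dd x=-\int_\Om M_\tau\cdot\nabla\sff'(U_\tau)\,\dd x,\qquad M_\tau:=\int_Y\kappa u_\tau\xi\,\dd\nu .
\]
Then write $\nabla\sff'(U_\tau)=\frac{3\vartheta}{2}(U_\tau-1)\nabla(U_\tau^2)+\frac{\vartheta}{2}\nabla U_\tau$.

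By the slope-coercivity bounds, $\nabla(U_\tau^2)$ and $\nabla U_\tau$ converge weakly in $L^2_{t,x}$. It therefore suffices that $(U_\tau-1)M_\tau\to(U-1)M$ strongly in $L^2_{t,x}$, i.e.\ that $U_\tau$ and $M_\tau$ converge strongly in $L^4_{t,x}$. This needs integrability only strictly above $4$. The bound $U_\tau\in L^{4+8/d}_{t,x}$ provides that in every dimension, together with strong convergence of the type averages and the domination $|M_\tau|\le\|\xi\|_{L^\infty}U_\tau$, which holds because $u_\tau\ge0$.

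In effect, one factor of the cubic nonlinearity is absorbed into the weakly compact quantity $\nabla(U_\tau^2)$. This is exactly the route you set aside as ``the alternative''. It is the paper's proof, and it is what makes the lemma dimension-independent.
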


\begin{proof}[Proof of Lemma~\ref{l:flux-passage.lower}]
The passage to the limit in the nonlocal part involving ${\sf B}u_\tau$ (resp.\ ${\sf B}\wt u_\tau$) is analogous to that in Section~\ref{sec:diagE}. It hence remains to consider the potential part coming from the double-well potential, which we write, using integration by parts, as 
\begin{align*}
-\int_0^T\int_\Om\int_Y\kappa u_\tau\xi\,\dd\nu\cdot \nabla {\sf f}'(U_\tau)\dd x.
\end{align*}
We now define
\[M_\tau:=\int_Y\kappa u_\tau\xi\,\dd\nu,\qquad M:=\int_Y\kappa u\xi\,\dd\nu.\]
By the uniform bound on the energy in~\eqref{eq:discr.edi.metric.nc} and the slope coercivity~\eqref{eq:slope.coercivity}, $(U_\tau^2)$ is $\tau$-uniformly bounded in $L^\infty(0,T;L^2(\Omega))\cap L^2(0,T;H^1(\Omega))$, giving that $U_\tau$  is bounded in $L^{4+\delta}((0,T)\times\Om)$ for $\delta=8/d>0$, thanks to the space--time Gagliardo--Nirenberg inequality. 
Combined with the strong %
convergence of the weighted $Y$-averages (cf.~ Lemma~\ref{l:GMM_convergence_strong}~\ref{l-item:GMM_convergence_strong}) and the bound $|M_\tau|\le\|\xi\|_{L^\infty} U_\tau$, we infer
\begin{align}\label{eq:U4.conv.strong}
	U_\tau\to U,\quad M_\tau\to M\qquad\text{strongly in }L^4((0,T)\times\Om).
\end{align}
Consequently, $(U_\tau-1)M_\tau\to(U-1)M$ strongly in $L^2((0,T)\times\Om)$,
whereas $\nabla(U_\tau^2)\rightharpoonup\nabla(U^2)$ and
$\nabla U_\tau\rightharpoonup\nabla U$ weakly in $L^2((0,T)\times\Om)$.
Motivated by this, using $\nabla{\sf f}'(U_\tau)=\frac{3\vartheta}{2}(U_\tau-1)\nabla(U_\tau^2)+\frac\vartheta2\nabla U_\tau$, we rewrite
\begin{align*}
	-\int_0^T\!\int_\Om\int_Y\kappa  u_\tau\xi\cdot\nabla{\sf f}'(U_\tau)\,\dd\nu\dd x\dd t
	&=-\frac\vartheta2\int_0^T\!\int_\Om\big(3(U_\tau-1)M_\tau\cdot\nabla(U_\tau^2)+M_\tau\cdot\nabla U_\tau\big)\,\dd x\dd t=:I(U_\tau,M_\tau).
\end{align*}
In the limit $\tau\downarrow0$, the right-hand side converges to $I(U,M)$, which equals
$-\int_{(0,T)\times\Om}\int_Y \kappa u\xi\cdot\nabla{\sf f}'(U)\,\dd\nu\dd x\dd t.$ 
Along the same lines, it follows that $I(\wt U_\tau,\wt M_\tau)\to I(U,M)$.
Overall, we have thus shown~\eqref{eq:flux.lower-order.lim}.
\end{proof}

We are now in a position to derive the energy-dissipation inequality~\eqref{eq:EDI.diagE}.
Using~\eqref{eq:W2.KantorovichPot}, we rewrite~\eqref{eq:discr.edi.metric.nc} as 
\begin{align}\label{eq:discr.edi.j.nc}
	\calE(\mu_\tau(t)) +\frac12\int_0^t\int_\Om\int_Y\bigg(\frac{\abs{j_\tau}^2}{u_\tau}
	+ \frac{\abs{\wt\jmath_\tau}^2}{\wt u_\tau}\bigg)\,\dd\nu\dd x\dd s\le \calE(\mu_0).
\end{align}
As in Proposition~\ref{prop:disc_EDI}, cf.~\eqref{eq:flux_uniform-integrability}, this bound implies the equi-integrability of $j_\tau$, $\wt\jmath_\tau$ and hence weak $L^1$ convergence along a subsequence. From the strong convergence of the difference $u_\tau-\wt u_\tau$ in $L^2_{t,y}H^1_x$,  identity~\eqref{eq:EL.nc}, valid both for $j_\tau$ and $\wt\jmath_\tau$,  combined with the convergence of the lower-order part of the flux term (cf.~Lemma~\ref{l:flux-passage.lower}), it follows that 
the weak $L^1$ limits must coincide, i.e., there exists $j\in L^1_\loc([0,\infty);L^1(\Om\times Y))$ such that (along a subsequence)
\[j_\tau,\wt\jmath_\tau\rightharpoonup j\qquad\text{weakly in }L^1((0,T)\times \Om\times Y;\R^d)\quad\text{for all }T<\infty.\]
By the convergence of the energies to some non-increasing limit $E=E(t)$, which follows as in Section~\ref{sec:diagE},
and the joint weak lower semicontinuity of the dissipation terms, we can now pass to the limit in
\eqref{eq:discr.edi.j.nc}  to infer, for a.e.\ $t>0$,
\[E(t)+\int_0^t\int_\Om\int_Y\frac{|j|^2}{u}\,\dd\nu\dd x\dd s\le\calE(\mu_0),\]
where $E(t)\ge\calE(\mu(t))$.

To establish the flux-defect estimate~\eqref{eq:def.e-diss.flux}, define $z_\tau:= u_\tau-u$ and $E_{\rm def}(t):= E(t)-\calE(\mu(t))$. 
Using the strong convergence~\eqref{eq:E0.conv.nc} of the lower-order energies and of $\mathcal{E}(\mu_\tau)\to E$ in $L^1(0,T)$,
combined with weak convergence $\nabla z_\tau\weakly0$ in $L^2_{t,x,y}$, it follows that
\begin{align}\label{eq:gradient.defect.nc}
	\frac12\int_\Om\int_Y\alpha|\nabla z_\tau|^2\,\dd\nu\dd x=\mathcal{E}_1(\mu_\tau)-\mathcal{E}_1(\mu)
	-\int_\Om\int_Y\alpha\nabla z_\tau\cdot\nabla u\,\dd\nu\dd x
	\weakly E_{\rm def} \qquad\text{ in }L^1(0,T).
\end{align}
Subtracting the distributional formula for $j_u$ from the Euler--Lagrange equation for $j_\tau$, expanding the quadratic terms, and using the convergence of the lower-order terms (cf.~Lemma~\ref{l:flux-passage.lower}), we deduce
\begin{align*}
\bigg|\int_0^T\!\la j-j_u,\xi\ra\dd t\bigg|
\le\limsup_{\tau\downarrow0}\bigg|\int_0^T\!\!\int_\Om\int_Y\alpha\Big((\nabla z_\tau\otimes\nabla z_\tau):\nabla\xi+\frac12|\nabla z_\tau|^2\divv\xi+z_\tau\nabla z_\tau\cdot\nabla\divv\xi\Big)\,\dd\nu\dd x\dd t\bigg|.
\end{align*}
We now choose $\xi=\chi_I\zeta$ with  fixed but arbitrary $\zeta \in L^\infty(Y;C^2_{\tan}(\ol\Om))$ and  $I\subseteq[0,T]$ measurable.
Then, using also the fibrewise Poincar\'e inequality $\|z_\tau(t,\cdot,y)\|_{L^2(\Om)}\lesssim_\Om\|\nabla z_\tau(t,\cdot,y)\|_{L^2(\Om)}$ and~\eqref{eq:gradient.defect.nc}, we conclude
\begin{align}
	\bigg|\int_I\la j-j_u,\zeta\ra\,\dd t\bigg|\lesssim_\Om \|\zeta\|_{L^\infty(Y;C^2(\ol\Om))}\int_I  E_{\rm def}(t)\,\dd t.
\end{align}
 Since $I\subseteq[0,T]$ was arbitrary, localisation and an
approximation argument using the separability of the set $\{\zeta\in L^\infty(Y;C^2_{\tan}(\ol\Om)):\|\zeta\|_{L^\infty(Y;C^2(\ol\Om))}\le 1\}\subset L^1(Y;C^2(\ol\Om))$
yield~\eqref{eq:def.e-diss.flux} for all $t\in [0,T]\setminus N$ with a negligible set $N$ independent of $\zeta$.

\paragraph{Derivation of the continuity equation}
In the Euclidean setting of Section~\ref{ssec:proof.ex.diag}, the continuity equation was obtained directly from the transport maps between consecutive states by a Taylor expansion. In the present intrinsic setting, where velocities along geodesics need not be constant, it appears more transparent to work instead with the geodesic interpolation, which satisfies an exact continuity equation at the discrete level.
Let $\liftpi_\tau^k\in\calP(\operatorname{Geo}(\ol\Om)\times Y)$
be a fibrewise optimal dynamic plan from $\mu_\tau^k$ to $\mu_\tau^{k-1}$,  cf.~Lemma~\ref{l:dyn.plan.meas}.
In particular, $(e_0,\id_Y)_\#\liftpi_\tau^k=\mu_\tau^k$, and $(e_1,\id_Y)_\#\liftpi_\tau^k=\mu_\tau^{k-1}$.
We define the time-rescaled geodesic interpolant and its dynamic flux by
\[\rho_\tau(t)\coloneqq (e_{1-r},\id_Y)_\#\liftpi_\tau^k,\qquad
m_\tau(t)\coloneqq (e_{1-r},\id_Y)_\#\left(-\frac1\tau\dot\gamma(1-r)\,\liftpi_\tau^k\right),\qquad t\in((k{-}1)\tau,k\tau],\]
where $r\coloneqq \tau^{-1}(t-(k{-}1)\tau)\in(0,1]$. Equivalently,  for every 
$\xi\in L^\infty(Y;C(\ol\Om;\mathbb R^d))$
and a.e.\ $t\in((k{-}1)\tau,k\tau]$,
\begin{align}\label{eq:dyn.flux}
	\int_{\ol\Om\times Y}\xi\cdot\dd m_\tau(t):=-\frac1\tau\int_{\GEO(\ol\Om)\times Y}\xi(\gamma_{1-r},y)\cdot\dot\gamma(1-r)\,\dd\liftpi_\tau^k(\gamma,y).
\end{align}
By the chain rule along the geodesics, it follows that
\begin{align}\label{eq:CEdiscr.geod}
	\int_0^T\int_{\ol\Om\times Y}\pa_t\phi(t,\cdot)\,\dd\rho_\tau(t)\dd t
	+\int_0^T\int_{\ol\Om\times Y}\nabla\phi(t,\cdot)\cdot\dd m_\tau(t)\dd t
	+\int_{\ol\Om\times Y}\phi(0,\cdot)\,\dd\mu_0=0
\end{align} 
for all $\phi\in L^\infty(Y;C^1([0,T]\times\ol\Om))$ such that $\phi(T,\cdot,y)=0$ for $\nu$-a.e.\ $y$.
Furthermore, since $\liftpi_\tau^k$ is concentrated on $\GEO(\ol\Om)\times Y$ and induces an optimal coupling between $\mu_\tau^k$ and $\mu_\tau^{k-1}$, it is not difficult to see, using Jensen's inequality, that
\begin{align}\label{eq:mtau.rhotau}
\int_0^T\int_{\ol\Om\times Y}\bigg|\frac{\dd m_\tau}{\dd\rho_\tau}\bigg|^2\,\dd\rho_\tau\dd t\le\sum_k\frac1\tau\mathcal W_{2,X,\nu}^2(\mu_\tau^k,\mu_\tau^{k-1})\lesssim1.
\end{align}
The coupling induced by $(e_{1-r},e_0,\id_Y)_\#\liftpi_\tau^k$ also yields
for $t\in(t_{k-1},t_k]$ and $r=(t-t_{k-1})/\tau$,
\begin{align*}
	\mathcal W_{2,X,\nu}^2(\rho_\tau(t),\mu_\tau(t))
	&\le \int_{\GEO(\ol\Om)\times Y} d_X^2(\gamma_{1-r},\gamma_0)\,\dd\liftpi_\tau^k(\gamma,y)\\
	&=(1-r)^2\int_{\GEO(\ol\Om)\times Y} d_X^2(\gamma_1,\gamma_0)\,\dd\liftpi_\tau^k(\gamma,y)
	=(1-r)^2\mathcal W_{2,X,\nu}^2(\mu_\tau^k,\mu_\tau^{k-1}).
\end{align*}
Therefore,
\begin{align*}
	\int_0^T\mathcal W_{2,X,\nu}^2(\rho_\tau(t),\mu_\tau(t))\,\dd t
	\lesssim \tau\sum_k\mathcal W_{2,X,\nu}^2(\mu_\tau^k,\mu_\tau^{k-1})\to0.
\end{align*}
Consequently, along a subsequence, $\rho_\tau(t)\dd t\weakstar\mu_t\dd t$ in the sense of narrow convergence, and $m_\tau\dd t\weakstar m$ in $\mathcal{M}([0,T]{\times} X{\times }Y;\mathbb{R}^d)$. 
By lower semi-continuity of the action (cf.\ e.g.~\cite[Lemma~10.1]{ABS_2024}), we further have that 
$m\ll \mu_t\dd t= u\scrL^{1+d}\otimes\nu$. More specifically, $\dd m=j_m\dd(\scrL^{1+d}\otimes\nu)$ with 
\[ \int_0^T\int_{\ol\Om\times Y}\frac{|j_m|^2}{u}\,\dd\nu\dd x\dd t \le \liminf_{\tau\downarrow0} \int_0^T\int_{\ol\Om\times Y}
\bigg|\frac{\dd m_\tau}{\dd\rho_\tau}\bigg|^2 \,\dd\rho_\tau\dd t. \]
Since $\rho_\tau(t)\dd t$ and $\mu_t\dd t$ have the common $(t,y)$-marginal $\scrL^1\otimes\nu$, the narrow convergence
$\rho_\tau(t)\dd t\weakstar\mu_t\dd t$ implies, by approximation, that
\[\int_0^T\int_{X{\times}Y}\phi\,\dd\rho_\tau(t)\dd t\to \int_0^T\int_{X{\times} Y}\phi\,\dd\mu_t\dd t\qquad\text{for all }\phi\in L^1((0,T)\times Y;C(X)).\]
Taking also into account the bound~\eqref{eq:mtau.rhotau}, we further infer
\[ \bigg| \int_0^T\int_{X\times Y}\xi\cdot\dd m_\tau(t)\dd t \bigg|\lesssim\|\xi\|_{L^2((0,T)\times Y;C(X;\mathbb R^d))}\]
and an analogous estimate for $m$. Thus, similarly as for the mass distributions, we obtain convergence of the flux term in the sense
\[\int_0^T\int_{X{\times}Y}\xi\cdot\dd m_\tau(t)\dd t\to\int_{(0,T){\times}X{\times} Y}\xi\cdot\dd m\qquad\text{for all }\xi\in L^2((0,T)\times Y;C(X;\mathbb{R}^d)).\]
We may therefore pass to the limit in~\eqref{eq:CEdiscr.geod} for every $\phi\in L^\infty(Y;C^1([0,T]\times\ol\Om))$
with $\phi(T,\cdot,y)=0$ for $\nu$-a.e.\ $y$.
Localising in $y$ and using a countable $C^1$-dense family of scalar test functions, we conclude in particular that, for
$\nu$-a.e.\ $y\in Y$ and every $\phi\in C_c^1([0,T)\times\ol\Om)$,
\begin{align}\label{eq:CE.geod}
 \int_0^T\int_\Om\big(u(\cdot,y)\pa_t\phi+j_m(\cdot,y)\cdot\nabla_x\phi\big)\,\dd x\dd t +\int_\Om u_0\phi_{|t=0}\,\dd x=0.
\end{align}
With these observations we finally show the equality $j_m=j$.
For this, we make use of the lemma below.

\begin{lemma}[Error between dynamic and Euler--Lagrange flux]\label{l:err.EL.dyn}
For all $T<\infty$, every compact set $K\Subset\Om$, and every $\xi\in L^\infty((0,T)\times Y;C^1(\ol\Om;\mathbb R^d))$
satisfying $\supp\xi(t,\cdot,y)\subseteq K$ for $\dd t\otimes\nu$-a.e.\ $(t,y)$, we have
\begin{align}\label{eq:err.EL.dyn}
\bigg|\int_0^T\int_{X\times Y}\xi\cdot\dd m_\tau\dd t-\int_0^T\int_\Om\int_Y\xi\cdot j_\tau\,\dd\nu\dd x\dd t\bigg|
\lesssim_K\|\xi\|_{L^\infty((0,T)\times Y;C^1(\ol\Om))}\sum_k\mathcal W_{2,X,\nu}^2(\mu_\tau^k,\mu_\tau^{k-1}).
\end{align}
\end{lemma}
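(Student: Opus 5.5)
We fix $k$ and $t\in(t_{k-1},t_k]$ with $r=(t-t_{k-1})/\tau$, and compare the two fluxes pointwise along the dynamic plan $\liftpi_\tau^k$. By Lemma~\ref{l:ini.vel.geo}, $-\dot\gamma(0+)=\nabla_x\vp_\tau^k(\gamma_0,y)$ for $\liftpi_\tau^k$-a.e.\ $(\gamma,y)$, and $(e_0,\id_Y)_\#\liftpi_\tau^k=\mu_\tau^k$. Hence
\[\int_\Om\int_Y\xi\cdot j_\tau\,\dd\nu\dd x=-\frac1\tau\int_{\GEO(\ol\Om)\times Y}\xi(t,\gamma_0,y)\cdot\dot\gamma(0+)\,\dd\liftpi_\tau^k(\gamma,y).\]
The dynamic flux~\eqref{eq:dyn.flux} instead evaluates $\xi(t,\gamma_{1-r},y)\cdot\dot\gamma(1-r)$. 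The difference of the integrands therefore splits into a spatial shift term,
\[\frac1\tau\big(\xi(\gamma_{1-r})-\xi(\gamma_0)\big)\cdot\dot\gamma(1-r),\]
and a velocity change term,
\[\frac1\tau\,\xi(\gamma_0)\cdot\big(\dot\gamma(1-r)-\dot\gamma(0+)\big).\]

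The spatial shift term is harmless. We have $|\xi(\gamma_{1-r})-\xi(\gamma_0)|\le\|\nabla\xi\|_\infty|\gamma_{1-r}-\gamma_0|\le\|\nabla\xi\|_\infty d_X(\gamma_0,\gamma_1)$, and $|\dot\gamma|\le d_X(\gamma_0,\gamma_1)$ by Lemma~\ref{l:dyn.plan.meas}. Integrating over the plan and over $t$ gives a bound $\|\xi\|_{C^1}\tfrac1\tau\cdot\tau\,\calW^2_{2,X,\nu}(\mu_\tau^k,\mu_\tau^{k-1})$, and summing over $k$ yields the right-hand side of~\eqref{eq:err.EL.dyn}. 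The restriction to $\supp\xi\subseteq K$ is not needed for this term.

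The velocity change term is the main obstacle, since geodesics of the intrinsic distance bend at $\partial\Om$. This is where $K\Subset\Om$ enters. Let $\delta:=\operatorname{dist}(K,\partial\Om)>0$. The integrand is nonzero only if $\gamma_0\in K$. If $d_X(\gamma_0,\gamma_1)<\delta$, then the whole geodesic stays in $B_\delta(\gamma_0)\subset\Om$. It is therefore a Euclidean segment and $\dot\gamma$ is constant, so the term vanishes identically. If $d_X(\gamma_0,\gamma_1)\ge\delta$, we simply bound the integrand by
\[\frac2\tau\|\xi\|_\infty d_X(\gamma_0,\gamma_1)\le\frac{2}{\tau\delta}\|\xi\|_\infty d_X^2(\gamma_0,\gamma_1).\]
Integrating in $t$ over the interval of length $\tau$ and over the plan, and then summing in $k$, gives
\[\lesssim\delta^{-1}\|\xi\|_{L^\infty}\sum_k\calW^2_{2,X,\nu}(\mu_\tau^k,\mu_\tau^{k-1}).\]
This is the claimed bound, with constant depending on $K$ through $\delta$.

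Two technical points remain. The first is measurability of the integrands in $(t,\gamma,y)$, which follows from the Borel selections of Lemmas~\ref{l:Kant-pot.meas} and~\ref{l:dyn.plan.meas} together with the Borel velocity map $v$. The second is that $\dot\gamma(1-r)$ is defined only for a.e.\ $r$; this is harmless after integrating in $t$ by Fubini. Summing the two contributions concludes the proof. Since the right-hand side is $O(\tau)$, this will then identify $j_m=j$ on interior test fields, and by density on all fields.
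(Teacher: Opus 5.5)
Your proof is correct and follows the paper's argument essentially step for step. You use the same rewriting of the Euler--Lagrange flux along $\liftpi_\tau^k$ via Lemma~\ref{l:ini.vel.geo} and $(e_0,\id_Y)_\#\liftpi_\tau^k=\mu_\tau^k$, the same split into a spatial-shift term and a velocity-change term, and the same case distinction $\ell<\delta$ versus $\ell\ge\delta$ with $\delta=\dist(K,\partial\Om)$.

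One aside needs correcting. On a non-convex $\Om$, your Euclidean estimate $|\xi(\gamma_{1-r})-\xi(\gamma_0)|\le\|\nabla\xi\|_\infty|\gamma_{1-r}-\gamma_0|$ does rely on $\supp\xi\subseteq K$, through the $C^1$ zero extension, as in the paper. The support restriction is dispensable for that term only if you estimate via $d_X(\gamma_{1-r},\gamma_0)=(1-r)\,d_X(\gamma_0,\gamma_1)$ instead.
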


\begin{proof}
	Using $(e_0,\id_Y)_\#\liftpi_\tau^k=\mu_\tau^k$, we can write 
	\begin{align*}
		\int_{(k{-}1)\tau}^{k\tau}\int_\Om\int_Y\xi\cdot j_\tau^k\,\dd\nu\dd x\dd t
		=\frac1\tau\int_{(k{-}1)\tau}^{k\tau}\int_\Om\int_Y\xi\cdot \nabla\vp_\tau^k\,\dd
		\mu_\tau^k\dd t. %
	\end{align*}
	Consequently, changing variables $t=t_k(r)\coloneqq (k-1+r)\tau$ and invoking Lemma~\ref{l:ini.vel.geo} fibrewise,  we deduce
	\begin{align}\nonumber
		\err_k&\coloneqq \int_{(k{-}1)\tau}^{k\tau}\int_{X\times Y}\xi\cdot\dd m_\tau\dd t-\int_{(k{-}1)\tau}^{k\tau}\int_\Om\int_Y\xi\cdot j_\tau^k\,\dd\nu\dd x\dd t\\
		&=\int_0^1\int_{\GEO(X)\times Y}\big(\xi(t_k(r),\gamma_{1-r},y)\cdot(-\dot\gamma(1-r))-\xi(t_k(r),\gamma_0,y)\cdot(-\dot\gamma(0+))\big)\,\dd\liftpi_\tau^k(\gamma,y)\dd r.\label{eq:err.k.rewrite}
	\end{align}
Let $\delta:=\dist(K,\pa\Om)>0$.	
Since the extension of $\xi$ by zero in the spatial variable is globally Lipschitz, we have $|\xi(t,x,y)-\xi(t,x',y)|\lesssim_\xi|x-x'|$.
	For $\liftpi_\tau^k$-a.e.\ $(\gamma,y)$ and a.e.\ $r\in(0,1)$, the constant-speed property gives
	\[|\dot\gamma(1-r)|=\ell\coloneqq d_X(\gamma_0,\gamma_1),\qquad
	|\gamma_{1-r}-\gamma_0|\le d_X(\gamma_{1-r},\gamma_0)=(1-r)\ell.\]
	Hence,
	\begin{align*}
		\big|\xi(t_k(r),\gamma_{1-r},y)\cdot\dot\gamma(1-r)-\xi(t_k(r),\gamma_0,y)\cdot\dot\gamma(0+)\big|\lesssim_\xi (1-r)\ell^2+|\xi(t_k(r),\gamma_0,y)|\,|\dot\gamma(1-r)-\dot\gamma(0+)|.
	\end{align*}
	The first term on the right-hand side being controlled by $\ell^2$, we aim to obtain a similar bound for the second term. 
    To this end, first observe that if $\xi(t_k(r),\gamma_0,y)=0$, it vanishes. Otherwise, we have $\gamma_0\in K$.
	Thus, if $\ell<\delta$, the geodesic is contained in $B_\delta^{\textrm{Eucl}}(\gamma_0)\subset\Om$ and hence, by minimality, it is a Euclidean line segment, so that $\dot\gamma(1-r)=\dot\gamma(0+)$ for a.e.\ $r$. 
    Conversely, if $\ell\ge\delta$, then
	\[|\dot\gamma(1-r)-\dot\gamma(0+)|\le2\ell\le2\delta^{-1}\ell^2.\]
	Thus, the integrand in~\eqref{eq:err.k.rewrite} is bounded in modulus by $C_\xi\ell^2$. Integration in $r$ and optimality of $\liftpi_\tau^k$ yield
	\[|\err_k|\lesssim_\xi\int_{\GEO(X)\times Y}d_X^2(\gamma_0,\gamma_1)\,\dd\liftpi_\tau^k(\gamma,y)
	=\mathcal W_{2,X,\nu}^2(\mu_\tau^k,\mu_\tau^{k-1}).\]
	With this, a summation over $k$ proves~\eqref{eq:err.EL.dyn}.
\end{proof}

 Since $\sum_k\mathcal W_{2,X,\nu}^2(\mu_\tau^k,\mu_\tau^{k-1})
=\tau\sum_k\frac1\tau\mathcal W_{2,X,\nu}^2(\mu_\tau^k,\mu_\tau^{k-1})
\lesssim\tau\to0$,
Lemma~\ref{l:err.EL.dyn} and the weak convergence properties established above yield
\[\int_0^T\int_\Om\int_Y \xi\cdot(j_m-j)\,\dd\nu\dd x\dd t=0\]
for every $\xi$ in the test class of Lemma~\ref{l:err.EL.dyn}. It follows that $j_m=j$ a.e., so that~\eqref{eq:CE.geod} is the required continuity equation with flux $j$, including the initial condition and the no-flux boundary condition, for $\nu$-a.e.\ $y$.

All remaining assertions can be derived analogously as in the proof of Theorem~\ref{thm:existence}. This completes the construction of energy-dissipating solutions.

\paragraph{Weak solvability for finitely many types} 
If $\supp\nu$ is finite, the narrow convergence implies 
that \[\mathcal{W}_{2,X,\nu}(\mu_\tau(t),\mu(t))\to0\quad \text{ locally uniformly in }t\in[0,\infty).\]
 Interpolating between $L^2(Y;H^2(\Om))$ and $\mathcal{W}_{2,X,\nu}$ we then infer
\begin{align}
	u_\tau,\wt u_\tau\to u\quad \text{in }L^2((0,T)\times Y;H^1(\Om)),
\end{align}
which implies $E(t)=\mathcal{E}(\mu_t)$ for a.e.\ $t>0$ and $j=j_u$, and hence the weak solution property.

\section{Weak--strong uniqueness principles}\label{sec:uniq.wkstr}
In this section, we establish the proofs of Theorems~\ref{thm:wk-str.uniq.e-diss} and~\ref{thm:wk-str.uniq}. 
We present the proofs for $\vartheta>0$ and $\kappa\equiv1$. The modifications required for $\vartheta=0$ and for general bounded $\kappa\geq\kappa_*>0$ are omitted for brevity. 

\subsection{Energy-dissipating solutions on bounded domains}\label{ssec:wsu.diag.e-diss}
In this section we show Theorem~\ref{thm:wk-str.uniq.e-diss}. Let $X=\overline\Omega$, where $\Omega\subset\mathbb R^d$ is a bounded smooth domain.
Let $(u,j,E)$ be an energy-dissipating solution in the sense of Definition~\ref{def:e-diss.sol}, with initial data $\mu_0=u_0\scrL^d{\otimes}\nu$, enjoying the additional regularity~\eqref{eq:ediss-sol.reg}.
Further, let $v$ denote a local strong solution with the regularity specified in Theorem~\ref{thm:wk-str.uniq.e-diss}.
Note that conservation of mass and the initial condition $v(0)=u_0$ imply that $\int_X v(t,x,y)\,\dd x=1$ for all $t\in[0,T]$ and $\nu\text{-a.e.\ }y\in Y$.

We define $U:=\int_Yu\dd\nu, V:=\int_Y v\dd\nu$, recall the notations
\begin{align*}
	w_u=-\alpha\Delta u+{\sf f}'(U)+\mathsf Bu,\qquad w_v=-\alpha\Delta v+{\sf f}'(V)+\mathsf Bv,
\end{align*}
and abbreviate $z:=u-v, Z:=U-V$.

To compensate for the non-convexity of  $\mathcal{E}_0$, we augment the classical relative energy by a squared negative Sobolev norm. To this end, we fix an integer $N>1+d/2$, let $\mathcal V=H^N(\Omega)$ with dual $\mathcal{V}^*$, and define, for $\lambda\ge1$ to be determined, 
\[S_\rel:=E_\rel+ \frac\lambda2\|z\|_{L^2(Y;\mathcal V^*)}^2,\]
where 
\begin{align*}
E_\rel(t)&=E_{\dft}(t)+\mathcal{E}_\rel(u|v)= E(t)-\calE(v)-\la  w_v,u-v\ra,
\end{align*}
with the understanding that $E(0):=\mathcal{E}(\mu_0)$, $E_{\dft}(0):=0$, so that $E_\rel(0)=\mathcal{E}_{\rel}(u_0|v_0)$ denotes the classical relative energy. Here, we use the shorthand $\mathcal{E}(v)$ to denote $\mathcal{E}(v\scrL^d\otimes\nu)$ etc. Similarly, below, we write `$u\in\mathrm{dom}(\mathcal{E})$' if $\mu=u\scrL^d\otimes\nu\in \mathrm{dom}(\mathcal{E})$. Finally, observe that, since $\mathcal{V}\embeds L^\infty(\Om)$, it holds that $\|u\|_{L^2(Y;\mathcal{V}^*)}\lesssim \|u\|_{L^2(Y;L^1(\Om))}\lesssim 1$.

We start with two auxiliary results that are valid in every space dimension $d\ge1$.

\begin{lemma}[Coercivity of augmented relative energy]\label{l:aug-relen.coerc}
	There exists $\lambda<\infty$ such that for all $u\in\mathrm{dom}(\mathcal{E})$ it holds that 
	\begin{align}\label{eq:Sdef<Srel.e}
	\mathcal{E}_\rel(u|v)+ \frac\lambda2\|z\|_{L^2(Y;\mathcal V^*)}^2\gtrsim \|z\|_{L^2(Y;H^1(\Omega))}^2+\vartheta\|Z\|_{L^4(\Omega)}^4.
	\end{align}
\end{lemma}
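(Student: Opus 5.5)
The plan starts from the explicit form of the relative energy computed in Section~\ref{sss:EntDissIneqFluxDefect}. With $\kappa\equiv1$ and the bounds \eqref{eq:pot-rel.coerc} and \eqref{eq:boundBetaApprox}, we already have
\[
\mathcal{E}_\rel(u|v)\ge \frac{\alpha_*}{2}\|\nabla z\|_{L^2(\Om\times Y)}^2+\frac{\vartheta}{12}\|Z\|_{L^4(\Om)}^4-C_0\|z\|_{L^2(\Om\times Y)}^2,\qquad C_0:=\frac{\vartheta}{8}+\frac{\beta^*}{2}.
\]
The only task is therefore to absorb the negative $L^2$ term using the gradient term and the augmentation $\frac\lambda2\|z\|^2_{L^2(Y;\mathcal V^*)}$. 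We also need to recover the full $H^1$ norm, not just the gradient, on the left.

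The key step is a fibrewise interpolation (Ehrling-type) inequality on the bounded smooth domain: for every $\ve>0$ there is $C_\ve$ with
\[
\|g\|_{L^2(\Om)}^2\le \ve\|\nabla g\|_{L^2(\Om)}^2+C_\ve\|g\|_{\mathcal V^*}^2\qquad\text{for all }g\in H^1(\Om).
\]
I would prove it by interpolation. Let $Q$ denote the projection onto constants. The mean part is controlled exactly by the dual norm, since $1\in\mathcal V$ and $|\int_\Om g|\le\|1\|_{\mathcal V}\|g\|_{\mathcal V^*}$. The mean-free part satisfies $\|g-Qg\|_{L^2}^2\le\|g-Qg\|_{H^1}\|g-Qg\|_{(H^1)^*}$. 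Alternatively, the interpolation inequality $\|g\|_{L^2}\le \|g\|_{H^1}^{\theta}\|g\|_{H^{-N}}^{1-\theta}$, with $\mathcal V^*$ identified with a space containing $L^1$ and embedded into $(H^1)^*$, gives the estimate after Young's inequality together with Poincar\'e--Wirtinger. A compactness–contradiction argument also works, because $H^1(\Om)\embeds L^2(\Om)$ is compact and $L^2(\Om)\embeds\mathcal V^*$ is injective. The constants do not depend on $y$. Applying the inequality to $g=z(\cdot,y)$, which lies in $H^1$ for $\nu$-a.e.\ $y$ since $u\in\mathrm{dom}(\mathcal E)$ and $v$ is regular, and integrating in $\nu$ gives the same inequality for $\|z\|_{L^2(\Om\times Y)}^2$.

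With this in hand I choose $\ve$ so that $(C_0+1)\ve\le\alpha_*/4$, and then set $\lambda:=2(C_0+1)C_\ve$. This yields
\[
\mathcal{E}_\rel(u|v)+\frac\lambda2\|z\|^2_{L^2(Y;\mathcal V^*)}\ge\frac{\alpha_*}{4}\|\nabla z\|^2+\|z\|^2_{L^2}+\frac{\vartheta}{12}\|Z\|_{L^4}^4,
\]
which is \eqref{eq:Sdef<Srel.e}. Finiteness of all terms follows from $u,v\in\mathrm{dom}(\mathcal E)$ and the remark $\|z\|_{L^2(Y;\mathcal V^*)}\lesssim1$.

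I expect the main obstacle to be only the justification of the Ehrling inequality with the given $\mathcal V^*$ (identifying $L^2\embeds\mathcal V^*$ injectively and handling constants). Otherwise the argument is routine, and the dimension does not enter.
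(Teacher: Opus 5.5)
Your proposal is correct and follows the paper's proof: the same lower bound for $\mathcal{E}_{\mathrm{rel}}(u|v)$, then a fibrewise Ehrling inequality $\|g\|_{L^2(\Omega)}\le\varepsilon\|\nabla g\|_{L^2(\Omega)}+C_\varepsilon\|g\|_{\mathcal V^*}$, with $\varepsilon$ chosen small and then $\lambda$ large. Of your three justifications of that inequality, only the compactness--contradiction argument works as stated, and it is exactly the Ehrling lemma the paper cites; your first two sketches use the embedding in the wrong direction, since $(H^1(\Omega))^*\hookrightarrow\mathcal V^*$ means an $(H^1)^*$-norm is not controlled by $\|\cdot\|_{\mathcal V^*}$, and they would need an additional (e.g.\ spectral) argument to be repaired.
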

\begin{proof}
We first recall that by Ehrling's lemma
for all $\ve>0$ there exists $C_\ve<\infty$ such that
\begin{align}\label{eq:interpol.Ehrling-type}
\|\tilde z\|_{L^2(\Omega)}\le \ve\|\nabla \tilde z\|_{L^2(\Om)}+C_\ve \|\tilde z\|_{\mathcal{V}^*}\qquad\text{ for all }\tilde z\in H^1(\Om).
\end{align}
We now estimate, using the lower bound~\eqref{eq:pot-rel.coerc} and $\|Z\|_{L^2(\Om)}\le\|z\|_{L^2(\Om\times Y)}$.
\begin{align*}
\mathcal{E}_\rel(u|v)&=\frac12\int_\Om\int_Y\alpha|\nabla z|^2\,\dd\nu\dd x
	+\int_\Om\big({\sf f}(U)-{\sf f}(V)-{\sf f}'(V)Z\big)\,\dd x
	+\frac12\int_\Om(\mathsf Bz,z)_{L^2(Y,\nu)}\,\dd x\\
	&\ge\frac{\alpha_*}{2}\|\nabla z\|_{L^2(\Om\times Y)}^2
	+\frac{\vartheta}{12}\|Z\|_{L^4(X)}^4
	-\bigg(\frac\vartheta8+\frac{\beta^*}{2}\bigg)\|z\|_{L^2(\Om\times Y)}^2.
\end{align*}
Invoking~\eqref{eq:interpol.Ehrling-type} fibrewise, choosing $\ve>0$ sufficiently small and then $\lambda$ large enough (only depending on fixed parameters), we deduce~\eqref{eq:Sdef<Srel.e}.
\end{proof}

We next derive a sharp inequality for the evolution of $E_\rel$ using the energy-dissipation property.

\begin{lemma}[Relative energy inequality]\label{l:EDI.rel}
	For a.e.\ $t\in(0,T)$, it holds
	\begin{align*}
	E_\rel(t) +	\int_0^t\int_\Om\int_Yu\left|\frac{j}{u}+\nabla w_v\right|^2\,\dd\nu\dd x\dd s&\le E_\rel(0) + \sum_{m=1}^4 \int_0^t\mathrm{err}_m\,\dd s
	\end{align*}
	where, still denoting $z=u-v$ and $Z=U-V$ and recalling $j_u$ from~\eqref{eq:flux.distrib}, we have
	\begin{subequations}
		\begin{align}
			\mathrm{err}_1&:= -\int_\Om\int_Y\alpha\divv\bigl(z\nabla w_v\bigr)\Delta z\,\dd\nu\dd x
			\label{eq:grad-err.v1} 
			\\ \label{eq:grad-err.v2}&\phantom{:}= \int_\Om\int_Y\alpha\Big(\nabla^2 w_v:\nabla z\otimes\nabla z+\frac12\Delta w_v|\nabla z|^2+z\nabla z\cdot\nabla\Delta w_v\Big)\,\dd\nu\dd x ,\\
		\nonumber	\mathrm{{err}}_2&:=\langle j-j_u,\nabla w_v\rangle,\\
			\nonumber	\mathrm{err}_3&:= -\int_\Om\partial_t V\big({\sf f}''(V)Z-{\sf f}'(U)+{\sf f}'(V)
			\big)\,\dd x,\\
			\mathrm{err}_4&:=-\int_\Om\int_Y z\nabla w_v\cdot\big({\sf f}''(U)\nabla U-{\sf f}''(V)\nabla V+\mathsf B\nabla z\big)\,\dd\nu\dd x.
			\nonumber
		\end{align}
	\end{subequations}
\end{lemma}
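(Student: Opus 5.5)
The plan is to expand $E_\rel(t)=E(t)-\calE(v(t))-\la w_v(t),u(t)-v(t)\ra$ into three pieces and estimate each one separately. The first piece, $E(t)$, is controlled by the energy-dissipation inequality~\eqref{eq:EDI.diagE}. The second, $-\calE(v(t))$, is handled by the exact energy identity of the strong solution $v$: by its regularity and the Neumann conditions $\pa_nv=\pa_nw_v=0$,
\[\calE(v(t))+\int_0^t\int_\Om\int_Y v|\nabla w_v|^2\,\dd\nu\dd x\dd s=\calE(\mu_0).\]
For the cross term $\la w_v,u-v\ra$, I test the continuity equation~\eqref{eq:CE.diagE} for $u$ (fibrewise, for $\nu$-a.e.\ $y$) with $\phi=\chi_{[0,t]}w_v(\cdot,\cdot,y)$. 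This requires a time-mollification of $\chi_{[0,t]}$, which is legitimate because $w_v,\pa_tw_v\in L^\infty$ and $\nabla w_v\in W^{2,\infty}$, with $t$ a Lebesgue point. It yields
\[\la w_v,u\ra(t)-\la w_v,u_0\ra=\int_0^t\big(\la \pa_tw_v,u\ra+\la j,\nabla w_v\ra\big)\,\dd s.\]
The corresponding identity for $v$ is classical, with flux $j_v=-v\nabla w_v$.

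Subtracting and completing the square, I write $\frac{|j|^2}{u}+2j\cdot\nabla w_v+u|\nabla w_v|^2=u|\frac ju+\nabla w_v|^2$. The left-hand side then acquires the dissipation term claimed in the lemma, while the right-hand side keeps the remainder
\[\int_0^t\Big(-\la\pa_tw_v,z\ra+\la j-j_u,\nabla w_v\ra+\la j_u,\nabla w_v\ra+\la u,|\nabla w_v|^2\ra-\la v,|\nabla w_v|^2\ra+\dots\Big)\,\dd s.\]
Here $\la j-j_u,\nabla w_v\ra$ is exactly $\mathrm{err}_2$. The vector field $\xi=\nabla w_v$ is admissible in~\eqref{eq:flux.distrib}: it lies in $L^\infty(Y;C^2)$ by the assumed $W^{2,\infty}$ regularity (after an approximation), and it is tangential because $\pa_nw_v=0$.

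The algebraic core is to rewrite $\la j_u,\nabla w_v\ra$ together with the remaining $v$-dependent terms and the time derivative term. For the time derivative I use $\pa_tw_v=-\alpha\Delta\pa_tv+{\sf f}''(V)\pa_tV+\mathsf B\pa_tv$ with $\pa_tv=\divv(v\nabla w_v)$. Since $u\in L^2_{t,y}H^2$ with $\pa_nu=0$ by~\eqref{eq:ediss-sol.reg}, I can integrate the definition~\eqref{eq:flux.distrib} by parts back to $\la j_u,\xi\ra=\int\int u\nabla w_u\cdot\xi$ with $w_u=-\alpha\Delta u+{\sf f}'(U)+\mathsf Bu$. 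This converts $\la j_u,\nabla w_v\ra$ into $-\int\int u\nabla w_u\cdot\nabla w_v$. The $\vartheta U^2\in L^2H^1$ regularity makes ${\sf f}'(U)\nabla u$-type products integrable. Collecting terms, everything becomes linear in $z$ against $\nabla w_v$, apart from three contributions:
\begin{itemize}
\item the quadratic gradient part $-\int\alpha\divv(z\nabla w_v)\Delta z$, which is $\mathrm{err}_1$, with the second form~\eqref{eq:grad-err.v2} following by integrating by parts twice (boundary terms vanish by $\pa_nz=0$ and $\nabla w_v\cdot n=0$);
\item the potential remainder from the Taylor expansion in $\pa_tV$, which is $\mathrm{err}_3$;
\item the mixed term $\mathrm{err}_4$, coming from $z\nabla w_v\cdot\nabla(w_u-w_v)$ minus its gradient part, together with the symmetry of $\mathsf B$.
\end{itemize}

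The main obstacle is this bookkeeping. I have to verify that all terms linear in $z$ cancel, using the symmetric structure
\[\la\pa_tw_v,z\ra=\la w_v',\pa_tv\ra\text{-type identities},\]
with $\mathsf B$ self-adjoint, and the weak formulation of $v$. I also have to justify each integration by parts with only $u\in H^2$, which I would do by approximating $u$ in $L^2_{t,y}H^2$ by smooth Neumann functions. Finally, the statement for a.e.\ $t$ follows because $E(t)$ is only defined a.e., and I use $E_\rel(0)=\mathcal E_\rel(u_0|v_0)=0$ through $\lim_{t\downarrow0}E(t)=\calE(\mu_0)$.
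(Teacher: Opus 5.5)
Your proposal is correct and is essentially the paper's argument. The paper writes $E_\rel=E-E_{\mathrm{aff}}$ and differentiates the affine part using the continuity equation for $u$ and $\partial_tv=\divv(v\nabla w_v)$; after splitting $v=u-z$ and using the constitutive flux identity with $\xi=\nabla w_v$, it obtains $\la\partial_tw_v,z\ra=\la j_u,\nabla w_v\ra+\int\int u|\nabla w_v|^2-\mathrm{err}_1-\mathrm{err}_3-\mathrm{err}_4$, which is your observation that $\mathrm{err}_1+\mathrm{err}_4=-\int\int z\nabla w_v\cdot\nabla(w_u-w_v)$ up to the Taylor remainder $\mathrm{err}_3$. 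Two small fixes: the sign should read $\la j_u,\xi\ra=-\int\int u\nabla w_u\cdot\xi$, understood as $\int\int w_u\divv(u\xi)$ since $u\in H^2$ only; and the term $-\la v,|\nabla w_v|^2\ra$ in your remainder is cancelled by the dissipation term in the energy identity for $v$.
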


\begin{proof}
To keep the argument concise we only present the formal calculations, since the approximation arguments required to justify them are standard.
We write $E_\rel=E-E_{\mathrm{aff}}$ with the affine correction
	\[	E_{\mathrm{aff}}(t)\coloneqq\mathcal E(v(t))+\int_\Om\int_Y w_v(t)z(t)\,\dd\nu\dd x.\]
Using the continuity equation for $u$ and the chain rule for $\mathcal E(v)$, we compute
	\begin{align}\label{eq:Eaff.evol}
		 \frac{\dd}{\dd t}E_{\mathrm{aff}}=\la w_v,\pa_tv\ra+\la\pa_t w_v,z\ra+\la w_v,\pa_tu-\pa_tv\ra=\la\pa_t w_v,z\ra+\la j,\nabla w_v\ra.	
		 \end{align}
Inserting the formula for $w_v$, using the symmetry of $\mathsf B$, and 
	$\pa_t v=\divv(v\nabla w_v)$, %
we have
	\begin{align*}
		\la\pa_t w_v,z\ra&=\int_\Om\int_Y\big(\alpha\nabla\pa_t v\cdot\nabla z+(\mathsf B\pa_t v)z\big)\,\dd\nu\dd x
		+\int_\Om{\sf f}''(V)\pa_t V\,Z\,\dd x\\
		&=-\int_\Om\int_Y\alpha\Delta z\,\divv(v\nabla w_v)\,\dd\nu\dd x-\mathrm{err}_3\\
		&\quad-\int_\Om\int_Y v\nabla w_v\cdot\big({\sf f}''(U)\nabla U-{\sf f}''(V)\nabla V+\mathsf B\nabla z\big)\,\dd\nu\dd x,
	\end{align*}
where we integrated by parts using $\pa_nz=\pa_nw_v=0$.
	Splitting $v=u-z$ in the two integrals on the right-hand side and using the constitutive flux identity with $\xi=\nabla w_v$, together with the first formula for $\mathrm{err}_1$, the definition of $\mathrm{err}_4$, and the identity $-\alpha\Delta v+{\sf f}'(V)+\mathsf Bv=w_v$, we deduce
	\[\la\pa_t w_v,z\ra=\la j_u,\nabla w_v\ra+\int_\Om\int_Yu|\nabla w_v|^2\,\dd\nu\dd x	-\mathrm{err}_1-\mathrm{err}_3-\mathrm{err}_4.\]
	Consequently, by the definition of $\mathrm{err}_2$,
	\[\frac{\dd}{\dd t}E_{\mathrm{aff}}=2\la j,\nabla w_v\ra+\int_\Om\int_Yu|\nabla w_v|^2\,\dd\nu\dd x
	-\sum_{m=1}^4\mathrm{err}_m.\]
We now invoke the energy-dissipation inequality for $u$, which implies
	\[E_\rel(s)\Big|_{s=0}^{s=t}+\int_0^t\bigg(\int_\Om\int_Y\frac{|j|^2}{u}\,\dd\nu\dd x
	+\frac{\dd}{\dd s}E_{\mathrm{aff}}\bigg)\,\dd s\le0.\]
In view of 
	\[\int_\Om\int_Y\frac{|j|^2}{u}\,\dd\nu\dd x+\frac{\dd}{\dd t}E_{\mathrm{aff}}
	=\int_\Om\int_Yu\bigg|\frac ju+\nabla w_v\bigg|^2\,\dd\nu\dd x-\sum_{m=1}^4\mathrm{err}_m,\]
this yields the asserted relative energy inequality.
	
	The second form of $\mathrm{err}_1$ for $z=u-v$ is obtained analogously as~\eqref{eq:flux-tensor}, expanding the divergence and integrating by parts 
\begin{align*}
	-\int_\Om\divv(z \nabla w_v)\Delta z\,\dd x&
	=-\int_\Om(\nabla w_v\cdot\nabla z)\Delta z\,\dd x-\int_\Om z(\divv \nabla w_v)\Delta z\,\dd x\\
	&=\int_\Om\nabla(\nabla w_v\cdot\nabla z)\cdot\nabla z\,\dd x+\int_\Om\nabla\bigl(z\divv \nabla w_v\bigr)\cdot\nabla z\,\dd x\\
	&=\int_\Om\Big(\nabla^2 w_v:(\nabla z\otimes\nabla z)+\frac12(\divv \nabla w_v)|\nabla z|^2+z\nabla z\cdot\nabla\divv \nabla w_v	\Big)\,\dd x.
\end{align*}
Here, we also used the vanishing of the boundary terms due to $\partial_nz=\partial_nw_v=0$.

This yields the asserted relative energy inequality.
\end{proof}

\begin{proof}[Proof of Theorem~\ref{thm:wk-str.uniq.e-diss}.]	From now on we restrict to $d\le 5$, and  choose $\lambda>0$ large enough according to Lemma~\ref{l:aug-relen.coerc}. In view of $E_\rel=E_\dft+\mathcal{E}_\rel(u|v)$, Lemma~\ref{l:aug-relen.coerc} then yields
	\begin{align}\label{eq:Srel.coerc}
		S_\rel\gtrsim E_\dft+ \|z\|_{L^2(Y;H^1(\Omega))}^2+\|Z\|_{L^4(\Omega)}^4.
	\end{align}
Let $R:\mathcal V\to\mathcal V^*$ denote the Riesz isomorphism.
Observing the regularity $z\in H^1(0,T;L^2(Y;\mathcal{V}^*))$, and using~\eqref{eq:CE.diagE} and $\pa_tv=\divv(v\nabla w_v)$, we compute for a.e.\ $t\in(0,T)$
\begin{align*}
\frac12\frac{\dd}{\dd t}\|z\|^2_{L^2(Y;\mathcal{V}^*)}&=\int_Y\la\pa_tz,R^{-1}z\ra\,\dd\nu=\int_Y\int_\Om(j+v\nabla w_v)\cdot\nabla(R^{-1}z)\,\dd x\dd\nu
\\&=\int_Y\int_\Om(j+u\nabla w_v)\cdot\nabla(R^{-1}z)\,\dd x\dd\nu
-\int_Y\int_\Om z\nabla w_v\cdot\nabla(R^{-1}z)\,\dd x\dd\nu
\\&\le\delta \int_Y\int_\Om u\bigg|\frac{j}{u}+\nabla w_v\bigg|^2\,\dd x\dd\nu
+C_\delta\int_Y\int_\Om u|\nabla(R^{-1}z)|^2 \,\dd x\dd\nu+C(\|\nabla w_v\|_{L^\infty})\|z\|_{L^2_{x,y}}^2
\end{align*}
For the third term in the last line, we note that $\|z\|_{L^2_{x,y}}^2\lesssim S_\rel$ by~\eqref{eq:Srel.coerc}.
To handle the second term, we estimate, using $\int_\Om u(x,y)\,\dd x=1$ for $\nu$-a.e.\ $y$,
\begin{align*}
\int_Y\int_\Om u|\nabla(R^{-1}z)|^2 \,\dd x\dd\nu &\le\int_Y\bigg(\int_\Om u\,\dd x\|\nabla(R^{-1}z)\|^2_{L^\infty(\Om)}\bigg)\dd\nu
\\&=\int_Y\|\nabla(R^{-1}z)\|^2_{L^\infty(\Om)}\dd\nu\lesssim\int_Y\|R^{-1}z\|^2_{\mathcal{V}}\,\dd\nu=
\int_Y\|z\|^2_{\mathcal{V}^*}\,\dd\nu\lesssim S_\rel.
\end{align*}
Thus, in combination with Lemma~\ref{l:EDI.rel}, we infer for $\delta=\frac{1}{2\lambda}$,
	\begin{align*}
	S_\rel(t) +	\frac12\int_0^t\int_\Om\int_Yu\left|\frac{j}{u}+\nabla w_v\right|^2\,\dd\nu\dd x\dd s&\le S_\rel(0) + \sum_{m=1}^4 \int_0^t\mathrm{err}_m\,\dd s+C\int_0^tS_\rel(r)\,\dd r.
\end{align*}
For $\mathrm{err}_1$, we use the form~\eqref{eq:grad-err.v2} and the coercivity~\eqref{eq:Sdef<Srel.e} to infer
	\[|\mathrm{err}_1|\le C_v\|z\|_{L^2(Y;H^1(\Omega))}^2\le C_vS_\rel.\]
For $\mathrm{err}_2$, we have $|\mathrm{err}_2|\lesssim E_{\dft}(t)\lesssim S_\rel$ by property~\eqref{eq:def.e-diss.flux} and the coercivity~\eqref{eq:Srel.coerc}.

For $\mathrm{err}_3$, using~\eqref{eq:pot.exp} 
and the boundedness of $V$ and $\partial_tV$, we infer
\begin{align*}
	|\mathrm{err}_3|\lesssim\vartheta\int_\Om\big(|Z|^2+|Z|^3\big)\,\dd x\lesssim\vartheta \|Z\|_{L^2(\Om)}^2+\vartheta\|Z\|_{L^4(\Om)}^4\lesssim S_\rel,
\end{align*}
where we used~\eqref{eq:Sdef<Srel.e} in the last step.

For the potential contribution to $\mathrm{err}_4$, we rely on $d\le 5$. 
To slightly simplify the estimate, it is convenient to integrate by parts, using $\partial_nw_v=0$, and rewrite
\[\mathrm{err}_4=\int_\Om\int_Y\divv(z\nabla w_v)({\sf f}'(U)-{\sf f}'(V)+\mathsf Bz)\,\dd\nu\dd x.\]
By~\eqref{eq:pot.exp} we have the pointwise bound $|{\sf f}'(U)-{\sf f}'(V)|\lesssim_v\vartheta(1+U^2)|Z|$ so that, invoking
H\"older's inequality, the embeddings $H^1(\Omega)\hookrightarrow L^{10/3}(\Omega)$, $H^2(\Omega)\hookrightarrow L^{10}(\Omega)$, valid for $d\le5$, 
and the bound $\|U\|_{L^{10}(\Omega)}\lesssim \|u\|_{L^2(Y;L^{10}(\Omega))}$,
we obtain
	\begin{align*}
		\|{\sf f}'(U)-{\sf f}'(V)\|_{L^2(\Omega)}\lesssim\vartheta\big(1+\|U\|_{L^{10}(\Omega)}^2\big)\|Z\|_{L^{10/3}(\Omega)}\lesssim\vartheta\big(1+\|u\|_{L^2(Y;H^2(\Omega))}^2\big)\|z\|_{L^2(Y;H^1(\Omega))}.
	\end{align*}
Estimating further
\[\|\divv(z\nabla w_v)\|_{L^2(\Om\times Y)}\le C_v\|z\|_{L^2(Y;H^1(\Om))},\]
and $\|\mathsf Bz\|_{L^2(\Om\times Y)}\le\beta^*\|z\|_{L^2(\Om\times Y)}$, we conclude via Cauchy--Schwarz
\begin{align}\label{eq:err4.e.bound}
	|\mathrm{err}_4|\lesssim_v\big(1+\|u\|_{L^2(Y;H^2(\Omega))}^2\big)S_\rel.
\end{align}
	Here $\|u\|_{L^2(Y;H^2(\Omega))}^2\in L^1(0,T)$ by \eqref{eq:wsu.e.regularity}. 

Overall, we conclude, for a.e.\ $t\in(0,T)$,
\begin{align}\label{eq:stab.est.e}
	S_\rel(t)+\frac12\int_0^t\int_\Om\int_Yu\left|\frac{j}{u}+\nabla w_v\right|^2\dd\nu\dd x\dd s
\le S_\rel(0)+C_v\int_0^t\big(1+\|u\|_{L^2(Y;H^2(\Omega))}^2\big)S_\rel\,\dd s.
\end{align}
Due to $v(0,\cdot)=u_0$, we have $S_\rel(0)=\mathcal{E}_{\rel}(u_0|v(0,\cdot))+\frac\lambda2\|u_0-v(0,\cdot)\|_{L^2(Y;\mathcal{V}^*)}^2=0$. Hence, Gronwall's inequality yields $S_\rel=0$ a.e. By \eqref{eq:Srel.coerc}, this implies $u=v$ and $E_\dft=0$ a.e. Consequently, $j=j_u=j_v$, which completes the proof of  Theorem~\ref{thm:wk-str.uniq.e-diss}.
\end{proof}

\begin{remark}
	If 
    $\vartheta=0$, the stability estimate, with the potential contribution omitted from the coercivity estimates, remains true in every space dimension $d\in \mathbb{N}_+$. 
\end{remark}

\subsection{Dissipative solutions on \texorpdfstring{$X=\mathbb{R}^d$}{X=Rd}}\label{ssec:wsu.diag.diss}

Below, we let $v$ be as in Theorem~\ref{thm:wk-str.uniq}, abbreviate $V=\int_Yv\,\dd\nu$, $w_v=-\alpha\Delta v+{\sf f}'(V)+\mathsf Bv$, and write $z=u-v$, $Z=U-V$ for any dissipative solution $(u,j,E,H)$.

\paragraph{Generalised distance and coercivity}
We proceed similarly as in the previous subsection, but now take $S_\rel:=E_\rel+\lambda H_\rel$ as the augmented relative energy with $H_\rel$ denoting the relative entropy
\begin{align}
H_\rel(t):=H(t)-\int_X\int_Y(\log v+1)z\,\dd x\dd\nu-\mathcal{H}(v),\;t>0,  
\end{align}
where $\lambda\ge1$ will be fixed below. Here we again adopt the convention that $E(0):=\mathcal{E}(\mu_0), H(0):=\mathcal{H}(\mu_0)$, and we write $\mathcal{H}(v)$ for $\mathcal{H}(v\scrL^d\otimes\nu)$ etc.
We first establish an appropriate coercivity property of $S_\rel$. 
Using the coercivity~\eqref{eq:pot-rel.coerc} of the relative potential term,
the symmetry of $\mathsf B$, its operator norm bound $\|\mathsf B\|\le\beta^*$, and $\|Z\|_{L^2(X)}\le\|z\|_{L^2(X\times Y)}$, we have
\begin{align*}
	E_\rel&=E_\dft+\frac12\int_X\int_Y\alpha|\nabla z|^2\,\dd\nu\dd x
	+\int_X\big({\sf f}(U)-{\sf f}(V)-{\sf f}'(V)Z\big)\,\dd x
	+\frac12\int_X(\mathsf Bz,z)_{L^2(Y,\nu)}\,\dd x\\
	&\ge E_\dft+\frac{\alpha_*}{2}\|\nabla z\|_{L^2(X\times Y)}^2
	+\frac{\vartheta}{12}\|Z\|_{L^4(X)}^4
	-\bigg(\frac\vartheta8+\frac{\beta^*}{2}\bigg)\|z\|_{L^2(X\times Y)}^2.
\end{align*}
Invoking Gagliardo--Nirenberg and Young's inequality, applied fibrewise, and then the Csisz\'ar--Kullback--Pinsker inequality (observing that $u(\cdot,y)$ and $v(\cdot,y)$ have unit mass, which for $v(\cdot,y)$ follows from the equation), we can estimate for every $\varepsilon>0$,
\begin{align*}
\|z\|_{L^2(X\times Y)}^2&\le\varepsilon\|\nabla z\|_{L^2(X\times Y)}^2+C_\varepsilon\int_Y\|z(\cdot,y)\|_{L^1(X)}^2\,\dd\nu(y)
	\le\varepsilon\|\nabla z\|_{L^2(X\times Y)}^2+C_\varepsilon H_\rel.
\end{align*}
Choosing $\varepsilon$ sufficiently small and then $\lambda$ sufficiently large, we deduce
\begin{align}\label{eq:Sdef<Srel}
	E_\dft+H_\dft+\|z\|_{L^2(X\times Y)}^2+\|\nabla z\|_{L^2(X\times Y)}^2+\vartheta\|Z\|_{L^4(X)}^4\lesssim S_\rel.
\end{align}

\paragraph{Evolution of relative energy and entropy}

First, we note that the relative energy inequality in Lemma~\ref{l:EDI.rel} with $\Om$ replaced by $\mathbb{R}^d$ remains valid for dissipative solutions on $X=\mathbb{R}^d$ with analogous proof.
We next establish a corresponding inequality for the evolution of $H_\rel$.

\begin{lemma}[Relative entropy inequality]\label{lem:rel-entr}
	For a.e.\ $t\in(0,T)$, it holds
	\begin{align*}
		H_\rel(t)+\int_0^t\mathcal{D}_\rel(u;u-v)\,\dd s&\le H_\rel(0)+\sum_{m=1}^4\int_0^t\overline{\mathrm{err}}_m\,\dd s,
	\end{align*}
	where, for $z:=u-v$, $Z:=\int_Y(u-v)\,\dd\nu$,
	\begin{align*}
	\mathcal{D}_\rel(u;z)=\int_X\int_Y\alpha|\Delta z|^2\,\dd\nu\dd x
	+\int_X{\sf f}''_\cvx(U)|\nabla Z|^2\,\dd x
	\end{align*}
	and 
	\begin{align*}
		\overline{\mathrm{err}}_1&\coloneqq \int_X\int_Y\alpha\divv(z\nabla\log v)\Delta z\,\dd\nu\dd x,\\
		\overline{\mathrm{err}}_2&\coloneqq -\la j-j_u,\nabla\log v\ra,\\
	\overline{\mathrm{err}}_3&=-\int_X\big({\sf f}''_\ccv(U)|\nabla Z|^2+%
(\sff''(U)-\sff''(V))	\nabla V\cdot\nabla Z\big)\,\dd x
	-\int_X(\mathsf B\nabla z,\nabla z)_{L^2(Y)}\dd x,\\
\overline{\mathrm{err}}_4&:= \int_X\int_Y	z\nabla\log v\cdot\big({\sf f}''(U)\nabla U-{\sf f}''(V)\nabla V+\mathsf B\nabla z\big)\,\dd\nu\dd x.
	\end{align*}
\end{lemma}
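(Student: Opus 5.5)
We follow the proof of Lemma~\ref{l:EDI.rel} and argue formally; the approximation arguments needed to justify each step are standard. We write $H_\rel=H-H_{\mathrm{aff}}$ with the affine correction
\[H_{\mathrm{aff}}(t)\coloneqq\mathcal H(v(t))+\int_X\int_Y(\log v+1)z\,\dd\nu\dd x .\]
The first step is to compute its time derivative. By the chain rule for $\mathcal H(v)$, the continuity equation~\eqref{eq:CE.diagE} for $u$, and $\int_X\pa_t z\,\dd x=0$ fibrewise, we get
\[\frac{\dd}{\dd t}H_{\mathrm{aff}}=\la\log v,\pa_tv\ra+\la\pa_t\log v,z\ra+\la\log v,\pa_tz\ra=\la\pa_t\log v,z\ra+\la j,\nabla\log v\ra .\]
We then insert $\pa_t v=\divv(v\nabla w_v)$, which gives $\la\pa_t\log v,z\ra=\int_X\int_Y \frac zv\divv(v\nabla w_v)\,\dd\nu\dd x$. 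Integrating by parts, this becomes $-\int_X\int_Y v\nabla w_v\cdot\nabla(z/v)\,\dd\nu\dd x=-\int_X\int_Y\nabla w_v\cdot(\nabla z-z\nabla\log v)\,\dd\nu\dd x$.

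The second step is to expand $\nabla w_v$, which produces terms mirroring the constitutive flux. We write $-\int\nabla w_v\cdot\nabla z=\int w_v\Delta z$ and insert $w_v=-\alpha\Delta v+{\sf f}'(V)+\mathsf Bv$. The Laplacian part gives $-\int\alpha\Delta v\,\Delta z$. We split $\Delta v=\Delta u-\Delta z$; this yields $\int\alpha|\Delta z|^2$ together with the term $-\int\alpha\Delta u\,\Delta z$. The potential and repulsion parts give $-\int {\sf f}''(V)\nabla V\cdot\nabla Z-\int(\mathsf B\nabla v,\nabla z)$.

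The third step is to combine these with the entropy-dissipation inequality~\eqref{eq:entr.diss}. For the cross terms $\la j,\nabla\log v\ra$, we write $\la j,\nabla\log v\ra=\la j-j_u,\nabla\log v\ra+\la j_u,\nabla\log v\ra$; the first summand is $-\overline{\mathrm{err}}_2$. For the second summand we use \eqref{eq:flux.distrib} with $\xi=\nabla\log v$, exactly as $\xi=\nabla w_v$ was used in Lemma~\ref{l:EDI.rel}. This gives $\la j_u,\nabla\log v\ra=-\int u\nabla w_u\cdot\nabla\log v$ in the tensorial sense, and identities of the type $u\nabla\log v=\nabla u-z\nabla\log v-\nabla z+\dots$ are rewritten via $v\nabla\log v=\nabla v$. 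Collecting terms, the quadratic pieces then assemble into the following. The diagonal terms $\int\alpha|\Delta u|^2$, $\int{\sf f}''(U)|\nabla U|^2$ and $\int(\mathsf B\nabla u,\nabla u)$ from \eqref{eq:entr.diss} recombine with the cross terms to give $\int\alpha|\Delta z|^2+\int{\sf f}''_\cvx(U)|\nabla Z|^2$, i.e.\ $\mathcal D_\rel(u;z)$. The leftover concave, repulsion and mixed terms form $\overline{\mathrm{err}}_3$. The term $z\nabla\log v\cdot\nabla w_u$-type pieces give $\overline{\mathrm{err}}_1$ and $\overline{\mathrm{err}}_4$. In $\overline{\mathrm{err}}_1$ we write $\alpha\Delta z\,\divv(z\nabla\log v)$ after splitting $\alpha\Delta u=\alpha\Delta v+\alpha\Delta z$ and using $\divv(v\nabla\log v)=\Delta v$.

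Finally we integrate in time and subtract $H_{\mathrm{aff}}(t)-H_{\mathrm{aff}}(0)$ from~\eqref{eq:entr.diss}, using $H(t)\ge$ its defining bound and the convention $H(0)=\mathcal H(\mu_0)$. We expect the main obstacle to be this algebraic bookkeeping. Specifically, one must verify that every term in which the strong solution enters only through $\nabla\log v$ either cancels against $\int u\nabla w_v\cdot\nabla\log v$-type terms or lands exactly in $\overline{\mathrm{err}}_1,\overline{\mathrm{err}}_4$. One must also check that the mixed potential terms reorganise into $(\sff''(U)-\sff''(V))\nabla V\cdot\nabla Z$ plus ${\sf f}''_\ccv(U)|\nabla Z|^2$. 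Here one uses ${\sf f}''={\sf f}''_\cvx+{\sf f}''_\ccv$ and expands ${\sf f}''(U)\nabla U-{\sf f}''(V)\nabla V={\sf f}''(U)\nabla Z+({\sf f}''(U)-{\sf f}''(V))\nabla V$. The justification of the integrations by parts on $\mathbb R^d$ additionally uses the decay assumptions~\eqref{eq:strongsol.reg.diss} and $u\in L^2_{t,y}H^2_x$.
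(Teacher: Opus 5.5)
Your proposal follows the paper's proof essentially step for step, so the argument is correct. It uses the same affine correction $H_{\mathrm{aff}}$ (your extra $+1$ is harmless since $\int_X z\,\dd x=0$ fibrewise), the same identity $\frac{\dd}{\dd t}H_{\mathrm{aff}}=\la\pa_t\log v,z\ra+\la j,\nabla\log v\ra$, the constitutive flux formula tested with $\xi=\nabla\log v$, recombination with \eqref{eq:entr.diss}, and the same assignment of terms to $\mathcal{D}_\rel$ and $\overline{\mathrm{err}}_1,\dots,\overline{\mathrm{err}}_4$. The one slip is the sign in your auxiliary identity, which should read $u\nabla\log v=\nabla v+z\nabla\log v=\nabla u-\nabla z+z\nabla\log v$; this sign is exactly what pairs the $z\nabla\log v$-contributions of $\la\pa_t\log v,z\ra$ and $\la j_u,\nabla\log v\ra$ into the differences $\Delta z$, ${\sf f}''(U)\nabla U-{\sf f}''(V)\nabla V$ and $\mathsf B\nabla z$ in $\overline{\mathrm{err}}_1$ and $\overline{\mathrm{err}}_4$.
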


\begin{proof}
We first determine the evolution of the affine correction
\[H_{\mathrm{aff}}:=\mathcal H(v)
+\int_X\int_Y\log vz\,\dd\nu\dd x,
\qquad H_\rel=H-H_{\mathrm{aff}}.\]
Using the continuity equation~\eqref{eq:CE.diagE} for $u$, the chain rule for $\mathcal H(v)$ and fibrewise mass conservation so that $\int_Xz\,\dd x=0$ for $\scrL\otimes\nu$-a.e.\ $(t,y)$, we compute similarly as in~\eqref{eq:Eaff.evol}, for a.e.\ $0<t<T$,
\begin{align*}
	\frac{\dd}{\dd t}H_{\mathrm{aff}}=\int_X\int_Yz\partial_t\log v\,\dd\nu\dd x+\la j,\nabla\log v\ra.
\end{align*}
Here, the choice $\phi=\log v$ in~\eqref{eq:CE.diagE} on any time interval $(0,T'), T'<T$, can be justified by approximating $\log v$ by  $\rho_R\log v$, where $0\le\rho_R\le 1$, $R\gg1$, is a smooth space-time cut-off satisfying $\rho_R(t,x)=1$ for $0\le t\le T{-}\frac1R$, $|x|\le R$, $\supp\rho_R\subset [0,T)\times B_{2R}^X(0)$, and $\|\nabla_x\rho_R\|_{L^\infty}\lesssim\frac1R$.

Using $\partial_t\log v=\Delta w_v+\nabla\log v\cdot\nabla w_v$ and integrating by parts, we obtain
\begin{align*}
	\int_X\int_Y z\partial_t\log v\,\dd\nu\dd x
	&=-\int_X\int_Y\alpha\Delta v
	\bigl(\Delta z-\divv(z\nabla\log v)\bigr)\,\dd\nu\dd x\\
	&\quad-\int_X\int_Y
	(\nabla z-z\nabla\log v)\cdot
	\bigl({\sf f}''(V)\nabla V+\mathsf B\nabla v\bigr)
	\,\dd\nu\dd x.
\end{align*}
On the other hand, the constitutive flux identity with $\xi=\nabla\log v$, combined with
$u\nabla\log v=\nabla v+z\nabla\log v$, yields
\begin{align*}
	\la j_u,\nabla\log v\ra&=-\int_X\int_Y\alpha\Delta u\big(\Delta v+\divv(z\nabla\log v)\big)\,\dd\nu\dd x\\
	&\quad-\int_X\int_Y(\nabla v+z\nabla\log v)\cdot\big({\sf f}''(U)\nabla U+\mathsf B\nabla u\big)	\,\dd\nu\dd x.
\end{align*}
Since $\la j,\nabla\log v\ra=\la j_u,\nabla\log v\ra-\overline{\mathrm{err}}_2$, adding these identities and collecting terms, using the symmetry of $\mathsf B$, we obtain
\begin{align*}
	\frac{\dd}{\dd t}H_{\mathrm{aff}}&=-\int_X\int_Y
	\alpha\big(2\Delta u\Delta v-|\Delta v|^2\big)
	\,\dd\nu\dd x-\int_X\big(
	{\sf f}''(U)\nabla U\cdot\nabla V
	+{\sf f}''(V)\nabla V\cdot\nabla Z\big)\,\dd x\\
	&\quad-\int_X(\mathsf B\nabla v,2\nabla u-\nabla v)_{L^2(Y,\nu)}\,\dd x	-\sum_{m\in\{1,2,4\}}\overline{\mathrm{err}}_m.
\end{align*}
Combining this with the entropy-dissipation inequality~\eqref{eq:entr.diss}, rearranging terms, and splitting
\begin{align*}
	\big({\sf f}''(U)\nabla U-{\sf f}''(V)\nabla V\big)\cdot\nabla Z
	&=\sff''(U)|\nabla Z|^2+(\sff''(U)-\sff''(V))\nabla V\cdot\nabla Z,
\end{align*}
we infer the asserted  relative entropy inequality.
\end{proof}

\paragraph{Weak--strong stability estimate for dissipative solutions}

With the above bounds, we are now in a position to prove the weak--strong uniqueness principle.
\begin{proof}[Proof of Theorem~\ref{thm:wk-str.uniq}.]
By the  coercivity~\eqref{eq:Sdef<Srel}, it suffices to show, for a.e.\ $t\in(0,T)$,
	\begin{align}\label{eq:stab.est}
		S_\rel(s)\Big|_{s=0}^{s=t}\lesssim_v\int_0^tS_\rel(s)\,\dd s.
	\end{align}	
	Indeed, once~\eqref{eq:stab.est} is established, Gronwall's inequality and the fact that $S_\rel(0)=\mathcal{E}_{\rel}(u_0|v(0,\cdot))+\lambda\mathcal{H}_{\rel}(u_0|v(0,\cdot))=0$ imply $S_\rel=0$ a.e.\ in $(0,T)$. By~\eqref{eq:Sdef<Srel}, this implies $u=v$ and $E_\dft=H_\dft=0$. The flux-defect estimate~\eqref{eq:mv-flux.control} then yields $j=j_u$.
	
	To establish~\eqref{eq:stab.est}, we add the relative energy inequality (cf.~Lemma~\ref{l:EDI.rel} with $\Om$ replaced by $\mathbb{R}^d$) and the inequality for $\lambda H_\rel$ from Lemma~\ref{lem:rel-entr}, to infer
	\begin{align}\label{eq:Srel_errors}
		S_\rel(s)\Big|_{s=0}^{s=t}
		&+\int_0^t\int_X\int_Y
		u\bigg|\frac ju+\nabla w_v\bigg|^2\,\dd\nu\dd x\dd s
		+\lambda\int_0^t\mathcal D_\rel(u;z)\,\dd s\le\sum_{m=1}^4\int_0^t
		\big(\mathrm{err}_m+\lambda\overline{\mathrm{err}}_m\big)\,\dd s.
	\end{align}
By the flux-defect estimate~\eqref{eq:mv-flux.control} and the coercivity~\eqref{eq:Sdef<Srel}, we have
	\[\bigg|\int_0^t\big(\mathrm{err}_2+\lambda\overline{\mathrm{err}}_2\big)\,\dd s\bigg|
	\lesssim_v\int_0^t(E_\dft+H_\dft)\,\dd s\lesssim\int_0^tS_\rel\,\dd s.\]
Next, we  estimate, using the bound $\|\divv(z\xi)\|_{L^2(X\times Y)}\lesssim_vS_\rel^{1/2}$ for $\xi\in\{\nabla w_v,\nabla\log v\}$, 
	\[|\mathrm{err}_1|+\lambda|\overline{\mathrm{err}}_1|\le\frac{\lambda}{2}\int_X\int_Y\alpha|\Delta z|^2\,\dd\nu\dd x+C_vS_\rel.\]	
	For the lower-order terms, we use the boundedness of $V$, $\nabla V$, and $\partial_tV$, which follows from the regularity hypotheses on the strong solution. Let
	\[\mathcal K\coloneqq\int_XU^2|\nabla Z|^2\,\dd x,\qquad q\coloneqq\int_Y|z|\,\dd\nu.	\]
	By the entropy estimate, $U\nabla U\in L^2((0,T)\times X)$, so $\mathcal K\in L^1(0,T)$. Moreover, by the non-negativity and $\nu(Y)=1$, it holds that $q\le U+V$ and $q^2\le\int_Y|z|^2\,\dd\nu$.
	Splitting into $\{U\le2\|V\|_\infty+1\}$ and its complement, where $Z=U-V>\frac{U}{2}$, we deduce
	\begin{align}\label{eq:wsu.weighted-q}
		(1+U^2)q^2\le C_v\bigg(\int_Y|z|^2\,\dd\nu+|Z|^4\bigg).
	\end{align}
To estimate the terms involving $\sff''(U)-\sff''(V)$, we use identity~\eqref{eq:pot.aux}.
We then obtain, for every $\delta>0$,
	\begin{align*}
		|\mathrm{err}_4|+\lambda|\overline{\mathrm{err}}_4|
		&\le C_v\int_X\bigg((1+U^2)q|\nabla Z|+(1+U)|Z|q\bigg)\,\dd x
		+C_v\beta^*\|z\|_{L^2(X\times Y)}\|\nabla z\|_{L^2(X\times Y)}\\
		&\le\delta\mathcal K+C_{\delta,v}S_\rel,
	\end{align*}
where the last step follows from Young's inequality,~\eqref{eq:wsu.weighted-q}, and~\eqref{eq:Sdef<Srel}.
	
	For $\mathrm{err}_3$, the expansion~\eqref{eq:pot.exp} yields
	\[|\mathrm{err}_3|\le C_v\int_X\big(|Z|^2+|Z|^4\big)\,\dd x\le C_vS_\rel.\]
	Finally, using ${\sf f}''_\ccv=-\vartheta$,  identity~\eqref{eq:pot.aux}, and 
	$\|\mathsf B\|\le\beta^*$, we infer, for every $\delta>0$,
	\begin{align*}
		\lambda|\overline{\mathrm{err}}_3|&\le C_v\bigg(\|\nabla z\|_{L^2(X\times Y)}^2
		+\int_X(1+U)|Z||\nabla Z|\,\dd x\bigg)\le\delta\mathcal K+C_{\delta,v}S_\rel.
	\end{align*}
	
We insert these estimates into~\eqref{eq:Srel_errors}, choose $\delta=\lambda\vartheta/4$ and use ${\sf f}''_\cvx(U)\ge\frac32 \vartheta U^2$ to infer $2\delta\mathcal K\le\frac{\lambda}{2}\int_X{\sf f}''_\cvx(U)|\nabla Z|^2\,\dd x$, to obtain, after absorption, the bound
\begin{align*}
	S_\rel(t)&+\int_0^t\int_X\int_Yu\bigg|\frac ju+\nabla w_v\bigg|^2\,\dd\nu\dd x\dd s
	+\frac{\lambda}{2}\int_0^t\mathcal D_\rel(u;z)\,\dd s\le S_\rel(0)+C_v\int_0^tS_\rel\,\dd s.
\end{align*}
This shows~\eqref{eq:stab.est} and thus concludes the proof.	
\end{proof}

\section*{Acknowledgements}
The authors thank Jos\'e A.\ Carrillo and Markus Mittnenzweig for several stimulating discussions on the modelling.
\appendix

\section{Appendix}

\subsection{A short review of fibred optimal transport}\label{ssec:Notation}
In this section, we collect some selected results from optimal transport theory that will be used throughout the paper. In particular, a central ingredient is the so-called 
\emph{fibred Wasserstein distance} introduced in \cite{peszek2023heterogeneous},
see also \cite{CHSW2025CWDA}, where the name \emph{conditional Wasserstein distance} is used.

\begin{theorem}[{Properties of $\calW_{2,\nu}$, \cite[Sec. 3]{peszek2023heterogeneous}}]Let $\nu \in \calP(Y)$ be any probability
measure.
\begin{enumerate}[label=(\roman*)]
\item $\calW_{2,\nu}$ is a well-defined, finite distance on $\calP_{2,\nu}(X{\times} Y)$. 
\item The fibred space $(\calP_{2,\nu}(X{\times}Y), \calW_{2,\nu})$ is a Polish space.
\item For all $\mu_1,\mu_2\in\calP_{2,\nu}(X{\times}Y)$, there exists a measurable family $\{\gamma^y\}_{y\in Y} \subset \calP(X{\times} X)$ such that $\gamma^y$ is an optimal transport plan for $\mu_1^y$ and $\mu_2^y$ for $\nu$-a.e.\ $y\in Y$.
\item For $\mu_n,\wt \mu_n \in \calP_{2,\nu}(X{\times}Y)$ 
with $\mu_n\weakstar \mu$ and $\wt\mu_n \weakstar \wt\mu$ narrowly in $\calP_{2,\nu}(X{\times}Y)$, we have that
\[
\liminf_{n\to \infty} \calW_{2,\nu}(\mu_n,\wt \mu_n) \geq \calW_{2,\nu}(\mu,\wt\mu).
\]
\end{enumerate}
\end{theorem}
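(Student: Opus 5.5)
The plan is to reduce every item to standard facts about $\mathcal W_{2,X}$ on $\calP_2(X)$, applied fibrewise, together with a measurable selection argument for the fibre variable in the spirit of Lemmas~\ref{l:Kant-pot.meas} and~\ref{l:dyn.plan.meas}. I would prove (iii) first, since it is used in (i) and (iv).

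\emph{Step (iii).} Fix Borel representatives $y\mapsto\mu_1^y,\mu_2^y$ of the disintegrations, extended to all of $Y$. The set
\[\Gamma_{\rm opt}:=\{(\sigma_1,\sigma_2,\gamma)\in\calP_2(X)^2\times\calP(X\times X):\ \gamma\in\Gamma(\sigma_1,\sigma_2) \text{ optimal}\}\]
is Borel (closed for narrow convergence when $X$ is compact; in general it is Borel by lower semicontinuity of the cost together with continuity of $\mathcal W_{2,X}$ in $\calP_2$). Its sections are non-empty and compact. A Borel selection theorem (e.g.\ \cite[Theorem~6.9.6]{Bogachev_2007}, or directly \cite[Corollary~5.22]{Villani_2009}) composed with $y\mapsto(\mu_1^y,\mu_2^y)$ then gives the measurable family $\{\gamma^y\}$.

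\emph{Step (i).} The map $y\mapsto\mathcal W_{2,X}^2(\mu_1^y,\mu_2^y)$ is Borel, being a composition of a Borel map with a lower semicontinuous one. It is finite and integrable because
\[\mathcal W_{2,X}^2(\mu_1^y,\mu_2^y)\le 2\int|x|^2\,\dd\mu_1^y+2\int|x|^2\,\dd\mu_2^y,\]
and the right-hand side is $\nu$-integrable by the definition of $\calP_{2,\nu}$. Independence of the chosen disintegration follows from $\nu$-a.e.\ uniqueness. Symmetry is immediate. The triangle inequality follows by applying the fibrewise triangle inequality and then Minkowski's inequality in $L^2(Y,\nu)$. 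For definiteness, $\mathcal W_{2,\nu}(\mu_1,\mu_2)=0$ forces $\mu_1^y=\mu_2^y$ for $\nu$-a.e.\ $y$, hence $\mu_1=\mu_2$.

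\emph{Step (ii).} This is the main obstacle. The key identification is that $\mu\mapsto(y\mapsto\mu^y)$ is an isometry onto the space $L^2(Y,\nu;\calP_2(X))$ of ($\nu$-classes of) Borel maps into the Polish metric space $(\calP_2(X),\mathcal W_{2,X})$ with square-integrable distance to a fixed point. Lebesgue–Bochner-type spaces $L^2(Y;M)$ with values in a Polish metric space $M$ are complete and separable:
\begin{itemize}
\item[] \emph{Completeness:} the usual Riesz–Fischer argument extracts a subsequence converging $\nu$-a.e.\ in $M$ and then uses Fatou's lemma.
\item[] \emph{Separability:} simple functions with values in a countable dense subset of $M$, on sets from a countable generating algebra of $\calB(Y)$, form a countable dense set.
\end{itemize}
Surjectivity onto $\calP_{2,\nu}$ holds because any Borel family $y\mapsto\sigma^y$ defines $\int_Y\sigma^y\,\dd\nu\in\calP_{2,\nu}(X\times Y)$.

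\emph{Step (iv).} Take a subsequence realising the $\liminf$. Let $\gamma_n\in\calP(X\times X\times Y)$ be built from the optimal families of (iii), so that
\[\mathcal W_{2,\nu}^2(\mu_n,\wt\mu_n)=\int|x-x'|^2\,\dd\gamma_n.\]
The marginals of $\gamma_n$ are tight, so $\gamma_n\to\gamma$ narrowly along a further subsequence. The limit $\gamma$ has $(x,y)$- and $(x',y)$-marginals $\mu$ and $\wt\mu$ and $Y$-marginal $\nu$, hence it is a fibred coupling. Disintegrating $\gamma$ over $\nu$ gives couplings $\gamma^y\in\Gamma(\mu^y,\wt\mu^y)$. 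Lower semicontinuity of $\int|x-x'|^2\,\dd\gamma$ under narrow convergence (the integrand is nonnegative and continuous) then gives
\[\liminf_{n}\mathcal W^2_{2,\nu}(\mu_n,\wt\mu_n)\ \ge\ \int_Y\!\int|x-x'|^2\,\dd\gamma^y\,\dd\nu\ \ge\ \mathcal W_{2,\nu}^2(\mu,\wt\mu),\]
which proves (iv).
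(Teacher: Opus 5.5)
Your proposal is correct. The paper gives no proof of this theorem: it quotes the four properties from \cite[Sec.~3]{peszek2023heterogeneous}. So there is no in-paper argument to match, and what you have written is a self-contained reconstruction of the cited result.

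\textbf{Consistency with the paper.} Your ingredients match how the paper treats related issues elsewhere.
\begin{itemize}
\item Step (iii) is the measurable-selection mechanism of Lemmas~\ref{l:Kant-pot.meas} and~\ref{l:dyn.plan.meas}: a Borel (in fact closed) set with non-empty compact sections, followed by \cite[Theorem~6.9.6]{Bogachev_2007} or directly \cite[Corollary~5.22]{Villani_2009}.
\item Step (iv) works with couplings on $X\times X\times Y$. These are the $\nu$-admissible plans appearing in Proposition~\ref{prop:time-derivative_metric}. This choice is what makes the fibre constraint $y=y'$ survive the narrow limit; after disintegrating the limit coupling over $\nu$, the argument is the classical lower-semicontinuity one.
\item Step (ii) identifies $(\calP_{2,\nu}(X{\times}Y),\calW_{2,\nu})$ isometrically with the metric-valued Lebesgue space $L^2(Y,\nu;(\calP_2(X),\calW_{2,X}))$. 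This gives completeness and separability from the generic Riesz--Fischer and simple-function arguments, with no optimal-transport input. It is a clean route to Polishness and independent of (iii) and (iv).
\end{itemize}

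\textbf{One point to make explicit.} The disintegration $y\mapsto\mu^y$ is a priori only Borel into $\calP(X)$ with the narrow topology. You use it as a Borel map into $(\calP_2(X),\calW_{2,X})$ in two places:
\begin{itemize}
\item in (ii), to regard it as an element of $L^2(Y;\calP_2(X))$;
\item in (iii), when you compose it with a selection defined on $\calP_2(X)^2$.
\end{itemize}
This requires that on $\calP_2(X)$ the Borel $\sigma$-algebras of the narrow and $\calW_{2,X}$ topologies coincide. This follows, for instance, from the Lusin--Souslin theorem applied to the continuous injection of $(\calP_2(X),\calW_{2,X})$ into $\calP(X)$ with the narrow topology. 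You should also redefine $\mu^y$ on the Borel $\nu$-null set where its second moment is infinite.

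\textbf{A small inaccuracy.} Your argument for (i) does not actually use (iii). Measurability there comes from lower semicontinuity, and integrability from the product coupling; (iii) is only needed in (iv).
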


Similar to the classical Wasserstein case in \cite[Sect. 8.4]{AGS}, 
we can characterize the tangent space 
in $\mu\in\calP_{2,\nu}(X{\times Y})$.
The latter is defined as the closure of smooth gradient fields with respect to $L^2(X{\times} Y;\mu)$
norm, viz.
\begin{equation}
\mathrm{Tan}_\mu(\calP_{2,\nu}(X{\times}Y))
\coloneqq  \overline{\big\{\nablax\varphi|_{X\times Y}\,|\,\varphi\in C^\infty_c(\R^d{\times} \R^{d'})\big\}}^{L^2((X{\times}Y);\mu)}.
\end{equation}
The following results are proven in \cite[Propositions 3.20 and 3.21]{peszek2023heterogeneous}
\begin{proposition}\label{prop:contEqn}
    Let $\nu\in\calP(Y)$.
    \begin{enumerate}[label=(\roman*) ]
        \item  For $\mu\in\calP_{2,\nu}(X{\times}Y)$, it holds 
        \[
        \mathrm{Tan}_\mu(\calP_{2,\nu}(X{\times}Y)) = \big\{ v \in L^2(X{\times} Y;\mu)^d\,|\, v(\cdot ,y) \in\mathrm{Tan}_{\mu^y} (\calP_2(X)) \text{ for } \nu\text{-a.e.\ }y\in Y\big\}.
        \]
        \item For $\mu\in \mathrm{AC}^p(0,T;\calP_{2,\nu}(X{\times}Y))$, $1\leq p\leq \infty$, $T>0$, there exists a Borel family of
        vector fields $v_t \in \mathrm{Tan}_{\mu_t}(\calP_{2,\nu}(X{\times}Y))$ with $\int_0^T\|v_t \|^p_{L^2(X{\times} Y;\mu_t)} \dd t < \infty$ 
        such that $(\mu_t , v_t )$ satisfies the $x$-continuity equation
\begin{equation}\label{eq:contEqn}
\partial_t \mu_t +\divv_x  (v_t\mu_t)=0,\quad t\in(0,T),
\end{equation}
in the distributional sense. Moreover, we have that
$\|v_t\|_{L^2(X{\times} Y;\mu_t)}\leq |\mu'|_{\calW_{2,\nu}}(t)$ for a.e.\ $t\in [0,T]$, where $|\mu'|_{\calW_{2,\nu}}$ denotes the metric derivative of $\mu_t$ with respect  to $\calW_{2,\nu}$.
\item If $\mu_t$ is a Borel family of probability measures in $\calP_{2,\nu}(X{\times}Y)$, $v_t\in L^2(X{\times} Y;\mu_t)^d$ is a Borel family of velocity fields with $\int_0^T\|v_t\|^p_{L^2(X{\times} Y;\mu_t)}\dd t<\infty$, 
and both satisfy \eqref{eq:contEqn}, then $\mu\in \mathrm{AC}^p
(0, T ;\calP_{2,\nu}(X\times Y ))$ and 
$|\mu'|_{\calW_{2,\nu}} (t) \leq \|v_t \|_{L^2(X{\times} Y;\mu_t)}$  
for a.e.\ $t \in (0, T )$.

    \end{enumerate}
\end{proposition}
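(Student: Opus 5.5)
The plan is to reduce each of the three assertions to the classical non-fibred statements of \cite[Sect.~8.3--8.4]{AGS}, applied either on the product space $X\times Y$ or fibrewise in $y$. The fibred structure enters through two facts. First, couplings that keep $y$ fixed are exactly the admissible couplings for $\calW_{2,\nu}$. Second, the test gradients $\nablax\varphi$ contain no $y$-derivative.

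For (i), the inclusion ``$\subseteq$'' follows by disintegration. If $\nablax\varphi_n\to v$ in $L^2(X\times Y;\mu)$, then after passing to a subsequence, $\nablax\varphi_n(\cdot,y)\to v(\cdot,y)$ in $L^2(\mu^y)$ for $\nu$-a.e.\ $y$ by Fubini. Since each $\varphi_n(\cdot,y)\in C_c^\infty(\R^d)$, this gives $v(\cdot,y)\in\mathrm{Tan}_{\mu^y}(\calP_2(X))$.

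For ``$\supseteq$'', I would fix $v$ with $v(\cdot,y)\in\mathrm{Tan}_{\mu^y}$ for a.e.\ $y$ and argue in three steps.
\begin{enumerate}
\item Fix a countable family $\{\psi_i\}\subset C_c^\infty(\R^d)$ that is dense in $C^1$. For every $\varepsilon>0$ and a.e.\ $y$, some index $i$ satisfies $\|\nablax\psi_i-v(\cdot,y)\|_{L^2(\mu^y)}<\varepsilon$. Choosing the smallest such index gives a Borel map $y\mapsto i_\varepsilon(y)$, because $y\mapsto\|\nablax\psi_i-v(\cdot,y)\|_{L^2(\mu^y)}$ is Borel.
\item Truncating to finitely many indices at small cost in $L^2(\mu)$, $v$ is approximated by fields of the form $\sum_{i\le N}\mathbf 1_{A_i}(y)\nablax\psi_i(x)$ with Borel sets $A_i$.
\item Approximate each $\mathbf 1_{A_i}$ in $L^2(\nu)$ by some $g_i\in C_c^\infty(\R^{d'})$. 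The resulting error is at most $\|\nablax\psi_i\|_\infty\|\mathbf 1_{A_i}-g_i\|_{L^2(\nu)}$, and $\sum_i g_i(y)\psi_i(x)$ is an admissible test function.
\end{enumerate}

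For (ii), I would follow the duality proof of \cite[Thm.~8.3.1]{AGS}. Take $\varphi\in C_c^\infty(\R^d\times\R^{d'})$ and a measurable family of fibrewise optimal plans between $\mu_s$ and $\mu_t$, which exists by part (iii) of the preceding theorem. A first-order expansion in $x$ along these plans, followed by Cauchy--Schwarz, gives
\[
\Big|\int\varphi\,\dd\mu_t-\int\varphi\,\dd\mu_s\Big|\le\int_s^t\|\nablax\varphi\|_{L^2(\mu_r)}|\mu'|_{\calW_{2,\nu}}(r)\,\dd r .
\]
This shows that $t\mapsto\int\varphi\,\dd\mu_t$ is absolutely continuous. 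Consequently, the linear functional $\eta\nablax\varphi\mapsto-\int_0^T\eta'(t)\int\varphi\,\dd\mu_t\,\dd t$, for $\eta\in C_c^\infty(0,T)$, is bounded by $\|\eta\nablax\varphi\|_{L^2(\mu_t\dd t)}$ weighted by $|\mu'|$. It therefore extends to the closure of space--time $x$-gradients. Riesz representation in $L^2(\mu_t\dd t)$ then yields a Borel family $v_t\in\mathrm{Tan}_{\mu_t}$ solving \eqref{eq:contEqn}, with $\|v_t\|_{L^2(\mu_t)}\le|\mu'|(t)$ for a.e.\ $t$; this step proceeds verbatim as in \cite{AGS}.

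For (iii), I would lift the problem to $X\times Y$ with the velocity field $(v_t,0)$. The superposition principle \cite[Thm.~8.2.1]{AGS} applies once $\int_0^T\|v_t\|_{L^2(\mu_t)}\,\dd t<\infty$ holds, which is where the $L^p$ hypothesis, via H\"older, is used. It gives a measure $\eta$ on curves $(\gamma,y)$ with $y$ constant and $\dot\gamma_r=v_r(\gamma_r,y)$. The plan $(e_s,e_t)_\#\eta$ is then a fibred coupling of $\mu_s$ and $\mu_t$, so that
\[
\calW_{2,\nu}^2(\mu_s,\mu_t)\le\int\Big|\int_s^t|v_r(\gamma_r,y)|\,\dd r\Big|^2\dd\eta\le\Big(\int_s^t\|v_r\|_{L^2(\mu_r)}\,\dd r\Big)^2
\]
by Minkowski's integral inequality. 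This gives both the $\mathrm{AC}^p$ property and the bound $|\mu'|\le\|v_t\|$.

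The main obstacle I expect is measurability in (i): producing a jointly Borel approximating gradient when the fibres $\mu^y$ are arbitrary. The countable dense family with least-index selection is the first thing I would try. A further technical point is that the superposition principle needs a finite second-moment or integrability condition on $X\times Y$; this follows here from $\mu_t\in\calP_{2,\nu}$ together with the compact support of $\nu$ assumed in \eqref{eq:assNuCompSupp}.
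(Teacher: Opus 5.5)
Your proposal is correct, but it does something the paper does not do. The paper gives no argument for this proposition; it imports (i)--(iii) from \cite[Propositions~3.20 and~3.21]{peszek2023heterogeneous}. You instead transplant the classical proofs of \cite[Sect.~8.2--8.4]{AGS} to the product space $X\times Y$. The fibred structure then enters only through $y$-preserving couplings and through the absence of $y$-derivatives in the test gradients. This gives a self-contained argument:
\begin{enumerate}
\item In (i), the countable $C^1$-dense family with a least-index Borel selection, followed by $L^2(\nu)$-approximation of the indicators $\mathbf 1_{A_i}$ by smooth $g_i$, resolves the measurability issue you were concerned about.
\item In (ii), the Taylor expansion along measurable fibrewise optimal plans yields $|\tfrac{\dd}{\dd t}\int\varphi\,\dd\mu_t|\le\|\nablax\varphi\|_{L^2(\mu_t)}|\mu'|(t)$ for a.e.\ $t$. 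The reason is that the remainder is $O(\calW_{2,\nu}^2(\mu_s,\mu_t))=o(t-s)$ at Lebesgue points of $|\mu'|$.
\item In (iii), lifting to the velocity $(v_t,0)$ on $\R^{d+d'}$ produces $\nu$-admissible couplings directly from the superposition measure. This spares you the localisation of the continuity equation in $y$, and the fibrewise bookkeeping, that a fibre-by-fibre reduction to the classical results would need.
\end{enumerate}
The citation buys brevity; your version shows that nothing beyond the classical theory is needed.

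A few points should be tightened, though none is a real gap.
\begin{enumerate}
\item In (iii), \cite[Thm.~8.2.1]{AGS} assumes $\int_0^T\!\int|v_t|^q\,\dd\mu_t\,\dd t<\infty$ for some $q>1$. For $p>1$ this follows from your hypothesis by Jensen and H\"older. For $p=1$ you need either the $L^1$ version \cite[Theorem~3.2]{ambrosio2008transport} or a time reparametrisation that makes $\|v_t\|_{L^2(\mu_t)}$ bounded.
\item Before applying superposition you should pass to the narrowly continuous representative of $t\mapsto\mu_t$. Superposition needs no moment condition, so the compact support of $\nu$ is not needed; the proposition does not assume it in any case.
\item In (ii), for $p<2$ the functional is not bounded on $L^2(\mu_t\,\dd t)$. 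Either reparametrise by arc length so that $|\mu'|\in L^\infty$, as \cite{AGS} do, or work in $L^2(|\mu'|\,\mu_t\,\dd t)$.
\item The pointwise bound $\|v_t\|_{L^2(\mu_t)}\le|\mu'|(t)$ then follows by testing the representation with $\chi_J(t)v$ for arbitrary intervals $J$; this field stays in the closure of $x$-gradients.
\end{enumerate}
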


The differentiability of $\calW_{2,\nu}$ along absolutely continuous curves follows from the classical differentiability 
result for the Wasserstein distance $\calW_2$ on $\calP_2(X)$
and is proven in \cite[Proposition 3.22]{peszek2023heterogeneous}.

\begin{proposition}\label{prop:time-derivative_metric}
    Let  $\mu \in \mathrm{AC}(0,T;\calP_{2,\nu}(X{\times}Y))$,  $\sigma\in\calP_{2,\nu}(X{\times}Y)$, 
    and $v_t\in\mathrm{Tan}_{\mu_t}(\calP_{2,\nu}(X{\times}Y))$
 be the vector field tangent to $\mu$ characterized by Proposition~\ref{prop:contEqn}(ii).  Then
\[
\frac{\mathrm{d}}{\mathrm{d}t}\frac{1}{2}\calW_{2,\nu}^2(\mu_t,\sigma)
= \int_{(X{\times}Y)^2} (x-x')\cdot v_t(x,y)\,\dd\gamma_t(x,x',y,y')
\]
for a.e.\  $t\in[0,T]$ and any $\gamma_t\in\Gamma_{o,\nu}(\mu_t,\sigma)$, an optimal $\nu$-admissible transport plan for $\calW_{2,\nu}$.    
\end{proposition}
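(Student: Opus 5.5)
The plan is to follow the classical proof for the Wasserstein space (cf.\ \cite[Theorem~8.4.7]{AGS}), working fibrewise and then integrating over $\nu$.

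\textbf{Fibrewise differentiation.} Fix a Borel family $y\mapsto\gamma^y$ of optimal plans between $\mu_t^y$ and $\sigma^y$. By Proposition~\ref{prop:contEqn}(i), for a.e.\ $t$ the tangent field $v_t$ satisfies $v_t(\cdot,y)\in\mathrm{Tan}_{\mu_t^y}(\calP_2(X))$ for $\nu$-a.e.\ $y$. I would disintegrate the continuity equation~\eqref{eq:contEqn}: test it with $\psi(y)\phi(t,x)$ and localise in $y$ over a countable dense family of $\phi$. This gives that, for $\nu$-a.e.\ $y$, the pair $(\mu_t^y,v_t(\cdot,y))$ solves the continuity equation on $X$. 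Moreover $\int_0^T\|v_t(\cdot,y)\|_{L^2(\mu_t^y)}\,\dd t<\infty$ for $\nu$-a.e.\ $y$, by Fubini and $\int_0^T\|v_t\|_{L^2(\mu_t)}\dd t<\infty$. Hence each fibre curve $t\mapsto\mu_t^y$ is absolutely continuous in $\calP_2(X)$. The classical result \cite[Theorem~8.4.7]{AGS} then gives, for a.e.\ $t$ (with the null set depending on $y$),
\[
\frac{\dd}{\dd t}\tfrac12\calW_2^2(\mu_t^y,\sigma^y)=\int_{X\times X}(x-x')\cdot v_t(x,y)\,\dd\gamma^y_t(x,x').
\]

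\textbf{Integration over $\nu$.} Next I would show that $g(t)=\frac12\calW_{2,\nu}^2(\mu_t,\sigma)=\int_Y g_y(t)\,\dd\nu$ is absolutely continuous, where $g_y(t)=\frac12\calW_2^2(\mu_t^y,\sigma^y)$. The fibre functions satisfy
\[
|g_y(t)-g_y(s)|\le\tfrac12\calW_2(\mu_t^y,\mu_s^y)\big(\calW_2(\mu_t^y,\sigma^y)+\calW_2(\mu_s^y,\sigma^y)\big),
\]
and $\calW_2(\mu_t^y,\mu_s^y)\le\int_s^t\|v_r(\cdot,y)\|_{L^2(\mu_r^y)}\,\dd r$. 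Cauchy--Schwarz in $y$, together with a uniform bound on the second moments along the curve, then yields $|g(t)-g(s)|\lesssim\int_s^t\|v_r\|_{L^2(\mu_r)}\,\dd r$. To differentiate under the integral, I would write $g(t)-g(s)=\int_Y\int_s^t g_y'(r)\,\dd r\,\dd\nu$ and apply Fubini. This requires integrability of $(r,y)\mapsto g_y'(r)$, which follows from $|g_y'|\le\|v_r(\cdot,y)\|_{L^2(\mu_r^y)}\calW_2(\mu_r^y,\sigma^y)$. Inserting the fibre formula and using that $\gamma_t=\int\gamma_t^y\otimes\delta_{(y,y)}\,\dd\nu$ is an optimal $\nu$-admissible plan gives the stated formula for a.e.\ $t$.

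\textbf{Independence of the plan.} I would then argue that the formula holds for \emph{any} $\gamma_t\in\Gamma_{o,\nu}(\mu_t,\sigma)$, since every such plan disintegrates into fibrewise optimal plans. For each fibre, \cite[Theorem~8.4.7]{AGS} already gives the value for every optimal plan, because $v_t(\cdot,y)$ is tangent. The remaining issue is that the exceptional time set from the fibre argument must be chosen independently of the plan. This works because the value of $g'(t)$ is determined first, and the right-hand side takes that same value for every optimal plan whenever the fibrewise identity holds for $\nu$-a.e.\ $y$. That fibrewise identity holds for a.e.\ $t$ by Fubini applied to the jointly measurable exceptional set in $(t,y)$.

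\textbf{Main obstacle.} I expect the hardest part to be the measurability bookkeeping. It requires representatives of $\mu_t^y$ and $v_t(\cdot,y)$ that are jointly Borel in $(t,y)$, and an exceptional set in $(t,y)$ that is measurable, so that Fubini transfers "for a.e.\ $t$, for each $y$" to "for a.e.\ $t$, for $\nu$-a.e.\ $y$".
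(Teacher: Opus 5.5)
Your argument is correct and follows essentially the route the paper indicates: the paper gives no self-contained proof, but states that the result follows from the classical differentiability of $\calW_2$ on $\calP_2(X)$ (i.e.\ \cite[Theorem~8.4.7]{AGS} applied fibrewise) and refers to \cite[Proposition~3.22]{peszek2023heterogeneous}. This matches your fibrewise reduction, the Fubini step, and the disintegration of an arbitrary optimal $\nu$-admissible plan into fibrewise optimal plans; as a minor simplification, absolute continuity of $t\mapsto\frac12\calW_{2,\nu}^2(\mu_t,\sigma)$ already follows from $|a^2-b^2|=|a-b|(a+b)$ and the triangle inequality for $\calW_{2,\nu}$, without passing through the fibres.
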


\subsection{Interpolation inequalities}

\begin{lemma}\label{l:interpolation}
	\begin{enumerate}[label={\normalfont(\roman*)}] 
		\item 
			Let $1< q\le2+\frac2d$ and abbreviate 		$\theta\coloneqq \frac{2d}{d+2}\frac{q-1}{q}\in(0,1)$, so that $\frac1q=\theta\Big(\frac12-\frac1d\Big)+(1-\theta)$. 
			
			Then,  for all $p\in[1,\infty)$ with $\theta p\le r$, where $r\in[1,\infty]$, and $z\in L^\infty((0,T)\times Y;L^1(X))$
		\begin{align}\label{eq:interpol.H1-M}
			\|z\|_{L^p(0,T;L^q(X{\times}Y))}\lesssim_{T,\theta,q} M_z^{1-\theta}\|\nablax z\|_{L^r(0,T;L^2(X{\times}Y))}^{\theta},
		\end{align}
		where $M_z=\esssup_{t,y}\int_X|z|\,\dd x<\infty$. 
		\item Let $\mu_1,\mu_0:[0,T]\to\calP_{2,\nu}(X{\times}Y)$ be such that, for a.e.\ $t\in[0,T]$, 
		\[
        \mu_i(t)=u_i(t)\scrL^d{\otimes}\nu,\qquad u_i\in L^2([0,T]\times Y;H^1(X)),\quad i=0,1.
        \]
		Then,
			\begin{align}\label{eq:interpol.H1-Hrel}
			\|u_0-u_1\|_{L^2([0,T]\times X\times Y)}^2\lesssim \|\nablax(u_0-u_1)\|_{L^2([0,T]\times X\times Y)}^{\frac{2d}{d+2}}\bigg(\int_0^T\mathcal{H}_{\rel}(u_0|u_1)\,\dd t\bigg)^{\frac{2}{d+2}}.
		\end{align}
	\end{enumerate}
\end{lemma}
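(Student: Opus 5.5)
Both parts reduce to a fibrewise Gagliardo--Nirenberg (Nash-type) inequality on $X$ involving the $L^1$ norm and the $L^2$ norm of the gradient. After that come Hölder/Jensen in $(t,y)$ and, for (ii), the Csiszár--Kullback--Pinsker inequality.

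For (i), I first fix $(t,y)$ and apply to $f=z(t,\cdot,y)$ the inequality
\[
\|f\|_{L^q(X)}\lesssim \|f\|_{L^1(X)}^{1-\theta}\|\nabla f\|_{L^2(X)}^{\theta},\qquad \tfrac1q=\theta\big(\tfrac12-\tfrac1d\big)+(1-\theta).
\]
On $\mathbb R^d$ this holds in every dimension; for $d\le2$ it is understood via the scaling-consistent Nash/GN form. Raising it to the power $q$ and integrating in $\nu$ gives
\[
\int_Y\|f\|_{L^q}^q\,\dd\nu\le M_z^{(1-\theta)q}\int_Y\|\nabla z\|_{L^2(X)}^{\theta q}\,\dd\nu .
\]
A direct computation shows $\theta q=\frac{2d(q-1)}{d+2}\le 2$ exactly when $q\le 2+\frac2d$. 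So Jensen's inequality with respect to the probability measure $\nu$ yields
\[
\|z(t)\|_{L^q(X\times Y)}\lesssim M_z^{1-\theta}\|\nabla z(t)\|_{L^2(X\times Y)}^{\theta}.
\]
Taking the $L^p(0,T)$ norm, I need $\int_0^T\|\nabla z\|^{\theta p}\,\dd t$. Since $\theta p\le r$, Hölder in time bounds this by $C_T\|\nabla z\|_{L^r_tL^2}^{\theta p}$, which gives \eqref{eq:interpol.H1-M}.

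For (ii), set $f=(u_0-u_1)(t,\cdot,y)$. I use Nash's inequality in squared form,
\[
\|f\|_{L^2}^2\lesssim\|f\|_{L^1}^{4/(d+2)}\|\nabla f\|_{L^2}^{2d/(d+2)}.
\]
Integrating over $[0,T]\times Y$ and applying Hölder with exponents $\frac{d+2}{2}$ and $\frac{d+2}{d}$ gives
\[
\|u_0-u_1\|_{L^2}^2\lesssim\Big(\int_0^T\!\!\int_Y\|f\|_{L^1}^2\,\dd\nu\dd t\Big)^{\frac2{d+2}}\|\nabla(u_0-u_1)\|_{L^2}^{\frac{2d}{d+2}} .
\]
Since $u_0(t,\cdot,y)$ and $u_1(t,\cdot,y)$ are both probability densities, the fibrewise Csiszár--Kullback--Pinsker inequality gives $\|f\|_{L^1}^2\le 2\,\mathcal H_{\rel}(u_0|u_1)(t,y)$, where this fibrewise relative entropy integrates over $y$ to $\mathcal H_{\rel}$. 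This yields \eqref{eq:interpol.H1-Hrel}.

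The main obstacle is the validity of the endpoint GN inequality with only $L^1$ and $\dot H^1$ control. On $\mathbb R^d$ the scaling-invariant Nash inequality settles this in all dimensions. On a bounded domain the pure homogeneous form fails for constants, so I would add a lower-order term $\|f\|_{L^1}$. In (ii) this term is harmless, because $f$ has zero fibrewise mean and Poincaré--Wirtinger absorbs it. In (i) the extra term produces a contribution $M_z$ that is absorbed into the $T$-dependent constant, or is avoided by restricting to mean-free $z$, as in the applications.
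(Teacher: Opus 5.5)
Your argument is correct and follows the paper's proof: fibrewise Gagliardo--Nirenberg with the $L^1$ norm, then H\"older/Jensen in $y$ using $\theta q\le 2$ and $\nu(Y)=1$, then H\"older in time using $\theta p\le r$; for (ii), the case $q=2$, $\theta=\frac{d}{d+2}$ combined with fibrewise Csisz\'ar--Kullback--Pinsker and H\"older over $[0,T]\times Y$. One caveat on your closing remark: on a bounded domain the additive $M_z$ term cannot be absorbed into the constant, since for constant $z\neq0$ the right-hand side of \eqref{eq:interpol.H1-M} vanishes; there the mean-free restriction is the correct fix, as you do in (ii), and it holds in the applications, e.g.\ for $u_\tau-\wt u_\tau$.
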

\begin{proof}
	{\em Re~}(i):
By the Gagliardo--Nirenberg inequality, 
\begin{align}\label{eq:gns.X}
	\|z(t,\cdot,y)\|_{L^q(X)}^q\lesssim_q 	\|\nablax z(t,\cdot,y)\|_{L^2(X)}^{\theta q}\|z(t,\cdot,y)\|_{L^1(X)}^{(1-\theta)q}.
\end{align}
We integrate in $Y$, and apply H\"older's inequality for a term on the right-hand side to infer
\begin{align*}
	\|z(t,\cdot)\|_{L^q(X{\times}Y)}^q&\lesssim_q 	\int_Y\|\nablax z(t,\cdot,y)\|_{L^2(X)}^{\theta q}\underbrace{\|z(t,\cdot,y)\|_{L^1(X)}^{(1-\theta)q}.}_{\lesssim M^{(1-\theta)q}}
	\\&\lesssim\bigg(\int_Y\|\nablax z(t,\cdot,y)\|_{L^2(X)}^2\bigg)^{\theta q/2} M^{(1-\theta)q}.
\end{align*}
where in the second step we used $\theta q\le 2$ and applied H\"older's inequality and $\nu(Y)=1$.

The assertion is obtained by taking the $L^p_t$ norm and applying once more H\"older's inequality, observing that $\theta p\le r$. 

{\em Re~}(ii): We apply~\eqref{eq:gns.X} with $q=2$ and thus $\theta=\frac{d}{d+2}$ to the difference $z=u-v$, and estimate for $\mathcal{L}\otimes\nu$-a.e.\ in $[0,T]\times Y$, using Csiszár--Kullback--Pinsker inequality,
\[\|z(t,\cdot, y)\|_{L^1(X)}^2\lesssim \int_X h(u|v)\,\dd x.\]
The conclusion then follows upon integration over $[0,T]\times Y$ and use of the H\"older's inequality.
\end{proof}

\subsection{Narrow convergence and negative Sobolev spaces}
The following theorem shows that uniform narrow convergence of finite signed measures
with uniformly bounded total variation implies uniform convergence in sufficiently
negative Sobolev norms.

\begin{theorem}\label{thm:negSobolev}
Let $\{\sigma_n(t)\}_{n\in\bbN}\subset \mathcal{M}(\R^d)$ be measurable curves of finite signed measures. Assume $\sup_{n\in\bbN}\sup_{t\in[0,T]}
|\sigma_n(t)|(\R^d)<\infty$, and that 
$\sigma_n(t)\rightharpoonup \sigma(t)$ narrowly, uniformly in $t\in[0,T]$. Then, for every $m>d/2$, we also have that
$\sup_{t\in[0,T]}
\|\sigma_n(t)-\sigma(t)\|_{H^{-m}(\R^d)}
\to0$.
\end{theorem}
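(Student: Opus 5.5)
The plan is to test against the Fourier-side representation and to split into low and high frequencies. For $m>d/2$ the space $H^m(\R^d)$ embeds continuously into $C_b(\R^d)$, indeed into $C_0(\R^d)$. By duality,
\[
\|\sigma\|_{H^{-m}}=\sup\Big\{\Big|\int\phi\,\dd\sigma\Big|:\|\phi\|_{H^m}\le1\Big\}.
\]
Set $\rho_n(t):=\sigma_n(t)-\sigma(t)$. Its total variation $M:=\sup_{n,t}|\rho_n(t)|(\R^d)$ is finite, since narrow limits inherit the total variation bound by lower semicontinuity. So it suffices to show that $\sup_t\sup_{\|\phi\|_{H^m}\le1}|\int\phi\,\dd\rho_n(t)|\to0$.

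The key step is to replace the unit ball of $H^m$ by a compact subset of $C_b$ up to a small uniform error, and then use an equicontinuity/compactness argument.

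\emph{Localisation.} Uniform narrow convergence to $\sigma(t)$, together with the uniform mass bound, should yield uniform tightness of $\{|\rho_n(t)|\}_{n,t}$. I would obtain this from the Prokhorov-type characterisation: narrow convergence of signed measures with bounded variation implies tightness of the total variations (compactness of a narrowly convergent family). This must be made uniform in $t$. The idea is to first use that $t\mapsto\sigma(t)$ ranges in a narrowly compact set, which holds if $\sigma$ is narrowly continuous or by taking closures, and then use uniformity to pass to $n$ large. If this step is delicate, I would alternatively argue by contradiction with sequences $(n_k,t_k)$ and compactness. Given $\varepsilon>0$, I then choose $R$ with $|\rho_n(t)|(\R^d\setminus B_R)\le\varepsilon$ for all $n,t$. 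Combined with $\|\phi\|_\infty\lesssim\|\phi\|_{H^m}$, this reduces matters to $\chi_R\phi$, where $\chi_R$ is a smooth cutoff.

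\emph{Regularisation.} I mollify at scale $\delta$. For $m>d/2$, a Morrey/Sobolev estimate with some $s\in(0,\min(1,m-d/2))$ gives $\|\phi-\phi*\eta_\delta\|_\infty\lesssim\delta^{s}\|\phi\|_{H^m}$, so this replacement costs $M\delta^s$ uniformly. The family $\{\chi_R(\phi*\eta_\delta):\|\phi\|_{H^m}\le1\}$ is uniformly bounded and equi-Lipschitz with support in $B_{2R}$, hence precompact in $C_b$ by Arzelà–Ascoli. I cover it by finitely many $\varepsilon$-balls with centres $\psi_1,\dots,\psi_N$.

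\emph{Conclusion.} For each $\psi_i$, the uniform narrow convergence gives $\sup_t|\int\psi_i\,\dd\rho_n(t)|\to0$. Therefore
\[
\limsup_n\sup_t\|\rho_n(t)\|_{H^{-m}}\lesssim M(\varepsilon+\delta^s)+\varepsilon,
\]
and letting $\varepsilon,\delta\to0$ finishes the proof. The main obstacle is the uniform tightness in the localisation step; I would treat it first, possibly by the contradiction-and-subsequence argument to avoid measurability subtleties in $t$.
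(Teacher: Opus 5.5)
Your argument is correct, but it takes a genuinely different route from the paper, and the tightness step has to be done by the second method you mention.

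\emph{The tightness step.} The first route is not available. The theorem assumes neither narrow continuity nor tightness of $\{\sigma(t)\}_t$: for instance, $\sigma_n(t)=\sigma(t)=\delta_{e_1/t}$ satisfies every hypothesis, and taking closures does not create compactness. What you actually need is tightness of the differences $|\rho_n(t)|$, uniformly in $t$. This can only hold for $n\ge n_0$, because finitely many $\sigma_n$ may be arbitrary; so ``for all $n,t$'' is false as stated, although harmless for a $\limsup$.

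The contradiction argument does work. Suppose tightness fails. Then you can find $n_k\to\infty$ and $t_k$ with $|\rho_{n_k}(t_k)|(\R^d\setminus B_k)>\varepsilon$. On the other hand, $\rho_{n_k}(t_k)\rightharpoonup0$ narrowly, since $|\int\phi\,\dd\rho_{n_k}(t_k)|\le\sup_t|\int\phi\,\dd\rho_{n_k}(t)|$. This contradicts the converse of Prokhorov's theorem for signed measures of bounded variation. That converse is true on Polish spaces, but it is a nontrivial input. In $\R^d$ you can prove it by a gliding-hump construction: build one $\phi\in C_b$ from bumps on disjoint annuli going to infinity.

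\emph{The remaining steps.} These are fine. The mollification is superfluous: $\{\chi_R\phi:\|\phi\|_{H^m}\le1\}$ is already bounded in $C^{0,s}$ and supported in $B_{2R}$, hence precompact in $C_b$ by Arzel\`a--Ascoli.

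\emph{Comparison with the paper.} The paper avoids tightness and compactness of test functions altogether. It tests only with the Fourier characters $x\mapsto e^{-ix\cdot\zeta}$, which lie in $C_b$. Uniform narrow convergence then gives $f_n(\zeta)=\sup_t|\widehat{\omega_n(t)}(\zeta)|^2\to0$ pointwise, with $f_n\le4M^2$. Dominated convergence against the weight $(1+|\zeta|^2)^{-m}$, which is integrable exactly when $m>d/2$, finishes the proof in a few lines.

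Your localisation-plus-$\varepsilon$-net argument is longer and rests on the signed Prokhorov converse. In return, it never uses the Hilbert or Fourier structure. It therefore proves uniform convergence in the dual norm of bounded $C^{0,s}$ functions (any $s\in(0,1]$, i.e.\ a bounded-Lipschitz-type norm), which controls the $H^{-m}$ norm. It also transfers to settings where Fourier analysis is unavailable.
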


\begin{proof}
First note that in the case $m >d/2$, we have the continuous embedding $H^m(\R^d)\subset C_b(\R^d)$, 
thus $\mathcal{M}(\R^d)\subset H^{-m}(\R^d)$. 
Let us consider a sequence of measurable curves $t\mapsto \sigma_n(t)\in\mathcal{M}(\R^d)$ 
with uniformly bounded total variation norm and narrowly converging, uniformly in $t\in[0,T]$, i.e.,
\[
\sup_{t\in[0,T]}
\left|
\int_{\R^d}\varphi\,\dd\sigma_n(t)
-
\int_{\R^d}\varphi\,\dd\sigma(t)
\right|
\to0,\quad \text{for every }\varphi\in C_b(\R^d).
\]
By assumption, we have that
\[
M:=\sup_n\sup_{t\in[0,T]}|\sigma_n(t)|(\mathbb R^d)<\infty, \quad\text{and}\quad
|\omega_n(t)|(\mathbb R^d)\leq2M,
\text{ where }
\omega_n(t):=\sigma_n(t)-\sigma(t).
\]
We argue via Fourier transformation: For $x,\zeta\in\R^d$, define
\[
\widehat{\omega_n(t)}(\zeta)
:=
\int_{\mathbb R^d}\mathrm{e}^{-ix\cdot\zeta}\,\dd\omega_n(t)(x).
\]
For each fixed $\zeta\in \R^d$, both $\cos(x\cdot\zeta)$ and $\sin(x\cdot \zeta)$ belong to $C_b(\R^d)$, thus, the hypothesis gives
\begin{equation}
\label{eq:FourierModeConv}
f_n(\zeta)
:=
\sup_{t\in[0,T]}
|\widehat{\omega_n(t)}(\zeta)|^2
\to 0\quad\text{and}\quad
0\leq f_n(\zeta)\leq4M^2
\qquad\text{for every }\zeta.
\end{equation}
The functions $f_n$ are measurable: for every fixed $t$, the Fourier transform of the finite measure $\omega_n(t)$ is continuous in $\zeta$, and a supremum of continuous functions is lower semicontinuous.
For $m>d/2$, the Fourier characterization of the negative Sobolev norm gives, with a normalization constant $c_d>0$
\[
\|\omega_n(t)\|_{H^{-m}(\mathbb R^d)}^2
=
c_d\int_{\R^d}
(1+|\zeta|^2)^{-m}
|\widehat{\omega_n(t)}(\zeta)|^2\,\dd\zeta.
\]
Consequently, we arrive at
\[
\sup_{t\in[0,T]}
\|\omega_n(t)\|_{H^{-m}(\R^d)}^2
\leq
c_d\int_{\mathbb R^d}
(1+|\zeta|^2)^{-m}f_n(\zeta)\,\dd\zeta.
\]
Since $
(1+|\zeta|^2)^{-m}f_n(\zeta)
\leq
4M^2(1+|\zeta|^2)^{-m}$, 
and the right-hand side is integrable precisely because $m>d/2$, the Dominated Convergence theorem yields $\sup_{t\in[0,T]}
\|\omega_n(t)\|_{H^{-m}(\R^d)}\to 0$.
\end{proof}

\printbibliography[heading=bibintoc]

\end{document}

%% file: header.tex
\usepackage{latexsym,amssymb}
\usepackage{amsmath,amsthm,amsxtra,amsbsy,accents}
\usepackage{mathtools}
\usepackage{mathrsfs}
\usepackage{bm}
\usepackage{hyperref}

\DeclareMathAlphabet{\mathup}{OT1}{\familydefault}{m}{n}

\newcommand{\eps}{\varepsilon}

\DeclarePairedDelimiter{\abs}{\lvert}{\rvert}
\DeclarePairedDelimiter{\norm}{\lVert}{\rVert}
\DeclarePairedDelimiter{\bra}{(}{)}
\DeclarePairedDelimiter{\pra}{[}{]}

\DeclarePairedDelimiter{\skp}{\langle}{\rangle}

\makeatletter
\newcommand{\customlabel}[2]{%
   \protected@write \@auxout {}{\string \newlabel {#1}{{#2}{\thepage}{#2}{#1}{}} }%
   \hypertarget{#1}{}
}
\makeatother

\numberwithin{figure}{section}

\def\calA{{\mathcal A}} \def\calB{{\mathcal B}} 
\def\calD{{\mathcal D}} \def\calE{{\mathcal E}} \def\calF{{\mathcal F}}
 \def\calH{{\mathcal H}} 
  \def\calL{{\mathcal L}}
\def\calM{{\mathcal M}} \def\calN{{\mathcal N}} 
\def\calP{{\mathcal P}}  \def\calR{{\mathcal R}}
  
 \def\calW{{\mathcal W}} 
 
\def\rmD{{\mathrm D}}  
  
  \def\rmL{{\mathrm L}}

  \def\sff{{\mathsf f}}

\def\sfy{{\mathsf y}}

 \def\sfB{{\mathsf B}} 
  \def\sfF{{\mathsf F}}
  
\def\sfJ{{\mathsf J}}

  \def\scrL{{\mathscr  L}}

\def\scrY{{\mathscr  Y}}

 \def\bbN{{\mathbb N}} 
  \def\bbR{{\mathbb R}}

\newtheorem{theorem}{Theorem}[section]
\newtheorem{lemma}[theorem]{Lemma}
\newtheorem{corollary}[theorem]{Corollary}
\newtheorem{definition}[theorem]{Definition}

\newtheorem{proposition}[theorem]{Proposition}
\newtheorem{remark}[theorem]{Remark}

\makeatletter
\newcommand{\oset}[3][0ex]{%
  \mathrel{\mathop{#3}\limits^{
    \vbox to#1{\kern-2\ex@
    \hbox{$\scriptstyle#2$}\vss}}}}
\makeatother

\DeclareMathOperator*{\esssup}{ess\,sup}

\DeclareMathOperator*{\argmin}{arg\,min}

\newcommand{\dist}{\mathrm{dist}}

\newcommand{\AC}{\mathrm{AC}}

\DeclareMathOperator\supp{supp}

\newcommand{\nablax}{\nabla} 